\documentclass[12pt,reqno]{amsart}

\usepackage[utf8]{inputenc}
\usepackage{amsmath} 
\usepackage{amsthm} 
\usepackage{amssymb} 
\usepackage{amscd} 

\usepackage{emptypage}
\usepackage{enumitem} 
\usepackage{mathtools} 
\usepackage{mathrsfs} 
\usepackage{upgreek}

\usepackage{hyperref,cleveref,autonum}

\usepackage{esint} 
\usepackage{graphicx,caption} 
\usepackage{float} 

\usepackage{pgf,tikz,pgfplots}
\usetikzlibrary{arrows}
\usetikzlibrary{intersections}
\usetikzlibrary{fadings}
\usepackage{wrapfig}

\usepackage[margin=0.9in]{geometry}
\addtolength{\topmargin}{0.2in}
\addtolength{\textheight}{-0.2in}

\newtheorem{theorem}{Theorem}[section]
\newtheorem*{theorem*}{Theorem}
\newtheorem{proposition}[theorem]{Proposition}
\newtheorem{lemma}[theorem]{Lemma}
\newtheorem{corollary}[theorem]{Corollary}
\newtheorem{conjecture}[theorem]{Conjecture}

\theoremstyle{definition}

\theoremstyle{remark}
\newtheorem{remark}[theorem]{Remark}
\newtheorem*{remark*}{Remark}

\newcommand{\con}[1]{\mathbb{#1}}
\newcommand{\C}{\con{C}} 
\newcommand{\R}{\con{R}} 
\newcommand{\Z}{\con{Z}} 
\newcommand{\N}{\con{N}} 
\newcommand{\Sph}{\con{S}} 

\newcommand{\ccal}{\mathscr{C}}

\newcommand{\ical}{\mathcal{I}}
\newcommand{\lcal}{\mathcal{L}}

\newcommand{\rcal}{\mathscr{R}}

\newcommand{\norm}[1]{\left \| {#1} \right \| }

\newcommand{\Dirac}{\mathcal{H}}


\newcommand\beqc[1]{\left\{\begin{array}{#1}}
	\newcommand\eeqc{\end{array} \right.}
\def\PDEsystem{rcll}
\def\bmatrix{\begin{pmatrix}}
	\def\ematrix{\end{pmatrix}}

\DeclareMathOperator{\sign}{sign}


\numberwithin{equation}{section}
\hyphenation{auto-ma-ti-cally}

\makeatletter
\def\l@subsection{\@tocline{2}{0pt}{2.5pc}{5pc}{}}
\makeatother

\DeclareSymbolFont{myletters}{OML}{ztmcm}{m}{it}
\DeclareMathSymbol{\uplambda}{\mathord}{myletters}{"15}

\DeclareRobustCommand{\SkipTocEntry}[5]{}


\title[Eigenvalue curves for generalized MIT bag models]
{Eigenvalue curves for generalized MIT bag models}

\author[N. Arrizabalaga]{Naiara Arrizabalaga}
\address{N. Arrizabalaga \newline
Departamento  de  Matem\'aticas,  Universidad  del  Pais  Vasco/Euskal Herriko Unibertsitatea,  Aptdo.   644,  48080 Bilbao, Spain}
\email{naiara.arrizabalaga@ehu.eus}

\author[A. Mas]{Albert Mas}
\address{A. Mas\textsuperscript{1,2}
	\newline
	\textsuperscript{1} Departament de Matem\`atiques,
	Universitat Polit\`ecnica de Catalunya.
	Campus Diagonal Bes\`os, Edifici A (EEBE), 
	Av. Eduard Maristany 16, 08019
	Barcelona, Spain.
	\newline \textsuperscript{2} Centre de Recerca Matem\`atica. Edifici C, Campus Bellaterra, 08193 Bellaterra, Spain.}
\email{albert.mas.blesa@upc.edu}

\author[T. Sanz-Perela]{Tom\'as Sanz-Perela}
\address{T. Sanz-Perela\textsuperscript{1,2}
	\newline
	\textsuperscript{1}  Basque Center for Applied Mathematics, Alameda Mazarredo 14, 48009 Bilbao, Spain
\newline \textsuperscript{2} Departamento de Matemáticas, Universidad Autónoma de Madrid, Ciudad Universitaria de Cantoblanco, 28049 Madrid, Spain}
\email{tomas.sanz@uam.es}

\author[L. Vega]{Luis Vega} 
\address{L. Vega\textsuperscript{1,2}
	\newline
	\textsuperscript{1} Basque Center for Applied Mathematics, Alameda Mazarredo 14, 48009 
	\newline \textsuperscript{2} Departamento  de  Matemáticas,  Universidad  del  Pais  Vasco/Euskal Herriko Unibertsitatea,  Aptdo.   644,  48080 Bilbao, Spain
	}
\email{lvega@bcamath.org}

\date{\today}
\subjclass[2010]{Primary: 35Q40; Secondary 35P05, 81Q10.}
\keywords{Dirac operator, spectral theory, MIT bag model, shape optimization.}

\thanks{All authors are supported by the  ERC-2014-ADG project HADE Id. 669689 (European Research Council).
	N.\!~A.\! is supported by the MINECO grant PGC2018-094522-B-I00 (Spain) and IT1247-19 (Gobierno Vasco).
	A.\!~M.\! is supported by grants MTM2017-84214-C2-1-P and RED2018-102650-T funded by MCIN/AEI/10.13039/501100011033 and by ``ERDF A way of making Europe'', by MINECO grant MTM2017-83499-P (Spain), and by the Spanish State Research Agency, through the Severo Ochoa and Mar\'ia de Maeztu Program for Centers and Units of Excellence in R\&D (CEX2020-001084-M).
	T.\!~S.-P.\! is supported by grants MTM2017-84214-C2-1-P and RED2018-102650-T funded by MCIN/AEI/10.13039/501100011033 and by ``ERDF A way of making Europe'', AGAUR research group 2017-SGR-1392 (Catalunya), and EPSRC grant EP/S03157X/1.
L.\!~V.\! is supported by the Basque Government through the BERC 2018-2021 program and by the Spanish State Research Agency through BCAM Severo Ochoa excellence accreditation SEV-2017-0718.
 }

\begin{document}
	
\begin{abstract}
We study spectral properties of Dirac operators on bounded domains  $\Omega \subset \mathbb{R}^3$ with boundary conditions of electrostatic and Lorentz scalar type and which depend on a parameter $\tau\in\R$; the case $\tau = 0$ corresponds to the MIT bag model.
We show that the eigenvalues are parametrized as increasing functions of  $\tau$, and we exploit this monotonicity to study the limits as $\tau \to \pm \infty$.
We prove that if $\Omega$ is not a ball then the first positive eigenvalue is greater than the one of a ball with the same volume for all $\tau$ large enough.
Moreover, we show that the first positive eigenvalue converges to the mass of the particle as $\tau \downarrow -\infty$, and we also analyze its first order asymptotics.
\end{abstract}

\maketitle
\tableofcontents


\section{Introduction}

Dirac operators acting on domains 
$\Omega\subset\R^d$ are used in relativistic quantum
mechanics to describe particles that are confined in a box. The so-called MIT bag model is a very remarkable example in dimension $d=3$. It was introduced in the
1970s as a simplified model to study confinement of quarks in hadrons \cite{bogoliubov1987,MIT061974,PhysRevD.12.2060,johnson}. The mathematical study of this and related three-dimensional models has gained attention in the recent years \cite{ALMR,ALR2017,ALR1,BehrndtHolzmannMas,Benhellal2019InfiniteMass,HolzmannZigZag,OurmieresBonafosVega,Rabinovich2020}. In dimension $d=2$, Dirac operators with special boundary conditions similar to the ones in the MIT bag
model are used in the description of graphene \cite{AB08,BM87,CGPNG09,gucclu2014graphene, MC_F_2004,PSKYHNG08}. They have been also investigated in the past few years from the mathematical point of view \cite{barbaroux2019resolvent,barbaroux2020dirac,BFSB17_1,cassano2019self,FreitasSiegl2014spectra,LT_OB_2018,PizzichilloVanDenBosch2019self,StockmeyerVugalter2018infinite}.
The present work focuses on the spectral study of a family 
$\{\Dirac_\tau\}_{\tau\in\R}$ of Dirac operators acting on bounded domains 
$\Omega\subset\R^3$ with
electrostatic plus Lorentz scalar type boundary conditions which depend on a parameter $\tau\in\R$; the particular case $\tau=0$ corresponds to the MIT bag model. 

\medskip

Throughout this article we assume that $\Omega\subset \R^3$ is a bounded domain with $C^2$ boundary. The unit normal vector field at $\partial\Omega$ which points outwards of $\Omega$ is denoted by $\nu$, and the surface measure on $\partial\Omega$, by $\upsigma$. Given $\tau\in\R$, let $\Dirac_\tau$ be the Dirac operator on $\Omega$ defined by
\begin{equation}
	\begin{split}
		\mathrm{Dom}(\Dirac_\tau) &:= \big\{ \varphi \in H^1(\Omega)\otimes\C^4: \, \varphi = i (\sinh\tau- \cosh\tau \, \beta)( \alpha \cdot\nu ) \varphi  \,\text{ on } \partial \Omega \big\},\\
		\Dirac_\tau\varphi &:= (-i \alpha \cdot \nabla + m \beta)\varphi \quad\text{for all 
			$\varphi\in\mathrm{Dom}(\Dirac_\tau),$}
	\end{split}
\end{equation}
where $-i \alpha \cdot \nabla + m \beta=:\Dirac$ denotes the differential expression which gives the action of the free Dirac operator on $\R^3$. More precisely,
$m\geq0$ denotes the mass of the particle, $\alpha := (\alpha_1, \alpha_2, \alpha_3)$, 
$$
\alpha_j:=\begin{pmatrix}0&\sigma_j\\\sigma_j&0\end{pmatrix} \mbox{ for }j=1,2,3, \quad  \text{and}\quad
\beta:=\begin{pmatrix}I_2&0\\0&-I_2\end{pmatrix}	
$$
are the $\C^{4\times4}$-valued Dirac matrices,
$I_d$ denotes the identity matrix in $\C^{d\times d}$ (it will also be denoted by $1$ when no confusion arises), and
$$
\sigma_1 := 
\begin{pmatrix}
	0&1\\1&0
\end{pmatrix},
\quad
\sigma_2 := 
\begin{pmatrix}
	0&-i\\i&0
\end{pmatrix},
\quad
\sigma_3 := 
\begin{pmatrix}
	1&0\\0&-1
\end{pmatrix}
$$
are the Pauli matrices. As customary, we use the notation $\alpha \cdot X:=\alpha_1 X_1+\alpha_2 X_2+\alpha_3 X_3$ for $X = (X_1,X_2,X_3)$, and analogously for $\sigma \cdot X$ with
$\sigma := (\sigma_1,\sigma_2,\sigma_3)$.

\medskip

The family $\{\Dirac_\tau\}_{\tau\in\R}$ naturally arises in the context of confining $\delta$-shell interactions. 
In the last decade, Dirac operators coupled with  $\delta$-shell potentials have been investigated from a mathematical perspective: their self-adjointness and spectral properties \cite{AMV1,AMV2,BehrndtEtAl2018spectral,BehrndtEtAl2019,BehrndtHolzmann2019dirac,BehrndtEtAl2020limiting,behrndt2020two,DittrichExnerSeba,Mas2017dirac,Rabinovich2020fredholm},  
the case of rough domains \cite{Benhellal2021rough,Benhellal2021spectral}, and their approximations and other asymptotic regimes \cite{CLMT2021,HolzmannEtAl2018dirac,MasPizzichillo-Klein,MasPizzichillo-Sphere,MoroianuEtAl2020dirac}; we refer to the survey \cite{OurmieresBonafosPizzichillo-Survey} for further details on the state of the art of shell interactions for Dirac operators. 
Several of these works addressed singular perturbations of the form
\begin{equation}\label{delta.shell.conf} 
	-i \alpha \cdot \nabla + m \beta+  (\lambda_e I_4+ \lambda_s \beta)\delta_{\partial\Omega},
\end{equation}
which correspond to the free Dirac operator on $\R^3$ coupled with electrostatic and Lorentz scalar $\delta$-shell potentials with strengths $\lambda_e$ and $\lambda_s$, respectively. Here, the $\delta$-shell distribution 
acts as $\delta_{\partial\Omega}(\varphi)= {\textstyle\frac{1}{2}}(\varphi_++\varphi_-)$, where $\varphi_{\pm}$ denotes the boundary values of $\varphi:\R^3\to\C^4$ when one approaches $\partial\Omega$ from inside/outside $\Omega$.
It is well known that the operator associated to \eqref{delta.shell.conf} decouples as the orthogonal sum of two operators, one acting in $L^2(\Omega)\otimes\C^4$ and the other in $L^2(\R^3\setminus\overline\Omega)\otimes\C^4$, if and only if $\lambda_e^2-\lambda_s^2=-4$. This has important consequences in the time-dependent scenario: the Hamiltonian \eqref{delta.shell.conf} generates confinement if and only if 
$\lambda_e^2-\lambda_s^2=-4$, meaning that a particle which is initially located inside/outside $\Omega$ will remain inside/outside $\Omega$ for all time. Under the confining relation $\lambda_e^2-\lambda_s^2=-4$, the boundary condition for the operator acting in $L^2(\Omega)\otimes\C^4$ is
\begin{equation}\label{MITgen.delta.shell}
	\varphi = \frac{i}{2} (\lambda_e - \lambda_s \beta) (\alpha \cdot\nu)\varphi\quad \text{on $\partial \Omega$;}
\end{equation}
recall that the MIT bag boundary condition corresponds to \eqref{MITgen.delta.shell} with
$\lambda_e=0$ and $\lambda_s=2$. Hence, if we set 
\begin{equation}\label{MITgen.delta.shell.2}
	\tau\mapsto\lambda_e(\tau):=2\sinh \tau\quad\text{and}\quad
	\tau\mapsto\lambda_s(\tau):=2\cosh \tau
	\quad\text{for $\tau\in\R$},
\end{equation} 
we obtain a parametrization of the whole branch of the hyperbola 
$\lambda_e^2-\lambda_s^2=-4$ that contains the MIT bag boundary condition, which is attained through the parametrization at $\tau=0$. Observe that the boundary condition used in the definition of $\mathrm{Dom}(\Dirac_\tau)$ is simply the combination of \eqref{MITgen.delta.shell} and \eqref{MITgen.delta.shell.2}. That is, from the singular perturbations point of view, 
$\{\Dirac_\tau\}_{\tau\in\R}$ is the restriction to $\Omega$ of the branch of 
confining electrostatic plus Lorentz scalar $\delta$-shell interactions  (of constant strength in $\partial \Omega$) that contains the MIT bag model.

In two dimensions, a parametrization similar to 
$\tau\mapsto\Dirac_\tau$ was used in \cite{BFSB17_1} to describe graphene quantum dots. In there, the boundary conditions are given by $\delta$-shell potentials of Lorentz scalar plus magnetic type. In our three-dimensional framework, the situation analogous to \cite{BFSB17_1} would be to couple the free Dirac operator with potentials of the form 
$(\lambda_s \beta+\lambda_a i(\alpha\cdot\nu)\beta)\delta_{\partial\Omega}$ with $\lambda_s^2+\lambda_a^2=4$, instead of the ones given by \eqref{delta.shell.conf}. Of course, both shell interactions agree for $\lambda_s=2$ (hence 
$\lambda_e=\lambda_a=0$), and they lead to the MIT bag operator. We refer to \cite[Sections 2.3 and 9]{CLMT2021} for more details. 

\medskip

The operator $\Dirac_\tau$ is self-adjoint. Its spectrum 
$\sigma(\Dirac_\tau)$ is purely discrete and is contained in
$\R\setminus[-m,m]$.
That is, for every $\tau\in\R$, $\sigma(\Dirac_\tau)$ is a sequence $\{\lambda_k^{\pm} (\tau)\}_{k\geq1}\subset\R$ such that
$$
\ldots \leq \lambda_2^-(\tau) \leq \lambda_1^-(\tau) < -m \leq m <  \lambda_1^+(\tau) \leq \lambda_2^+(\tau) \leq \ldots
$$
In addition, it holds that $\lambda$ is an eigenvalue of 
$\Dirac_\tau$ if and only if $-\lambda$ is an eigenvalue of $\Dirac_{-\tau}$ ---in particular $\sigma(\Dirac_0)$ is symmetric. All these properties are gathered in \Cref{Lemma:SpectrumGenMIT} below.
Our main goal in this work is to describe the {\em eigenvalue curves} of the family $\{\Dirac_\tau\}_{\tau\in\R}$, that is, the mappings 
$\tau\mapsto\lambda_k^{\pm}(\tau)$, $k\geq1$, 
as $\tau$ ranges over all $\R$.
We pay special attention to the study of the {\em first eigenvalues}, and by this we mean the ones whose absolute value is closest to $m$, i.e., 
$\lambda_1^-(\tau)$ and $\lambda_1^+(\tau)$. 
By the odd symmetry with respect to the parameter $\tau$ mentioned above, it is enough to study $\lambda_1^+(\tau)$, which will be called in the sequel the {\em first positive eigenvalue} of $\Dirac_\tau$.

In \Cref{Subsec:Sphere} we investigate how the eigenvalue curves look like when $\Omega$ is a ball, a case where explicit formulas involving Bessel functions are available. This is our starting point for the spectral analysis in the general case: the evidences observed on the ball provide us with clues of what could be expected to hold on any domain. Our main results for general domains are described in \Cref{Subsec:MainResults}.

\medskip

Before getting into more details, a few words on our motivation for the results presented in this work are in order. From a general perspective, there is a large body of literature on the spectral analysis of differential operators on domains with parameter-dependent  boundary conditions. The Robin Laplacian is a very remarkable example; the interested reader may look at \cite{BFK_2017} and the references therein. 
However, as far as we know, the type of perturbative analysis carried out in the present work has not been considered so far in the context of shell interactions for Dirac operators, except for \cite{AMV3}. In there, the monotonicity of the eigenvalues with respect to the parameter that defines the electrostatic $\delta$-shell interaction is used to optimize the threshold of admissible strengths that yield nontrivial point spectrum, and to characterize the optimal domains. Roughly speaking, a property of the parameter-dependent family of operators (the monotonicity) is successfully used to solve a shape optimization problem for a certain quantity of spectral nature (the threshold of strengths).

One may consider the Dirac operators on domains as the relativistic counterpart of Laplacians with boundary conditions, as for example of Robin type. In this way, the study carried out in the present work has its own interest from the point of view of perturbation theory. However, the main motivation that originated this article was to address the shape optimization problem for the spectral gap of the MIT bag operator, which consists of minimizing the first squared eigenvalue of $\Dirac_0$ among all domains with prescribed volume. The analogous question in the two-dimensional framework ---the optimization of the spectral gap for Dirac operators with infinite mass boundary conditions--- is considered a hot open problem in spectral geometry~\cite{ProblemListShapeOptimization}. More generally, the quest of geometrical upper and lower bounds for the spectral gap is a trending topic of research \cite{AntunesEtAl,BFSB17_2, BrietKrejcirik2021spectral,LotoreichikO-B2019sharp}. This quest is also addressed in the differential geometry literature for Dirac operators on spin manifolds, where sharp inequalities for spectral gaps in terms of geometric quantities are shown \cite{AgricolaFriedrich1999upper,AmmannBar2002dirac,Bar1992lower, Bar1998extrinsic,ChamiGinouxHabib2019Bessel,Hijazi1986conformal,HijaziMontielRoldan2002eigenvalue,HijaziMontielZhang2001eigenvalues,KramerSemmelmannWeingart1998first}. 
Despite the amount of works available on this topic, for the case of bounded domains in euclidean spaces the problem of minimizing the spectral gap under a volume constraint (and with no further restrictions on the geometry of the boundary) remains open.

Following the line of our comments about \cite{AMV3}, in order to address the shape optimization problem for the MIT bag operator it could be useful to take benefit from the inclusion of $\Dirac_0$ in the family $\{\Dirac_\tau\}_{\tau\in\R}$, and to exploit the connection of $\Dirac_\tau$ with the Dirichlet Laplacian as $\tau\to\pm\infty$; see \Cref{Th:EigLimits} below. In this regard, \Cref{Th:BallOptimalLargeTau} shows the optimality of the ball in the asymptotic regime $\tau\uparrow+\infty$. In addition, \Cref{Th:Rayleigh.Intro} draws a path to address the optimality of the ball in the asymptotic regime $\tau\downarrow-\infty$. As we said, we expect that this information for $|\tau|$ large enough will be useful to deal with the optimality of the ball in the general case of $\tau\in\R$ and, in particular, for $\tau=0$.

\medskip

We close this introductory section clarifying some conventions that we are going to use in the sequel. Besides an amount of standard notation, as well as further shorthand that will be introduced in due time, we use the following notation throughout the paper:

\medskip

\begin{center}
	\begin{tabular}{l c l}
		\hline\hline
		\rule{0pt}{3ex}
		\!\!$E^d$ && tensor product $E\otimes\C^d$ of a vector space $E$ with $\C^d$\\
		$A^\intercal$ && transpose of a matrix $A$\\
		$\varphi= (u, v)^\intercal$ && decomposition of $\varphi\in\C^4$ in upper and lower components, that is,\\
		&& \qquad  $u=(\varphi_1,\varphi_2)^\intercal$ and $v=(\varphi_3,\varphi_4)^\intercal$ for
		$\varphi=(\varphi_1,\varphi_2,\varphi_3,\varphi_4)^\intercal$\\
		$\langle\cdot,\cdot\rangle_{L^2(\Omega)^d}$ &&
		scalar product in $L^2(\Omega)^d$ given by
		$\langle u,v\rangle_{L^2(\Omega)^d}:=\int_{\Omega} u\cdot\overline v\,dx$\\
		$\langle\cdot,\cdot\rangle_{L^2(\partial\Omega)^d}$ &&
		scalar product in $L^2(\partial\Omega)^d$ given by
		$\langle u,v\rangle_{L^2(\partial\Omega)^d}:=\int_{\partial\Omega} u\cdot\overline v\,d\upsigma$\\
		$H^1(\Omega)$ && Sobolev space of functions in $L^2(\Omega)$ with first weak partial\\
		&& \qquad derivatives in $L^2(\Omega)$\\
		$\sigma$ && $3$-tuple $(\sigma_1, \sigma_2, \sigma_3)$ of Pauli matrices\\
		$\sigma(T)$ && spectrum of the operator $T$\\
		$\{T_1,T_2\}$ && anticommutator $T_1T_2+T_2T_1$ of operators $T_1$ and $T_2$\\
		[1ex]\hline\hline
	\end{tabular}
\end{center}

\bigskip

\subsection{Eigenvalue curves for a ball} \label{Subsec:Sphere}

We present here a brief summary of the spectral study of $\Dirac_\tau$ in the case $\Omega = B_R\subset\R^3$, the ball of radius $R>0$ centered at the origin.
In this radially symmetric domain we can use separation of variables ($r \in [0,R)$ and $\theta \in \Sph^2$) and, thanks to the spherical harmonic spinors, obtain explicit equations for the eigenvalues and eigenfunctions of $\Dirac_\tau$.
The analysis done for the ball provides some intuition on which kind of situations one can expect when studying the operator in a general domain $\Omega \subset \R^3$, for which no explicit formulas are available.
A more detailed analysis including the proofs of the facts stated in this section can be found in the Appendix~\ref{Sec:ExplicitComputationsSphere}.

To deal with the problem in the ball, it is suitable to use the  decomposition
$$
L^2(\R^3)^4 = \bigoplus_{j=1/2}^{+\infty} \, \bigoplus_{\mu_j=-j}^j L_{j, \mu_j}^+ \oplus  L_{j, \mu_j}^-, 
$$
where  $L_{j, \mu_j}^\pm $ for $j = 1/2, \, 3/2,\, \ldots\, $, and $\mu_j = -j, \, -j + 1, \, \ldots\, ,\, j-1,\, j$, are invariant spaces under the action of $\Dirac$, and are defined in terms of the spherical harmonic spinors; see Appendix~\ref{Sec:ExplicitComputationsSphere} for the explicit expressions.
Thanks to the above decomposition, the eigenvalue problem for $\Dirac_\tau$ can be reduced to a system of two Bessel-type ODE.
After imposing the boundary conditions, for each invariant space $L_{j, \mu_j}^\pm $ we obtain the eigenvalue equation 
\begin{equation}
	\label{Eq:IntroEigEquationSphere}
	0 = e^\tau J_{\ell + 1/2} ( \sqrt{\lambda^2-m^2} R ) \pm \dfrac{\sqrt{\lambda^2-m^2}}{\lambda+m} J_{\ell' + 1/2} ( \sqrt{\lambda^2-m^2} R ),
\end{equation}
where $J_k$ is the $k$-th Bessel function of the first kind,  $\ell = j \pm 1/2$, and $\ell' = j \mp 1/2$.
This equation already appears in \cite[formula~(6.3)]{DittrichExnerSeba}.
Note that the index $\mu_j$ does not appear in \eqref{Eq:IntroEigEquationSphere}, meaning that for a given half-integer $j$ there are $2j + 1$ linearly independent eigenfunctions associated to the same eigenvalue.
Therefore, all the eigenvalues have even multiplicity.

If we solve numerically \eqref{Eq:IntroEigEquationSphere} for some choice of indexes, we obtain the plot shown in \Cref{Fig:ALL_eigenvalues}.
\begin{figure}[h]
	\centering
	\vskip-25pt
	\includegraphics[width=0.7\textwidth]{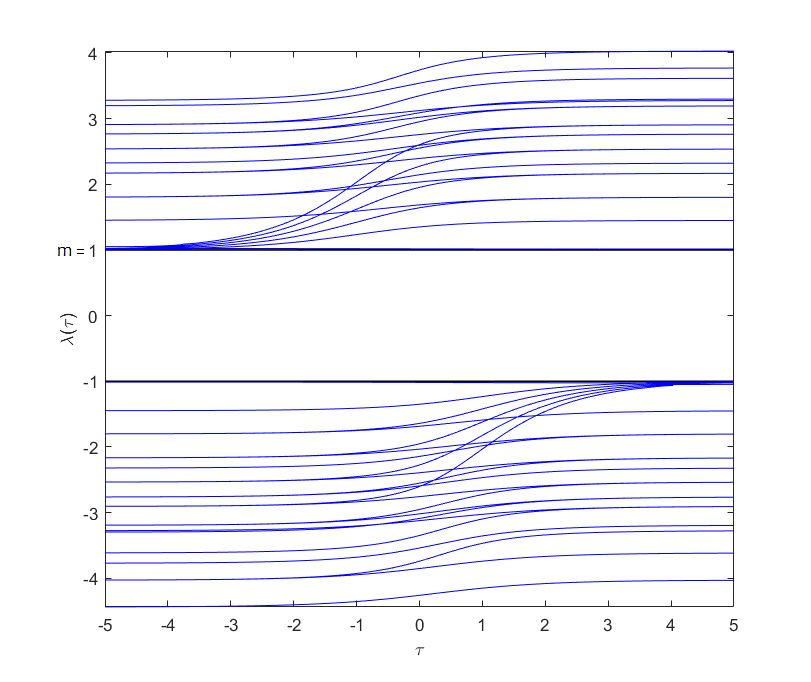}
	\vskip-20pt
	\caption{Some eigenvalue curves $\tau\mapsto\lambda(\tau)$ of $\Dirac_\tau$ on $B_R$ for $R=3$ and $m=1$.}
	\label{Fig:ALL_eigenvalues}
\end{figure}
\newline
As we see, the eigenvalues can be parametrized in terms of $\tau$, obtaining a family of increasing functions.
To show rigorously that this parametrization can be done, it suffices to write the eigenvalue equation \eqref{Eq:IntroEigEquationSphere}  as
\begin{equation}
	\label{Eq:IntroEigEquationSphereParam}
	e^\tau =  \mp \dfrac{\sqrt{\lambda^2-m^2}}{\lambda+m} \dfrac{ J_{\ell' + 1/2} ( \sqrt{\lambda^2-m^2} R ) } {J_{\ell + 1/2} ( \sqrt{\lambda^2-m^2} R ) } =: h(\lambda)
\end{equation}
and invert the function $h$ in suitable intervals $I$.
This provides, for each of these intervals, a parametrization of an eigenvalue given by
$\tau \mapsto \lambda (\tau) = h^{-1} (e^\tau) \in I$.
By the distribution of the zeroes and singularities of $h$, it can be seen that $h$ is only invertible in intervals $I$ in which the above quotient of Bessel functions does not change sign. These maximal intervals have the form  $( -  z_{\ell'}, - z_\ell)$, $(- z_{\ell'}, -m)$, $(m,  z_\ell)$, or $( z_{\ell'},  z_\ell)$, where $ z_k$ denotes a positive zero of the function $J_{k + 1/2} ( \sqrt{(\cdot)^2-m^2} R )$.
In each of these intervals, $h: I \to (0,+\infty)$ is strictly increasing and surjective, and thus it can be inverted.
From this it follows that $\lambda(\tau)$ is also strictly increasing.
\Cref{Prop:ParamEgienvaluesSphere} gathers all these considerations.

In addition, the limit of $|\lambda(\tau)|$ as $\tau \to \pm \infty$ must be at the boundary of the interval $I$, and thus be either $m$ or a positive zero of $J_{k + 1/2} ( \sqrt{(\cdot)^2-m^2} R )$ for some $k=0,1,2,\ldots$; note that each of these zeroes corresponds to the square root of a Dirichlet eigenvalue of $-\Delta + m^2$ in $B_R$.
Since the same Bessel function may appear in the denominator of the expression of $h$ in \eqref{Eq:IntroEigEquationSphereParam} for different choices of the index $j$, more than one eigenvalue curve may converge to the same value as $\tau \uparrow +\infty$, and analogously as $\tau \downarrow-\infty$.
This is illustrated in \Cref{Fig:ALL_eigenvalues} and, more explicitly, in \Cref{Fig:j32BOTH_eigenvalues,Fig:Crossings_eigenvalues}.

\begin{remark}
	\label{Rem:SymmetrySphere}
	Note that \eqref{Eq:IntroEigEquationSphereParam} can be rewritten as	
	\begin{equation}
		\label{Eq:IntroEigEquationSphereParamBis}
		e^\tau =  \mp \sign (\lambda+m ) \sqrt{\dfrac{\lambda-m}{\lambda+m}} \dfrac{ J_{\ell' + 1/2} ( \sqrt{\lambda^2-m^2} R ) } {J_{\ell + 1/2} ( \sqrt{\lambda^2-m^2} R ) } =: h(\lambda).
	\end{equation}
	From this, it follows that $\lambda(\tau)$ is an eigenvalue curve corresponding to the subspace $L^\pm_{j, \mu_j}$ if and only if $-\lambda(-\tau)$ is an eigenvalue curve associated to  $L^\mp_{j, \mu_j}$. 
	This fact is illustrated in \Cref{Fig:j12_eigenvalues,Fig:j12BOTH_eigenvalues}.
	As we will see, the odd symmetry of the eigenvalue curves with respect to $\tau$ holds as well for a general domain $\Omega$; see \Cref{Lemma:SpectrumGenMIT}~$(iii)$.
	Therefore, it will be enough to study the positive spectrum of $\Dirac_\tau$.
\end{remark}

\hskip -15pt
\begin{minipage}[t]{0.5\textwidth}
	\centering
	\includegraphics[width=0.95\textwidth]{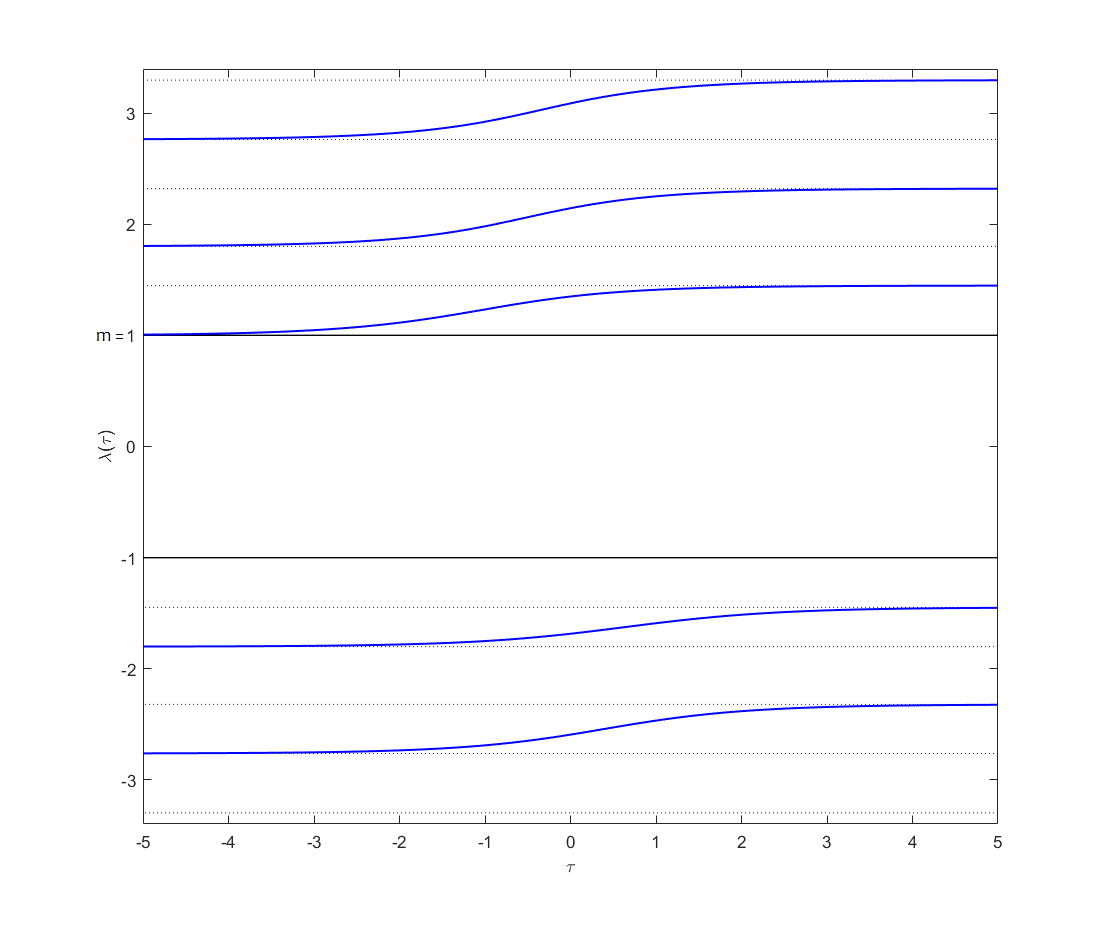}
	\vskip-20pt
	\captionof{figure}{Eigenvalue curves for $L^-_{1/2, \mu_{1/2}}$.}
	\label{Fig:j12_eigenvalues}
\end{minipage}
\begin{minipage}[t]{0.5\textwidth}
	\centering
	\includegraphics[width=0.95\textwidth]{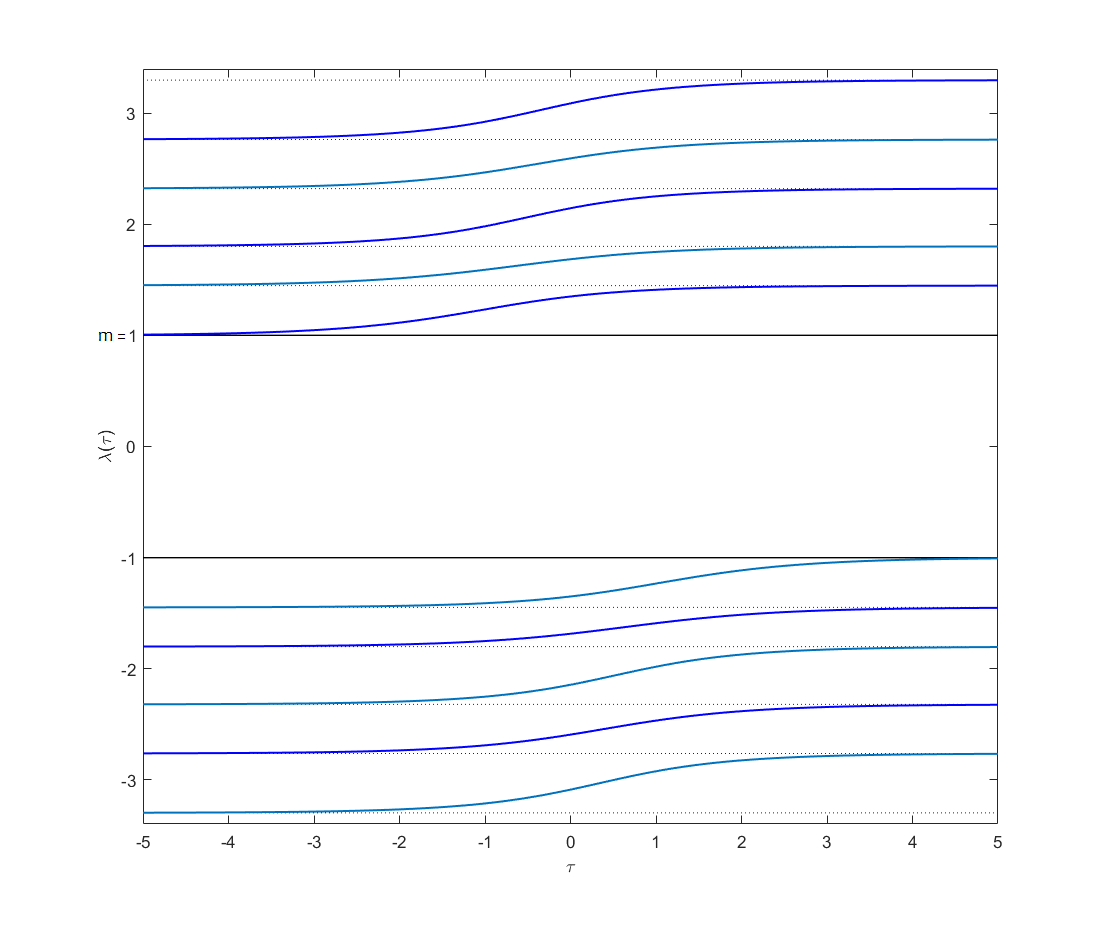}
	\vskip-20pt
	\captionof{figure}{Eigenvalue curves for $L^-_{1/2, \mu_{1/2}}$ and $L^+_{1/2, \mu_{1/2}}$.}
	\label{Fig:j12BOTH_eigenvalues}
\end{minipage}

\hskip -15pt
\begin{minipage}[t]{0.5\textwidth}
	\centering
	\includegraphics[width=0.95\textwidth]{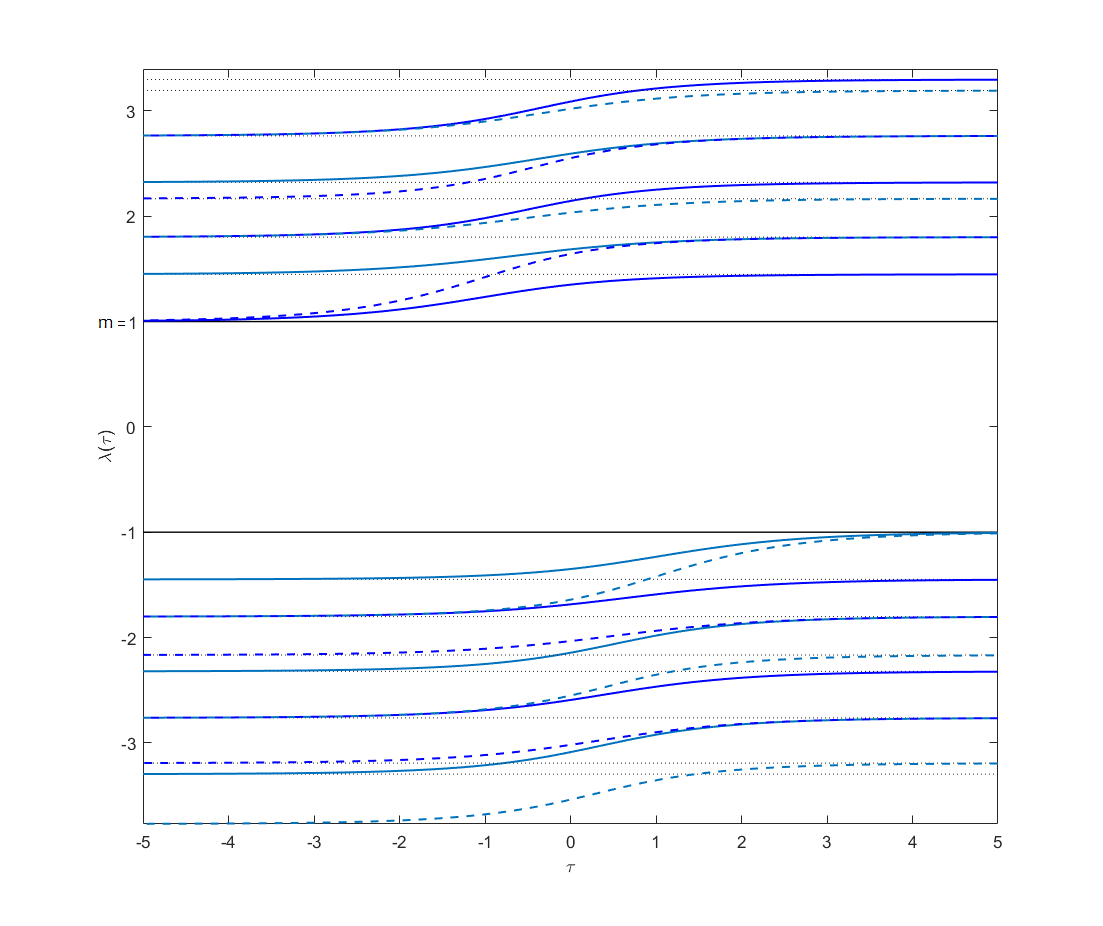}
	\vskip-20pt
	\captionof{figure}{Eigenvalue curves for $j=1/2,3/2$.}
	\label{Fig:j32BOTH_eigenvalues}
\end{minipage}
\begin{minipage}[t]{0.5\textwidth}
	\centering
	\includegraphics[width=0.95\textwidth]{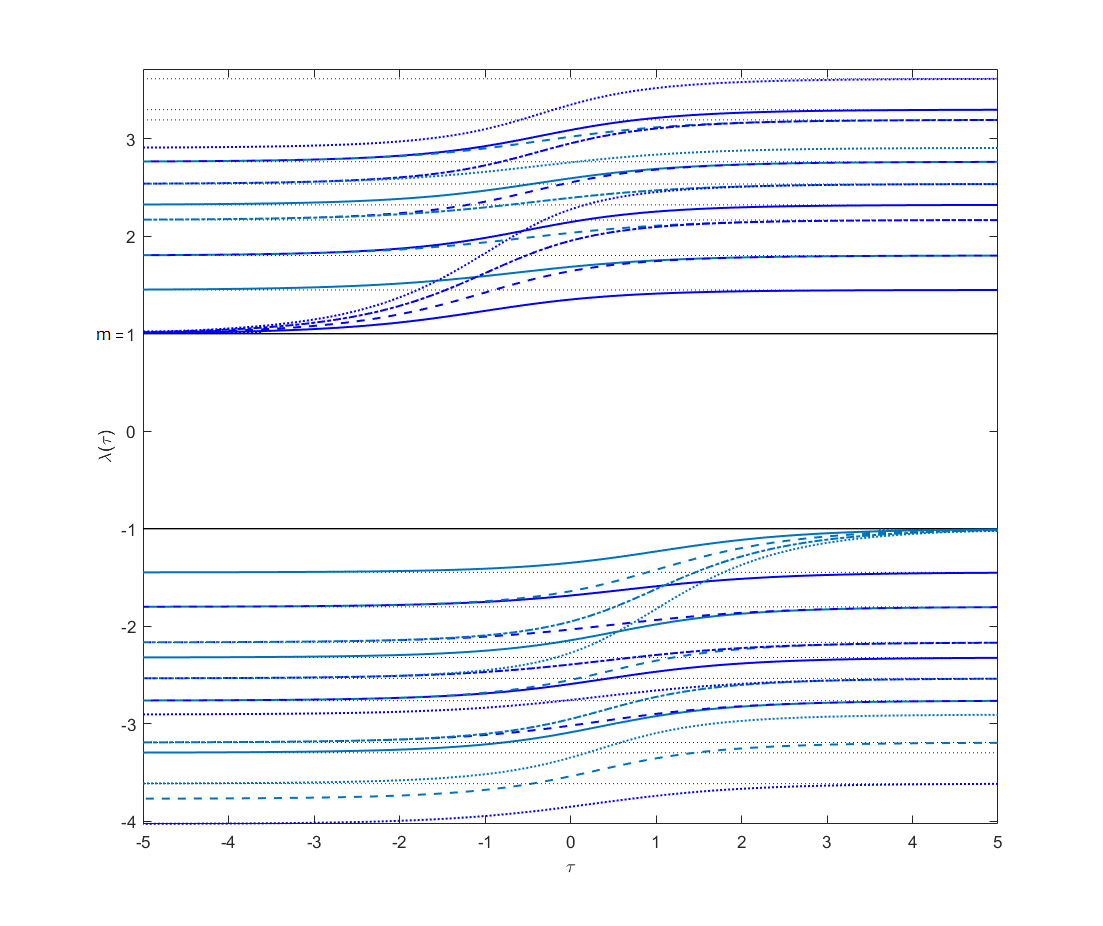}
	\vskip-20pt
	\captionof{figure}{Eigenvalue curves for $j=1/2,3/2,5/2,7/2$.}
	\label{Fig:Crossings_eigenvalues}
\end{minipage}

\bigskip

To summarize, all the eigenvalues of $\Dirac_{\tau}$ in $B_R$ can be represented as a set of monotone increasing curves parametrized by $\tau \in \R$, which may cross among them; see \Cref{Fig:Crossings_eigenvalues}. 
For a given curve $\tau \mapsto \lambda(\tau) \in \sigma(\Dirac_{\tau}) \cap (m,+\infty)$, we know which are the possible limits of $\lambda(\tau)$ as $\tau \to \pm \infty$: with the only exception of some eigenvalues that, as $\tau \downarrow -\infty$, converge to $m$, the limiting values of $\lambda(\tau)$ are of the form $\sqrt{\Lambda + m^2}$, where $\Lambda$ is an eigenvalue of the Dirichlet Laplacian in $B_R$.
In the next sections we will show that all these properties hold for every bounded domain $\Omega \subset \R^3$ with $C^2$ boundary; see \Cref{Th:ParamEigenvalues,Th:EigLimits}.
In addition, we will use the explicit information available for the ball to study a shape optimization problem for the first positive eigenvalue of $\Dirac_{\tau}$ when $|\tau|$ is large enough.

\subsection{Main results}\label{Subsec:MainResults}
Let $\Omega\subset \R^3$ be a bounded domain with $C^2$ boundary. In this section we state our main results on the spectral study of the operator $\Dirac_\tau$ in terms of the parameter $\tau\in\R$.
The study is carried out taking into account the different phenomena observed in \Cref{Fig:ALL_eigenvalues}, where the case of a ball is illustrated.

Recall that
\begin{equation}
	\label{Eq:DefGenMIT}
	\begin{split}
		\mathrm{Dom}(\Dirac_\tau) &:= \big\{ \varphi \in H^1(\Omega)^4: \, \varphi = i (\sinh\tau- \cosh\tau \, \beta)( \alpha \cdot\nu ) \varphi  \,\text{ on } \partial \Omega \big\},\\
		\Dirac_\tau\varphi &:= \Dirac\varphi 
		\quad\text{for all 
			$\varphi\in\mathrm{Dom}(\Dirac_\tau)$,}
	\end{split}
\end{equation}
and $\Dirac:=-i \alpha \cdot \nabla + m \beta$.
The fact that $\Dirac_\tau$ is self-adjoint follows by \cite[Proposition 5.15]{BehrndtHolzmannMas}.  
Let us begin by recalling several known properties of the spectrum of $\Dirac_\tau$. We gather them in the following lemma, whose proof is given in \Cref{Sec:Formulas} for the sake of completeness.

\begin{lemma}
	\label{Lemma:SpectrumGenMIT}
	For every $\tau\in\R$, the following holds:
	\begin{itemize}
		\item[$(i)$] The spectrum $\sigma(\Dirac_\tau)$ is a sequence of real eigenvalues that only accumulate at 
		$\pm\infty$.
		\item[$(ii)$] 
		$\sigma(\Dirac_\tau) \subset (-\infty,-m)\cup(m,+\infty)$.
		\item[$(iii)$] $\lambda\in\sigma(\Dirac_\tau)$ if and only if 
		$-\lambda\in\sigma(\Dirac_{-\tau})$. In particular, 
		$\sigma(\Dirac_0)$ is symmetric.
		\item[$(iv)$] Every $\lambda\in\sigma(\Dirac_\tau)$ has finite and even multiplicity.		
	\end{itemize}
\end{lemma}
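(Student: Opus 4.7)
The plan is to handle $(i)$--$(iv)$ in order, using the cited self-adjointness of $\Dirac_\tau$ and two carefully chosen symmetries at the matrix level. For $(i)$, the inclusion $\mathrm{Dom}(\Dirac_\tau)\subset H^1(\Omega)^4$ combined with the Rellich--Kondrachov compact embedding $H^1(\Omega)\hookrightarrow L^2(\Omega)$, valid since $\Omega$ is bounded with $C^2$ boundary, makes the resolvent of $\Dirac_\tau$ compact; together with self-adjointness this yields a purely discrete real spectrum accumulating only at $\pm\infty$.

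For $(ii)$, I would establish, via integration by parts on $\Omega$ combined with the involution identity $\Lambda_\tau^2=I$ for $\Lambda_\tau:=i(\sinh\tau-\cosh\tau\beta)(\alpha\cdot\nu)$, a Krein-type formula of the form $\|\Dirac_\tau\varphi\|_{L^2(\Omega)^4}^2=\|\nabla\varphi\|^2+m^2\|\varphi\|^2+\mathcal{B}_\tau(\varphi)$ with $\mathcal{B}_\tau(\varphi)\geq 0$ for every $\varphi\in\mathrm{Dom}(\Dirac_\tau)$. For an eigenpair $(\lambda,\varphi)$ this gives $(\lambda^2-m^2)\|\varphi\|^2\geq\|\nabla\varphi\|^2$; equality would force $\varphi$ to be constant on $\Omega$, and then evaluating the boundary condition at two points of $\partial\Omega$ with linearly independent outward normals (which exist since the Gauss map of a bounded $C^2$ domain covers $\Sph^2$) forces $\varphi\equiv 0$. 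Hence $\lambda^2>m^2$.

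For $(iii)$, introduce the constant unitary matrix $U:=\bigl(\begin{smallmatrix}0&I_2\\-I_2&0\end{smallmatrix}\bigr)$. A direct computation gives $U\alpha_j U^{-1}=-\alpha_j$ for $j=1,2,3$ and $U\beta U^{-1}=-\beta$, so $U\Dirac U^{-1}=-\Dirac$; using these anticommutations to transform the boundary condition defining $\mathrm{Dom}(\Dirac_{-\tau})$ into the one defining $\mathrm{Dom}(\Dirac_\tau)$ shows that $U$ is a unitary intertwiner between $\Dirac_{-\tau}$ and $-\Dirac_\tau$, whence $\sigma(\Dirac_\tau)=-\sigma(\Dirac_{-\tau})$. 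For $(iv)$, define the antiunitary operator $T\varphi:=V\overline\varphi$ with $V:=\alpha_1\alpha_3$; Clifford-algebra bookkeeping yields $V^2=-I$, $\overline V=V$, anticommutation of $V$ with $\alpha_1,\alpha_3$, and commutation with $\alpha_2,\beta$, which together with $\overline{\alpha_2}=-\alpha_2$ and $\overline{\alpha_j}=\alpha_j$ for $j=1,3$ give $T\Dirac=\Dirac T$ and $T^2=-I$. The same type of boundary-condition manipulation as in $(iii)$ shows that $T$ preserves $\mathrm{Dom}(\Dirac_\tau)$. Kramers degeneracy then follows: if $\Dirac_\tau\varphi=\lambda\varphi$ then $\Dirac_\tau(T\varphi)=\lambda T\varphi$, while $T^2=-I$ combined with antiunitarity force $\langle\varphi,T\varphi\rangle=-\langle\varphi,T\varphi\rangle=0$, so $\{\varphi,T\varphi\}$ span a two-dimensional $T$-invariant subspace of the eigenspace; iterating in its orthogonal complement---still $T$-invariant and finite-dimensional by $(i)$---forces even multiplicity.

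The main obstacle is locating the right symmetries $U$ and $T$ and then verifying in each case that they send the parameter-$\tau$ (or $-\tau$) boundary condition to the correct one; this is a direct but somewhat delicate $4\times 4$ Clifford-algebra computation resting on the identity $\Lambda_\tau^2=I$ and on the anticommutation rules of $U$ and $V$ against $\alpha_j$ and $\beta$. A secondary subtle point is the strict inequality in $(ii)$: ruling out that a constant function belong to $\mathrm{Dom}(\Dirac_\tau)$ and be an eigenfunction requires the geometric fact that the normals of a bounded $C^2$ domain span $\R^3$.
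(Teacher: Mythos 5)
Your parts $(i)$, $(iii)$ and $(iv)$ are sound. For $(i)$ you argue exactly as the paper (self-adjointness plus compactness of the resolvent via $H^1(\Omega)^4\hookrightarrow L^2(\Omega)^4$). For $(iii)$ you use the constant \emph{linear} unitary $U$ with $U\alpha_jU^{-1}=-\alpha_j$, $U\beta U^{-1}=-\beta$, whereas the paper conjugates by the antilinear charge-conjugation $\mathcal C\psi=i\beta\alpha_2\overline\psi$; both intertwinings map the $(-\tau)$-boundary condition to the $\tau$-one, so this is a correct (and slightly cleaner) variant. For $(iv)$ your $V=\alpha_1\alpha_3$ equals $-i\gamma_5\alpha_2$, so your $T$ is literally the paper's time-reversal operator; you moreover spell out the Kramers-degeneracy step ($T^2=-1$, $\varphi\perp T\varphi$, $T$-invariance of the orthogonal complement inside the finite-dimensional eigenspace) which the paper leaves implicit. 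All the Clifford-algebra identities you list check out.

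The genuine gap is in $(ii)$. Integration by parts for $\varphi\in\mathrm{Dom}(\Dirac_\tau)$ gives $\norm{\Dirac\varphi}^2_{L^2(\Omega)^4}=\norm{\alpha\cdot\nabla\varphi}^2_{L^2(\Omega)^4}+m^2\norm{\varphi}^2_{L^2(\Omega)^4}+m\,e^\tau\norm{u}^2_{L^2(\partial\Omega)^2}+m\,e^{-\tau}\norm{v}^2_{L^2(\partial\Omega)^2}$, i.e.\ the bulk term is $\norm{\alpha\cdot\nabla\varphi}^2$, \emph{not} $\norm{\nabla\varphi}^2$. Converting $\norm{\alpha\cdot\nabla\varphi}^2$ into $\norm{\nabla\varphi}^2$ produces additional boundary terms: a curvature term $\frac12\int_{\partial\Omega}\kappa|\varphi|^2\,d\upsigma$ and, for $\tau\neq0$, a tangential-derivative term with factor $\sinh\tau$ (this is exactly the paper's mean-curvature formula in Appendix A, which is stated but deliberately not used in the proof of the lemma). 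Neither of these has a sign on a general bounded $C^2$ domain: $\kappa$ is negative somewhere on a nonconvex $\partial\Omega$, and the $\sinh\tau$ term is indefinite. So your claimed identity $\norm{\Dirac_\tau\varphi}^2=\norm{\nabla\varphi}^2+m^2\norm{\varphi}^2+\mathcal B_\tau(\varphi)$ with $\mathcal B_\tau(\varphi)\geq0$ is not available (it is essentially a convexity-type statement), and with it falls the key step ``$\lambda^2=m^2\Rightarrow\nabla\varphi=0\Rightarrow\varphi$ constant'': with the correct identity, equality only yields $\alpha\cdot\nabla\varphi=0$, and null solutions of $\alpha\cdot\nabla$ are by no means constant, so your two-normals argument has nothing to bite on. The repair is the paper's route: keep the identity with $\norm{\alpha\cdot\nabla\varphi}^2$, whose boundary term $m(e^\tau\norm{u}^2_{L^2(\partial\Omega)^2}+e^{-\tau}\norm{v}^2_{L^2(\partial\Omega)^2})$ is nonnegative, and obtain strictness by observing that an eigenfunction cannot have identically vanishing trace --- by the reproducing formula $\varphi=\Phi_\lambda\big(i(\alpha\cdot\nu)\varphi\big)$, a zero trace would force $\varphi\equiv0$ --- so the boundary term is strictly positive and $\lambda^2\norm{\varphi}^2=\norm{\Dirac\varphi}^2>m^2\norm{\varphi}^2$. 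Your geometric aside about the Gauss map is correct but becomes unnecessary once the trace argument is used.
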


As we see from \Cref{Lemma:SpectrumGenMIT}~$(i)$, for every $\tau\in\R$ the spectrum $\sigma(\Dirac_\tau)$ only contains real eigenvalues with no accumulation points. It is then natural to investigate how the eigenvalues depend on the parameter $\tau$. Our first result in this direction is that they can be parametrized by means of strictly increasing real analytic functions of $\tau$ whose graphs may cross at locally finitely many exceptional values of $\tau$. Note that these crossings indeed occur in the case of the ball, as shown in 
\Cref{Fig:ALL_eigenvalues,Fig:Crossings_eigenvalues}.
The following proposition gives the precise statement about the existence of the mentioned parametrization; see \Cref{Sec:Parametrization+Monotonicity} for a proof.

\begin{theorem}
	\label{Th:ParamEigenvalues}
	Given $\tau_0\in \R$, let $\{\lambda_k (\tau_0) \}_{k \in \Z \setminus \{0\}}$ be an enumeration of the eigenvalues of $\Dirac_{\tau_0}$ (each repeated according to its finite algebraic multiplicity).
	Then, each $\lambda_k (\tau_0)$ can be extended to a real analytic function $\tau\mapsto\lambda_k(\tau)\in\R$ in such a way that
	\begin{equation}
		\sigma(\Dirac_{\tau}) = {\textstyle \bigcup_{k \in \Z \setminus \{0\}}} \{\lambda_k(\tau)\}\quad\text{for all }\tau\in\R.
	\end{equation}
	All the functions $\tau\mapsto\lambda_k (\tau)$ are strictly increasing in $\R$ and, up to repetition (due to the even multiplicity), for each eigenvalue curve $\lambda_k(\tau)$ there are only finitely many eigenvalue curves meeting at locally finitely many crossing points with $\lambda_k(\tau)$.
	
	Furthermore, for each $k\in \Z\setminus \{0\}$, there exists an analytic function  $\tau \mapsto \varphi_k(\tau)\in L^2(\Omega)^4$ such that, for every $\tau\in \R$, the family $\{\varphi_k(\tau)\}_{k \in \Z \setminus \{0\}}$ is a basis of $L^2(\Omega)^4$ and  each $\varphi_k(\tau)$ is an eigenfunction associated to the eigenvalue $\lambda_k(\tau)$ ---that is, $\varphi_k(\tau)\in \mathrm{Dom}(\Dirac_{\tau})\setminus \{0\}$ and $\Dirac_{\tau}\varphi_k(\tau)=\lambda_k (\tau)\varphi_k(\tau)$.

\end{theorem}

In the statement of this theorem regarding crossing points we mean that, modulo eigenvalue curves which are exactly the same for all $\tau \in \R$, in every compact set each eigenvalue curve can cross other curves only a finite number of times, and in each crossing it will coincide only with a finite number of eigenvalue curves.
It is also worth pointing out that since the eigenvalue curves are continuous, $\sigma(\Dirac_0)$ is symmetric, and $\sigma(\Dirac_\tau) \subset (-\infty,-m)\cup(m,+\infty)$ for all $\tau$, then it holds that $\sigma(\Dirac_\tau) \cap(m,+\infty)\neq\emptyset$ and $\sigma(\Dirac_\tau) \cap(-\infty,-m)\neq\emptyset$ for all $\tau\in\R$.

The next step in our analysis is to address the asymptotic behavior of the eigenvalue curves as 
$\tau\to\pm\infty$. Thanks to the odd symmetry of the eigenvalues with respect to the parameter $\tau$ shown in \Cref{Lemma:SpectrumGenMIT}~$(iii)$, it is enough to consider only positive eigenvalues, i.e., the elements of $\sigma(\Dirac_\tau)\cap(m,+\infty)$. 
In the following result we describe their asymptotic behavior in terms of $\sigma(-\Delta_D)$, the spectrum of the Dirichlet Laplacian $-\Delta_D$.
The proof is given in \Cref{Sec:AsymptoricBehavior}.

\begin{theorem}
	\label{Th:EigLimits}
	Let $\tau\mapsto\lambda(\tau)\in\sigma(\Dirac_\tau)\cap(m,+\infty)$ be a continuous function defined on an interval $I\subset\R$. The following holds:
	\begin{enumerate}[label=$(\roman*)$]
		\item If $I=(-\infty,\tau_0)$ for some $\tau_0\in\R$, then 
		$\lambda(-\infty):=\lim_{\tau\downarrow-\infty}\lambda(\tau)$ exists and belongs to $[m,+\infty)$. In addition,
		$\lambda(-\infty)=m$ if 
		$\lambda(\tau)^2-m^2\leq\min\sigma(-\Delta_D)$ for some 
		$\tau\in I$, and 
		$\lambda(-\infty)^2-m^2\in \sigma(-\Delta_D)$ otherwise.
		\item If $I=(\tau_0,+\infty)$ for some $\tau_0\in\R$, then 
		$\lambda(+\infty):=\lim_{\tau\uparrow+\infty}\lambda(\tau)$ exists as an element of the set $(m,+\infty]$. In addition, if $\lambda(+\infty)<+\infty$ then 
		$\lambda(+\infty)^2-m^2\in \sigma(-\Delta_D)$.
	\end{enumerate}
\end{theorem}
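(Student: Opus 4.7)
The plan is to couple the strict monotonicity supplied by \Cref{Th:ParamEigenvalues} with a compactness argument at the level of $L^2$-normalized eigenfunctions. Existence of both limits is immediate: $\lambda$ is strictly increasing on $I$ by \Cref{Th:ParamEigenvalues}, and $\lambda(\tau) > m$ by \Cref{Lemma:SpectrumGenMIT}$(ii)$, forcing $\lambda(-\infty)\in[m,+\infty)$ in case $(i)$ and $\lambda(+\infty)\in(m,+\infty]$ in case $(ii)$.

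Suppose now in case $(ii)$ that $\lambda_\infty:=\lambda(+\infty)$ is finite. Pick $\tau_n\uparrow+\infty$ and $L^2(\Omega)^4$-normalized eigenfunctions $\varphi_n=(u_n,v_n)^\intercal\in\mathrm{Dom}(\Dirac_{\tau_n})$ with $\Dirac\varphi_n=\lambda_n\varphi_n$, so that $\lambda_n\to\lambda_\infty\in(m,+\infty)$. The technical heart of the argument is a uniform bound $\|\varphi_n\|_{H^1(\Omega)^4}\leq C$. I would derive it from a Friedrichs-type identity
\begin{equation*}
    \lambda_n^2 \,=\, \|\Dirac \varphi_n\|_{L^2}^2 \,=\, \|\nabla \varphi_n\|_{L^2}^2 + m^2 + B_{\tau_n}(\varphi_n),
\end{equation*}
obtained by integration by parts from $\Dirac^2=-\Delta+m^2$, where the boundary quadratic form $B_{\tau_n}$ can be rewritten, using the boundary condition defining $\mathrm{Dom}(\Dirac_{\tau_n})$, as a surface integral of $|\varphi_n|^2$ against a coefficient depending on $\tau_n$, $m$ and the principal curvatures of $\partial\Omega$. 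A sharp trace inequality then absorbs an $\varepsilon\|\nabla\varphi_n\|_{L^2}^2$ term and closes the estimate. Rellich and weak compactness yield, along a subsequence, $\varphi_n\to\varphi_\infty$ strongly in $L^2(\Omega)^4$ and weakly in $H^1(\Omega)^4$, with $\|\varphi_\infty\|_{L^2}=1$ and $\Dirac\varphi_\infty=\lambda_\infty\varphi_\infty$ in $\Omega$ distributionally.

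To pass to the limit in the boundary condition, write $\varphi_n=i(\sinh\tau_n-\cosh\tau_n\beta)(\alpha\cdot\nu)\varphi_n$ componentwise as $u_n=-ie^{-\tau_n}(\sigma\cdot\nu)v_n$ and $v_n=ie^{\tau_n}(\sigma\cdot\nu)u_n$ on $\partial\Omega$. In case $(ii)$ the first identity together with the trace theorem and the uniform $H^1$ bound gives $\|u_n\|_{L^2(\partial\Omega)^2}\leq e^{-\tau_n}\|v_n\|_{L^2(\partial\Omega)^2}\to 0$, so $u_\infty|_{\partial\Omega}=0$. Since $\lambda_\infty>m$, applying $-i(\sigma\cdot\nabla)$ to the Dirac equation yields $-\Delta u_\infty=(\lambda_\infty^2-m^2)u_\infty$ in $\Omega$ with vanishing Dirichlet trace, and $u_\infty\not\equiv 0$: otherwise the Dirac equation forces $v_\infty\equiv 0$ too, contradicting $\|\varphi_\infty\|_{L^2}=1$. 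Hence $\lambda_\infty^2-m^2\in\sigma(-\Delta_D)$. Case $(i)$ under the hypothesis $\lambda(-\infty)>m$ is handled symmetrically, using the second form of the boundary condition to obtain $v_\infty|_{\partial\Omega}=0$ and $-\Delta v_\infty=(\lambda(-\infty)^2-m^2)v_\infty$ in $\Omega$.

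The dichotomy in $(i)$ is then a monotonicity bookkeeping. If $\lambda(\tau_*)^2-m^2\leq\min\sigma(-\Delta_D)$ for some $\tau_*\in I$ and if $\lambda(-\infty)>m$, the previous step gives $\lambda(-\infty)^2-m^2\in\sigma(-\Delta_D)$, while strict monotonicity yields $\lambda(-\infty)^2-m^2<\lambda(\tau_*)^2-m^2\leq\min\sigma(-\Delta_D)$, a contradiction; hence $\lambda(-\infty)=m$. Otherwise $\lambda(\tau)^2-m^2>\min\sigma(-\Delta_D)>0$ for every $\tau\in I$, so $\lambda(-\infty)>m$ and the previous step applies. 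The \emph{main obstacle} throughout is the uniform $H^1$ control of $\varphi_n$: the form $B_{\tau_n}$ depends explicitly on $\tau_n$ and the boundary condition degenerates exponentially in $\tau_n$, so one must carefully match these scales against the trace inequality to close the Friedrichs estimate and simultaneously guarantee the vanishing limiting trace of the correct component.
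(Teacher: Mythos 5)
Your overall skeleton (monotonicity for existence of the limits, compactness of normalized eigenfunctions, passage to a Dirichlet eigenfunction, then monotonicity bookkeeping for the dichotomy in $(i)$) matches the paper's strategy, but the technical heart of your argument has a genuine gap. The uniform bound $\|\varphi_n\|_{H^1(\Omega)^4}\leq C$ cannot be obtained the way you describe. The correct version of your ``Friedrichs-type identity'' is the combination of \eqref{Eq:SquareDiracNorm} with \Cref{Lemma:MeanCurvatureFormula}: for $\tau\neq 0$ the boundary form is \emph{not} a surface integral of $|\varphi|^2$ against a curvature-type coefficient, but contains the extra term $\sinh\tau\,\langle\gamma_5(\alpha\cdot\nu)\varphi,\ \alpha\cdot(\nu\times\nabla)\varphi\rangle_{L^2(\partial\Omega)^4}$, which involves first-order tangential derivatives of the trace and carries the exponentially large weight $\sinh\tau_n$; no fixed $\varepsilon$-absorption by a trace inequality can beat a factor $e^{|\tau_n|}$ as $\tau_n\to\pm\infty$. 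In fact a uniform $H^1$ bound on the \emph{full} spinor is not available in general: e.g.\ as $\tau\downarrow-\infty$ the component $u_\tau$ is only controlled through $\|u_\tau\|_{H^{1/2}(\partial\Omega)^2}\lesssim(1+e^{-\tau})\|u_\tau\|_{H^{-1/2}(\partial\Omega)^2}$ (\Cref{Lemma:BoundaryPb.2}), which degenerates. The paper circumvents this by bounding only \emph{one} component — $u_\tau$ for $\tau\uparrow+\infty$ (\Cref{Prop:CompactnessPlusInfty}) and $v_\tau$ for $\tau\downarrow-\infty$ (\Cref{Prop:CompactnessMinusInfty}) — using the boundary integral reformulation (\Cref{Lemma:BoundaryPb}, \Cref{Prop:RegularityKW}), where the exponential from the boundary condition $v_\tau=ie^\tau(\sigma\cdot\nu)u_\tau$ and the exponential in \eqref{Eq:BoundaryPbComm.UV} cancel; the vanishing trace comes from \Cref{Lemma:RelationNormsIntBdry} rather than from a trace theorem applied to an $H^1$-bounded $v_\tau$.

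This gap also breaks your ``symmetric'' treatment of case $(i)$. Your nondegeneracy argument (``if $u_\infty\equiv0$ then $v_\infty\equiv0$, contradicting $\|\varphi_\infty\|_{L^2}=1$'') needs strong $L^2(\Omega)^4$ convergence of the \emph{whole} spinor, which is exactly what is unavailable. With only one component compact, the two cases are genuinely asymmetric: for $\tau\uparrow+\infty$ the limit $u^\star$ is nontrivial for free because $\|u_\tau\|_{L^2(\Omega)^2}>\tfrac1{\sqrt2}\|\varphi_\tau\|_{L^2(\Omega)^4}$ (\Cref{Coro:u>vL^2}), whereas for $\tau\downarrow-\infty$ the same lemma only says $v_\tau$ is the \emph{smaller} component, and one must rule out $\|v_{\tau_k}\|_{L^2(\Omega)^2}\to0$ by a separate mechanism. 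The paper does this by choosing a sequence $\tau_k\downarrow-\infty$ with $\lambda'(\tau_k)\to0$ (possible since $\lambda'$ is integrable near $-\infty$ by monotonicity and boundedness) and feeding this into the derivative formula \eqref{Eq:Lambda'Formula2}, which forces $\|v_{\tau_k}\|^2_{L^2(\Omega)^2}\geq C>0$ when $\lambda(-\infty)>m$. Some ingredient of this kind is indispensable in your write-up; without it, even granting compactness of $v_\tau$, you could only conclude that $v_\star$ solves the Dirichlet problem, not that it is a genuine eigenfunction.
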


The function $\lambda_1^+:\R\to(m,+\infty)$ defined by 
\begin{equation}\label{defi:ParamFirstEig}
	\tau \mapsto  \lambda_1^+(\tau):=\min(\sigma(\Dirac_\tau)\cap(m,+\infty))
\end{equation}
assigns to every $\tau\in\R$ the first (smallest) positive eigenvalue of $\Dirac_\tau$, as \Cref{Lemma:SpectrumGenMIT}~$(ii)$ shows. Combining \Cref{Th:ParamEigenvalues,Th:EigLimits}, we deduce several properties of the function $\lambda_1^+$ which are gathered in the following result; see \Cref{Subsec:EigLimits} for a proof. Analogous conclusions hold for the mapping
$\tau \mapsto  \lambda_1^-(\tau):=\max(\sigma(\Dirac_\tau)\cap(-\infty,-m))$,
which assigns to every $\tau\in\R$ the first (largest) negative eigenvalue of $\Dirac_\tau$. This is because
$\lambda_1^-(\tau)=-\lambda_1^+(-\tau)$ by \Cref{Lemma:SpectrumGenMIT}~$(iii)$.

\begin{theorem}
	\label{Th:FirstEigTom}
	The function $\lambda_1^+$ defined by \eqref{defi:ParamFirstEig} is continuous and strictly increasing in $\R$, and satisfies
	\begin{equation}\label{lim:pm.infty.lambda1+.state}	
		\lim_{\tau\downarrow-\infty}\lambda_1^+(\tau)=m
		\quad\text{and}\quad
		\lim_{\tau\uparrow+\infty}\lambda_1^+(\tau)^2-m^2\in \sigma(-\Delta_D)\cup\{+\infty\}.
	\end{equation}
	In addition, $\lambda_1^+$ is real analytic in 
	$\R\setminus E$, where $E\subset\R$ is some set such that $E\cap [-R,R]$ is finite for all $R>0$.
\end{theorem}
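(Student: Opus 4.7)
My plan is to reduce every claim to Theorem~\ref{Th:ParamEigenvalues} or Theorem~\ref{Th:EigLimits}; the only genuinely new input is a trial-function computation used to identify the $\tau\downarrow-\infty$ limit. By Theorem~\ref{Th:ParamEigenvalues} and Lemma~\ref{Lemma:SpectrumGenMIT}(ii), the spectrum of $\Dirac_\tau$ is the union of the graphs of strictly increasing real analytic curves $\lambda_k(\tau)$, each of which stays either entirely in $(-\infty,-m)$ or entirely in $(m,+\infty)$. Writing $K^+$ for the indices of curves of the second type, $\lambda_1^+(\tau)=\inf_{k\in K^+}\lambda_k(\tau)$. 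Strict monotonicity is immediate: if $\tau_1<\tau_2$ and $\lambda_{k_0}$ attains $\lambda_1^+(\tau_2)$, strict monotonicity of the single curve $\lambda_{k_0}$ gives $\lambda_1^+(\tau_1)\le\lambda_{k_0}(\tau_1)<\lambda_{k_0}(\tau_2)=\lambda_1^+(\tau_2)$. For continuity and local analyticity, fix a compact interval $[a,b]\subset\R$; by monotonicity, any $k\in K^+$ with $\lambda_k=\lambda_1^+$ somewhere in $[a,b]$ must satisfy $\lambda_k(a)\le\lambda_1^+(b)+1$, and Lemma~\ref{Lemma:SpectrumGenMIT}(i) guarantees that only finitely many such indices $k_1,\ldots,k_N$ qualify. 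Hence $\lambda_1^+|_{[a,b]}=\min(\lambda_{k_1},\ldots,\lambda_{k_N})$ is continuous and real analytic off the finite pairwise coincidence set of these $\lambda_{k_i}$'s (finite by Theorem~\ref{Th:ParamEigenvalues}); a countable compact exhaustion of $\R$ produces the locally finite set $E$.

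With continuity and monotonicity in hand, the $+\infty$ limit is immediate: Theorem~\ref{Th:EigLimits}(ii), applied to $\lambda_1^+$ viewed as a continuous spectrum-valued function on $(0,+\infty)$, shows that if $\lim_{\tau\uparrow+\infty}\lambda_1^+(\tau)<+\infty$ then its square minus $m^2$ lies in $\sigma(-\Delta_D)$. For the $-\infty$ limit, monotonicity gives $L_-:=\lim_{\tau\downarrow-\infty}\lambda_1^+(\tau)\in[m,+\infty)$, and Theorem~\ref{Th:EigLimits}(i) reduces the claim $L_-=m$ to producing a single $\tau_0$ with $\lambda_1^+(\tau_0)^2\le m^2+\Lambda_1$, $\Lambda_1:=\min\sigma(-\Delta_D)$. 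Producing such a $\tau_0$ is the main obstacle.

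I would do so via the variational principle for $\Dirac_\tau^2$. For $\tau<0$, Lemma~\ref{Lemma:SpectrumGenMIT}(iii) combined with the monotonicity just proven gives $\lambda_1^+(\tau)^2<\lambda_1^+(-\tau)^2=\lambda_1^-(\tau)^2$, so $\lambda_1^+(\tau)^2=\min\sigma(\Dirac_\tau^2)=\inf\{\|\Dirac\psi\|^2/\|\psi\|^2:\psi\in\mathrm{Dom}(\Dirac_\tau)\setminus\{0\}\}$. Writing $\psi=(u,v)^\intercal$, the block-diagonalization $\sinh\tau-\cosh\tau\,\beta=\mathrm{diag}(-e^{-\tau}I_2,e^\tau I_2)$ reduces the boundary condition of $\Dirac_\tau$ to $v=ie^\tau(\sigma\cdot\nu)u$ on $\partial\Omega$. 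I would take as trial function
\[
\psi_\tau(x):=\bigl(c,\ ie^\tau\eta_\delta(\dist(x,\partial\Omega))(\sigma\cdot\nu)(x)\,c\bigr)^{\!\intercal},
\]
with $c\in\C^2\setminus\{0\}$ a constant spinor, $\nu$ extended smoothly into a tubular neighborhood of $\partial\Omega$, and $\eta_\delta$ a smooth cutoff equal to $1$ at $0$ and supported in $[0,\delta)$. Using $\nabla c=0$, $(\sigma\cdot\nu)^2=I_2$ on $\partial\Omega$, and integrating by parts to evaluate the cross term $\int_\Omega\overline{c}\cdot(\sigma\cdot\nabla v_\tau)\,dx$ via the boundary condition, a short computation gives
\[
\frac{\|\Dirac\psi_\tau\|^2}{\|\psi_\tau\|^2}=m^2+\frac{2me^\tau|c|^2\upsigma(\partial\Omega)+\|\sigma\cdot\nabla v_\tau\|^2}{|c|^2|\Omega|+O(e^{2\tau}\delta)},\qquad \|\sigma\cdot\nabla v_\tau\|^2=O(e^{2\tau}\delta^{-1}+e^{2\tau}\delta).
\]
The hard part is balancing $\delta$ against $\tau$: $\delta$ must be small enough that the $L^2$-mass of the boundary layer stays negligible, and large enough to tame the gradient blow-up $e^{2\tau}\delta^{-1}$. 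The choice $\delta=e^{\tau/2}$ achieves both and yields a right-hand side of $m^2+O(e^\tau)$, which is strictly below $m^2+\Lambda_1$ for $\tau$ sufficiently negative, giving the required $\tau_0$ and, via Theorem~\ref{Th:EigLimits}(i), the conclusion $L_-=m$.
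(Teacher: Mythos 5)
Your proof is correct, and for the first two assertions it follows essentially the paper's route: reducing $\lambda_1^+$ on each compact interval to a minimum of finitely many of the analytic, strictly increasing curves of \Cref{Th:ParamEigenvalues}, and invoking \Cref{Th:EigLimits}$(ii)$ for the $\tau\uparrow+\infty$ limit. One small attribution point there: the finiteness of the qualifying indices $k_1,\dots,k_N$ does not follow from \Cref{Lemma:SpectrumGenMIT}$(i)$ alone, since discreteness only makes the set of \emph{values} $\lambda_k(a)$ in the window finite; you also need that only finitely many curves pass through each spectral point $(a,\lambda_0)$, which is precisely the finite-multiplicity and locally-finite-crossing part of \Cref{Th:ParamEigenvalues}, so cite that instead. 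Where you genuinely diverge from the paper is the $\tau\downarrow-\infty$ limit. The paper works at the single value $\tau=0$: by the symmetry of $\sigma(\Dirac_0)$ one has $\lambda_1^+(0)^2=\min\sigma(\Dirac_0^2)$, and since $C^\infty_c(\Omega)^4\subset\mathrm{Dom}(\Dirac_0^2)$ with $\Dirac^2=-\Delta+m^2$ on such spinors, the Rayleigh--Ritz principle for the Dirichlet Laplacian gives $\lambda_1^+(0)^2\le m^2+\min\sigma(-\Delta_D)$ with no boundary terms whatsoever. You instead work at $\tau\ll0$, use \Cref{Lemma:SpectrumGenMIT}$(iii)$ together with the monotonicity already proved to identify $\lambda_1^+(\tau)^2=\min\sigma(\Dirac_\tau^2)$, and build a boundary-layer trial spinor realizing the boundary condition $v=ie^\tau(\sigma\cdot\nu)u$; your computation checks out: integration by parts gives the cross term $2me^\tau|c|^2\upsigma(\partial\Omega)$, the gradient term is $O\big(e^{2\tau}(\delta^{-1}+\delta)\big)$, the layer's $L^2$ mass is $O(e^{2\tau}\delta)$, and the choice $\delta=e^{\tau/2}$ (admissible once $\delta$ is below the reach of the tubular neighbourhood, i.e.\ for $\tau$ negative enough) yields $\lambda_1^+(\tau)^2\le m^2+Ce^{\tau}$. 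This is stronger than what is needed: it not only produces the single $\tau_0$ demanded by \Cref{Th:EigLimits}$(i)$ but in fact gives $\lim_{\tau\downarrow-\infty}\lambda_1^+(\tau)=m$ directly, and (for $m>0$) even an upper bound on $L_\Omega(\tau)=(\lambda_1^+(\tau)-m)e^{-\tau}$ in the spirit of \Cref{Subsec:Rayleigh}. The price is a noticeably heavier construction (extension of $\nu$, cutoff, balancing $\delta$ against $\tau$), whereas the paper's compactly supported test functions make the boundary condition trivial and the comparison with $\min\sigma(-\Delta_D)$ immediate.
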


In \Cref{Remark:FirstEigFinite} we make a comment on  the possibility of having $\lim_{\tau\uparrow+\infty}\lambda_1^+(\tau) = +\infty$.
See also \Cref{Remark:SetE} for a comment on the set $E$. 

\Cref{Th:FirstEigTom} has some consequences on a shape optimization problem for the first positive eigenvalue of $\Dirac_\tau$ when 
$\tau$ is large enough. In order to highlight the dependence of $\Dirac_\tau$ on the domain $\Omega\subset\R^3$ ---which we assume through the paper that has a $C^2$ boundary---, let us now denote the first positive eigenvalue of $\Dirac_\tau$ by $\lambda_\Omega(\tau)$  (that is, we set 
$\lambda_\Omega:=\lambda_1^+$). Then, using \eqref{lim:pm.infty.lambda1+.state} and Faber-Krahn inequality, we get the following result, whose proof is given in \Cref{Subsec:EigLimits}.

\begin{corollary}
	\label{Th:BallOptimalLargeTau}
	Let $\Omega\subset \R^3$ be a bounded domain with $C^2$ boundary, and let $B\subset\R^3$ be a ball such that 
	$|\Omega| = |B|$. If $\Omega$ is not a ball,
	then there exists $\tau_0\in \R$ depending on 
	$\Omega$ such that 
	$\lambda_B(\tau) < \lambda_\Omega(\tau)$ for all $\tau \geq \tau_0$.
\end{corollary}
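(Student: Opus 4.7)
The plan is to combine Theorem \ref{Th:FirstEigTom}, which controls the asymptotic behavior of $\lambda_\Omega(\tau)$ as $\tau\uparrow+\infty$, with the classical Faber--Krahn inequality. Set
\[
L_\Omega := \lim_{\tau\uparrow+\infty}\bigl(\lambda_\Omega(\tau)^2 - m^2\bigr) \in \sigma(-\Delta_D^\Omega)\cup\{+\infty\},
\]
and define $L_B$ analogously for the ball. The key estimate to establish is $L_B < L_\Omega$; once this is available, the strict monotonicity of the two eigenvalue curves closes the argument.

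First I would pin down the value of $L_B$ using the explicit description from \Cref{Subsec:Sphere} and \Cref{Sec:ExplicitComputationsSphere}. Sending $e^\tau\to+\infty$ in the parametrization \eqref{Eq:IntroEigEquationSphereParam} forces $\sqrt{\lambda^2-m^2}\,R$ to approach a positive zero of some $J_{\ell+1/2}$. Comparing the smallest positive zeros $j_{\ell+1/2,1}$ across $\ell\geq 0$ (which are monotone in $\ell$), one identifies the first positive eigenvalue curve as lying in the sector $j=1/2$, $\ell=0$, $\ell'=1$; since the smallest positive zero of $J_{1/2}$ equals $\pi$, this yields $L_B=(\pi/R)^2=\lambda_1(-\Delta_D^B)$.

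With $L_B$ identified, Faber--Krahn gives $\lambda_1(-\Delta_D^B)<\lambda_1(-\Delta_D^\Omega)$ strictly, because $\Omega$ is not a ball and $|\Omega|=|B|$. Since $L_\Omega\in\sigma(-\Delta_D^\Omega)\cup\{+\infty\}$, in either case $L_\Omega\geq \lambda_1(-\Delta_D^\Omega)>L_B$. By the strict monotonicity of $\tau\mapsto\lambda_B(\tau)^2$ and $\tau\mapsto\lambda_\Omega(\tau)^2$ provided by Theorem \ref{Th:FirstEigTom}, one has $\lambda_B(\tau)^2-m^2<L_B$ for every $\tau\in\R$, while $\lambda_\Omega(\tau)^2-m^2\uparrow L_\Omega$. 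Hence there exists $\tau_0=\tau_0(\Omega)$ such that
\[
\lambda_B(\tau)^2 - m^2 \;<\; L_B \;<\; \lambda_\Omega(\tau)^2 - m^2 \quad \text{for all } \tau \geq \tau_0,
\]
and the positivity of both eigenvalues yields $\lambda_B(\tau)<\lambda_\Omega(\tau)$ on that range.

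The delicate point is the identification $L_B=\lambda_1(-\Delta_D^B)$: Theorem \ref{Th:FirstEigTom} by itself only guarantees $L_B\in\sigma(-\Delta_D^B)\cup\{+\infty\}$, so one really needs the finer information coming from the ball-specific Bessel function analysis to rule out larger Dirichlet eigenvalues (or $+\infty$) as potential values of $L_B$. Were a general statement $L_\Omega=\lambda_1(-\Delta_D^\Omega)$ available for arbitrary bounded $C^2$ domains, this step would be immediate; but such a result is stronger than what Theorem \ref{Th:FirstEigTom} provides. Everything else---Faber--Krahn plus monotonicity---is short and standard.
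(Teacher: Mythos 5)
Your proposal is correct and follows essentially the same route as the paper: Faber--Krahn, the lower bound $\lim_{\tau\uparrow+\infty}\lambda_\Omega(\tau)^2-m^2\geq\min\sigma(-\Delta_D)$ coming from \Cref{Th:FirstEigTom}, and the exact ball limit $\sqrt{\pi^2/R^2+m^2}$, combined with monotonicity of the curves. The ``delicate point'' you rightly flag---that $L_B$ equals the \emph{first} Dirichlet eigenvalue of the ball rather than a higher one or $+\infty$---is exactly what the paper supplies via \Cref{Prop:ParamFirstEigSphere}, where the Bessel-function analysis shows the curve $\uplambda^-_{1/2,0}$ coincides with $\lambda_1^+$ for all $\tau$ and converges to $\sqrt{\pi^2/R^2+m^2}$.
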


This means that, if $\Omega$ is not a ball, for large values of $\tau$ the first positive eigenvalue of 
$\Dirac_\tau$ on $\Omega$ is strictly larger than the one on a ball with the same volume as $\Omega$. Then, a natural question is whether the analogous result for $\tau$ tending to $-\infty$ holds or not. Recall that the first positive eigenvalue of $\Dirac_\tau$ tends to $m$ as $\tau\downarrow-\infty$ independently of the shape of $\Omega$, by \eqref{lim:pm.infty.lambda1+.state}. Hence, in order to find the optimal shape of $\Omega$ for $\tau$ negative and far from the origin,
one is forced to study the asymptotic expansion of $\lambda_\Omega(\tau)-m$ as $\tau\downarrow-\infty$. Our next result goes in this direction, but requires some preliminaries to state it properly.

Let $L_\Omega:\R\to(0,+\infty)$ be defined by
\begin{equation}\label{def.L.Omega}
	\tau\mapsto L_\Omega(\tau):= (\lambda_\Omega(\tau)-m)e^{-\tau},
\end{equation} 
thus
$\lambda_\Omega(\tau)=m+e^{\tau}L_\Omega(\tau)$. In 
\Cref{Lemma:LDecreasing} we will prove that $L_\Omega$ is strictly decreasing in 
$\R$. Therefore, the limit 
$$L_\Omega^\star:=\lim_{\tau\downarrow-\infty}L_\Omega(\tau)$$
exists as an element of the set $(0,+\infty]$. Moreover, in \Cref{Prop:ParamFirstEigSphere} we will also show that $L_B^\star<+\infty$ for every ball $B\subset\R^3$. This in particular suggests that $L_\Omega^\star$ is the natural quantity to look at when addressing the asymptotic expansion of $\lambda_\Omega(\tau)-m$ as $\tau\downarrow-\infty$.

Assume now for a while that $\Omega\subset\R^3$ is a domain such that $L_\Omega^\star=+\infty$, and let $B\subset\R^3$ be a ball, not necessarily with the same volume as $\Omega$. Since $L_B^\star<+\infty$, there exists $\tau_0\in\R$ such that $L_\Omega(\tau)>L_B^\star\geq L_B(\tau)$ for all 
$\tau\leq\tau_0$. Therefore,
\begin{equation}\label{ball.best.-infty}
	\lambda_\Omega(\tau)=m+e^{\tau}L_\Omega(\tau)
	>m+e^{\tau}L_B(\tau)=\lambda_B(\tau)
\end{equation}
for all $\tau\leq\tau_0$. That is, if $L_\Omega^\star=+\infty$,  then the first positive eigenvalue of 
$\Dirac_\tau$ on $\Omega$ is strictly greater than the one on any ball $B$ whenever $-\tau$ is large enough depending on $\Omega$ and $B$. As a consequence, in order to look for the optimal shape of $\Omega$ (under a volume constraint) for $\tau$ negative and far from the origin, we can assume without loss of generality that $L_\Omega^\star<+\infty$.

Our last result in this work provides a lower bound for $L_\Omega^\star$ in terms of the optimization,  among functions in a boundary Hardy space, of a Rayleigh quotient which involves the single layer potential for the Laplacian. We expect that this lower bound, which is attained if $\Omega$ is a ball, will  be useful to solve the above-mentioned optimization problem for $\tau$ negative and far from the origin. 
On the one hand, the single layer potential appears in trace form as the operator $K:L^2(\partial\Omega)^2\to L^2(\partial\Omega)^2$ defined by
$$K u(x):=\frac{1}{4\pi} \int_{\partial \Omega}\frac{u(y)}{|x-y|} \, d\upsigma(y)\quad\text{for $\upsigma$-a.e.\! $x\in\partial\Omega$}.$$
It is well known that $K$
is a bounded, self-adjoint, compact, positive, and injective operator in $L^2(\partial\Omega)^2$. In particular, $K$ diagonalizes in an $L^2(\partial\Omega)^2$-orthonormal basis of eigenfunctions and all its eigenvalues are strictly positive real numbers. On the other hand, the boundary Hardy space referred above is the subspace $P(L^2(\partial\Omega)^2)$ of $L^2(\partial\Omega)^2$, where  
$P:L^2(\partial\Omega)^2\to L^2(\partial\Omega)^2$ 
is defined by
$P:=\frac{1}{2}+ iW(\sigma\cdot\nu),$
and
\begin{equation}
	\begin{split}
		W u(x) := \lim_{\epsilon \downarrow 0} \frac{i}{4\pi }
		\int_{ \{ y \in \partial \Omega :\, |x-y|> \epsilon\}}    
		\Big({\sigma} \cdot \frac{x-y}{|x-y|^{3}}\Big) u(y)\, d \upsigma(y)
		\quad\text{for $\upsigma$-a.e.\! $x\in\partial\Omega$}.
	\end{split}
\end{equation}
It is well known that $W$ is a bounded operator in $L^2(\partial\Omega)^2$, and that $P$ is a projection. The subspace $P(L^2(\partial\Omega)^2)$ arises as the trace space on 
$\partial\Omega$ of null-solutions of 
$\sigma\cdot\nabla$ in $\Omega$; see \Cref{Subsec:Proj.Hardy} for more details (by a matter of notation, in Sections \ref{Subsec:Proj.Hardy} and \ref{Subsec:Rayleigh} the operators $K$ and $W$ are denoted by $K_m$ and $W_m$, respectively, and $P$ by $P_+$).

Before stating our result regarding the lower bound for $L_\Omega^\star$, let us briefly explain the heuristics behind it. Recall that $L_\Omega^\star$ arises when considering the behavior of  $\lambda_\Omega(\tau)$ as $\tau\downarrow-\infty$. If one looks at the associated eigenfunction for 
$\lambda_\Omega(\tau)$, in the limit $\tau\downarrow-\infty$ one is led to an eigenvalue-type problem of the form
\begin{equation}\label{form.rayleigh.l.omega.setL.heur}
	P^*u=L_\Omega^\star( \sigma\cdot\nu)K( \sigma\cdot\nu)u
	\quad\text{for some }u\in P(L^2(\partial\Omega)^2)\setminus\{0\},
\end{equation}
where $P^*$ denotes the adjoint of $P$.
Hence, in order to estimate $L_\Omega^\star$ it is natural to investigate the set
\begin{equation}\label{form.rayleigh.l.omega.setL}
	\mathcal L_\Omega:=\Big\{L\in\C:\,
	\text{there exists }u\in P(L^2(\partial\Omega)^2)\setminus\{0\}
	\text{ with }
	P^*u=
	L( \sigma\cdot\nu)K( \sigma\cdot\nu)u\Big\}.
\end{equation}
By definition, we have $L_\Omega^\star\in\mathcal L_\Omega$. 
Despite that we initially think of $\mathcal L_\Omega$ as a subset of $\C$, it turns out that $\mathcal L_\Omega$ only contains positive real numbers. Actually, 
given $L\in\mathcal L_\Omega$ and an associated function $u$ as in 
\eqref{form.rayleigh.l.omega.setL}, it is not hard to show (see the proof of \Cref{Th:Rayleigh.Intro.l2}) that 
\begin{equation}\label{heurist.1}
	\|u\|_{L^2(\partial\Omega)^2}^2
	=L\langle(\sigma\cdot\nu)K(\sigma\cdot\nu)u,u\rangle_{L^2(\partial\Omega)^2}.
\end{equation}
This suggests considering the functional
\begin{equation}\label{form.rayleigh.R}
	\rcal(u):=
	\frac{\langle(\sigma\cdot\nu)K(\sigma\cdot\nu)u,u\rangle_{L^2(\partial\Omega)^2}}{\|u\|_{L^2(\partial\Omega)^2}^2}\quad
	\text{for } u\in L^2(\partial\Omega)^2\setminus\{0\},
\end{equation}
so that for $L$ and $u$ as in \eqref{heurist.1} we get 
$L=1/\rcal(u)$. Since 
$K$ is bounded in $L^2(\partial\Omega)^2$ and is strictly positive, we have $0<\rcal(u)\leq\|K\|_{L^2(\partial\Omega)^2\to L^2(\partial\Omega)^2}$
for all $u\in L^2(\partial\Omega)^2\setminus\{0\}$, and this yields $$\mathcal L_\Omega\subset\big[1/\|K\|_{L^2(\partial\Omega)^2\to L^2(\partial\Omega)^2},+\infty\big)\subset\R$$ by \eqref{heurist.1}. Therefore, it is reasonable to use $\inf\mathcal L_\Omega$  to bound $L_\Omega^\star$ from below. Finally, recalling the relation $L=1/\rcal(u)$ given by \eqref{heurist.1}, and 
defining
\begin{equation}\label{form.rayleigh.r.omega}
	\rcal_\Omega:=\sup_{u\in P(L^2(\partial\Omega)^2)\setminus\{0\}}\rcal(u),
\end{equation}
we end up with $$L_\Omega^\star\geq\inf\mathcal L_\Omega\geq 1/\rcal_\Omega.$$
This is the lower bound for $L_\Omega^\star$ in terms of the optimization of a Rayleigh quotient that we were referring to as our last main result in this work.
At this point the reader might think that substantial information may have been lost during all these steps, and that in the end we have $L_\Omega^\star>1/\rcal_\Omega$ for all $\Omega$. However, we will show that the equality holds if 
$\Omega$ is a ball, hence the lower bound that we found is sharp. Furthermore, we will prove that 
$1/\rcal_\Omega=\min \mathcal L_\Omega$ for every $\Omega$, which means that $1/\rcal_\Omega$ is actually the smallest admissible value in the eigenvalue-type problem \eqref{form.rayleigh.l.omega.setL.heur}. It would be very interesting to know whether the equality $1/\rcal_\Omega=L_\Omega^\star$ holds in general or not.

We gather all these considerations in the following theorem,  which is our last main result of this work. It will be proven in \Cref{Subsec:Rayleigh}.

\begin{theorem}\label{Th:Rayleigh.Intro}
	Let $\Omega\subset \R^3$ be a bounded domain with $C^2$ boundary. The following holds:
	\begin{itemize}
		\item[$(i)$] If $L_\Omega^\star<+\infty$ then $L_\Omega^\star\in\mathcal L_\Omega$.
		\item[$(ii)$] $\mathcal L_\Omega\subset\R$, and 
		$1/\rcal_\Omega\leq L$ for all $L\in\mathcal L_\Omega$.
		\item[$(iii)$] The supremum in \eqref{form.rayleigh.r.omega} is attained. Moreover, if  
		$u\in P(L^2(\partial\Omega)^2)\setminus\{0\}$ is such that 
		$\rcal(u)=\rcal_\Omega$, then
		$P^*u=
		\frac{1}{\rcal_\Omega}( \sigma\cdot\nu)K( \sigma\cdot\nu)u$. In particular, 
		$1/\rcal_\Omega\in\mathcal L_\Omega$.
	\end{itemize}
	
	As a consequence of $(i)$, $(ii)$, and $(iii)$, it holds that
	$1/\rcal_\Omega=\min\mathcal L_\Omega\leq L_\Omega^\star$ for every bounded domain $\Omega\subset \R^3$ with $C^2$ boundary. Furthermore, $1/\rcal_B=L_B^\star$ for every ball 
	$B\subset \R^3$.
\end{theorem}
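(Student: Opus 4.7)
My plan is to address the items in the order (iii), (ii), (i), since the dependencies flow that way, and then combine them to obtain the final consequences.

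For (iii), set $S := (\sigma\cdot\nu) K (\sigma\cdot\nu)$, which is compact, self-adjoint, and strictly positive on $L^2(\partial\Omega)^2$ (inherited from the analogous properties of $K$ recalled before the statement). Existence of a maximizer over $P(L^2(\partial\Omega)^2)\setminus\{0\}$ follows by a standard weak-compactness argument: pick a maximizing sequence $\{u_n\}$ with $\|u_n\|=1$ and $\rcal(u_n)\to\rcal_\Omega$; a weak limit $u_\infty$ exists and stays in the closed (hence weakly closed) subspace $P(L^2(\partial\Omega)^2)$, and compactness of $S$ upgrades $u_n\rightharpoonup u_\infty$ to $\langle Su_n,u_n\rangle\to\langle Su_\infty,u_\infty\rangle$, giving $\rcal(u_\infty)=\rcal_\Omega$ (in particular $u_\infty\neq 0$, since $\rcal_\Omega>0$). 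The Euler--Lagrange equation for the restricted problem reads $Su_\infty - \rcal_\Omega u_\infty \perp P(L^2(\partial\Omega)^2)$, i.e., $P^*(S u_\infty - \rcal_\Omega u_\infty)=0$. To upgrade this to the stated equation $P^*u_\infty = \frac{1}{\rcal_\Omega} S u_\infty$ I expect one must use an algebraic identity of the type $SP = P^*S$ (or equivalent), coming from the Cauchy-type structure of $P=\tfrac12 + iW(\sigma\cdot\nu)$ together with the fact that $K$ is the single-layer potential for $-\Delta$; this is where the proof needs more than soft functional analysis.

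For (ii), take any $L\in\lcal_\Omega$ with associated $u\in P(L^2(\partial\Omega)^2)\setminus\{0\}$. Pair the equation $P^*u = L\, S u$ with $u$ in $L^2(\partial\Omega)^2$: the left side equals $\langle u,Pu\rangle = \|u\|^2$ (using that $P$ is a projection and $u\in P(L^2)$, so $Pu=u$), while the right side equals $L\,\rcal(u)\|u\|^2$, a strictly positive real multiple of $L$ (since $K$ is strictly positive and $\sigma\cdot\nu$ is a pointwise self-adjoint involution, so $(\sigma\cdot\nu)u\neq 0$). This forces $L=1/\rcal(u)\in\R$, and then $L\ge 1/\rcal_\Omega$ follows from $\rcal(u)\le\rcal_\Omega$.

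The main difficulty is (i). I would fix $\tau_n\downarrow-\infty$ and $L^2(\Omega)^4$-normalized eigenfunctions $\varphi_n\in\mathrm{Dom}(\Dirac_{\tau_n})$ with eigenvalue $\lambda_\Omega(\tau_n) = m + e^{\tau_n}L_\Omega(\tau_n)$, assuming $L_\Omega^\star<+\infty$. The strategy is to convert the bulk equation $\Dirac\varphi_n = \lambda_\Omega(\tau_n)\varphi_n$ into a boundary equation via the Green's representation for $\Dirac-\lambda_\Omega(\tau_n)$ (the same shell-interaction machinery recalled in the introduction), feeding in the boundary condition $\varphi_n = i(\sinh\tau_n - \cosh\tau_n\,\beta)(\alpha\cdot\nu)\varphi_n$. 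Writing $\varphi_n=(u_n,v_n)^\intercal$ and using $\lambda_\Omega(\tau_n)\to m$, the lower trace $v_n|_{\partial\Omega}$ should become asymptotically subdominant compared to $u_n|_{\partial\Omega}$; after rescaling the traces by the appropriate power of $e^{\tau_n}$, the eigenvalue/boundary system ought to reduce to an equation of the form $A_n\Phi_n = L_\Omega(\tau_n) B_n\Phi_n$ on $L^2(\partial\Omega)^2$, with $A_n\to P^*$ and $B_n\to (\sigma\cdot\nu)K(\sigma\cdot\nu)$ in a suitable operator sense. The hard points are: (a) uniform bounds securing a nontrivial weak limit $\Phi_\infty$, in which the compactness of $(\sigma\cdot\nu)K(\sigma\cdot\nu)$ and the hypothesis $L_\Omega^\star<+\infty$ play the decisive role in ruling out degeneracy; (b) showing $\Phi_\infty\in P(L^2(\partial\Omega)^2)$ by closedness of that subspace under the limiting boundary traces; and (c) justifying the operator convergences $A_n\to P^*$ and $B_n\to(\sigma\cdot\nu)K(\sigma\cdot\nu)$ with enough uniformity to pass to the limit. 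Once these are in hand, the limit equation $P^*\Phi_\infty = L_\Omega^\star(\sigma\cdot\nu)K(\sigma\cdot\nu)\Phi_\infty$ gives $L_\Omega^\star\in\lcal_\Omega$.

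To assemble the final consequences: parts (ii) and (iii) together give $1/\rcal_\Omega = \min\lcal_\Omega$; part (i), trivially extended when $L_\Omega^\star=+\infty$, provides $L_\Omega^\star\in\lcal_\Omega\cup\{+\infty\}$, hence $1/\rcal_\Omega\le L_\Omega^\star$. For the ball, I would use the explicit Bessel-function eigenvalue equation \eqref{Eq:IntroEigEquationSphereParam} from \Cref{Sec:ExplicitComputationsSphere} to compute $L_B^\star$ by identifying the smallest positive-eigenvalue curve and letting $\tau\downarrow-\infty$, and separately diagonalize $K$ and $P$ in the basis of spherical harmonic spinors to compute $\rcal_B$; the resulting expressions should match and yield $1/\rcal_B = L_B^\star$.
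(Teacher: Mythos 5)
Your outline follows the same route as the paper's proof (boundary-integral reformulation plus compactness for $(i)$, the pairing identity for $(ii)$, direct method plus Euler--Lagrange for $(iii)$, explicit spherical analysis for the ball), and your arguments for $(ii)$ and for the existence of a maximizer in $(iii)$ are essentially complete and match \Cref{Th:Rayleigh.Intro.l2,Rstar.attained}. However, the three points you flag as ``hard'' are precisely where the actual content of the proof lies, and as written they remain gaps. In $(iii)$, the identity you guess, $SP=P^*S$ with $S=(\sigma\cdot\nu)K(\sigma\cdot\nu)$, is exactly equivalent to the anticommutation $\{K_m(\sigma\cdot\nu),W_m(\sigma\cdot\nu)\}=0$ of \Cref{l.prop.K.W}~$(iii)$, which follows from $(\ccal_\lambda(\alpha\cdot\nu))^2=-1/4$; granting it, one still has to run the paper's two-step argument (\Cref{Rstar.EL}): from the constrained condition $(P_+)^*(Su-\rcal_\Omega u)=0$ one rewrites $\rcal_\Omega u$ as $\rcal_\Omega(P_+)^*u$, proves separately that $(P_-)^*\big(S-\rcal_\Omega(P_+)^*\big)u=0$ via $(P_-)^*S=SP_-$ and $P_-u=0$, and adds the two identities using $(P_+)^*+(P_-)^*=1$. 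This is a short but genuine step that your sketch does not supply.

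The more serious issues are in $(i)$ and in the ball statement. For $(i)$, your plan normalizes $\varphi_n$ in $L^2(\Omega)^4$ and takes weak limits of rescaled traces, which by itself does not exclude a trivial limit $\Phi_\infty=0$; the paper avoids this by working entirely with the homogeneous boundary system of \Cref{Lemma:BoundaryPb}, normalizing the boundary trace $\|u_\tau\|_{L^2(\partial\Omega)^2}=1$, and invoking the uniform bound \eqref{Eq:BoundaryPbComm.UV} of \Cref{Lemma:BoundaryPb.2} --- whose constant involves exactly $(\lambda-m)e^{-\tau}=L_\Omega(\tau)$, bounded precisely because $L_\Omega^\star<+\infty$ --- so that the compact embedding $H^{1/2}(\partial\Omega)\hookrightarrow L^2(\partial\Omega)$ yields \emph{strong} $L^2(\partial\Omega)$ convergence along a subsequence and the limit keeps norm one; continuity of $\lambda\mapsto K_\lambda,W_\lambda$ then gives the two limit equations $P_-u_\star=0$ and $(P_+)^*u_\star=L_\Omega^\star Su_\star$. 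Your hard points (a)--(c) are thus not technicalities to be filled in routinely: (a) in particular requires the change of normalization. Finally, for the ball, $1/\rcal_B=L_B^\star$ is not a matter of computing two explicit numbers and observing that they match: maximizing $\rcal$ over all of $L^2(\partial B)^2$ would return the top eigenvalue of $K$ and the wrong value $1/R$ instead of $3/R$, so one must show that the corresponding eigenfunction is killed by the Hardy-space constraint. In \Cref{Prop:ParamFirstEigSphere} this is done by proving that $(\sigma\cdot\nu)\psi_0^{\mu}=\psi_1^{\mu}$ is orthogonal to $P_+(L^2(\partial B)^2)$, via a limit $\tau\uparrow+\infty$ along the first negative eigenvalue curve, before identifying $\rcal_{B}$ with the second eigenvalue of $K$ and matching it with $L_{B}^\star$ (which, as you say, can be read off the Bessel equation as $\lambda\downarrow m$). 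Your ``diagonalize $K$ and $P$'' plan is viable, but this exclusion step is the crux and is missing.
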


In view of \Cref{Th:Rayleigh.Intro}, the question that we would like to answer at this point is whether the ball is the unique maximizer of $\rcal_\Omega$ among all bounded and smooth domains $\Omega$ of the same volume.  
The fact that Hardy spaces come into play in \eqref{form.rayleigh.r.omega} makes this question difficult and challenging, and it requires further study to be answered. 
By the same argument as in \eqref{ball.best.-infty}, but using also \Cref{Th:Rayleigh.Intro}, an affirmative answer would yield that if $\Omega$ is not a ball then for $-\tau$ large enough the first positive eigenvalue of $\Dirac_\tau$ on $\Omega$ is strictly greater than the one on a ball with the same volume as $\Omega$.
That is, we would get the analogue of \Cref{Th:BallOptimalLargeTau} in the regime $\tau \downarrow -\infty$.

Having in mind the results presented above (and with a bit of positive thinking), we would like to finish this introduction by posing the following
\begin{conjecture}\label{conjecture_FK_all_tau}
	Let $\Omega\subset \R^3$ be a bounded domain with $C^2$ boundary, and let $B\subset\R^3$ be a ball such that 
	$|\Omega| = |B|$. If $\Omega$ is not a ball,
	then $\lambda_B(\tau) < \lambda_\Omega(\tau)$ for all $\tau \in\R$.
\end{conjecture}
A positive answer to this conjecture for $\tau=0$ would give the solution to the shape optimization problem for the spectral gap of the MIT bag operator on smooth domains.

\begin{remark}
	\Cref{conjecture_FK_all_tau} also makes sense in the two-dimensional framework, that is, for a bounded domain 
	$\Omega\subset \R^2$. The corresponding Dirac operator would be
	$-i(\sigma_1\partial_1+\sigma_2 \partial_2) +m\sigma_3$ 
	acting on functions 
	$\varphi \in H^1(\Omega)^2$ such that 
	\begin{equation}
		\varphi =i (\sinh\tau- \cosh\tau \, \sigma_3)( \sigma_1\nu_1+\sigma_2 \nu_2 ) \varphi\quad\text{on $\partial \Omega$}.
	\end{equation}
	When $\tau=0$, this is the so-called {\em Dirac operator with infinite mass boundary conditions}.
	
	The techniques used in this work are not specific of the three-dimensional framework. They mainly depend on the algebraic structure of the operator and its boundary conditions. Therefore, they would also apply in the two-dimensional framework, taking into account the obvious modifications which arise when replacing $\alpha$ by $(\sigma_1,\sigma_2)$ and 
	$\beta$ by $\sigma_3$. In this regard, we expect that the main results presented in \Cref{Subsec:MainResults}  also hold true for the two-dimensional Dirac operator. 
\end{remark}

\addtocontents{toc}{\SkipTocEntry}
\subsection*{Organization of the paper} 

In \Cref{Sec:PropSpectrum} we present some preliminary results and introduce the boundary integral operators associated to $\Dirac$, proving some regularity estimates.
With these tools at hand, in \Cref{Sec:Parametrization+Monotonicity} we establish \Cref{Th:ParamEigenvalues}, proving that the eigenvalues of $\Dirac_\tau$ can be parametrized as increasing functions of $\tau\in \R$.
Then, in \Cref{Sec:AsymptoricBehavior} we study the asymptotic behavior of the eigenvalue curves.
First, in \Cref{Subsec:EigLimits} we prove \Cref{Th:EigLimits} regarding the limits of the eigenvalue curves, \Cref{Th:FirstEigTom}, on the first positive eigenvalue, and \Cref{Th:BallOptimalLargeTau}, the shape optimization result.
Next, in \Cref{Subsec:Proj.Hardy} we introduce skew projections onto Hardy spaces, which are used in \Cref{Subsec:Rayleigh} to establish \Cref{Th:Rayleigh.Intro}.

This article contains two appendixes.
In \Cref{Sec:Formulas} we establish the main properties of the spectrum of $\Dirac_\tau$, gathered in \Cref{Lemma:SpectrumGenMIT}.
In there the reader can also find a formula relating  $\Dirac_\tau$ with the mean curvature of $\partial \Omega$.
Finally, \Cref{Sec:ExplicitComputationsSphere} is devoted to the spectral analysis when the underlying domain is a ball, and we provide an extensive and explicit description of the eigenvalue curves.

\section{Regularity estimates} \label{Sec:PropSpectrum}

In this section, after presenting some preliminary results, we will introduce  boundary integral operators associated to $\Dirac$. 
We will use them to rewrite the eigenvalue equation for $\Dirac_\tau$ as an integral equation on $\partial\Omega$; see \Cref{Lemma:BoundaryPb} below.
Then, we will establish regularity estimates for solutions to this boundary eigenvalue equation (stated in \Cref{Lemma:BoundaryPb.2}) which will be crucial in the following sections when using compactness arguments.

\subsection{Preliminaries} \label{Subsec:Preliminaries}

The contents of this section are known results from the literature and simple computations that will be used in the sequel. Let us begin by recalling a result from~\cite{OurmieresBonafosVega} which explores the relation between a function in $\Omega$ an its trace on 
$\partial \Omega$. Given $u \in L^2(\Omega)^2$ such that 
$\sigma \cdot \nabla u \in L^2(\Omega)^2$, in general one cannot assure that $u \in H^1(\Omega)^2$. 
Nevertheless, as the following lemma shows, if the trace of $u$ belongs to $H^{1/2}(\partial \Omega)^2$, then $u \in H^1(\Omega)^2$.
Here, $H^s({\partial \Omega})^d$ with $s \in (0, 1)$ and $d>0$ integer denotes the fractional Sobolev space of functions $f\in L^2(\partial\Omega)^d$ such that 
\begin{equation} \label{def_Sobolev_Slobodeckii}
	\|f\|_{H^s(\partial\Omega)^d}:=\bigg(
	\int_{{\partial \Omega}}|f|^2\, d\upsigma
	+\int_{{\partial \Omega}}\int_{{\partial \Omega}}
	\frac{|f(x)-f(y)|^2}{|x-y|^{2+2s}}\,d\upsigma(y)\,d\upsigma(x)\bigg)^{1/2}<+\infty.
\end{equation}
We will also use the symbol $H^{-s}(\partial \Omega)^d$ to denote the continuous dual of $H^s({\partial \Omega})^d$.
Recall that 
$$\|\cdot\|_{H^{-s}(\partial\Omega)^d}\leq 
\|\cdot\|_{L^2(\partial\Omega)^d}\leq
\|\cdot\|_{H^{s}(\partial\Omega)^d}\quad \text{for all } s\in(0,1).$$

\begin{lemma}
	\label{Lemma:SigmaGradEstimates}
	Let $u \in L^2(\Omega)^2$ be such that $ \sigma \cdot \nabla u \in L^2(\Omega)^2$. 
	Then,	
	
	\begin{itemize}
		\item[$(i)$] the trace of $u$ belongs to $H^{-1/2}(\partial \Omega)^2$, and satisfies 
		\begin{equation}
			\label{Eq:SigmaGradTraceEstimate}
			\|u \|_{H^{-1/2}(\partial \Omega)^2} \leq C\left (   \| u \|_{L^2(\Omega)^2}    + \| \sigma \cdot \nabla u \|_{L^2(\Omega)^2} \right)
		\end{equation}
		for some $C>0$ depending only on $\Omega$.
		
		\item[$(ii)$] Assume in adittion that $u\in H^{1/2}(\partial \Omega)^2$.
		Then, $u \in H^1(\Omega)^2$ and 
		\begin{equation}
			\label{Eq:SigmaGradH1Estimate}
			\| u \|_{H^1(\Omega)^2} \leq C \left (  \|  u \|_{L^2(\Omega)^2} + \| \sigma \cdot\nabla u \|_{L^2(\Omega)^2}  + \| u \|_{H^{1/2}(\partial \Omega)^2} \right )
		\end{equation}
		for some $C>0$ depending only on 
		$\Omega$.
	\end{itemize}
\end{lemma}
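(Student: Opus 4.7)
My plan rests on two elementary identities for the Pauli matrices. The Hermiticity $\sigma_j^*=\sigma_j$ and a componentwise integration by parts yield, for $u,\varphi\in H^1(\Omega)^2$, the Green-type identity
\begin{equation*}
\langle\sigma\cdot\nabla u,\varphi\rangle_{L^2(\Omega)^2} + \langle u,\sigma\cdot\nabla\varphi\rangle_{L^2(\Omega)^2}
= \langle(\sigma\cdot\nu)u,\varphi\rangle_{L^2(\partial\Omega)^2},
\end{equation*}
while the anticommutation $\sigma_j\sigma_k+\sigma_k\sigma_j=2\delta_{jk}I_2$ produces the symbol identity $(\sigma\cdot\xi)^2=|\xi|^2 I_2$ for $\xi\in\R^3$, which realises $\sigma\cdot\nabla$ as a ``square root'' of $\Delta$ at the Fourier level. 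These two ingredients will drive, respectively, the duality argument for $(i)$ and the Fourier argument for $(ii)$.

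For $(i)$, the plan is to define the trace of $u$ on $\partial\Omega$ by duality. Given $\psi\in H^{1/2}(\partial\Omega)^2$, I would pick an extension $\varphi\in H^1(\Omega)^2$ with $\varphi|_{\partial\Omega}=\psi$ and $\|\varphi\|_{H^1(\Omega)^2}\leq C\|\psi\|_{H^{1/2}(\partial\Omega)^2}$, and set
\begin{equation*}
T_u(\psi):=\langle\sigma\cdot\nabla u,\varphi\rangle_{L^2(\Omega)^2}+\langle u,\sigma\cdot\nabla\varphi\rangle_{L^2(\Omega)^2}.
\end{equation*}
Independence of the extension follows from the Green-type identity applied to the difference of two extensions (which has zero trace and is approximated by $C^\infty_c(\Omega)^2$ functions); the Cauchy--Schwarz inequality yields $|T_u(\psi)|\leq C(\|u\|_{L^2(\Omega)^2}+\|\sigma\cdot\nabla u\|_{L^2(\Omega)^2})\|\psi\|_{H^{1/2}(\partial\Omega)^2}$, so $T_u\in H^{-1/2}(\partial\Omega)^2$. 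Since $\sigma\cdot\nu$ is a pointwise unitary multiplier on $\partial\Omega$, it acts bijectively on every $H^s(\partial\Omega)^2$, and one recovers $u|_{\partial\Omega}=(\sigma\cdot\nu)^{-1}T_u$ together with the bound \eqref{Eq:SigmaGradTraceEstimate}.

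For $(ii)$, let $\tilde u\in H^1(\Omega)^2$ be any $H^{1/2}$-bounded extension of $u|_{\partial\Omega}$, and set $w:=u-\tilde u$, so that $w,\sigma\cdot\nabla w\in L^2(\Omega)^2$ and the trace of $w$ produced by $(i)$ vanishes. The key step is to extend $w$ by $0$ to a function $\tilde w$ on $\R^3$ and show that $\sigma\cdot\nabla\tilde w\in L^2(\R^3)^2$: testing against $\varphi\in C^\infty_c(\R^3)^2$ and integrating by parts on $\Omega$, the only extra contribution relative to the interior term is the boundary pairing $\langle(\sigma\cdot\nu)w,\varphi\rangle_{L^2(\partial\Omega)^2}$, interpreted in the $H^{-1/2}$--$H^{1/2}$ duality, and this vanishes by construction. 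Consequently $\sigma\cdot\nabla\tilde w=\widetilde{\sigma\cdot\nabla w}\in L^2(\R^3)^2$, and the Fourier identity $\|(\sigma\cdot\xi)\widehat{\tilde w}\|_{L^2}=\||\xi|\widehat{\tilde w}\|_{L^2}$ immediately gives $\tilde w\in H^1(\R^3)^2$ with $\|\nabla\tilde w\|_{L^2(\R^3)^2}=\|\sigma\cdot\nabla w\|_{L^2(\Omega)^2}$. Restricting to $\Omega$ and adding the standard $H^1$-bound for $\tilde u$ then yields \eqref{Eq:SigmaGradH1Estimate}.

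The step I expect to require most care is the distributional computation of $\sigma\cdot\nabla\tilde w$: it demands checking that the intrinsic trace constructed in $(i)$ is precisely the one dual to the $H^{1/2}$-pairing arising from integration by parts against a compactly supported test function on $\R^3$, so that ``trace $=0$ in $H^{-1/2}$'' genuinely forbids any surface-delta contribution to $\sigma\cdot\nabla\tilde w$. Once this consistency is in place, the remainder of both arguments reduces to standard Fourier analysis on $\R^3$ and to the classical trace/extension theory for $H^{1/2}(\partial\Omega)^2$.
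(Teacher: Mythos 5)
Your proof is correct and takes essentially the same route as the paper: part $(i)$ is the standard duality construction of the trace on $\{u\in L^2(\Omega)^2:\,\sigma\cdot\nabla u\in L^2(\Omega)^2\}$ (which the paper delegates to the cited reference of Ourmi\`eres-Bonafos and Vega), and part $(ii)$ subtracts an $H^1$ extension of the boundary datum, extends the zero-trace remainder by zero, checks that no surface contribution appears in $\sigma\cdot\nabla$ of the extension, and uses the identification $H^1(\R^3)^2=\{u\in L^2(\R^3)^2:\,\sigma\cdot\nabla u\in L^2(\R^3)^2\}$ with equal norms --- exactly the paper's argument, with the steps the paper cites written out in full. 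I see no gaps.
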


\begin{proof}
	The proof of these results follows similar arguments as the ones in \cite[Section~2]{OurmieresBonafosVega}, and therefore some details may be omitted. 
	Throughout the proof, we shall denote 
	$$
	H({\sigma}, \Omega) := \big\{u\in L^2(\Omega)^2:\, \sigma \cdot \nabla u \in L^2(\Omega)^2\big\},
	$$
	which is a Hilbert space equipped with the scalar product
	$$
	\langle u, v \rangle_{H({\sigma}, \Omega)} := \langle u, v \rangle_{L^2(\Omega)^2} + \langle {\sigma} \cdot \nabla u, {\sigma} \cdot \nabla v \rangle_{L^2(\Omega)^2}
	$$
	and the associated norm $\norm{u}_{H({\sigma}, \Omega)} := \big(\| u \|_{L^2(\Omega)^2}^2    + \| \sigma \cdot \nabla u \|_{L^2(\Omega)^2}^2\big)^{1/2}$.

	Statement $(i)$ simply means that the trace operator is bounded from $H({\sigma}, \Omega)$ into $H^{-1/2}(\partial \Omega)^2$, and it is proved exactly as \cite[Proposition~2.1]{OurmieresBonafosVega}. 
	Hence, we will only address the proof of $(ii)$.
	Given $u\in  H({\sigma}, \Omega)$ such that $u\in H^{1/2}(\partial \Omega)^2$, set
	$$
	\tilde{u} := u - E(u),
	$$
	where $E:  H^{1/2}(\partial \Omega)^2\to H^1(\Omega)^2$ is a bounded extension operator, i.e.,
	\begin{equation}
		\label{Eq:ExtensionOperator}
		\norm{ E(u)}_{H^1(\Omega)^2} \leq C \norm{u}_{H^{1/2}(\partial \Omega)^2}
	\end{equation}
	for some $C>0$ depending only on $\Omega$.
	Note that the new function $\tilde{u}$ has zero trace on $\partial \Omega$ and, therefore, denoting by $u_0$ its extension by $0$ in $\R^3\setminus\Omega$, we have that
	\begin{equation}
		\label{Eq:SigmaDeriveExtZero}
		{\sigma} \cdot \nabla u_0 = ({\sigma} \cdot \nabla \tilde{u}) \chi_\Omega\quad\text{in $\R^3$},
	\end{equation}
	where $\chi_\Omega$ denotes the characteristic function of the set $\Omega$; for a rigorous proof of \eqref{Eq:SigmaDeriveExtZero} see for instance the proof of \cite[Proposition~2.16]{OurmieresBonafosVega}.
	
	We claim that
	\begin{equation}
		\label{Eq:EqNorms}
		\norm{\tilde{u}}_{H^1(\Omega)^2} = \norm{\tilde{u}}_{H({\sigma},\Omega)}.
	\end{equation}
	Note first that $\norm{\tilde{u}}_{H^1(\Omega)^2} = \norm{u_0}_{H^1(\R^3)^2}$. Using the fact that $H^1(\R^3)^2 = H({\sigma}, \R^3)$, which can be shown using integration by parts, we obtain 	$\norm{\tilde{u}}_{H^1(\Omega)^2} = \norm{u_0}_{H({\sigma},\R^3)}$. 
	Thus, the claim follows from \eqref{Eq:SigmaDeriveExtZero}.
	
	Finally, noting that 
	$$
	\norm{\tilde{u}}_{H({\sigma},\Omega)} 
	\leq  \norm{u}_{H({\sigma},\Omega)} +  \norm{ E(u)}_{H({\sigma},\Omega)} 
	\leq \norm{u}_{H({\sigma},\Omega)} +  \norm{ E(u)}_{H^1(\Omega)^2}
	$$
	and that
	$$
	\norm{\tilde{u}}_{H^1(\Omega)^2}  \geq \norm{u}_{H^1(\Omega)^2} - \norm{ E(u)}_{H^1(\Omega)^2},
	$$
	from \eqref{Eq:EqNorms} we get
	$$
	\norm{u}_{H^1(\Omega)^2} \leq \norm{u}_{H({\sigma},\Omega)} +  2\norm{ E(u)}_{H^1(\Omega)^2},
	$$
	and using \eqref{Eq:ExtensionOperator} we conclude the proof of $(ii)$.
\end{proof}

Using \Cref{Lemma:SigmaGradEstimates} we can easily get estimates for the trace of the eigenfunctions of 
$\Dirac_\tau$, once they are written component-wise. Given 
$\tau,\lambda\in\R$, by definition we have that 
$\varphi\in\mathrm{Dom}(\Dirac_\tau)$ satisfies 
$\Dirac_\tau\varphi=\lambda\varphi$ if and only if $\varphi\in H^1(\Omega)^4$ and
\begin{equation}
	\label{Eq:EigenvaluePb.1}
	\beqc{\PDEsystem}
	\Dirac \varphi  &=&  \lambda \varphi & \text{in } \Omega, \\
	\varphi &=&  i (\sinh \tau - \cosh \tau \, \beta) (\alpha \cdot\nu ) \varphi & \text{on } \partial \Omega.
	\eeqc
\end{equation}
If we write
$\varphi=(u,v)^\intercal$
with $u, v \in H^1(\Omega)^2$, 
and we recall that $\Dirac = -i \alpha \cdot \nabla + m \beta$,
then \eqref{Eq:EigenvaluePb.1} is equivalent to
\begin{equation}
	\label{Eq:EigenvaluePbUV.1}
	\beqc{\PDEsystem}
	-i {\sigma} \cdot \nabla v & = & (\lambda-m) u  & \text{in } \Omega, \\
	-i {\sigma} \cdot \nabla u & = & (\lambda+m) v  & \text{in } \Omega, \\
	v & = & i e^\tau ( {\sigma}\cdot\nu)u & \text{on } \partial \Omega. 
	\eeqc
\end{equation}
In particular, since $(-i {\sigma} \cdot \nabla)^2 = - \Delta$, from the first two equations in \eqref{Eq:EigenvaluePbUV.1} we deduce that
\begin{equation}
	\label{Eq:HelmholtzUV}
	- \Delta  u  =  (\lambda^2 -m^2) u\quad\text{and}\quad
	- \Delta  v  =  (\lambda^2 -m^2) v \quad  \text{in } \Omega.
\end{equation}
In addition, combining \eqref{Eq:EigenvaluePbUV.1} with \Cref{Lemma:SigmaGradEstimates}~$(i)$ we obtain the following result.

\begin{lemma}
	\label{Lemma:H1/2Estimate}
	Let $u,v\in H^1(\Omega)^2$ be such that 
	$\varphi=(u,v)^\intercal\in H^1(\Omega)^4$ solves \eqref{Eq:EigenvaluePb.1}. Then,
	\begin{equation}
		\label{Eq:H-1/2Estimate}
		\begin{split}
			\|u \|_{H^{-1/2}(\partial \Omega)^2} &\leq  C\left(   \| u \|_{L^2(\Omega)^2}    + |\lambda+m|  \| v \|_{L^2( \Omega)^2}\right), \\	
			\|v \|_{H^{-1/2}(\partial \Omega)^2} &\leq  C\left( |\lambda-m|  \| u \|_{L^2(\Omega)^2}    +   \| v \|_{L^2( \Omega)^2}\right) 
		\end{split}
	\end{equation}
	for some $C>0$ depending only on $\Omega$.
\end{lemma}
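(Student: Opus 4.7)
The plan is to observe that Lemma \ref{Lemma:H1/2Estimate} is essentially a direct corollary of Lemma \ref{Lemma:SigmaGradEstimates}$(i)$, once one rewrites the Dirac eigenvalue problem componentwise as the first-order system \eqref{Eq:EigenvaluePbUV.1}. The key identity to exploit is that the first two equations of \eqref{Eq:EigenvaluePbUV.1} give pointwise (hence $L^2$) control of $\sigma\cdot\nabla u$ and $\sigma\cdot\nabla v$ in terms of $v$ and $u$, respectively.

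First, I would read off from \eqref{Eq:EigenvaluePbUV.1} that $\sigma\cdot\nabla u = i(\lambda+m)v$ and $\sigma\cdot\nabla v = i(\lambda-m)u$ in $\Omega$. In particular, since $u,v\in H^1(\Omega)^2$, both $\sigma\cdot\nabla u$ and $\sigma\cdot\nabla v$ lie in $L^2(\Omega)^2$, with
\begin{equation*}
\|\sigma\cdot\nabla u\|_{L^2(\Omega)^2}=|\lambda+m|\,\|v\|_{L^2(\Omega)^2},\qquad
\|\sigma\cdot\nabla v\|_{L^2(\Omega)^2}=|\lambda-m|\,\|u\|_{L^2(\Omega)^2}.
\end{equation*}
This places $u$ and $v$ in the hypothesis of Lemma \ref{Lemma:SigmaGradEstimates}$(i)$.

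Next, I would apply \eqref{Eq:SigmaGradTraceEstimate} to $u$ and to $v$ separately. Plugging the above identities into the right-hand side of that estimate produces
\begin{equation*}
\|u\|_{H^{-1/2}(\partial\Omega)^2}\leq C\bigl(\|u\|_{L^2(\Omega)^2}+|\lambda+m|\,\|v\|_{L^2(\Omega)^2}\bigr),
\end{equation*}
and symmetrically for $v$, which are precisely the two inequalities in \eqref{Eq:H-1/2Estimate}. The constant $C$ is the one from Lemma \ref{Lemma:SigmaGradEstimates}$(i)$ and depends only on $\Omega$.

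There is no substantive obstacle here; the only thing to be careful about is that the constant in \eqref{Eq:SigmaGradTraceEstimate} does not depend on $\lambda$ (it is a purely geometric trace estimate), so the factors $|\lambda\pm m|$ appearing in \eqref{Eq:H-1/2Estimate} come exclusively from rewriting the $\sigma\cdot\nabla$ norms via the system \eqref{Eq:EigenvaluePbUV.1}. This makes the bound sharp in its dependence on $\lambda$, which is what will be needed later when the estimate is fed into compactness arguments for the eigenvalue curves.
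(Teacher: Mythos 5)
Your proposal is correct and coincides with the paper's own argument: apply \Cref{Lemma:SigmaGradEstimates}~$(i)$ to $u$ and $v$ separately, and replace $\|\sigma\cdot\nabla u\|_{L^2(\Omega)^2}$ and $\|\sigma\cdot\nabla v\|_{L^2(\Omega)^2}$ using the first two equations of \eqref{Eq:EigenvaluePbUV.1}. Your remark that the constant is $\lambda$-independent, coming purely from the trace estimate, is exactly the point that makes the lemma useful later.
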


\begin{proof}
	
	Using \Cref{Lemma:SigmaGradEstimates}~$(i)$ we get
	$
	\|u \|_{H^{-1/2}(\partial \Omega)^2} \leq C(   \| u \|_{L^2(\Omega)^2}    + \| {\sigma} \cdot \nabla u \|_{L^2(\Omega)^2} ).
	$
	The equality $-i {\sigma} \cdot \nabla u = (\lambda+m) v$ from \eqref{Eq:EigenvaluePbUV.1} yields the first estimate. The second one is analogous.
\end{proof}

We finish this section by giving the relation between the 
$L^2$-norm of $u$ on $\partial\Omega$ and the $L^2$-norms of $u$ and $v$ on $\Omega$. An analogous formula for the $L^2$-norm of $v$ on $\partial\Omega$ can be obtained using the last equality in \eqref{Eq:EigenvaluePbUV.1}.

\begin{lemma}
	\label{Lemma:RelationNormsIntBdry}
	Let $u,v\in H^1(\Omega)^2$ such that 
	$\varphi=(u,v)^\intercal\in H^1(\Omega)^4$ solves \eqref{Eq:EigenvaluePb.1}. Then,
	\begin{equation}
		\label{Eq:RelationNormsIntBdry}
		e^\tau \| u \|^2_{L^2(\partial \Omega)^2}= 
		e^{-\tau} \| v \|^2_{L^2(\partial \Omega)^2}= (\lambda-m) \| u \|^2_{L^2( \Omega)^2} - (\lambda+m)\| v \|^2_{L^2( \Omega)^2}\,.
	\end{equation}
\end{lemma}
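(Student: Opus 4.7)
The proof has two pieces, and the key is to carry out integration by parts on the Dirac system while exploiting the boundary identity $(\sigma\cdot\nu)^{2}=I_{2}$.

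First, for the identity $e^\tau \|u\|^{2}_{L^{2}(\partial\Omega)^{2}} = e^{-\tau}\|v\|^{2}_{L^{2}(\partial\Omega)^{2}}$, I would simply use the boundary condition in \eqref{Eq:EigenvaluePbUV.1}, namely $v = i e^{\tau}(\sigma\cdot\nu)u$ on $\partial\Omega$. Since the Pauli matrices are Hermitian and $(\sigma\cdot\nu)^{2} = |\nu|^{2} I_{2} = I_{2}$, taking pointwise modulus yields $|v|^{2} = e^{2\tau}|u|^{2}$ $\upsigma$-a.e. on $\partial\Omega$, which integrates to the claimed equality.

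For the second equality, the plan is to compute $(\lambda-m)\|u\|_{L^2(\Omega)^2}^2 - (\lambda+m)\|v\|_{L^2(\Omega)^2}^2$ by substituting the bulk equations $-i\sigma\cdot\nabla v = (\lambda-m)u$ and $-i\sigma\cdot\nabla u = (\lambda+m)v$. This rewrites the expression as
\begin{equation*}
\langle -i\sigma\cdot\nabla v,\,u\rangle_{L^2(\Omega)^2} - \langle v,\,-i\sigma\cdot\nabla u\rangle_{L^2(\Omega)^2}.
\end{equation*}
Since $u,v\in H^{1}(\Omega)^{2}$, the usual divergence theorem applies componentwise. Using that each $\sigma_{j}$ is self-adjoint, integration by parts gives
\begin{equation*}
\langle -i\sigma\cdot\nabla v,u\rangle_{L^2(\Omega)^2} - \langle v,-i\sigma\cdot\nabla u\rangle_{L^2(\Omega)^2} = -i\int_{\partial\Omega}(\sigma\cdot\nu) v \cdot \overline{u}\, d\upsigma.
\end{equation*}

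Finally, I would plug the boundary condition $v = ie^{\tau}(\sigma\cdot\nu)u$ into the right-hand side. Using $(\sigma\cdot\nu)^{2} = I_{2}$ once more,
\begin{equation*}
-i\int_{\partial\Omega}(\sigma\cdot\nu)\bigl(ie^{\tau}(\sigma\cdot\nu)u\bigr)\cdot\overline{u}\,d\upsigma = e^{\tau}\int_{\partial\Omega}|u|^{2}\,d\upsigma = e^{\tau}\|u\|_{L^{2}(\partial\Omega)^{2}}^{2}.
\end{equation*}
Combining the three steps yields the stated chain of equalities. There is no real obstacle here, the computation is routine; the only point requiring a moment's care is the validity of integration by parts, which is guaranteed by the $H^{1}$ regularity hypothesis on $\varphi$ so that all traces and boundary integrals are classical $L^{2}$-integrals.
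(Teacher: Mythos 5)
Your proof is correct and follows essentially the same route as the paper: the first equality comes from the boundary condition $v=ie^{\tau}(\sigma\cdot\nu)u$ together with $(\sigma\cdot\nu)^{2}=I_{2}$, and the second comes from pairing the bulk equations with $\overline{u}$ and $\overline{v}$, integrating by parts, and substituting the boundary condition into the resulting boundary term, exactly as in the paper. The only cosmetic difference is that you start from the difference $(\lambda-m)\|u\|^{2}-(\lambda+m)\|v\|^{2}$ while the paper starts from $(\lambda-m)\int_{\Omega}|u|^{2}$, which is the same computation rearranged.
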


\begin{proof}
	Note that $e^\tau \| u \|^2_{L^2(\partial \Omega)^2}= 
	e^{-\tau} \| v \|^2_{L^2(\partial \Omega)^2}$ because $v=ie^\tau({\sigma}\cdot\nu)u$ by \eqref{Eq:EigenvaluePbUV.1}. 
	Now,
	if we multiply the first equation in \eqref{Eq:EigenvaluePbUV.1} by $\overline{u}$, we integrate by parts in $\Omega$, and we use the other two equations in \eqref{Eq:EigenvaluePbUV.1}, we get
	\begin{equation} \begin{split}
			(\lambda-m) \int_{\Omega} |u|^2  
			&= \int_{\Omega} (-i {\sigma} \cdot \nabla  v) \cdot \overline{ u } 
			= \int_{\Omega} v \cdot\overline{(-i {\sigma} \cdot \nabla u)} - i \int_{\partial \Omega}  ({\sigma}\cdot\nu) v \cdot\overline{ u } \,d \upsigma \\
			&= (\lambda + m) \int_{\Omega} |v|^2
			+ e^\tau \int_{\partial \Omega} |u|^2 \,d \upsigma.
	\end{split} \end{equation}
\end{proof}

\subsection{Boundary integral operators}\label{Subsec:BIOperators}

In this section we introduce boundary integral operators associated to $\Dirac$. 
We will use them to rewrite the eigenvalue problem as an integral equation on $\partial\Omega$ (see \Cref{Lemma:BoundaryPb} below), and to prove regularity estimates for the eigenfunctions of $\Dirac_\tau$ that will be used in the following sections.
The spectral analysis using boundary integral operators is an approach already used in \cite{AMV1,AMV2,AMV3}. 
It represents the Birman-Schwinger principle on our setting.

A fundamental solution of $\Dirac-\lambda$ for $\lambda\in\R$ is given by
\begin{equation}
	\label{Eq:DefPhi}
	\phi_\lambda(x) := \dfrac{e^{-i \sqrt{\lambda^2 - m^2} |x|}}{4\pi |x|} \left[ \lambda + m \beta + \left (1 + i \sqrt{\lambda^2 - m^2} |x| \right ) i \alpha \cdot \dfrac{x}{|x|^2} \right]
	\quad\text{for all $x\in\R^3\setminus\{0\}$}
\end{equation}
if $\lambda \in (-\infty,-m)\cup(m,+\infty)$, and by
\begin{equation}
	\label{Eq:DefPhi.selfadj}
	\phi_\lambda(x):=  \dfrac{e^{- \sqrt{ m^2 - \lambda^2} |x|}}{4\pi |x|} \left[ \lambda + m \beta + \left (1 +  \sqrt{ m^2 - \lambda^2} |x| \right ) i \alpha \cdot \dfrac{x}{|x|^2} \right]\quad\text{for all $x\in\R^3\setminus\{0\}$}
\end{equation}
if $\lambda\in[-m,m]$.
This means that $(\Dirac-\lambda)\phi_\lambda=\delta_0 I_4$ in the sense of distributions, where $\delta_0$ denotes the Dirac delta measure centered at the origin of coordinates. Therefore, given $g\in L^2(\partial \Omega)^4$, if we set
\begin{equation}
	\label{Eq:DefPhiMaj}
	\Phi_\lambda g(x) := \int_{\partial \Omega} \phi_\lambda(x-y) g (y)\, d \upsigma(y) \quad \text{ for all $x \in \Omega$},
\end{equation}
then $(\Dirac-\lambda)\Phi_\lambda g=0$ in $\Omega$. Reciprocally, null solutions of $\Dirac-\lambda$ in 
$\Omega$ can be expressed by means of $\phi_\lambda$, as the following result shows. 

\begin{lemma}[Reproducing formula]
	\label{Lemma:ReproductionFormula}
	Let $\lambda \in \R$ and $\varphi \in H^1(\Omega)^4$ be such that
	$(\Dirac-\lambda)\varphi = 0$ in $\Omega$. 
	Then, $\varphi = \Phi_\lambda (i (\alpha \cdot\nu)\varphi)$ in $\Omega$. That is,
	\begin{equation}
		\label{Eq:ReproductionFormula}
		\varphi (x) = \int_{\partial \Omega} \phi_\lambda(x-y) i (\alpha \cdot\nu(y)) \, \varphi (y) \,d \upsigma(y) \quad \text{ for all } x \in \Omega.
	\end{equation}
\end{lemma}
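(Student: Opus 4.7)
The plan is to extend $\varphi$ by zero to all of $\R^3$ and exploit the fact that $\phi_\lambda$ is a fundamental solution of $\Dirac-\lambda$. Set $\varphi_0:=\varphi\chi_\Omega\in L^2(\R^3)^4$, which is compactly supported. The first step is to compute the distribution $(\Dirac-\lambda)\varphi_0$ on $\R^3$. Testing against $\psi\in C_c^\infty(\R^3)^4$, the fact that $\alpha_j$ and $\beta$ are Hermitian (so $\Dirac$ is formally self-adjoint) together with the Green-type identity
$$
\int_\Omega \varphi\cdot\overline{\Dirac\psi}\,dx = \int_\Omega(\Dirac\varphi)\cdot\overline{\psi}\,dx + i\int_{\partial\Omega}(\alpha\cdot\nu)\varphi\cdot\overline{\psi}\,d\upsigma,
$$
which is valid for $\varphi,\psi\in H^1(\Omega)^4$ by an integration by parts in each component, combined with the hypothesis $\Dirac\varphi=\lambda\varphi$, give
$$
\langle\varphi_0,(\Dirac-\lambda)\psi\rangle_{\R^3} = i\int_{\partial\Omega}(\alpha\cdot\nu)\varphi\cdot\overline{\psi}\,d\upsigma.
$$
Hence $(\Dirac-\lambda)\varphi_0 = i(\alpha\cdot\nu)\varphi\,\delta_{\partial\Omega}$ in $\mathcal{D}'(\R^3)^4$.

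Now define $R(x):=\int_{\partial\Omega}\phi_\lambda(x-y)\,i(\alpha\cdot\nu(y))\varphi(y)\,d\upsigma(y)$ for $x\in\R^3\setminus\partial\Omega$. Because $\phi_\lambda$ is the fundamental solution of $\Dirac-\lambda$ on $\R^3$, a direct differentiation under the integral sign (together with the distributional identity $(\Dirac-\lambda)\phi_\lambda=\delta_0 I_4$) shows $(\Dirac-\lambda)R = i(\alpha\cdot\nu)\varphi\,\delta_{\partial\Omega}$ in $\mathcal{D}'(\R^3)^4$. Thus $w:=\varphi_0-R$ is a null solution of $\Dirac-\lambda$ on all of $\R^3$. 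Since $\varphi_0$ has compact support and $\phi_\lambda$ decays at infinity with the prescribed ``outgoing'' behavior (oscillatory-decay of order $|x|^{-1}$ when $|\lambda|>m$, exponential decay when $|\lambda|\leq m$), $R(x)$ obeys the corresponding decay/radiation condition. Combined with the absence of embedded $L^2$-eigenfunctions of the free Dirac operator on $\R^3$, this forces $w\equiv 0$ in $\R^3$. Evaluating at $x\in\Omega$ then yields $\varphi(x)=\varphi_0(x)=R(x)$, which is the desired formula \eqref{Eq:ReproductionFormula}.

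The main obstacle I anticipate is the uniqueness step above in the regime $|\lambda|>m$, where $\lambda$ lies in the essential spectrum of the free Dirac operator and an honest Sommerfeld-type radiation argument is required rather than a mere $L^2$ uniqueness statement. A route that avoids this difficulty entirely is the classical one: fix $x\in\Omega$, remove a small ball $B_\varepsilon(x)\subset\Omega$, and apply Green's identity on $\Omega_\varepsilon:=\Omega\setminus\overline{B_\varepsilon(x)}$ to the smooth kernel $\phi_\lambda(x-\cdot)$ and the $H^1$ solution $\varphi$. As $\varepsilon\downarrow 0$, the contribution over $\partial B_\varepsilon(x)$ concentrates at $x$ via the $(4\pi|x-y|)^{-1}$ singularity of $\phi_\lambda$ together with the identity $(\alpha\cdot\nu)^2=I_4$, producing exactly $\varphi(x)$, while the contribution over $\partial\Omega$ reproduces the right-hand side of \eqref{Eq:ReproductionFormula}. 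This second approach is technically elementary but requires careful bookkeeping of matrix-multiplication orders in the integration by parts, since $\Dirac$ acts differently on $\phi_\lambda$ from the left and from the right.
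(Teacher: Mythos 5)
Your second, ``fallback'' argument is precisely the proof the paper gives (and defers to \cite[Lemma~3.3]{AMV1}): fix $x\in\Omega$, excise a small ball $B_\epsilon(x)$, apply the Green identity for $\Dirac$ on $\Omega\setminus \overline{B_\epsilon(x)}$ with the kernel $\phi_\lambda(x-\cdot)$, and let $\epsilon\downarrow 0$, the $\partial B_\epsilon(x)$ term producing $\varphi(x)$ through the $(4\pi|x-y|)^{-1}$ singularity and $(\alpha\cdot\nu)^2=I_4$, and the $\partial\Omega$ term producing the right-hand side of \eqref{Eq:ReproductionFormula}. If you run that route, you are doing exactly what the paper does, and your description of it is correct.

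The primary route, as written, does not close, and the gap is exactly where you suspect. For $m<|\lambda|$ the kernel \eqref{Eq:DefPhi} decays only like $|x|^{-1}$, so the layer potential $R$ is not square integrable near infinity; hence $w=\varphi_0-R$ need not belong to $L^2(\R^3)^4$, and the statement ``the free Dirac operator has no embedded $L^2$-eigenvalues'' cannot be invoked at all. What is actually needed is a Rellich-type uniqueness theorem: each component of $w$ solves the Helmholtz equation $-\Delta w=(\lambda^2-m^2)w$ outside a compact set, and one must verify that $R$ satisfies the Sommerfeld radiation condition corresponding to the specific branch $e^{-i\sqrt{\lambda^2-m^2}|x|}$ chosen in \eqref{Eq:DefPhi}, apply Rellich's lemma to conclude $w\equiv 0$ outside a large ball, and then use unique continuation to get $w\equiv 0$ everywhere (the threshold case $|\lambda|=m$, where there is no exponential decay either, needs its own elementary harmonic-function argument). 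None of this is carried out, so the uniqueness step is a genuine missing piece rather than a routine remark. A secondary, fixable point: the identity $(\Dirac-\lambda)R=i(\alpha\cdot\nu)\varphi\,\delta_{\partial\Omega}$ is not literally ``differentiation under the integral sign''; making it rigorous requires pairing with test functions, Fubini, and the observation that $\phi_\lambda(z)^*$ is the fundamental solution with the opposite oscillatory branch evaluated at $-z$ (or, alternatively, the jump relations of \Cref{Lemma:PropertiesOfC}, which in the paper are proved independently, so one must avoid circularity). Given these issues, the excision argument you propose as a fallback is not just ``technically elementary'' but the efficient proof, which is why the paper adopts it.
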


\begin{proof}
	Note that if $\Dirac\varphi=\lambda\varphi$ in 
	$\Omega$ then 
	$-\Delta\varphi=(\lambda^2-m^2)\varphi$ in 
	$\Omega$. By standard regularity theory, $\varphi$ is infinitely differentiable in $\Omega$. In particular, it makes sense to write ``for all $x\in\Omega$'' in \eqref{Eq:ReproductionFormula}.
	
	The proof of the lemma follows the same lines as in \cite[Lemma~3.3]{AMV1}: given $x\in\Omega$, one combines that $\phi_\lambda$ is a fundamental solution of $\Dirac-\lambda$ with an integration by parts in $\Omega \setminus \{y\in\R^3:\,|x-y|<\epsilon\}$, and then one lets $\epsilon \downarrow 0$ to obtain \eqref{Eq:ReproductionFormula}. We omit the details.
\end{proof}

In the following lemma, the boundary trace of the function
$\Phi_\lambda g$ given in \eqref{Eq:DefPhiMaj} is described in terms of a singular integral operator, and a trace identity for null solutions of $\Dirac-\lambda$ is deduced thanks to \Cref{Lemma:ReproductionFormula}. This operator is defined, for every $g \in L^2(\partial \Omega)^4$, by 
\begin{equation}
	\label{Eq:DefC}
	\ccal_\lambda g(x) := \lim_{\epsilon \downarrow 0} 
	\int_{\{y\in\partial \Omega:\, |x-y|> \epsilon\}} \phi_\lambda(x-y) g (y) \,d \upsigma(y) \quad \text{ for $\upsigma$-a.e.\! $x \in \partial \Omega$}.
\end{equation}
The same arguments as in \cite[Lemma~3.3]{AMV1} show that $\ccal_\lambda$ is bounded in $L^2(\partial\Omega)^4$. In addition, since $\ccal_\lambda$ is defined by means of the kernel \eqref{Eq:DefPhi.selfadj} for $\lambda\in[-m,m]$, $\ccal_\lambda$ is self-adjoint in $L^2(\partial\Omega)^4$ for 
$\lambda\in[-m,m]$.

\begin{lemma}[Plemelj-Sokhotski jump formula]
	\label{Lemma:PropertiesOfC}
	Let $\lambda \in \R$.
	Then, for every $g\in  L^2(\partial \Omega)^4$,
	\begin{equation}
		\label{Eq:Plemelj-Sokhotski}
		\lim_{z \in \Omega, \, z \to x} \Phi_\lambda g(z) = - \dfrac{i}{2} (\alpha \cdot\nu(x) ) g(x) + \ccal_\lambda g(x) \quad \text{ for $\upsigma$-a.e.\! $x \in \partial \Omega$},
	\end{equation}
	where the limit on the left-hand side of \eqref{Eq:Plemelj-Sokhotski} is taken nontangentially.
	As a consequence, if $\varphi\in H^1(\Omega)^4$ is such that $\Dirac\varphi=\lambda \varphi$ in $\Omega$, then 
	$\varphi = 2 i \ccal_\lambda (\alpha \cdot\nu) \varphi$  
	$\upsigma$-almost everywhere on $\partial \Omega$.
\end{lemma}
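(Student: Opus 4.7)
The plan is to establish the jump formula by separating $\phi_\lambda$ into a leading singular part of Riesz-kernel type plus a weakly singular remainder, and then to deduce the trace identity by combining the jump formula with the reproducing formula of \Cref{Lemma:ReproductionFormula}.

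First I would inspect the structure of the kernel $\phi_\lambda$ given in \eqref{Eq:DefPhi} and \eqref{Eq:DefPhi.selfadj}. In both cases, as $|x|\to 0$ the exponential factor equals $1+O(|x|)$, so the kernel decomposes as
\begin{equation*}
\phi_\lambda(x) = \frac{i}{4\pi}\,\alpha\cdot\frac{x}{|x|^3} + R_\lambda(x),
\end{equation*}
where $R_\lambda$ is a kernel with only a mild singularity of the form $|x|^{-1}$ (and smooth factors). The first term is precisely the (matrix-valued) Riesz/Cauchy-type kernel for which the classical Plemelj–Sokhotski formula on $C^2$ surfaces is known: the nontangential limit from inside $\Omega$ of its single-layer potential against a density $g\in L^2(\partial\Omega)^4$ equals $-\tfrac{i}{2}(\alpha\cdot\nu(x))g(x)$ plus its principal-value boundary value. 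For the remainder $R_\lambda$, since the associated integral operator has a weakly singular kernel of order $|x-y|^{-1}$ on a $2$-dimensional surface, the corresponding layer potential extends continuously across $\partial\Omega$ and produces no jump; hence its nontangential limit from $\Omega$ coincides with its boundary integral, which is absorbed into $\ccal_\lambda g(x)$. Putting the two contributions together yields \eqref{Eq:Plemelj-Sokhotski}. This part is essentially the same computation as in \cite[Lemma~3.3]{AMV1}, which the authors have been citing throughout this section, so the argument should go through verbatim after verifying that the sign of the jump matches the outward-pointing convention for $\nu$ and the choice of approach from inside $\Omega$.

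For the trace identity, I would apply \Cref{Lemma:ReproductionFormula} to write $\varphi = \Phi_\lambda\bigl(i(\alpha\cdot\nu)\varphi\bigr)$ in $\Omega$. Since $\varphi\in H^1(\Omega)^4$, its trace on $\partial\Omega$ can be realized as a nontangential limit from inside for $\upsigma$-a.e.\ $x$, and $i(\alpha\cdot\nu)\varphi\in H^{1/2}(\partial\Omega)^4\subset L^2(\partial\Omega)^4$, so \eqref{Eq:Plemelj-Sokhotski} applies to $g = i(\alpha\cdot\nu)\varphi$. This gives, on $\partial\Omega$,
\begin{equation*}
\varphi = -\frac{i}{2}(\alpha\cdot\nu)\bigl(i(\alpha\cdot\nu)\varphi\bigr) + \ccal_\lambda\bigl(i(\alpha\cdot\nu)\varphi\bigr) = \frac{1}{2}(\alpha\cdot\nu)^2\varphi + i\,\ccal_\lambda(\alpha\cdot\nu)\varphi.
\end{equation*}
Using the Clifford-type identity $(\alpha\cdot\nu)^2 = |\nu|^2 I_4 = I_4$, which follows from the anticommutation relations $\alpha_j\alpha_k+\alpha_k\alpha_j = 2\delta_{jk}I_4$ and $|\nu|=1$, the first term on the right becomes $\tfrac{1}{2}\varphi$. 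Rearranging then gives $\tfrac{1}{2}\varphi = i\,\ccal_\lambda(\alpha\cdot\nu)\varphi$, which is the stated identity $\varphi = 2i\,\ccal_\lambda(\alpha\cdot\nu)\varphi$ on $\partial\Omega$.

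The main obstacle is the jump formula itself, and concretely the careful justification that $R_\lambda$ produces no boundary contribution while the Riesz-type leading term produces exactly $-\tfrac{i}{2}(\alpha\cdot\nu(x))g(x)$ with the correct sign. This requires: $(a)$ a uniform-in-$\epsilon$ bound on the truncated principal-value operator so that $\ccal_\lambda$ is a well-defined bounded operator in $L^2(\partial\Omega)^4$ (this uses the $C^2$ regularity of $\partial\Omega$ and standard Calderón–Zygmund theory), and $(b)$ the identification of the nontangential limit, which can be done by approximating $g$ by smooth densities where the limit can be computed by hand, and then passing to the $L^2$ limit using the boundedness in $(a)$. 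Once this machinery is in place (it is identical to the one in \cite{AMV1}), the remainder of the argument is a short algebraic manipulation.
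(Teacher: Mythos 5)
Your proposal is correct and follows essentially the same route as the paper: the jump formula \eqref{Eq:Plemelj-Sokhotski} is obtained exactly as in \cite[Lemma~3.3]{AMV1} (your singular-plus-weakly-singular decomposition is precisely the mechanism there), and the trace identity is then derived by plugging $g=i(\alpha\cdot\nu)\varphi$ from \Cref{Lemma:ReproductionFormula} into the jump formula and using $(\alpha\cdot\nu)^2=I_4$, which is word for word the paper's computation.
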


\begin{proof}
	The identity \eqref{Eq:Plemelj-Sokhotski} is proved exactly as in \cite[Lemma~3.3]{AMV1}.
	Now, let $\varphi\in H^1(\Omega)^4$ be such that $\Dirac \varphi = \lambda \varphi$ in $\Omega$. 
	From Lemma~\ref{Lemma:ReproductionFormula} it follows that 
	$\varphi = \Phi_\lambda (i (\alpha \cdot\nu)  \varphi)$ in 
	$\Omega$.
	Taking the nontangential limit towards $\partial \Omega$, and using \eqref{Eq:Plemelj-Sokhotski} and that $(\alpha\cdot\nu)^2=1$, we have 
	\begin{equation}
		\varphi = -\dfrac{i}{2} (\alpha \cdot\nu)(i(\alpha \cdot\nu) \varphi) + i\ccal_\lambda (\alpha \cdot\nu) \varphi
		= \dfrac{1}{2}  \varphi + i\ccal_\lambda (\alpha \cdot\nu) \varphi\quad 
		\text{$\upsigma$-a.e.\! on $\partial \Omega$}.
	\end{equation}
\end{proof}

\begin{remark}
	Note that the left-hand side of \eqref{Eq:Plemelj-Sokhotski} concerns a pointwise nontangential limit. If  
	$\Phi_\lambda g\in H^1(\Omega)$, this limit coincides 
	$\upsigma$-a.e.\! on $\partial \Omega$ with the standard trace of functions in Sobolev spaces defined through a trace operator; see \cite[Lemma 3.1]{BGM.trace} for example.
\end{remark}

In \Cref{Lemma:BoundaryPb} below we will use  \Cref{Lemma:PropertiesOfC} to rewrite the eigenvalue problem 
$\Dirac_\tau\varphi=\lambda\varphi$ as an integral equation for $\varphi$ on $\partial\Omega$. To do so, it is convenient to express the operator $\ccal_\lambda$ in terms of its action on the components of 
$\varphi\in L^2(\partial\Omega)^4$, namely,
\begin{equation}
	\label{Eq:DecompCa}
	\ccal_\lambda\varphi = \begin{pmatrix}
		(\lambda+m) K_\lambda & W_\lambda \\
		W_\lambda & (\lambda -m) K_\lambda
	\end{pmatrix}\begin{pmatrix}u\\v\end{pmatrix},
	\quad\text{where $\varphi=\begin{pmatrix}u\\v\end{pmatrix}$ and $u,v\in L^2(\partial\Omega)^2$.}
\end{equation}
The operators $K_\lambda$ and $W_\lambda$ are defined, for every $u\in L^2(\partial \Omega)^2$ and $\upsigma$-a.e.\! $x \in \partial \Omega$, by
\begin{equation}\label{Eq:DefK}
	\begin{split}
		K_\lambda u(x) &:=  \dfrac{1}{4\pi}\int_{\partial \Omega} k_\lambda(x-y) u (y) \,d \upsigma(y),\\
		W_\lambda u(x) &:=  \lim_{\epsilon \downarrow 0} \dfrac{1}{4\pi}\int_{\{y\in\partial \Omega:\, |x-y|> \epsilon\}}  w_\lambda(x-y) u (y) \,d \upsigma(y),
	\end{split}
\end{equation}
where the kernels $k_\lambda$ and $w_\lambda$ are given, for $x\in\R^3\setminus\{0\}$, by
\begin{equation}\label{kernels.K.W}
	\begin{split}
		k_\lambda(x)
		:=\dfrac{e^{-i \sqrt{\lambda^2 - m^2} |x|}}{|x|},\quad
		w_\lambda (x) := e^{-i \sqrt{\lambda^2 - m^2} |x|}\left (1 + i \sqrt{\lambda^2 - m^2} |x| \right ) i {\sigma} \cdot \dfrac{x}
		{|x|^3}
	\end{split}
\end{equation}
if $\lambda \in (-\infty,-m)\cup(m,+\infty)$, and by
\eqref{kernels.K.W} replacing $i \sqrt{\lambda^2 - m^2}$ by
$\sqrt{m^2-\lambda^2}$ if $\lambda\in[-m,m]$.
The fact that $\ccal_\lambda$ is bounded in $L^2(\partial\Omega)^4$ yields the boundedness of $K_\lambda$ and $W_\lambda$ in $L^2(\partial\Omega)^2$. In addition, since $\ccal_\lambda$ is self-adjoint in $L^2(\partial\Omega)^4$ for 
$\lambda\in[-m,m]$, we deduce that $K_\lambda$ and $W_\lambda$ are self-adjoint in $L^2(\partial\Omega)^2$ for $\lambda\in[-m,m]$.

The combination of \Cref{Lemma:ReproductionFormula,Lemma:PropertiesOfC} leads to the important identity $(\ccal_\lambda(\alpha\cdot\nu))^2=-1/4$ as operators in $L^2(\partial\Omega)^4$. This, apart from showing that 
$(\ccal_\lambda)^{-1}=-4(\alpha\cdot\nu)\ccal_\lambda(\alpha\cdot\nu)$, has some consequences on $K_\lambda$ and $W_\lambda$ that will be used in the proof of \Cref{Rstar.EL}. We gather them  in the following result.

\begin{lemma}\label{l.prop.K.W}
	For every $\lambda\in\R$, the following holds:
	\begin{itemize}
		\item[$(i)$] $(\ccal_\lambda(\alpha\cdot\nu))^2=-1/4$ as operators in $L^2(\partial\Omega)^4$.
		\item[$(ii)$] $(W_\lambda(\sigma\cdot\nu))^2
		+(\lambda^2-m^2)(K_\lambda(\sigma\cdot\nu))^2=-1/4$ as operators in $L^2(\partial\Omega)^2$.
		\item[$(iii)$] $\{K_\lambda(\sigma\cdot\nu),
		W_\lambda(\sigma\cdot\nu)\}=0$ as operators in $L^2(\partial\Omega)^2$.
	\end{itemize}
\end{lemma}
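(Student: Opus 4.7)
The plan is to derive (i) directly from the reproducing formula of \Cref{Lemma:ReproductionFormula} and the Plemelj-Sokhotski jump identity of \Cref{Lemma:PropertiesOfC}, and then obtain (ii) and (iii) from (i) by writing everything in $2 \times 2$ block form. For (i), I would fix $g$ in a dense subspace of $L^2(\partial\Omega)^4$ (say $H^{1/2}(\partial\Omega)^4$, so that standard layer-potential mapping properties give $\Phi_\lambda(i(\alpha\cdot\nu)g)\in H^1(\Omega)^4$) and set $\varphi := \Phi_\lambda(i(\alpha\cdot\nu)g)$. Since $(\Dirac-\lambda)\varphi = 0$ in $\Omega$ and $\varphi\in H^1(\Omega)^4$, \Cref{Lemma:PropertiesOfC} yields the trace identity $\varphi|_{\partial\Omega} = 2i\ccal_\lambda(\alpha\cdot\nu)\varphi|_{\partial\Omega}$. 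Independently, applying \eqref{Eq:Plemelj-Sokhotski} to the density $i(\alpha\cdot\nu)g$ and using $(\alpha\cdot\nu)^2=1$ gives $\varphi|_{\partial\Omega} = \tfrac{1}{2}g + i\ccal_\lambda(\alpha\cdot\nu)g$. Substituting this into the trace identity and cancelling the term $i\ccal_\lambda(\alpha\cdot\nu)g$ produces $\tfrac{1}{2}g = -2(\ccal_\lambda(\alpha\cdot\nu))^2 g$, i.e., $(\ccal_\lambda(\alpha\cdot\nu))^2 g = -\tfrac{1}{4} g$. Boundedness of $\ccal_\lambda$ on $L^2(\partial\Omega)^4$ together with the density of $H^{1/2}(\partial\Omega)^4$ then extends this to all of $L^2(\partial\Omega)^4$.

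For (ii) and (iii) I would expand identity (i) in block form. Writing $\alpha\cdot\nu = \begin{pmatrix} 0 & \sigma\cdot\nu \\ \sigma\cdot\nu & 0 \end{pmatrix}$ and using the decomposition \eqref{Eq:DecompCa}, and abbreviating $S := K_\lambda(\sigma\cdot\nu)$ and $T := W_\lambda(\sigma\cdot\nu)$, a direct multiplication gives $\ccal_\lambda(\alpha\cdot\nu) = \begin{pmatrix} T & (\lambda+m)S \\ (\lambda-m)S & T \end{pmatrix}$. Its square therefore has diagonal blocks $T^2 + (\lambda^2-m^2)S^2$ and off-diagonal blocks $(\lambda \pm m)\{S,T\}$. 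Equating each block with the corresponding block of $-\tfrac{1}{4}I_4$ produces (ii) from the diagonals and the two relations $(\lambda+m)\{S,T\}=0$ and $(\lambda-m)\{S,T\}=0$ from the off-diagonals. Since $m\geq 0$, the factors $\lambda+m$ and $\lambda-m$ cannot vanish simultaneously unless $\lambda = m = 0$, so (iii) follows immediately outside of this single borderline case.

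To handle the remaining case $\lambda = m = 0$, I would use a continuity argument: the kernels $k_\lambda$ and $w_\lambda$ in \eqref{kernels.K.W} (and their $\lambda\in[-m,m]$ counterparts) glue continuously across $|\lambda|=m$ and depend analytically on $\lambda$ off that threshold, and a routine dominated-convergence estimate on the integral kernels shows that $\lambda\mapsto K_\lambda$ and $\lambda\mapsto W_\lambda$ are continuous into the operator-norm topology of $L^2(\partial\Omega)^2$. Consequently $\{K_\lambda(\sigma\cdot\nu),W_\lambda(\sigma\cdot\nu)\}$ is norm-continuous in $\lambda$, and since it vanishes for every $\lambda\neq 0$ when $m=0$, it must vanish at $\lambda=0$ as well. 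The main technical point I expect to require care is the very first step, namely justifying that $\Phi_\lambda$ maps a dense subspace of $L^2(\partial\Omega)^4$ into $H^1(\Omega)^4$ so that both \Cref{Lemma:ReproductionFormula,Lemma:PropertiesOfC} can be applied to $\varphi=\Phi_\lambda(i(\alpha\cdot\nu)g)$; everything after that reduces to algebraic manipulation.
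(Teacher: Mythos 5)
Your proposal is correct and follows essentially the same route as the paper: item $(i)$ is obtained by combining the reproducing formula (\Cref{Lemma:ReproductionFormula}) with the Plemelj--Sokhotski jump formula (\Cref{Lemma:PropertiesOfC}) applied to $\Phi_\lambda$ acting on a dense class of densities, and $(ii)$--$(iii)$ then come from writing $\ccal_\lambda(\alpha\cdot\nu)$ in $2\times 2$ block form via \eqref{Eq:DecompCa}. Your extra care at the borderline case $\lambda=m=0$, where the off-diagonal blocks only give $(\lambda\pm m)\{K_\lambda(\sigma\cdot\nu),W_\lambda(\sigma\cdot\nu)\}=0$, is a legitimate refinement of a step the paper's one-line deduction leaves implicit, and your continuity-in-$\lambda$ argument (or, even more directly, the observation that the kernels in \eqref{kernels.K.W} depend on $\lambda$ and $m$ only through $\lambda^2-m^2$) settles it.
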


\begin{proof}
	Let us show $(i)$. Given $g\in C^\infty(\partial\Omega)^4$, set 
	$\varphi=\Phi_\lambda g$. Then, $\varphi\in H^1(\Omega)$ and $(H-\lambda)\varphi=0$ in $\Omega$. Thus, by \Cref{Lemma:ReproductionFormula} and \eqref{Eq:Plemelj-Sokhotski} we get that
	$$\Phi_\lambda g=\varphi = \Phi_\lambda (i (\alpha \cdot\nu)\varphi)
	=\Phi_\lambda \Big(i (\alpha \cdot\nu)\big({\textstyle - \frac{i}{2}} (\alpha \cdot\nu ) g + \ccal_\lambda g\big)\Big)
	=\Phi_\lambda \Big( \big({\textstyle \frac{1}{2}}   + i(\alpha \cdot\nu )\ccal_\lambda \big)g\Big)$$
	in $\Omega$. Taking traces on $\partial\Omega$ and using once again \eqref{Eq:Plemelj-Sokhotski}, we deduce that
	\begin{equation}
		\begin{split}
			\big({\textstyle - \frac{i}{2}} (\alpha \cdot\nu )  + \ccal_\lambda\big) g
			&=\big({\textstyle - \frac{i}{2}} (\alpha \cdot\nu )  + \ccal_\lambda\big)\big({\textstyle \frac{1}{2}}   + i(\alpha \cdot\nu )\ccal_\lambda \big)g\\
			&=\big({\textstyle - \frac{i}{4}}(\alpha \cdot\nu ) 
			+\ccal_\lambda+i\ccal_\lambda(\alpha \cdot\nu )\ccal_\lambda\big)g.
		\end{split}
	\end{equation}
	From here, and by the density of $C^\infty(\partial\Omega)^4$
	in $L^2(\partial\Omega)^4$, we get
	${\textstyle - \frac{1}{4}} (\alpha \cdot\nu )  
	= \ccal_\lambda(\alpha \cdot\nu )\ccal_\lambda$, which proves $(i)$. Then, $(ii)$ and $(iii)$ follow by $(i)$ writing $\ccal_\lambda$ as in \eqref{Eq:DecompCa}, in terms of $K_\lambda$ and $W_\lambda$.
\end{proof}

Let us now establish a regularity result that will be a key tool in the compactness arguments carried out in \Cref{Subsec:EigLimits}.
It concerns regularity estimates for the operators $ K_\lambda $ and $ \{W_\lambda, \sigma \cdot\nu\}$.

\begin{lemma}
	\label{Prop:RegularityKW}
	For every $\lambda \in \R$ there exists $C_\lambda>0$ depending only on $\Omega$ and $\lambda^2 - m^2$ such that
	\begin{equation}
		\label{Eq:RegularityK}
		\norm{ K_\lambda u }_{H^{1/2}(\partial \Omega)^2} \leq C_\lambda \norm{u}_{H^{-1/2}(\partial \Omega)^2}
	\end{equation}
	and 
	\begin{equation}
		\label{Eq:RegularityWComm}
		\norm{ \{W_\lambda, \sigma \cdot\nu\}u}_{H^{1/2}(\partial \Omega)^2} \leq C_\lambda \norm{u}_{H^{-1/2}(\partial \Omega)^2}
	\end{equation}
	for all $u\in H^{-1/2}(\partial \Omega)^2$. 
	The constant $C_\lambda$ can be chosen in such a way that for every $C_\circ>0$ there exists $C'>0$ depending only on $\Omega$ and $C_\circ$ such that 		$C_\lambda<C'$ for all $\lambda\in\R$ with $|\lambda^2-m^2|<C_\circ$.
\end{lemma}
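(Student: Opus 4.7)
The plan is to identify each operator with (or reduce it to) a weakly singular single--layer potential on $\partial\Omega$ and then invoke the classical $H^{-1/2}\to H^{1/2}$ mapping property of such operators on $C^2$ boundaries.

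For estimate \eqref{Eq:RegularityK} I would observe that $\frac{1}{4\pi}k_\lambda(x-y) = G_\lambda(x-y)$ is exactly the fundamental solution of the Helmholtz operator $-\Delta + (m^2 - \lambda^2)$ in $\R^3$, so $K_\lambda$ coincides with the scalar single layer potential trace (acting diagonally on $\C^2$). For $C^2$ boundaries this operator is known to map $H^{-1/2}(\partial\Omega)^2\to H^{1/2}(\partial\Omega)^2$ continuously. To make the dependence on $\lambda^2-m^2$ quantitative, split
\[
G_\lambda(x-y) = G_0(x-y) + \bigl(G_\lambda(x-y)-G_0(x-y)\bigr),
\]
where $G_0(x-y)=\tfrac{1}{4\pi|x-y|}$ is the Newtonian kernel. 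The first term yields the classical estimate (with constant depending only on $\Omega$), and the difference is smoother: a direct Taylor expansion gives $|G_\lambda(x-y)-G_0(x-y)|\leq C|\lambda^2-m^2|^{1/2}$ together with one more derivative estimate controlled by $|\lambda^2-m^2|$, so its contribution is a Hilbert--Schmidt operator with norm bounded by $C(\Omega, C_\circ)$ whenever $|\lambda^2-m^2|<C_\circ$.

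For estimate \eqref{Eq:RegularityWComm} the crucial point is an algebraic cancellation in the kernel of the anticommutator. Write
\[
w_\lambda(r) = f_\lambda(|r|)\,\sigma\cdot r,\qquad
f_\lambda(|r|):= \frac{i\,e^{-i\sqrt{\lambda^2-m^2}|r|}\bigl(1+i\sqrt{\lambda^2-m^2}|r|\bigr)}{|r|^3},
\]
so that the kernel of $\{W_\lambda,\sigma\cdot\nu\}$ is
\[
\frac{f_\lambda(|x-y|)}{4\pi}\bigl[(\sigma\cdot\nu(x))(\sigma\cdot(x-y)) + (\sigma\cdot(x-y))(\sigma\cdot\nu(y))\bigr].
\]
Using $(\sigma\cdot a)(\sigma\cdot b)=a\cdot b+i\sigma\cdot(a\times b)$, this equals
\[
\frac{f_\lambda(|x-y|)}{4\pi}\Bigl[(x-y)\cdot\bigl(\nu(x)+\nu(y)\bigr) + i\sigma\cdot\bigl((\nu(x)-\nu(y))\times(x-y)\bigr)\Bigr].
\]
Since $\partial\Omega$ is $C^2$, the standard geometric bounds $|(x-y)\cdot\nu(x)|+|(x-y)\cdot\nu(y)| \leq C|x-y|^2$ and $|\nu(x)-\nu(y)|\leq C|x-y|$ hold for $x,y\in\partial\Omega$ close. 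Combined with $f_\lambda(|r|)=O(|r|^{-3})$, the whole kernel has size $O(|x-y|^{-1})$ instead of the expected $O(|x-y|^{-2})$. Thus $\{W_\lambda,\sigma\cdot\nu\}$ is effectively a (matrix-valued) weakly singular integral operator of single-layer type, to which the same $H^{-1/2}\to H^{1/2}$ mapping result applies. Uniform control for $|\lambda^2-m^2|<C_\circ$ again follows by splitting off the $\lambda=m$ kernel and treating the $\lambda$-dependent remainder as a lower-order smoothing perturbation.

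The main obstacle is keeping track of the matrix structure when invoking the single-layer mapping result: the classical estimate applies to scalar kernels of the form $\tfrac{\rho(x,y)}{|x-y|}$ with $\rho$ of class $C^{0,\alpha}$ in the $(x,y)$-variables. One therefore has to verify that each scalar entry of the matrix kernel above, after absorbing the $O(|x-y|^2)$ cancellation factors into the numerator, satisfies the required regularity on $\partial\Omega\times\partial\Omega$; this is where the $C^2$ hypothesis on $\partial\Omega$ is essentially used. Once this is done, the uniform dependence of the constants on $\lambda^2-m^2$ over bounded sets is immediate from the explicit form of $f_\lambda$.
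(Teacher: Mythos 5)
Your treatment of \eqref{Eq:RegularityK} and the algebraic part of \eqref{Eq:RegularityWComm} is essentially the paper's: the anticommutation identity $(\sigma\cdot a)(\sigma\cdot b)=-(\sigma\cdot b)(\sigma\cdot a)+2a\cdot b$, together with the $C^2$ bounds $|\nu(x)-\nu(y)|=O(|x-y|)$ and $|(x-y)\cdot\nu(x)|=O(|x-y|^2)$, is exactly how the paper reduces the anticommutator kernel from $O(|x-y|^{-2})$ to $O(|x-y|^{-1})$, and splitting off the $\lambda$-independent leading singularity is also how the uniformity in $|\lambda^2-m^2|<C_\circ$ is obtained there (your ``Hilbert--Schmidt'' phrasing is not the right property for an $H^{-1/2}\to H^{1/2}$ conclusion, but the perturbation is indeed smoothing and this part is repairable).

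The genuine gap is in the mapping theorem you invoke at the end. You reduce to ``kernels of the form $\rho(x,y)/|x-y|$ with $\rho\in C^{0,\alpha}(\partial\Omega\times\partial\Omega)$ map $H^{-1/2}\to H^{1/2}$'' and propose to verify that the entries of the anticommutator kernel have Hölder numerators after absorbing the cancellation. This fails on both counts. First, a pointwise bound $|A(x,y)|\le C|x-y|^{-1}$ with a merely Hölder numerator gives $L^2$-boundedness (even compactness), but not a gain of a full derivative; the classical full-gain results require the kernel to have the structure of an asymptotic expansion in terms homogeneous of degree $-1$ in $x-y$ with regular $x$-dependent coefficients (Nédélec's pseudo-homogeneous class $-1$), not just a size bound. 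Second, and decisively, your hypothesis cannot be verified here: writing the cancelled kernel as $\rho(x,y)/|x-y|$ forces $\rho$ to contain quantities like $(x-y)\cdot\nu(x)/|x-y|^2$ and $\big((\nu(x)-\nu(y))\times(x-y)\big)/|x-y|^2$, which as $y\to x$ along a tangential direction $\theta$ converge to $\tfrac12\,\mathrm{II}_x(\theta,\theta)$ and to the shape operator applied to $\theta$ crossed with $\theta$; these limits are direction-dependent, so $\rho$ is bounded but not continuous on the diagonal (except for very special domains), let alone $C^{0,\alpha}$. The paper avoids this by staying in the pseudo-homogeneous framework: it proves that $K_\lambda$ and $\{W_\lambda,\sigma\cdot\nu\}$ are bounded from $L^2(\partial\Omega)^2$ into $H^1(\partial\Omega)^2$ via Nédélec's theorem, then uses the adjoint identities $(\mathcal T_b)^*=\mathcal T_{-b}$ and $(\mathcal A_b)^*=\mathcal A_{-b}$ to get $H^{-1}\to L^2$ by duality, and finally interpolates to reach $H^{-1/2}\to H^{1/2}$. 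Some version of this two-step (smoothing estimate in the correct kernel class, then duality and interpolation) is what your argument is missing.
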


\begin{proof}	
	The result essentially follows from the fact that both operators $K_\lambda$ and $ \{W_\lambda, \sigma \cdot\nu\}$ are integral operators given by kernels whose singularity is comparable to $1/|x|$, and which are regular enough far from the singularity.
	The proof is similar to that of \cite[Proposition~2.8]{OurmieresBonafosVega}, and thus we will omit some details. In particular, we will only address the proof of the proposition for 
	$|\lambda|\geq m$; the proof for $|\lambda|<m$ follows by similar arguments.
	Through the proof we will make use of the Sobolev space $H^1 (\partial \Omega)^2$ as well as its (continuous) dual $H^{-1} (\partial \Omega)^2$, which are defined in the standard way thanks to the fact that $\Omega$ has a $C^2$ boundary.

	Given $b\in \R$, consider the bounded operators in $L^2(\partial \Omega)^2$ defined, for $u \in L^2(\partial \Omega)^2$, by
	$$
	\mathcal{T}_b u (x):= \int_{\partial \Omega} T_b(x-y) u(y) \,d \upsigma (y) \quad \text{with}\quad   T_b (x) := \dfrac{e^{-i b |x|}}{|x|},
	$$
	and 
	$$
	\mathcal{S}_b u (x):= \lim_{\epsilon \downarrow 0} 
	\int_{\{y\in\partial \Omega:\, |x-y|> \epsilon\}}  S_b(x-y) u(y) \,d \upsigma (y) \quad \text{with}\quad   S_b (x) := e^{-i b |x|} (1 + i b |x|)i {\sigma} \cdot \dfrac{x}{|x|^3}.
	$$
	It is easy to check that
	$
	\overline{ T_b (-x)} = (T_{-b} (x))^\intercal$  and $\overline{ S_b (-x)} = (S_{-b} (x))^\intercal$. Thus,
	\begin{equation}
		\label{Eq:Duality1}
		(\mathcal{T}_b)^* = \mathcal{T}_{-b}  \quad \text{and} \quad (\mathcal{S}_b)^* = \mathcal{S}_{-b}.
	\end{equation}
	Let us also define
	$
	\mathcal{A}_b := \{\mathcal{S}_b,  \sigma \cdot\nu\}.
	$
	By \eqref{Eq:Duality1}, we have
	\begin{equation}
		\label{Eq:Duality2}
		(\mathcal{A}_b)^* = \mathcal{A}_{-b}.
	\end{equation}
	In addition, $\mathcal{A}_b$ can be written as the integral operator
	$$
	\mathcal{A}_b u (x) = \int_{\partial \Omega} A_b(x,y) u(y) \,d \upsigma (y)
	$$
	with kernel
	$$
	A_b (x,y) := e^{-i b |x-y|} (1 + i b |x-y|)i\Big[({\sigma} \cdot\nu(x) ) \Big({\sigma} \cdot \frac{x-y}{|x-y|^3}\Big) 
	+ \Big( {\sigma} \cdot \frac{x-y}{|x-y|^3} \Big)({\sigma} \cdot\nu(y) )\Big].
	$$
	As we will see from \eqref{Eq:GrowthKernels} below, the singularity of the kernel $A_b$ does not require to write 
	the integral operator $\mathcal{A}_b$ as an integral in the principal value sense.

	Note that the operators $\mathcal{T}_b$ and $\mathcal{A}_b$ coincide, respectively, with $K_\lambda$ and $\{W_\lambda, {\sigma} \cdot\nu \}$ (up to a multiplicative constant) for $ b= \sqrt{\lambda^2 - m^2}$.
	We will establish the regularity estimates for $\mathcal{T}_b$ and $\mathcal{A}_b$, since the notation is slightly more convenient thanks to \eqref{Eq:Duality1} and \eqref{Eq:Duality2}.
	
	First, let us prove that  $\mathcal{T}_b$ and $\mathcal{A}_b$ are bounded operators from $L^2(\partial \Omega)^2$ into $H^1 (\partial \Omega)^2$ for every $b\in \R$, with a norm uniformly controlled for all $b$ such that $|b|^2<C_\circ$.
	To prove it for $\mathcal{T}_b$, note that its kernel $T_b$ can be written as
	\begin{equation}
		T_b (x-y) = \dfrac{1}{|x-y|} + h_b(x-y)
	\end{equation}
	for some smooth function $h_b$.
	This entails that the kernel $T_b$ is pseudo-homogeneous of class $-1$ in the sense of \cite[\S 4.3.3]{Nedelec}, and thus \cite[Theorem~4.3.2]{Nedelec} yields that  $\mathcal{T}_b$ is bounded from $L^2(\partial \Omega)^2$ into $H^1 (\partial \Omega)^2$.
	Note that the dependence on $b$ in the norm of the operator comes from the kernel $h_b$, which has been obtained after considering the Taylor expansion of $e^{ib |x-y|}$.
	Therefore, it follows readily that it can be uniformly controlled for all $b$ such that $|b|^2<C_\circ$.
	The result for $\mathcal{A}_b$ is obtained using the same argument, provided that we show that	
	\begin{equation}
		\label{Eq:GrowthKernels}
		\widetilde{A}_b(x,y) := ({\sigma} \cdot\nu(x) ) \Big({\sigma} \cdot \frac{x-y}{|x-y|^3}\Big) 
		+ \Big( {\sigma} \cdot \frac{x-y}{|x-y|^3} \Big)({\sigma} \cdot\nu(y) ) = O(|x-y|^{-1}) \quad \text{as $|x-y|\to 0$.}
	\end{equation}
	To see this, note first that, for every $p,q\in\R^3$, 
	\begin{equation}
		\begin{split}
			(\sigma\cdot p)(\sigma\cdot q)
			&=p\cdot q+i\sigma\cdot(p\times q)
			=-q\cdot p-i\sigma\cdot(q\times p)+2p\cdot q
			=-(\sigma\cdot q)(\sigma\cdot p)+2p\cdot q.
		\end{split}
	\end{equation}
	Thus
	$$
	({\sigma} \cdot\nu(x)) ({\sigma} \cdot (x-y)) = - ({\sigma} \cdot (x-y) ) ({\sigma} \cdot\nu(x)) + 2 \nu(x) \cdot (x-y),
	$$
	which yields
	$$
	\widetilde{A}_b(x,y) =  \Big({\sigma} \cdot \dfrac{x-y} {|x-y|^3}\Big) \big({\sigma} \cdot (\nu(y) - \nu(x)) \big) + 2 \nu(x) \cdot \dfrac{x-y}{|x-y|^3}.
	$$
	Then, \eqref{Eq:GrowthKernels} follows by using that 
	$|\nu(x) - \nu(y)| = O(|x-y|)$ and $ |\nu(x) \cdot (x-y)| = O(|x-y|^2)$ because $\partial\Omega$ is of class $C^2$; see~\cite[Example~4.5]{Nedelec} or \cite[Lemma~3.15]{Folland}.
	
	Finally, since $\mathcal{T}_b$ and $\mathcal{A}_b$ are linear bounded operators from $L^2(\partial \Omega)^2$ into $H^1 (\partial \Omega)^2$ for every $b\in \R$, by \eqref{Eq:Duality1}, \eqref{Eq:Duality2}, and duality it follows that $\mathcal{T}_b$ and $\mathcal{A}_b$ are also bounded from $H^{-1} (\partial \Omega)^2$ into $L^2(\partial \Omega)^2$, with the same norm.
	From this, we obtain the desired estimates by using classical interpolation results; see for instance \cite[Propositions~2.1.62 and 2.4.3]{Sauter-Schwab}.
\end{proof}

We finish this section by showing a reformulation of the eigenvalue equation
\begin{equation}
	\Dirac_\tau\varphi=\lambda\varphi,\quad\varphi\in\mathrm{Dom}(\Dirac_\tau)
\end{equation}
in terms of boundary integral equations which involve the components of the trace of
$\varphi$ on $\partial\Omega$ and the operators $K_\lambda$ and $W_\lambda$ introduced in \eqref{Eq:DefK}. Despite that $\varphi$ belongs to $H^1(\Omega)^4$ by assumption, and thus the trace of its components belong to $H^{1/2}(\partial\Omega)^2$, we will see that it is equivalent to pose the corresponding boundary integral equations in the larger space $L^2(\partial\Omega)^2$. This is because they indeed force the solution to belong to $H^{1/2}(\partial\Omega)^2$ thanks to \Cref{Prop:RegularityKW}, as we will see.

\begin{proposition}
	\label{Lemma:BoundaryPb}
	Let $\tau,\lambda\in\R$, let $\varphi:\Omega\to\C^4$, and let $\Phi_\lambda$ be as in \eqref{Eq:DefPhiMaj}. Then, the following are equivalent:
	\begin{itemize}
		\item[$(i)$] $\varphi\in\mathrm{Dom}(\Dirac_\tau)$ and 
		$\Dirac_\tau\varphi=\lambda\varphi$.
		\item[$(ii)$] $\varphi=\Phi_\lambda(i (\alpha \cdot\nu)g)$ in $\Omega$, where
		\begin{equation}\label{bdy.components.rel}
			g = \begin{pmatrix} u \\ i e^\tau ({\sigma}\cdot\nu) u \end{pmatrix} 
		\end{equation}
		for some $u\in L^2(\partial \Omega)^2$ such that 
		\begin{equation}
			\label{Eq:BoundaryPb}
			\begin{split}
				u & =  \big(2i W_\lambda  ({\sigma}\cdot\nu) - 2(\lambda+m) e^\tau K_\lambda\big) u\quad\text{and}\\
				u & =  \big( 2 i ({\sigma}\cdot\nu)  W_\lambda  + 2(\lambda-m)e^{-\tau} ({\sigma}\cdot\nu )  K_\lambda ({\sigma}\cdot\nu) \big) u 
			\end{split}
		\end{equation}
		in $L^2(\partial\Omega)^2$.
	\end{itemize}
	In addition, if $\varphi$ is as in $(ii)$, then	$\varphi=g$ in $L^2(\partial\Omega)^4$.
\end{proposition}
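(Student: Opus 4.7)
The equivalence splits into two implications, and the nontrivial work lies entirely in the reverse direction and concerns regularity.

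\textbf{Direction $(i)\Rightarrow(ii)$.} Write $\varphi=(u,v)^\intercal$ with $u,v\in H^1(\Omega)^2$. The boundary condition defining $\mathrm{Dom}(\Dirac_\tau)$ is equivalent to $v=ie^\tau(\sigma\cdot\nu)u$ on $\partial\Omega$, as already recorded in \eqref{Eq:EigenvaluePbUV.1}; hence the trace $g$ of $\varphi$ takes the form \eqref{bdy.components.rel}. Since $(\Dirac-\lambda)\varphi=0$ in $\Omega$, \Cref{Lemma:ReproductionFormula} yields $\varphi=\Phi_\lambda(i(\alpha\cdot\nu)g)$ in $\Omega$. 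Taking the nontangential trace via \Cref{Lemma:PropertiesOfC} produces the single identity $g=2i\ccal_\lambda(\alpha\cdot\nu)g$ on $\partial\Omega$. Using $(\alpha\cdot\nu)g=(ie^\tau u,(\sigma\cdot\nu)u)^\intercal$ (from the block form of $\alpha\cdot\nu$ together with $(\sigma\cdot\nu)^2=I_2$) and the decomposition \eqref{Eq:DecompCa}, the upper $\C^2$-row of this identity is the first equation of \eqref{Eq:BoundaryPb}; the lower row, after dividing by $ie^\tau$ and applying $(\sigma\cdot\nu)$ on the left, gives the second one.

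\textbf{Direction $(ii)\Rightarrow(i)$.} Given $u\in L^2(\partial\Omega)^2$ solving \eqref{Eq:BoundaryPb}, define $g$ as in \eqref{bdy.components.rel} and $\varphi:=\Phi_\lambda(i(\alpha\cdot\nu)g)$. By construction $(\Dirac-\lambda)\varphi=0$ in $\Omega$ because $\phi_\lambda$ is a fundamental solution of $\Dirac-\lambda$. The jump formula of \Cref{Lemma:PropertiesOfC} gives the nontangential boundary value $\varphi|_{\partial\Omega}=\tfrac{1}{2}g+i\ccal_\lambda(\alpha\cdot\nu)g$, and reversing the algebra of the previous paragraph shows that \eqref{Eq:BoundaryPb} is exactly the condition $g=2i\ccal_\lambda(\alpha\cdot\nu)g$; hence $\varphi=g$ on $\partial\Omega$. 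This trace identity is the boundary condition of $\mathrm{Dom}(\Dirac_\tau)$ and also proves the last assertion of the proposition, so the only remaining task is to upgrade $\varphi$ to $H^1(\Omega)^4$.

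\textbf{Main obstacle: regularity.} This is the central difficulty: a priori $u$ lies only in $L^2(\partial\Omega)^2$, whereas \Cref{Lemma:SigmaGradEstimates}~$(ii)$ demands an $H^{1/2}$ trace in order to conclude $H^1$ regularity inside. The key observation is that while the individual compositions $W_\lambda(\sigma\cdot\nu)$ and $(\sigma\cdot\nu)W_\lambda$ are merely $L^2$-bounded, their anticommutator $\{W_\lambda,\sigma\cdot\nu\}$ gains half a derivative by \Cref{Prop:RegularityKW}. \emph{Adding} the two equations of \eqref{Eq:BoundaryPb} therefore produces
\begin{equation*}
u=i\{W_\lambda,\sigma\cdot\nu\}u-(\lambda+m)e^\tau K_\lambda u+(\lambda-m)e^{-\tau}(\sigma\cdot\nu)K_\lambda(\sigma\cdot\nu)u,
\end{equation*}
and since \Cref{Prop:RegularityKW} yields that both $K_\lambda$ and $\{W_\lambda,\sigma\cdot\nu\}$ map $H^{-1/2}(\partial\Omega)^2$ into $H^{1/2}(\partial\Omega)^2$, while multiplication by $\sigma\cdot\nu\in C^1(\partial\Omega)$ preserves $H^{\pm 1/2}(\partial\Omega)^2$, the right-hand side lies in $H^{1/2}(\partial\Omega)^2$. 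Consequently $u\in H^{1/2}(\partial\Omega)^2$ and $g\in H^{1/2}(\partial\Omega)^4$. Standard $L^2\to L^2$ mapping properties of the layer potential $\Phi_\lambda$ give $\varphi\in L^2(\Omega)^4$, and $(\Dirac-\lambda)\varphi=0$ together with \eqref{Eq:EigenvaluePbUV.1} provides $L^2(\Omega)^2$ control of $\sigma\cdot\nabla$ applied to each $\C^2$-component of $\varphi$. Applying \Cref{Lemma:SigmaGradEstimates}~$(ii)$ componentwise then promotes $\varphi$ to $H^1(\Omega)^4$, completing the proof.
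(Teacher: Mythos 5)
Your proof is correct and follows essentially the same route as the paper: the reproducing formula plus the Plemelj--Sokhotski jump to reduce both directions to the trace identity $g=2i\ccal_\lambda(\alpha\cdot\nu)g$ written component-wise via \eqref{Eq:DecompCa}, and then, for $(ii)\Rightarrow(i)$, the anticommutator identity obtained by adding the two equations of \eqref{Eq:BoundaryPb} together with \Cref{Prop:RegularityKW} and \Cref{Lemma:SigmaGradEstimates}~$(ii)$ to upgrade $\varphi$ to $H^1(\Omega)^4$. The only difference is cosmetic: the paper isolates the regularity step as \Cref{Lemma:BoundaryPb.2}, which you reprove inline.
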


To prove \Cref{Lemma:BoundaryPb} we will use the following regularity result, which  will be also used in the compactness arguments carried out in \Cref{Sec:Parametrization+Monotonicity,Subsec:EigLimits}.
It will be specially useful when studying the asymptotic behavior of the eigenvalue curves as $\tau\to \pm \infty$, since a precise control over the dependence on $\tau$ in the constants involved in the estimates is needed.

\begin{lemma}
	\label{Lemma:BoundaryPb.2}	
	Let $u\in L^2(\partial \Omega)^2$ be a solution to \eqref{Eq:BoundaryPb}, and set $v=i e^\tau ({\sigma}\cdot\nu) u$. Then,
	\begin{eqnarray}
		u\!\!\! &=& \!\!\! \big(i\{W_\lambda , {\sigma}\cdot\nu\}   - (\lambda+m) e^\tau K_\lambda  + (\lambda-m)e^{-\tau} ({\sigma}\cdot\nu)  K_\lambda ({\sigma}\cdot\nu )\big) u
		\quad\text{and}\label{Eq:BoundaryPbComm}\\
		v\!\!\! &=& \!\!\!\big(i \{W_\lambda , {\sigma}\cdot\nu\}   - (\lambda+m) e^\tau ({\sigma}\cdot\nu)  K_\lambda ({\sigma}\cdot\nu )  + (\lambda-m)e^{-\tau} K_\lambda \big)v\label{Eq:BoundaryPbCommv}
	\end{eqnarray}
	$\upsigma$-a.e.\! on $\partial\Omega$. As a consequence, $u,v\in H^{1/2}(\partial\Omega)^2$, and 
	\begin{equation}\label{Eq:BoundaryPbComm.UV}
		\begin{split}
			\|u\|_{H^{1/2}(\partial\Omega)^2}&\leq 
			C(1+ |\lambda+m| e^\tau+|\lambda-m|e^{-\tau})C_\lambda
			\norm{u}_{H^{-1/2}(\partial \Omega)^2},\\
			\|v\|_{H^{1/2}(\partial\Omega)^2}&\leq 
			C(1+ |\lambda+m| e^\tau+|\lambda-m|e^{-\tau})C_\lambda
			\norm{v}_{H^{-1/2}(\partial \Omega)^2},
		\end{split}
	\end{equation}
	where $C>0$ only depends on $\Omega$, and $C_\lambda$ is as in \Cref{Prop:RegularityKW}.
\end{lemma}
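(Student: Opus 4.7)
The plan is to derive \eqref{Eq:BoundaryPbComm} and \eqref{Eq:BoundaryPbCommv} as algebraic consequences of \eqref{Eq:BoundaryPb}, and then obtain the regularity estimates \eqref{Eq:BoundaryPbComm.UV} by invoking \Cref{Prop:RegularityKW}.

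First I would observe that \eqref{Eq:BoundaryPbComm} is obtained simply by averaging the two identities in \eqref{Eq:BoundaryPb}. Adding them and dividing by $2$ yields
\[
u = i\bigl(W_\lambda(\sigma\cdot\nu) + (\sigma\cdot\nu)W_\lambda\bigr)u - (\lambda+m)e^\tau K_\lambda u + (\lambda-m)e^{-\tau}(\sigma\cdot\nu)K_\lambda(\sigma\cdot\nu)u,
\]
which is exactly \eqref{Eq:BoundaryPbComm} since the parenthesis is $\{W_\lambda,\sigma\cdot\nu\}$. To get \eqref{Eq:BoundaryPbCommv}, I would substitute $u = -i e^{-\tau}(\sigma\cdot\nu)v$ (which follows from $v = ie^\tau(\sigma\cdot\nu)u$ and $(\sigma\cdot\nu)^2=1$) into \eqref{Eq:BoundaryPbComm}, multiply both sides on the left by $i e^\tau (\sigma\cdot\nu)$, and simplify each term separately. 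The key algebraic identity that makes the anticommutator term come out cleanly is
\[
(\sigma\cdot\nu)\{W_\lambda,\sigma\cdot\nu\}(\sigma\cdot\nu) = \{W_\lambda,\sigma\cdot\nu\},
\]
which follows directly from $(\sigma\cdot\nu)^2=1$. Tracking signs carefully, the first term remains $i\{W_\lambda,\sigma\cdot\nu\}v$, the term with $K_\lambda$ picks up the conjugation by $\sigma\cdot\nu$ and the factor $e^{2\tau}$ collapses to $e^\tau$, and the last term loses the conjugation by $(\sigma\cdot\nu)$ entirely and becomes $(\lambda-m)e^{-\tau}K_\lambda v$, yielding \eqref{Eq:BoundaryPbCommv}.

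For the $H^{1/2}$ regularity, I would apply the triangle inequality to \eqref{Eq:BoundaryPbComm}. By \Cref{Prop:RegularityKW}, both $K_\lambda$ and $\{W_\lambda,\sigma\cdot\nu\}$ map $H^{-1/2}(\partial\Omega)^2$ into $H^{1/2}(\partial\Omega)^2$ with norm bounded by $C_\lambda$. The only remaining operator is $(\sigma\cdot\nu)K_\lambda(\sigma\cdot\nu)$, which I would handle by noting that since $\partial\Omega$ is $C^2$, pointwise multiplication by $\sigma\cdot\nu$ is bounded on $H^{\pm 1/2}(\partial\Omega)^2$ (with a constant depending only on $\Omega$). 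Composing, $(\sigma\cdot\nu)K_\lambda(\sigma\cdot\nu)$ is bounded from $H^{-1/2}$ to $H^{1/2}$ with norm $\lesssim C_\lambda$. Summing the three contributions with the weights $1$, $|\lambda+m|e^\tau$, and $|\lambda-m|e^{-\tau}$ gives the first inequality in \eqref{Eq:BoundaryPbComm.UV}. The estimate for $v$ is entirely analogous, starting from \eqref{Eq:BoundaryPbCommv}.

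The main obstacle is bookkeeping rather than any real analytic difficulty: one must be careful with the factors of $i$ and $e^{\pm\tau}$ when passing from the equation for $u$ to the equation for $v$, and one must verify the conjugation identity for the anticommutator. The regularity step is essentially a direct invocation of \Cref{Prop:RegularityKW} together with the (trivial) boundedness of multiplication by the $C^1$ matrix field $\sigma\cdot\nu$ on the relevant Sobolev spaces.
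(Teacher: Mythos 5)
Your proposal is correct and follows essentially the same route as the paper: add the two equations in \eqref{Eq:BoundaryPb} to obtain \eqref{Eq:BoundaryPbComm}, use the boundary relation $v=ie^\tau(\sigma\cdot\nu)u$ together with $(\sigma\cdot\nu)^2=1$ to pass to \eqref{Eq:BoundaryPbCommv}, and then apply the triangle inequality, \Cref{Prop:RegularityKW}, and the boundedness of multiplication by $\sigma\cdot\nu$ on $H^{\pm1/2}(\partial\Omega)^2$ to get \eqref{Eq:BoundaryPbComm.UV}. Your sign and exponent bookkeeping, including the identity $(\sigma\cdot\nu)\{W_\lambda,\sigma\cdot\nu\}(\sigma\cdot\nu)=\{W_\lambda,\sigma\cdot\nu\}$, checks out.
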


\begin{proof}
	By adding the two equations from \eqref{Eq:BoundaryPb} we directly get \eqref{Eq:BoundaryPbComm} in the $L^2(\partial \Omega)^2$ sense, and therefore also $\upsigma$-a.e.\! on $\partial\Omega$. Then, plugging $v = i e^\tau ({\sigma}\cdot\nu) u$ in \eqref{Eq:BoundaryPbComm} we obtain \eqref{Eq:BoundaryPbCommv}.
	
	We now prove that \eqref{Eq:BoundaryPbComm} yields $u\in H^{1/2}(\partial\Omega)^2$ and the estimate for $\|u\|_{H^{1/2}(\partial\Omega)^2}$; the proof for $v$ is analogous. Using that $\nu$ is of class $C^1$ on $\partial\Omega$ (which gives that ${\sigma}\cdot\nu$ is a bounded operator in $H^{\pm1/2}(\partial \Omega)^2$; see \cite[Lemma A.2]{BehrndtHolzmannMas} for the case $H^{1/2}(\partial \Omega)^2$) and \Cref{Prop:RegularityKW}, we have
	\begin{equation}
		\begin{split}
			\|u\|_{H^{1/2}(\partial\Omega)^2}&\leq 
			\|\{W_\lambda , {\sigma}\cdot\nu\}u\|_{H^{1/2}(\partial\Omega)^2}   + |\lambda+m| e^\tau 
			\|K_\lambda u\|_{H^{1/2}(\partial\Omega)^2}\\ 
			&\quad+C|\lambda-m|e^{-\tau} 
			\|  K_\lambda ({\sigma}\cdot\nu ) u\|_{H^{1/2}(\partial\Omega)^2}\\
			&\leq C_\lambda \norm{u}_{H^{-1/2}(\partial \Omega)^2}
			+ |\lambda+m| e^\tau C_\lambda\norm{u}_{H^{-1/2}(\partial \Omega)^2}\\
			&\quad+C|\lambda-m|e^{-\tau} C_\lambda\norm{({\sigma}\cdot\nu ) u}_{H^{-1/2}(\partial \Omega)^2}.
		\end{split}
	\end{equation}
\end{proof}

Using the previous result we can now establish \Cref{Lemma:BoundaryPb}.

\begin{proof}[Proof of \Cref{Lemma:BoundaryPb}]
	We first show that $(i)$ implies $(ii)$. Let 
	$\varphi\in\mathrm{Dom}(\Dirac_\tau)$ such that
	$\Dirac_\tau \varphi=\lambda\varphi$. Then $\varphi\in H^1(\Omega)^4\subset H^{1/2}(\partial\Omega)^4\subset L^2(\partial \Omega)^4$, $\varphi = \Phi_\lambda (i (\alpha \cdot\nu)\varphi)$ in $\Omega$ by \Cref{Lemma:ReproductionFormula}, and
	$\varphi = i(\sinh \tau  - \cosh \tau \, \beta) (\alpha \cdot\nu ) \varphi$ on $\partial\Omega$.
	Writing this boundary equation in terms of the components of $\varphi$, we get
	\begin{equation}\label{comput.compon.K.W:eq2}  \begin{split}
			\begin{pmatrix} u \\ v \end{pmatrix}&:=\varphi 
			= i(\sinh \tau  - \cosh \tau \, \beta) (\alpha \cdot\nu ) \varphi
			\\
			&= i \begin{pmatrix} - e^{-\tau}  & 0 \\ 0 &  e^\tau  \end{pmatrix} \begin{pmatrix} 0 & {\sigma}\cdot\nu \\ {\sigma}\cdot\nu  & 0 \end{pmatrix}   \begin{pmatrix} u \\ v \end{pmatrix}  = i \begin{pmatrix} - e^{-\tau} ({\sigma}\cdot\nu ) v \\ e^\tau ({\sigma}\cdot\nu ) u \end{pmatrix}.
	\end{split} \end{equation}
	This shows \eqref{bdy.components.rel}. Hence, it only remains to prove that $u$ satisfies \eqref{Eq:BoundaryPb}.
	Thanks to \Cref{Lemma:PropertiesOfC}, we have $\varphi = 2 i \ccal_\lambda (\alpha \cdot\nu) \varphi$ on $\partial \Omega$.
	Writing this equation in terms of $u$ and $v$ using \eqref{Eq:DecompCa} we get
	\begin{equation}\label{comput.compon.K.W:eq1} \begin{split}
			\begin{pmatrix} u \\ v \end{pmatrix}  
			& =2 i \begin{pmatrix}
				(\lambda+m) K_\lambda & W_\lambda \\
				W_\lambda & (\lambda -m) K_\lambda
			\end{pmatrix}
			\begin{pmatrix} 0 & {\sigma}\cdot\nu \\ {\sigma}\cdot\nu  & 0 \end{pmatrix}   \begin{pmatrix} u \\ v \end{pmatrix} \\
			&= 2 i \begin{pmatrix}
				(\lambda+m) K_\lambda ({\sigma}\cdot\nu) v + W_\lambda ({\sigma}\cdot\nu )u \\
				W_\lambda ({\sigma}\cdot\nu) v +  (\lambda -m) K_\lambda ({\sigma}\cdot\nu) u
			\end{pmatrix} 
			= \begin{pmatrix}
				- 2 (\lambda+m) e^\tau  K_\lambda  u  + 2 i \, W_\lambda ({\sigma}\cdot\nu )u \\
				- 2 e^\tau W_\lambda u + 2 i  (\lambda -m) K_\lambda ({\sigma}\cdot\nu) u
			\end{pmatrix},
	\end{split} \end{equation}
	where in this last equality we used the identity 
	$v=i e^\tau ({\sigma}\cdot\nu) u $  proved in \eqref{comput.compon.K.W:eq2}.
	Therefore, the first equation in \eqref{Eq:BoundaryPb} is established.
	To prove the second one we simply have to multiply the equation
	$$
	i e^\tau ({\sigma}\cdot\nu) \,u =v= - 2 e^\tau W_\lambda u + 2 i  (\lambda -m) K_\lambda ({\sigma}\cdot\nu) u
	$$
	by $-i e^{-\tau}  ({\sigma}\cdot\nu)$ from the left.
	
	We now show that $(ii)$ implies $(i)$. Let $u\in L^2(\partial \Omega)^2$ be a solution to \eqref{Eq:BoundaryPb}, set 
	$v=i e^\tau ({\sigma}\cdot\nu) u$, $g$ as in \eqref{bdy.components.rel}, and define
	$\varphi=\Phi_\lambda(i (\alpha \cdot\nu)g)$ in $\Omega$. We want to prove that $\varphi\in\mathrm{Dom}(\Dirac_\tau)$ and that $\Dirac_\tau\varphi=\lambda\varphi$. Note that the eigenvalue equation $\Dirac_\tau\varphi=\lambda\varphi$ is automatically satisfied once we know that $\varphi\in\mathrm{Dom}(\Dirac_\tau)$, since we have set $\varphi=\Phi_\lambda(i (\alpha \cdot\nu)g)$ and $\phi_\lambda$ is a fundamental solution of $\Dirac-\lambda$. Therefore, it is enough to prove that $\varphi\in\mathrm{Dom}(\Dirac_\tau)$. 
	For this purpose, let us first show that $\varphi=g$ on $\partial\Omega$, which is the last statement of the proposition. 
	Since $u$ solves \eqref{Eq:BoundaryPb} and we have set $v=i e^\tau ({\sigma}\cdot\nu) u$, \eqref{comput.compon.K.W:eq1} yields $g=2i\ccal_\lambda(\alpha\cdot\nu)g$ in $L^2(\partial\Omega)^4$. Then, if we take traces on both sides of the identity $\varphi=\Phi_\lambda(i (\alpha \cdot\nu)g)$, \Cref{Lemma:PropertiesOfC} gives
	$$\varphi=- \dfrac{i}{2} (\alpha \cdot\nu) i (\alpha \cdot\nu)g + \ccal_\lambda i (\alpha \cdot\nu)g
	=\frac{1}{2}g+\ccal_\lambda i (\alpha \cdot\nu)g
	=\frac{1}{2}g+\frac{1}{2}g=g\quad\text{on $\partial\Omega$},$$
	as desired. At this point, that $\varphi$ solves the boundary equation $\varphi = i(\sinh \tau  - \cosh \tau \, \beta) (\alpha \cdot\nu ) \varphi$ on $\partial\Omega$ is straightforward arguing as in \eqref{comput.compon.K.W:eq2}. 
	
	It only remains to prove that $\varphi\in H^1(\Omega)^4$. We already know that
	$\varphi = g=(u,v)^\intercal$
	on $\partial\Omega$. Abusing notation, let us also denote by $u$ and $v$ the components of $\varphi$ as functions defined on $\Omega$. 
	It is known that $\Phi_\lambda$ is a bounded operator from $L^2(\partial\Omega)^4$ into $L^2(\Omega)^4$; one can argue as in the proof of \cite[Lemma 2.1]{AMV1} but using that $\Omega$ is bounded instead of the exponential decay assumed there. Hence, 
	$\varphi=\Phi_\lambda(i (\alpha \cdot\nu)g)\in L^2(\Omega)^4$, which means that $u,v\in L^2(\Omega)^2$. Then, using the equation 
	$\Dirac\varphi=\lambda\varphi$ once written component-wise as in \eqref{Eq:EigenvaluePbUV.1}, we deduce that 
	${\sigma} \cdot \nabla u, {\sigma} \cdot \nabla v\in L^2(\Omega)^2$. Note also that $u,v\in H^{1/2}(\partial\Omega)^2$ by \Cref{Lemma:BoundaryPb.2}. Therefore, thanks to \Cref{Lemma:SigmaGradEstimates}~$(ii)$, we conclude that
	$ u,v\in{H^1(\Omega)^2}$, that is, $ \varphi\in{H^1(\Omega)^4}$.
\end{proof}

\section{Parametrization and monotonicity} \label{Sec:Parametrization+Monotonicity}

In this section we study the dependence of the eigenvalues of $\Dirac_\tau$, and the associated eigenfunctions, on the parameter $\tau$, establishing \Cref{Th:ParamEigenvalues}. 
Note that even if the dependence on $\tau$ in $\Dirac_\tau$ is expressed through the smooth functions $\tau\mapsto\sinh\tau$ and $\tau \mapsto\cosh\tau$, the proof of \Cref{Th:ParamEigenvalues} is not straightforward and requires some ingredients.
First, we will prove that the eigenvalues can be parametrized locally; see \Cref{Prop:LocalParamEig} below.
Once this is done, we will show that the local parametrization can be extended to a global one. 
To accomplish this, we obtain an explicit formula for the derivative of the eigenvalues with respect to $\tau$; see \Cref{Prop:DerivativeEigenvalues}.
It will provide a growth estimate for the eigenvalue curves which will be crucial to eventually establish \Cref{Th:ParamEigenvalues}.

Let us first prove that the eigenvalues of the operator $\Dirac_\tau$  can be parametrized locally.
The dependence of $\Dirac_\tau$ on $\tau$ appears in the boundary conditions, and hence, in the domain of definition of the operator.
Therefore, we cannot apply directly the very extensive theory of \cite{Kato} devoted to the so-called holomorphic families of operators of type (A) ---in which the domain of the operators is independent of the parameter.
The usual strategy in these cases consists of passing to the weak formulation of the problem, constructing the associated bilinear form in which one of its terms account for the boundary condition ---and thus the domain of definition may be independent of the parameter.
However, this method does not work in our context: the theory developed in \cite{Kato} is devoted to sectorial operators and this is not our case (the spectrum of $\Dirac_{\tau}$ cannot be included into a sector of the complex plane, since it accumulates at $\pm \infty$).

Despite these difficulties, we can still use some of the theory developed in \cite{Kato}, since it can be shown that the resolvent of $\Dirac_\tau$ is analytic in $\tau\in\R$ (using crucially a Krein-type resolvent formula from \cite{BehrndtHolzmannMas}). 
Once this is proved, it readily follows that the eigenvalues can be locally parametrized, as stated in the following result.

\begin{lemma}
	\label{Prop:LocalParamEig}
	Let $\tau_0 \in \R$ and let $\lambda_\star$ be an eigenvalue of $\Dirac_{\tau_0}$ with multiplicity $\mu_\star$.
	Then, $\lambda_\star$ splits into one or several eigenvalue curves $\lambda_k(\tau)$ (with $k= 1,\ldots,k_\star$ for some $k_\star\leq \mu_\star$) which are real analytic close to $\tau = \tau_0$, and such that $\lambda_k(\tau) \in \sigma(\Dirac_\tau)$.
	Each of these eigenvalues has constant multiplicity $\mu_k$ and the sum of all the multiplicities is equal to $\mu_\star$.
	Moreover, for each $k$ there exist $\mu_k$ linearly independent  eigenfunctions $\varphi_k^j (\tau)$ associated to $\lambda_k(\tau)$ which depend analytically on $\tau$ and such that $\{ \varphi_k^j (\tau_0) : k = 1,\ldots,k_\star, \ j = 1,\ldots,\mu_k\}$ forms an orthogonal basis of the eigenspace associated to $\lambda_\star$.		
\end{lemma}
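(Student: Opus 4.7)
My strategy is to apply Kato's perturbation theory for self-adjoint operator families whose resolvents depend analytically on a real parameter (i.e.\! analyticity in the generalized sense of \cite{Kato}, Chapter VII), thereby avoiding the type-(A) hypothesis which fails here because $\mathrm{Dom}(\Dirac_\tau)$ varies with $\tau$. The first step is to fix $z_0\in\C\setminus\R$, which lies in the resolvent set of every $\Dirac_\tau$ by self-adjointness, and to establish that the map $\tau\mapsto(\Dirac_\tau-z_0)^{-1}$ is real-analytic as a function valued in the bounded operators on $L^2(\Omega)^4$, in a neighborhood of $\tau_0$.

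For this I would invoke the Krein-type resolvent formula of \cite{BehrndtHolzmannMas}, which expresses $(\Dirac_\tau-z_0)^{-1}$ as the sum of the free Dirac resolvent (independent of $\tau$) and a correction term of the form $\Phi_{z_0}\Lambda(\tau,z_0)^{-1}\Phi_{\bar z_0}^\ast$, where $\Phi_{z_0}$ is the Poisson-type operator built from the fundamental solution $\phi_{z_0}$ and $\Lambda(\tau,z_0)$ is a bounded operator on $L^2(\partial\Omega)^4$ whose $\tau$-dependence enters only through the coefficient matrix $i(\sinh\tau-\cosh\tau\,\beta)(\alpha\cdot\nu)$ appearing in the boundary condition of $\mathrm{Dom}(\Dirac_\tau)$. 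Since $\sinh$ and $\cosh$ are entire, $\tau\mapsto\Lambda(\tau,z_0)$ is real-analytic in operator norm; invertibility of $\Lambda(\tau_0,z_0)$ (which holds precisely because $z_0\notin\sigma(\Dirac_{\tau_0})$) propagates by a Neumann series to a neighborhood of $\tau_0$ and yields analyticity of $\Lambda(\tau,z_0)^{-1}$, hence of the full resolvent.

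With analyticity of the resolvent in hand, $\{\Dirac_\tau\}_\tau$ is a self-adjoint analytic family of operators with compact resolvent (the spectrum is purely discrete by \Cref{Lemma:SpectrumGenMIT}). The hypotheses of the Rellich--Nagy and Kato theorems (e.g.\! \cite{Kato}, Chapter VII, Theorems 1.8 and 3.9) are then met, giving exactly the stated conclusion: the isolated eigenvalue $\lambda_\star$ of multiplicity $\mu_\star$ splits into finitely many real-analytic branches $\lambda_k(\tau)$, each of constant multiplicity $\mu_k$ with $\sum_k\mu_k=\mu_\star$, and for every branch one can select $\mu_k$ real-analytic associated eigenfunctions $\varphi_k^j(\tau)$. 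Orthogonality of $\{\varphi_k^j(\tau_0)\}_{k,j}$ is obtained inside each eigenspace by Gram--Schmidt and automatically across distinct eigenvalues by self-adjointness of $\Dirac_{\tau_0}$.

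The main obstacle is the verification of the analytic invertibility of the boundary operator $\Lambda(\tau,z_0)$. Once its precise form is extracted from \cite{BehrndtHolzmannMas}, its analytic dependence on $\tau$ is a direct consequence of the polynomial dependence on $\sinh\tau$ and $\cosh\tau$; invertibility at $\tau_0$ is equivalent to $z_0\notin\sigma(\Dirac_{\tau_0})$, and the Neumann series argument then supplies both invertibility on a small interval around $\tau_0$ and the analyticity of the inverse, closing the argument.
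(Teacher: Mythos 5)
Your proposal follows essentially the same route as the paper: both prove real-analyticity of $\tau\mapsto(\Dirac_\tau-\zeta)^{-1}$ via the Krein-type resolvent formula of \cite{BehrndtHolzmannMas} together with a Neumann series argument for the boundary operator, and then invoke Kato's perturbation theory for analytic families with compact resolvent to obtain the analytic eigenvalue branches and eigenfunctions. The only cosmetic difference is that the paper's formula compares $\Dirac_\tau$ with the MIT bag operator $\Dirac_0$ (not the free Dirac operator), with the $\tau$-dependence isolated in the scalar $\omega(\tau)=-\sinh\tau/(1+\cosh\tau)$ multiplying the Weyl function, and the bounded invertibility of $I_4-\omega(\tau_0)M(\zeta)$ on the boundary space is supplied by a specific lemma of \cite{BehrndtHolzmannMas} rather than deduced directly from $\zeta\notin\sigma(\Dirac_{\tau_0})$.
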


\begin{proof}
	Although in the rest of the article we will consider $\tau \in \R$, in this proof we may assume that $\tau \in \C$, since the results from perturbation theory that we will use are more naturally presented in this setting.
	Then, the result will follow restricting $\tau$ to $\R$.
	
	We claim that for $\zeta \in \C\setminus \R$, the resolvent operator $(\Dirac_\tau - \zeta)^{-1}$ is bounded and  holomorphic in $\tau$ in a neighborhood of $\tau_0$.
	To prove this claim, we will use a resolvent formula from \cite{BehrndtHolzmannMas} which we recall now.
	For $\zeta \in \C \setminus \R$, consider the $\gamma$-field $\gamma(\zeta)$ and the Weyl function $M(\zeta)$ associated to the quasi boundary triple introduced in \cite{BehrndtHolzmannMas} for $\Dirac$ acting on $H^1(\Omega)^4$. These operators are defined in terms of $\Phi_\lambda$ and $\ccal_\lambda$ from \Cref{Subsec:BIOperators} taking $\lambda=\zeta\in \C \setminus \R$.
	Their explicit expression  can be found in \cite[Proposition~4.2]{BehrndtHolzmannMas}, but for our purposes it suffices to use that $\gamma(\zeta) : \mathcal{G}_\Omega^{1/2} \to H^1(\Omega)^4$, $M(\zeta) : \mathcal{G}_\Omega^{1/2} \to \mathcal{G}_\Omega^{1/2} $, and $\gamma(\overline\zeta)^* :  L^2(\Omega)^4 \to \mathcal{G}_\Omega^{1/2}  $ are  bounded operators, where $\mathcal{G}_\Omega^{1/2} := \big( 1 + i \beta (\alpha \cdot\nu)\big) \left(H^{1/2}(\partial \Omega)^4 \right)$.
	From \cite[Theorem~5.9 and Proposition~5.15]{BehrndtHolzmannMas}, for every  $\zeta \in \rho(\Dirac_\tau)\cap\rho(\Dirac_0)$ the following resolvent formula holds:
	\begin{equation}
		\label{Eq:Resolvent}
		(\Dirac_\tau-\zeta)^{-1} = \big(\Dirac_0 - \zeta\big)^{-1} 
		+ \gamma(\zeta) \big( I_4 - \omega(\tau) M(\zeta) \big)^{-1} \omega(\tau) \gamma(\overline{\zeta})^*, \quad \text{where } \
		\omega (\tau) := - \dfrac{\sinh \tau}{1 + \cosh \tau}.
	\end{equation}
	Note that $\omega$ is holomorphic in $\tau$ wherever the denominator does not vanish.
	From this and using that $\big(\Dirac_0 - \zeta\big)^{-1} $, $\gamma(\zeta)$, and $\gamma(\overline\zeta)^*$ are bounded, the holomorphy of $(\Dirac_\tau-\zeta)^{-1}$  will follow if we prove that $( I_4 - \omega(\tau) M(\zeta) )^{-1}$ is a bounded operator holomorphic in $\tau$.
	From~\cite[Lemma~4.4]{BehrndtHolzmannMas} it follows that, for every $\omega_0\in \R$ such that $|\omega_0| \neq 1$ and for every $\zeta \in \C \setminus \R$, the operator $I_4 - \omega_0 M(\zeta)$ has a bounded and everywhere defined inverse in $\mathcal{G}_\Omega^{1/2}$.
	Therefore, using this last assertion with $\omega_0 := \omega(\tau_0) \in \R$ (note that $|\omega(\tau_0)|< 1$ for every $\tau_0 \in \R$) together with the identity
	\begin{equation}
		I_4 - \omega(\tau) M(\zeta) = \Big( I_4 - \big(\omega (\tau) - \omega(\tau_0)\big)M(\zeta) \big( I_4 - \omega(\tau_0) M(\zeta) \big)^{-1} \Big) \big( I_4 - \omega(\tau_0) M(\zeta) \big),
	\end{equation}
	and a Neumann series argument, it follows that $( I_4 - \omega(\tau) M(\zeta) )^{-1}$ is a bounded operator holomorphic in $\tau$ for $\tau \in \C$ close enough to $\tau_0 \in \R$; see the comments in \cite[\textrm{VII}-\S1.1]{Kato} for more details.
	Hence, the claim is proved.
	
	Once we know that the resolvent of $\Dirac_\tau$ is bounded and  holomorphic\footnote{From the proof of 
		\cite[Theorem~\textrm{VII}.1.3]{Kato}, it is enough to have  $(\Dirac_\tau - \zeta)^{-1}$ holomorphic in $\tau$ for a single $\zeta \in \rho(\Dirac_0)\cap \rho(\Dirac_\tau)$, thus we can take any $\zeta \in \C\setminus \R$.} in $\tau$, by classical results in perturbation theory \cite[Theorems~\textrm{VII}.1.3 and~\textrm{VII}.1.8]{Kato} it follows that every isolated eigenvalue $\lambda_\star$ of $\Dirac_{\tau_0}$ splits into one or several eigenvalues $\lambda_k(\tau)$ of $\Dirac_\tau$ which depend holomorphically on $\tau$ (for $\tau$ close to  $\tau_0$), and the same is true for the corresponding eigenprojections.
	As a consequence, if $\mu_k$ is the multiplicity of each eigenvalue $\lambda_k(\tau)$, there exist $\mu_k$ linearly independent associated eigenfunctions $\varphi_k^j(\tau)$, with $j=1,\ldots,\mu_k$, which depend holomorphically on $\tau$; see  \cite[\textrm{II}-\S4]{Kato} for more details on this.
	Note that $\mu_k$ is constant for $\tau$ close to  $\tau_0$, and the sum of all these multiplicities is equal to the multiplicity of $\lambda_\star$.
	Moreover, all the functions $\varphi_k^j(\tau_0)$ form an orthogonal basis of the eigenspace associated to $\lambda_\star$.
\end{proof}

\begin{remark}
	\label{Rem:CurvesMultiplicity}
	If $\lambda_\star$ is an eigenvalue of $\Dirac_{\tau_0}$ with multiplicity $\mu_\star$, then using \Cref{Prop:LocalParamEig} we can define  $\mu_\star$ eigenvalue curves (for $\tau$ close to $\tau_0$), each one associated to a single eigenfunction.
	Some of these eigenvalue curves may be equal, accounting for the multiplicity of the corresponding eigenvalue of $\Dirac_{\tau}$.
	This setting will be the appropriate one to establish \Cref{Th:ParamEigenvalues} below. 
\end{remark}

With a local parametrization of the eigenvalue curves at hand, our next result shows their monotonicity with respect to the parameter $\tau$. 
In addition, we also provide an explicit expression for the derivative of the eigenvalues with respect to $\tau$.
Note that the monotone behavior was expected if one looks at the curves plotted in \Cref{Fig:ALL_eigenvalues} for the case of a ball.

\begin{lemma}
	\label{Prop:DerivativeEigenvalues}
	Let $\tau\mapsto\lambda(\tau)\in\sigma(\Dirac_\tau)$ and 
	$\tau\mapsto\varphi_\tau\in \mathrm{Dom}(\Dirac_\tau)\setminus\{0\}$ be differentiable functions on an interval $I\subset\R$, with 
	$\varphi_\tau$ such that 
	$\Dirac_\tau\varphi_\tau=\lambda(\tau)\varphi_\tau$ for all $\tau\in I$. Then 
	\begin{equation}
		\label{Eq:Lambda'Formula}
		\lambda'(\tau) = e^\tau \dfrac{ \norm{ u_\tau}^2_{L^2(\partial \Omega)^2}}{\norm{ \varphi_\tau}^2_{L^2( \Omega)^4}} > 0 
		\quad\text{for all $\tau\in I$,}
	\end{equation}
	where $\varphi_\tau=(u_\tau, v_\tau)^\intercal$ with $u_\tau,v_\tau\in H^1(\Omega)^2$.
	In particular, $\lambda$ is strictly increasing on $I$. Moreover, 
	\begin{equation}
		\label{Eq:Lambda'Formula2}
		\lambda'(\tau) =  (\lambda(\tau) - m) \dfrac{\| u_\tau \|^2_{L^2( \Omega)^2}}{\| \varphi_\tau \|^2_{L^2( \Omega)^4}} - (\lambda(\tau) + m) \dfrac{\|v_\tau \|^2_{L^2(\Omega)^2}}{\| \varphi_\tau \|^2_{L^2( \Omega)^4}}  .
	\end{equation}
\end{lemma}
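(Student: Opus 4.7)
The plan is to use a Hellmann–Feynman–type calculation: differentiate the eigenvalue relation $\Dirac \varphi_\tau = \lambda(\tau)\varphi_\tau$ in $\tau$, pair with $\varphi_\tau$ in $L^2(\Omega)^4$, and integrate by parts to reduce everything to a boundary integral. Writing $\dot\varphi := \partial_\tau \varphi_\tau$, differentiating gives $\Dirac \dot\varphi = \lambda'(\tau)\varphi_\tau + \lambda(\tau)\dot\varphi$ in $\Omega$. Since $\dot\varphi$ belongs to $H^1(\Omega)^4$ (by the analytic parametrization of \Cref{Prop:LocalParamEig}) but generally not to $\mathrm{Dom}(\Dirac_\tau)$, taking the inner product with $\varphi_\tau$ and integrating by parts the $-i\alpha\cdot\nabla$ term produces
\begin{equation*}
\lambda'(\tau)\,\|\varphi_\tau\|_{L^2(\Omega)^4}^2 \;=\; -i\int_{\partial\Omega}(\alpha\cdot\nu)\dot\varphi\cdot\overline{\varphi_\tau}\,d\upsigma,
\end{equation*}
after the interior contributions cancel via $\Dirac\varphi_\tau=\lambda(\tau)\varphi_\tau$ (recall $\lambda(\tau)\in\R$).

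The heart of the proof is evaluating that boundary integral. I would do this componentwise: with $\varphi_\tau=(u_\tau,v_\tau)^\intercal$ and $\dot\varphi=(\dot u_\tau,\dot v_\tau)^\intercal$, the boundary condition gives $v_\tau=ie^\tau(\sigma\cdot\nu)u_\tau$, equivalently $(\sigma\cdot\nu)u_\tau=-ie^{-\tau}v_\tau$ and $(\sigma\cdot\nu)v_\tau=ie^\tau u_\tau$. Using the hermiticity of $\sigma\cdot\nu$, a direct calculation yields
\begin{equation*}
-i(\alpha\cdot\nu)\dot\varphi\cdot\overline{\varphi_\tau}\;=\;e^{-\tau}\dot v_\tau\cdot\overline{v_\tau}-e^{\tau}\dot u_\tau\cdot\overline{u_\tau}\quad\text{on }\partial\Omega.
\end{equation*}
Now I would differentiate the boundary relation to obtain $\dot v_\tau=v_\tau+ie^\tau(\sigma\cdot\nu)\dot u_\tau$ on $\partial\Omega$, substitute, and watch the $\dot u_\tau$-terms cancel: the right-hand side collapses to $e^{-\tau}|v_\tau|^2$. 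Integrating and applying \Cref{Lemma:RelationNormsIntBdry} to rewrite $e^{-\tau}\|v_\tau\|_{L^2(\partial\Omega)^2}^2 = e^\tau\|u_\tau\|_{L^2(\partial\Omega)^2}^2$ produces \eqref{Eq:Lambda'Formula}. Formula \eqref{Eq:Lambda'Formula2} then follows by substituting the second equality in \Cref{Lemma:RelationNormsIntBdry} into the right-hand side of \eqref{Eq:Lambda'Formula}.

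Positivity of $\lambda'(\tau)$ requires ruling out $u_\tau\equiv 0$ on $\partial\Omega$. If this vanishing occurred, the boundary condition would force $v_\tau\equiv 0$ on $\partial\Omega$ as well, so $\varphi_\tau\in H^1_0(\Omega)^4$; extending by zero to $\R^3$ gives a nontrivial $H^1(\R^3)^4$ element in $\ker(\Dirac-\lambda(\tau))$, which is impossible since the free Dirac operator on $\R^3$ has no eigenvalues. The main obstacle I anticipate is purely bookkeeping: keeping track of the signs and factors of $i$ in the componentwise rewriting of the boundary integrand, and verifying carefully that $\dot\varphi\in H^1(\Omega)^4$ so that integration by parts on $\Dirac$ is justified — the former is mechanical, and the latter follows from the real analyticity produced by \Cref{Prop:LocalParamEig} together with the fact that $\mathrm{Dom}(\Dirac_\tau)\subset H^1(\Omega)^4$.
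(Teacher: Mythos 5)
Your proposal is correct and follows essentially the same route as the paper: differentiate the eigenvalue equation in $\tau$, pair with $\varphi_\tau$ and integrate by parts so that only the boundary term $\int_{\partial\Omega}\varphi'\cdot\overline{i(\alpha\cdot\nu)\varphi_\tau}\,d\upsigma$ survives, then use the boundary condition $v_\tau=ie^\tau(\sigma\cdot\nu)u_\tau$ together with its $\tau$-derivative to collapse that term (you land on $e^{-\tau}\|v_\tau\|^2_{L^2(\partial\Omega)^2}$ and convert via \Cref{Lemma:RelationNormsIntBdry}, the paper lands directly on $e^\tau\|u_\tau\|^2_{L^2(\partial\Omega)^2}$ — trivially the same computation), with \eqref{Eq:Lambda'Formula2} again from \Cref{Lemma:RelationNormsIntBdry}. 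Your justification of strict positivity (extension by zero and absence of eigenvalues of the free Dirac operator) is a valid alternative to the paper's implicit use of the reproducing formula of \Cref{Lemma:ReproductionFormula}, which shows directly that an eigenfunction cannot have vanishing trace.
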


\begin{proof}
	For simplicity of notation, in the following we will write 
	$\varphi$ and $\varphi'$ instead of $\varphi_\tau$ and
	$\partial_\tau\varphi_\tau$, respectively,
	and analogously for $u_\tau$ and $v_\tau$.
	Differentiating the equation $\Dirac \varphi = \lambda\varphi$ with respect to $\tau$, we get 
	$\Dirac \varphi' = \lambda' \varphi + \lambda \varphi '$.
	Multiplying this equation by $\overline{ \varphi}$ and integrating by parts in $\Omega$ we obtain
	\begin{equation}
		\label{Eq:ProofMonotonicity}
		\begin{split}
			\lambda' \int_\Omega |\varphi|^2 + \lambda \int_\Omega \varphi'\cdot \overline{\varphi} 
			&= \int_\Omega \Dirac \varphi' \cdot\overline{\varphi} 
			= \int_\Omega  \varphi' \cdot\overline{\Dirac \varphi} - i \int_{\partial \Omega} (\alpha \cdot\nu) \varphi' \cdot\overline{\varphi} \,d \upsigma  \\
			&= \lambda \int_\Omega  \varphi'\cdot \overline{\varphi}
			+  \int_{\partial \Omega}  \varphi' \cdot \overline{i (\alpha \cdot\nu) \varphi} \,d \upsigma. 
		\end{split}
	\end{equation}
	Here we used that $\lambda$ is real-valued by \Cref{Lemma:SpectrumGenMIT}, and that $(\alpha \cdot\nu)$ is hermitian.	
	Thus, from \eqref{Eq:ProofMonotonicity} we deduce that
	\begin{equation}
		\label{Eq:Lambda'FormulaProof}
		\lambda' = \| \varphi \|^{-2}_{L^2( \Omega)^4}\int_{\partial \Omega}  \varphi' \cdot \overline{i (\alpha \cdot\nu) \varphi} \,d \upsigma.
	\end{equation}
	Now, using the boundary condition $(\alpha \cdot\nu) \varphi = i(\sinh \tau + \cosh \tau\, \beta) \varphi$ given by the fact that 
	$\varphi\in\mathrm{Dom}(\Dirac_\tau)$ (recall that $(\alpha \cdot\nu)^2 = 1$ and that $\alpha_j\beta = -\beta \alpha_j$ for $j=1,2,3$), we have
	\begin{equation} \label{Eq:Lambda'FormulaProof.1}
		\begin{split}
			\int_{\partial \Omega}  \varphi' \cdot \overline{i (\alpha \cdot\nu) \varphi} \,d \upsigma &= - \int_{\partial \Omega}  \varphi' \cdot(\sinh \tau + \cosh \tau\, \beta)\cdot \overline{\varphi} \,d \upsigma \\
			&= -e^\tau \int_{\partial \Omega}  u' \cdot\overline{u} \,d \upsigma
			+ e^{-\tau} \int_{\partial \Omega}  v' \cdot\overline{v} \,d \upsigma.
	\end{split} \end{equation}
	Let us rewrite the last term on the right-hand side in terms of $u$.
	Recall that the boundary condition defining $\mathrm{Dom}(\Dirac_\tau)$ is expressed component-wise as $v = i e^\tau ({\sigma} \cdot\nu) u$. If we differentiate this identity with respect to $\tau$, we get
	$$
	v' = i e^\tau ({\sigma} \cdot\nu) (u + u'),
	$$
	and thus
	$$
	e^{-\tau} v' \cdot \overline{v} = e^{\tau} ({\sigma} \cdot\nu) (u+u') \cdot\overline{({\sigma} \cdot\nu) u}
	=e^{\tau}(|u|^2+u'\cdot\overline{u}).
	$$
	Therefore,
	$$
	e^{-\tau} \int_{\partial \Omega}  v' \cdot\overline{v} 
	\,d \upsigma
	=   e^\tau \| u \|^2_{L^2(\partial \Omega)^2} + e^\tau \int_{\partial \Omega}  u' \cdot\overline{u} \,d \upsigma,
	$$
	and inserting this into \eqref{Eq:Lambda'FormulaProof.1} we obtain
	$$
	\int_{\partial \Omega}  \varphi' \cdot \overline{i (\alpha \cdot\nu) \varphi} \,d \upsigma  =  e^\tau \| u \|^2_{L^2(\partial \Omega)^2}.
	$$
	Plugging this last expression into \eqref{Eq:Lambda'FormulaProof} we obtain \eqref{Eq:Lambda'Formula}.
	Finally, \eqref{Eq:Lambda'Formula2} is obtained combining \eqref{Eq:Lambda'Formula} with \Cref{Lemma:RelationNormsIntBdry}.
\end{proof}

Once we have established the monotonicity of the eigenvalues, we can finally prove \Cref{Th:ParamEigenvalues}. 
To do it, it will be crucial to use the explicit formula \eqref{Eq:Lambda'Formula2}, which provides a growth estimate for the eigenvalue curves $\tau \mapsto\lambda(\tau)$, preventing them to escape to infinity at finite values of the parameter $\tau$.

\begin{proof}[Proof of \Cref{Th:ParamEigenvalues}]
	Let $\tau_0 \in \R$ and let $\lambda_\star\in \sigma (\Dirac_{\tau_0})$ be an eigenvalue with multiplicity $\mu_\star$.
	Note that thanks to \Cref{Lemma:SpectrumGenMIT}~$(iii)$ we can assume without loss of generality that $\lambda_\star >m$.
	Then, by \Cref{Prop:LocalParamEig} and taking into account \Cref{Rem:CurvesMultiplicity}, there exist $\mu_\star$ real analytic functions $\lambda_1(\tau), \ldots, \lambda_{\mu_\star}(\tau)$ defined in a neighborhood of $\tau_0$, such that $\lambda_j(\tau) \in \sigma(\Dirac_\tau)$ and $\lambda_j(\tau_0) = \lambda_\star$ for $j = 1,\ldots,\mu_\star$. 
	Each of these functions $\lambda_j(\tau)$, some of them possibly equal in a neighborhood of $\tau_0$, is associated to a different eigenfunction which also depends analytically on $\tau$.
	
	Let $\lambda(\tau)$ be one of these analytic eigenvalue curves and let $\varphi_\tau$ be an associated eigenfunction, which can be taken to have norm equal to $1$ in $L^2(\Omega)^4$. 
	The curve $\lambda(\tau)$ can be continued analytically in a maximal interval $I$ in which $\lambda(\tau)$ represents an eigenvalue of $\Dirac_\tau$ with associated eigenfunction $\varphi_\tau$ (this is true even when the graph of $\lambda(\tau)$ crosses the graph of another such eigenvalue curve, see the comments in \cite[\textrm{VII}-\S3.2]{Kato}).
	Our goal is to prove that $I =\R$.
	
	By contradiction, let us assume that $I \neq \R$, and let $\tau^\star$ be a finite end of the maximal interval $I$ ---that is, $\tau^\star \in \partial I$ and $|\tau^\star|<+\infty$.
	Then, we set $\lambda^\star := \lim_{\tau \to  \tau^\star}\lambda(\tau)$, which exists by the monotonicity of $\lambda(\tau)$ shown in \Cref{Prop:DerivativeEigenvalues}.
	We claim that $\lambda^\star $ is finite.
	Indeed, by the formula for $\lambda'(\tau)$ given by \eqref{Eq:Lambda'Formula2} and using that $\lambda(\tau) > m$, we get $(\lambda(\tau) - m)' \leq \lambda(\tau) - m$, and therefore $m < \lambda(\tau) \leq C( 1 + e^\tau)$ for $\tau \in I$, with some constant $C$ depending on $m$ and $\tau_0$.
	As a consequence, since $\tau^\star < +\infty$, $\lambda(\tau)$ cannot tend to $+\infty$ as $\tau \to \tau^\star$.
	
	We shall prove now that $\lambda^\star $ is an eigenvalue of $\Dirac_{\tau^\star}$.
	To do it, recall that the eigenfunctions $\varphi_\tau= (u_\tau, v_\tau)^\intercal$ are normalized in $L^2(\Omega)^4$.
	Our goal is to bound them uniformly in $H^1(\Omega)^4$.
	To accomplish this, note first that by Lemma~\ref{Lemma:SigmaGradEstimates} we have
	$$
	\| u_\tau \|_{H^1(\Omega)^2} \leq C \left (  \|  u_\tau \|_{L^2(\Omega)^2} + \| {\sigma} \cdot\nabla u_\tau \|_{L^2(\Omega)^2}  + \| u_\tau \|_{H^{1/2}(\partial \Omega)^2} \right ).
	$$
	Since $\| \varphi_\tau \|^2_{L^2(\Omega)^4} = 1 $, using the equation $-i {\sigma} \cdot\nabla u_\tau = (\lambda(\tau) + m) v_\tau$ and the bounds $m < \lambda(\tau)\leq \lambda^\star + 1$ for $\tau$ close enough to $\tau^\star$, it follows that
	$$
	\| u_\tau \|_{H^1(\Omega)^2} \leq C \left (1  + \| u_\tau \|_{H^{1/2}(\partial \Omega)^2} \right )
	$$
	for some constant $C$ depending only on $\Omega$, $m$, and $\lambda^\star$.
	It remains to estimate $\| u_\tau \|_{H^{1/2}(\partial \Omega)^2}$.
	To do it, we use \eqref{Eq:BoundaryPbComm.UV} to obtain
	\begin{equation} \begin{split}
			\| u_\tau \|_{H^{1/2}(\partial \Omega)^2} &\leq  C  \|  u_\tau  \|_{H^{-1/2}(\partial \Omega)^2},
	\end{split} \end{equation}
	for some constant $C$ depending only on $\Omega$, $m$, and $\lambda^\star$, provided that $\tau\in I$ is close to $\tau^\star$.	
	Finally, note that, under the normalization hypothesis, by \Cref{Lemma:H1/2Estimate} we have $\|  u_\tau  \|_{H^{-1/2}(\partial \Omega)^2} \leq C$ for some constant depending only on $\Omega$, $m$, and $\lambda^\star$.
	Therefore, for some other constant $C$ depending only on the same quantities, we get 
	$$
	\| u_\tau \|_{H^1(\Omega)^2} \leq C ,
	$$
	provided that $\tau\in I$ is close to $\tau^\star$.
	Using a similar argument for $v_\tau$, we obtain the uniform bound $\| \varphi_\tau \|_{H^1(\Omega)^4} \leq C$ for $\tau\in I$  close to $\tau^\star$. 
	Therefore, by the compact embedding of $H^1(\Omega)^4$ into $L^2(\Omega)^4$ and of $H^{1/2}(\partial\Omega)^4$ into $L^2(\partial\Omega)^4$, and by weak$^*$ compactness on $H^1(\Omega)^4$, we can find a sequence $\{\tau_k\}_{k\in \N}$ with $\lim_{k \uparrow +\infty} \tau_k = \tau^\star$ such that  $\varphi_{\tau_k}$ converges in $L^2(\Omega)^4$ and in $ L^2(\partial \Omega)^4$ to some function $\varphi^\star \ \in H^1(\Omega)^4$ as $k \uparrow +\infty$. 
	Since $\varphi_{\tau_k}$ are normalized in $L^2(\Omega)^4$, we have $\varphi^\star \not \equiv 0$.
	Moreover, by the convergence in  $L^2(\partial\Omega)^4$, it follows readily that $\varphi^\star \in \mathrm{Dom} (\Dirac_{\tau^\star})$.
	Writing the eigenvalue equation for $\varphi_{\tau_k}$ in weak form and taking the limit $k \uparrow+\infty$, we obtain that $\varphi^\star$ solves weakly an eigenvalue equation with associated eigenvalue $\lambda^\star$.
	Since $\varphi^\star \in H^1(\Omega)^4$ (by the weak$^*$ compactness), a standard density argument shows that $ \Dirac \varphi^\star = \lambda^\star \varphi^\star$ in $\Omega$.

	Finally, once we have proven that $\lambda^\star \in \sigma(\Dirac_{\tau^\star})$, we can apply again \Cref{Prop:LocalParamEig} to this eigenvalue.
	By doing this, we extend analytically the curve $\lambda(\tau)$ to a bigger interval, contradicting the maximality of the interval $I$.
	As a consequence, we have shown that any eigenvalue curve $\lambda(\tau)$ can be defined for all $\tau \in \R$ as a real analytic function.

	To conclude the proof, we should show that, for the given $\tau_0 \in \R$, the extensions of all the eigenvalues $\{ \lambda_k (\tau_0)\}_{k \in \Z \setminus \{0 \} }$ exhaust the spectrum of $\Dirac_\tau$ for every $\tau\in \R$. 
	Indeed, if there were some $\tau_\star$ and an eigenvalue $\lambda_\star \in\sigma(\Dirac_{\tau_\star})$ which did not lie on any of the eigenvalue curves found before, then $\lambda_\star$ could itself be extended to an analytic eigenvalue curve on $\R$ by the previous arguments, and in particular we would have an eigenvalue $\lambda_\star(\tau_0)$ not included in  $\{ \lambda_k (\tau_0)\}_{k \in \Z \setminus \{0\}}$, contradicting the assumption that $\{ \lambda_k (\tau_0)\}_{k \in \Z \setminus \{0\}}$ (counting multiplicities) is the totality of the spectrum of $\Dirac_{\tau_0}$.
\end{proof}

\section{Asymptotic behavior} \label{Sec:AsymptoricBehavior}

In this section we study the asymptotic behavior of the eigenvalue curves as $\tau \to \pm \infty$.
First, we will establish \Cref{Th:EigLimits}, regarding the possible limits of the eigenvalue curves, and we will provide a finer description of the curve corresponding to the first positive eigenvalue of $\Dirac_{\tau}$, as stated in \Cref{Th:FirstEigTom}.
Furthermore, we will prove the shape optimization result for large values of $\tau$ stated in \Cref{Th:BallOptimalLargeTau}.
After that, we will focus on the first order asymptotics as $\tau \downarrow - \infty$. 
To do so, we will first need to introduce skew projections onto Hardy spaces, and then we will address the proof of \Cref{Th:Rayleigh.Intro}.

\subsection{Limits of eigenvalue curves as $\tau \to \pm \infty$}
\label{Subsec:EigLimits}
In this section we prove \Cref{Th:EigLimits,Th:FirstEigTom} and \Cref{Th:BallOptimalLargeTau}. We begin by showing two compactness results that will be used in the proof of \Cref{Th:EigLimits}, and also \Cref{Th:Rayleigh.Intro}. Roughly  speaking,  \Cref{Th:EigLimits} is based on the following argument: 
every $\varphi_\tau = (u_\tau, v_\tau)^\intercal\in\mathrm{Dom}(\Dirac_\tau)$ satisfies the boundary condition $v_\tau = i e^\tau ( {\sigma}\cdot\nu)u_\tau$ on $\partial \Omega$. 
If $\varphi_\tau$ is an eigenfunction, as $\tau \downarrow -\infty$ the boundary condition forces the trace of $v_\tau$ on $\partial \Omega$ to vanish, 
and as $\tau \uparrow +\infty$ the boundary condition forces the trace of $u_\tau$ to vanish. Combining this with uniform estimates on the $H^1(\Omega)^2$-norm of the components of $\varphi_\tau$ and a compactness argument, we end up with an eigenfunction of the Dirichlet Laplacian. From here we can then find a set of candidates to be the limit of the eigenvalue associated to $\varphi_\tau$ as $\tau \to \pm\infty$. This, in particular, will lead to the proof of \Cref{Th:EigLimits}.

Let us first address the compactness result for $v_\tau$ as $\tau \downarrow -\infty$. 
The crucial point is to establish a uniform bound for the $H^1(\Omega)^2$-norm of $v_\tau$, \eqref{Eq:H1boundV} below.
The proof of this estimate will be a slight modification of some ideas that were used to prove \Cref{Th:ParamEigenvalues}, but since now we do not have control on $e^{-\tau}$ as $\tau\downarrow-\infty$, we also need to use some estimates for $u_\tau$ and the boundary condition.

\begin{proposition}
	\label{Prop:CompactnessMinusInfty}
	Let $\mathcal{T}=\{\tau_k\}_{k\in\N}\subset\R$ be a sequence such that  $\lim_{k\uparrow+\infty}\tau_k = -\infty$. For every $\tau\in\mathcal{T}$,  
	let $\lambda(\tau)\in\sigma(\Dirac_\tau)\cap(m,+\infty)$  and $\varphi_\tau= (u_\tau, v_\tau)^\intercal\in \mathrm{Dom}(\Dirac_\tau)$ such that 
	$\Dirac_\tau\varphi_\tau=\lambda(\tau)\varphi_\tau$ and
	$\| \varphi_\tau \|_{L^2(\Omega)^4} = 1 $. Assume that
	$\lambda(\tau) \leq C_\circ$ for some $C_\circ>0$ and all 
	$\tau\in\mathcal{T}$.
	Then, 
	\begin{equation}
		\label{Eq:H1boundV}
		\norm{ v_\tau }_{H^1(\Omega)^2} \leq C \quad \text{for all } \tau\in\mathcal{T},
	\end{equation}
	where $C>0$ depends only on $m$, $C_\circ$, $\max\mathcal T$, and $\Omega$.
	
	As a consequence, there exists a subsequence 
	$\{\tau_{k_j}\}_{j\in\N}\subset\mathcal{T}$ for which the limit $$\lambda_\star := \lim_{j \uparrow +\infty} \lambda(\tau_{k_j})\in[m,C_\circ]$$ exists and such that $v_{\tau_{k_j}}$ converges in $L^2(\Omega)^2$ to a function $v_\star\in H^2(\Omega)^2$ satisfying
	\begin{equation}
		\label{Eq:LimitVPb}
		\beqc{\PDEsystem}
		-\Delta v_\star & = & (\lambda_\star^2-m^2) v_\star  & \text{in }  \Omega, \\
		v_\star & = & 0  & \text{on } \partial \Omega.
		\eeqc
	\end{equation}
\end{proposition}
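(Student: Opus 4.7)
The strategy is to obtain uniform control of $v_\tau$ in $H^1(\Omega)^2$ so that a standard compactness argument produces $v_\star$, and then to let the boundary relation $v_\tau = i e^\tau (\sigma\cdot\nu) u_\tau$ collapse to $v_\star = 0$ as $\tau\downarrow-\infty$. To assemble \eqref{Eq:H1boundV} I would invoke Lemma~\ref{Lemma:SigmaGradEstimates}$(ii)$ for $v_\tau$, which reduces the task to three uniform bounds: $\|v_\tau\|_{L^2(\Omega)^2}\leq 1$ from the normalization; $\|\sigma\cdot\nabla v_\tau\|_{L^2(\Omega)^2}=|\lambda(\tau)-m|\,\|u_\tau\|_{L^2(\Omega)^2}\leq C_\circ-m$ via the first identity of \eqref{Eq:EigenvaluePbUV.1}; and the more delicate $\|v_\tau\|_{H^{1/2}(\partial\Omega)^2}$.

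For the last bound I would use $v_\tau = i e^\tau (\sigma\cdot\nu) u_\tau$ together with the boundedness of multiplication by $\sigma\cdot\nu$ on $H^{1/2}(\partial\Omega)^2$ to reduce the problem to controlling $e^\tau\|u_\tau\|_{H^{1/2}(\partial\Omega)^2}$, and then apply Lemma~\ref{Lemma:BoundaryPb.2} to $u_\tau$. Multiplying the estimate there by $e^\tau$ yields
$$e^\tau \|u_\tau\|_{H^{1/2}(\partial\Omega)^2} \leq C\bigl(e^\tau + |\lambda(\tau)+m|\,e^{2\tau} + |\lambda(\tau)-m|\bigr)\,C_{\lambda(\tau)}\,\|u_\tau\|_{H^{-1/2}(\partial\Omega)^2},$$
whose prefactor stays bounded along $\mathcal{T}$ because $\tau\leq\max\mathcal{T}$, $\lambda(\tau)\in[m,C_\circ]$, and $C_{\lambda(\tau)}$ is uniformly controlled for $\lambda(\tau)^2-m^2\in[0,C_\circ^2-m^2]$ by the last assertion of Lemma~\ref{Prop:RegularityKW}. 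Lemma~\ref{Lemma:H1/2Estimate} then provides a uniform bound on $\|u_\tau\|_{H^{-1/2}(\partial\Omega)^2}$, closing the loop and producing \eqref{Eq:H1boundV}. The main obstacle lives precisely here: a direct application of Lemma~\ref{Lemma:BoundaryPb.2} to $u_\tau$ alone contains the divergent factor $|\lambda-m|\,e^{-\tau}$, and the cancellation relies crucially on absorbing the $e^\tau$ coming from the boundary identity \emph{before} invoking that lemma.

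Once \eqref{Eq:H1boundV} is in place, compactness of $H^1(\Omega)^2\hookrightarrow L^2(\Omega)^2$, weak compactness in $H^1(\Omega)^2$, and compactness of $[m,C_\circ]$ yield a subsequence along which $v_{\tau_{k_j}}\to v_\star$ strongly in $L^2(\Omega)^2$ and weakly in $H^1(\Omega)^2$, with $\lambda(\tau_{k_j})\to\lambda_\star\in[m,C_\circ]$. Passing to the limit in the distributional identity $-\Delta v_\tau = (\lambda(\tau)^2-m^2)v_\tau$ gives the interior equation in \eqref{Eq:LimitVPb}.

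Finally, for the Dirichlet condition, Lemma~\ref{Lemma:RelationNormsIntBdry} provides
$$\|v_\tau\|_{L^2(\partial\Omega)^2}^2 = e^\tau\bigl((\lambda(\tau)-m)\|u_\tau\|_{L^2(\Omega)^2}^2 - (\lambda(\tau)+m)\|v_\tau\|_{L^2(\Omega)^2}^2\bigr) \leq (C_\circ-m)\,e^\tau\longrightarrow 0.$$
Since $v_{\tau_{k_j}}\to v_\star$ weakly in $H^1(\Omega)^2$, the traces converge weakly in $H^{1/2}(\partial\Omega)^2$ and hence strongly in $L^2(\partial\Omega)^2$ by compact embedding, forcing $v_\star|_{\partial\Omega}=0$. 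Thus $v_\star\in H_0^1(\Omega)^2$ solves the reduced equation, and standard elliptic regularity on the $C^2$ domain $\Omega$ upgrades it to $H^2(\Omega)^2$, completing the argument.
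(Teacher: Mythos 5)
Your argument is correct and follows essentially the same route as the paper: reduction via Lemma~\ref{Lemma:SigmaGradEstimates}$(ii)$, control of the boundary term by combining $v_\tau=ie^\tau(\sigma\cdot\nu)u_\tau$ with the estimate of Lemma~\ref{Lemma:BoundaryPb.2} (applicable since $u_\tau$ solves \eqref{Eq:BoundaryPb} by \Cref{Lemma:BoundaryPb}) and Lemma~\ref{Lemma:H1/2Estimate}, then Lemma~\ref{Lemma:RelationNormsIntBdry} for the vanishing trace, compactness, and elliptic regularity. The cancellation of the $|\lambda-m|e^{-\tau}$ factor against the $e^\tau$ from the boundary condition that you highlight is exactly the mechanism used in the paper's proof.
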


\begin{proof}
	Throughout the proof we will use the letter $C$ to denote different constants depending only on $m$, $C_\circ$, 
	$\max\mathcal T$, and $\Omega$.

	First, note that by 
	\Cref{Lemma:SigmaGradEstimates}~$(ii)$ we have
	$$
	\norm{ v_\tau }_{H^1(\Omega)^2} \leq C \left (  \norm{ v_\tau }_{L^2(\Omega)^2} + \norm{{\sigma} \cdot\nabla v_\tau }_{L^2(\Omega)^2}  + \norm{ v_\tau }_{H^{1/2}(\partial \Omega)^2} \right ).
	$$
	Since $ \norm{ u_\tau }_{L^2(\Omega)^2}^2+ \norm{ v_\tau }_{L^2(\Omega)^2}^2=\| \varphi_\tau \|^2_{L^2(\Omega)^4} = 1 $, using the equation $-i{\sigma} \cdot\nabla v_\tau = (\lambda(\tau) - m) u_\tau$ from \eqref{Eq:EigenvaluePbUV.1} and the upper bound $\lambda(\tau)\leq C_\circ$, it follows that
	$$
	\norm{ v_\tau }_{H^1(\Omega)^2} \leq C \left (1  + \norm{ v_\tau }_{H^{1/2}(\partial \Omega)^2} \right ) \quad \text{for all } \tau\in\mathcal{T}.
	$$
	Therefore, to prove \eqref{Eq:H1boundV} we only need to estimate $\norm{ v_\tau }_{H^{1/2}(\partial \Omega)^2}$ uniformly in $\tau\in\mathcal{T}$. 		
	
	From the boundary condition $v_\tau = i e^\tau ({\sigma} \cdot\nu) u_\tau$, and since $\nu$ is of class $C^1$ on 
	$\partial\Omega$ (which yields that ${\sigma}\cdot\nu$ is a bounded operator in $H^{1/2}(\partial \Omega)^2$), we see that
	\begin{equation}
		\label{Eq:H12normsuv}
		\norm{v_\tau }_{H^{1/2}(\partial \Omega)^2} \leq C e^\tau \norm{u_\tau}_{H^{1/2}(\partial \Omega)^2}.
	\end{equation}
	Looking at $u_\tau$, if we combine \Cref{Lemma:BoundaryPb} with \eqref{Eq:BoundaryPbComm.UV}
	we get
	$$
	\|u_\tau\|_{H^{1/2}(\partial\Omega)^2}\leq 
	C\left(1+ |\lambda(\tau)+m| e^\tau+|\lambda(\tau)-m|e^{-\tau}\right)C_{\lambda(\tau)}
	\norm{u_\tau}_{H^{-1/2}(\partial \Omega)^2}.$$
	Now, using that $\lambda(\tau)\leq C_\circ$, it follows that
	$$
	\| u_\tau \|_{H^{1/2}(\partial \Omega)^2} \leq C (1 + e^{-\tau}) \| u_\tau \|_{H^{-1/2}(\partial \Omega)^2} \quad \text{for all } \tau\in\mathcal{T}.
	$$
	Here we have used once again that $\nu$ is of class $C^1$ on 
	$\partial\Omega$.
	Applying \Cref{Lemma:H1/2Estimate} and using that $\| \varphi_\tau \|_{L^2(\Omega)^4} = 1 $, we obtain
	$\| u_\tau \|_{H^{1/2}(\partial \Omega)^2} \leq C (1 + e^{-\tau})$ for all $\tau\in\mathcal{T}$.
	Combining this with \eqref{Eq:H12normsuv} we deduce that $\norm{ v_\tau }_{H^{1/2}(\partial \Omega)^2}\leq C$ for all $\tau\in\mathcal{T}$. This concludes the proof of \eqref{Eq:H1boundV}.
	
	Let us now address the proof of the statement regarding the function $v_\star$. Firstly, since $\lambda(\tau)\in(m,C_\circ]$ for all $\tau\in\mathcal{T}$, there exists a subsequence 
	$\mathcal{T}'=\{\tau_{k_j}\}_{j\in\N}\subset\mathcal{T}$ such that the limit $\lambda_\star := \lim_{j \uparrow +\infty} \lambda(\tau_{k_j})$ exists and satisfies $m\leq\lambda_\star\leq C_\circ$.
	Secondly, the normalization $\| \varphi_\tau \|_{L^2(\Omega)^4} = 1 $ together with \Cref{Lemma:RelationNormsIntBdry} yield $e^{-\tau} \| v_\tau \|^2_{L^2(\partial \Omega)^2}\leq C$ for all $\tau\in\mathcal{T}$, which leads to 
	\begin{equation}\label{lim.v.-infty.l2.zero}
		\lim_{j\uparrow+\infty}\| v_{\tau_{k_j}}\|_{L^2(\partial \Omega)^2}=0.
	\end{equation}
	Now, combining these two ingredients with the uniform estimate \eqref{Eq:H1boundV}, we can show the existence of $v_\star$. More precisely, by the compact embedding of $H^1(\Omega)^2$ into $L^2(\Omega)^2$ and of $H^{1/2}(\partial\Omega)^2$ into $L^2(\partial\Omega)^2$, and by weak$^*$ compactness on $H^1(\Omega)^2$, we can find a subsequence of $\mathcal{T}'$, which we denote again by
	$\{\tau_{k_j}\}_{j\in\N}$, for which $v_{\tau_{k_j}}$ converges in $L^2(\Omega)^2$ to a function $v_\star \in H^1_0(\Omega)^2$ ($v_\star$ has zero trace thanks to \eqref{lim.v.-infty.l2.zero}) satisfying
	$$-\Delta v_\star = (\lambda_\star^2-m^2) v_\star$$
	in the weak sense in $\Omega$ ---recall that $-\Delta v_\tau = (\lambda(\tau)^2 - m^2) v_\tau$ in $\Omega$ by \eqref{Eq:HelmholtzUV}.  	
	Finally, standard elliptic estimates show that $v_\star\in H^2(\Omega)^2$; see \cite[Theorem 4 in \S 6.3.2]{Evans} for example.
\end{proof}

We now address the compactness result related to the upper component $u_\tau$ of the eigenfunction $\varphi_\tau$ as $\tau \uparrow +\infty$. That is, the analogue of \Cref{Prop:CompactnessMinusInfty} for $\tau \uparrow +\infty$.

\begin{proposition}
	\label{Prop:CompactnessPlusInfty}
	Let $\mathcal{T}=\{\tau_k\}_{k\in\N}\subset\R$ be a sequence such that  $\lim_{k\uparrow+\infty}\tau_k = +\infty$. For every $\tau\in\mathcal{T}$,  
	let $\lambda(\tau)\in\sigma(\Dirac_\tau)\cap(m,+\infty)$  and $\varphi_\tau= (u_\tau, v_\tau)^\intercal\in \mathrm{Dom}(\Dirac_\tau)$ such that 
	$\Dirac_\tau\varphi_\tau=\lambda(\tau)\varphi_\tau$ and
	$\| \varphi_\tau \|_{L^2(\Omega)^4} = 1 $. Assume that
	$\lambda(\tau) \leq C_\circ$ for some $C_\circ>0$ and all 
	$\tau\in\mathcal{T}$.
	Then, 
	\begin{equation}
		\label{Eq:H1boundu.tau}
		\norm{ u_\tau }_{H^1(\Omega)^2} \leq C \quad \text{for all } \tau\in\mathcal{T},
	\end{equation}
	where $C>0$ depends only on $m$, $C_\circ$, $\min\mathcal T$, and $\Omega$.
	
	As a consequence, there exists a subsequence 
	$\{\tau_{k_j}\}_{j\in\N}\subset\mathcal{T}$ for which the limit $$\lambda^\star := \lim_{j \uparrow +\infty} \lambda(\tau_{k_j})\in[m,C_\circ]$$ exists and such that $u_{\tau_{k_j}}$ converges in $L^2(\Omega)^2$ to a function $u^\star\in H^2(\Omega)^2$ satisfying
	\begin{equation}
		\label{Eq:LimitUPb}
		\beqc{\PDEsystem}
		-\Delta u^\star & = & ((\lambda^\star)^2-m^2) u^\star  & \text{in }  \Omega, \\
		u^\star & = & 0  & \text{on } \partial \Omega,
		\eeqc
	\end{equation}
\end{proposition}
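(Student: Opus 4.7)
The plan is to mirror the proof of \Cref{Prop:CompactnessMinusInfty}, interchanging the roles of $u_\tau$ and $v_\tau$. The relevant asymmetry is that the boundary condition $v_\tau=ie^\tau(\sigma\cdot\nu)u_\tau$, or equivalently $u_\tau=-ie^{-\tau}(\sigma\cdot\nu)v_\tau$, now forces the trace of $u_\tau$ on $\partial\Omega$ (rather than that of $v_\tau$) to vanish as $\tau\uparrow+\infty$. So the proof should produce a limit $u^\star\in H^1_0(\Omega)^2$ satisfying the Dirichlet Helmholtz problem \eqref{Eq:LimitUPb}.

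To establish the $H^1$ bound \eqref{Eq:H1boundu.tau}, I first apply \Cref{Lemma:SigmaGradEstimates}~$(ii)$ to $u_\tau$:
\begin{equation*}
\|u_\tau\|_{H^1(\Omega)^2}\leq C\bigl(\|u_\tau\|_{L^2(\Omega)^2}+\|\sigma\cdot\nabla u_\tau\|_{L^2(\Omega)^2}+\|u_\tau\|_{H^{1/2}(\partial\Omega)^2}\bigr).
\end{equation*}
The first two terms are controlled uniformly using the identity $-i\sigma\cdot\nabla u_\tau=(\lambda(\tau)+m)v_\tau$ from \eqref{Eq:EigenvaluePbUV.1}, the normalization $\|\varphi_\tau\|_{L^2(\Omega)^4}=1$, and the upper bound $\lambda(\tau)\leq C_\circ$. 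The only remaining task is an $H^{1/2}(\partial\Omega)^2$ bound on the trace of $u_\tau$, and here the boundary condition gives $\|u_\tau\|_{H^{1/2}(\partial\Omega)^2}\leq Ce^{-\tau}\|v_\tau\|_{H^{1/2}(\partial\Omega)^2}$ (since $\sigma\cdot\nu$ is a bounded multiplier on $H^{1/2}$). I then apply estimate \eqref{Eq:BoundaryPbComm.UV} of \Cref{Lemma:BoundaryPb.2} to the component $v_\tau$ and invoke \Cref{Lemma:H1/2Estimate} together with the hypothesis $\lambda(\tau)\leq C_\circ$ (which in turn bounds $C_{\lambda(\tau)}$ uniformly by \Cref{Prop:RegularityKW}) and $\tau\geq\min\mathcal{T}$, yielding
\begin{equation*}
\|v_\tau\|_{H^{1/2}(\partial\Omega)^2}\leq C(1+e^\tau+e^{-\tau})\|v_\tau\|_{H^{-1/2}(\partial\Omega)^2}\leq C(1+e^\tau).
\end{equation*}
Combining these two estimates produces $\|u_\tau\|_{H^{1/2}(\partial\Omega)^2}\leq Ce^{-\tau}(1+e^\tau)=C(1+e^{-\tau})\leq C$ uniformly for $\tau\in\mathcal{T}$, which closes the $H^1$ bound.

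For the compactness statement, \Cref{Lemma:RelationNormsIntBdry} together with the normalization and the bound on $\lambda(\tau)$ yields $e^\tau\|u_\tau\|_{L^2(\partial\Omega)^2}^2\leq C$, hence $\|u_\tau\|_{L^2(\partial\Omega)^2}\to 0$ along the sequence. Since $\lambda(\tau)\in(m,C_\circ]$, a first subsequence extraction gives a limit $\lambda^\star\in[m,C_\circ]$. Then, using \eqref{Eq:H1boundu.tau}, the compact embeddings $H^1(\Omega)^2\hookrightarrow L^2(\Omega)^2$ and $H^{1/2}(\partial\Omega)^2\hookrightarrow L^2(\partial\Omega)^2$, and weak-$\ast$ compactness in $H^1(\Omega)^2$, I extract a further subsequence along which $u_{\tau_{k_j}}\to u^\star$ in $L^2(\Omega)^2$ with $u^\star\in H^1(\Omega)^2$; the vanishing boundary norm forces $u^\star\in H^1_0(\Omega)^2$. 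Passing to the limit in the weak formulation of $-\Delta u_\tau=(\lambda(\tau)^2-m^2)u_\tau$ from \eqref{Eq:HelmholtzUV} gives the first equation of \eqref{Eq:LimitUPb} in the weak sense, and standard elliptic regularity (as in \cite[Theorem 4 in \S6.3.2]{Evans}) upgrades $u^\star$ to $H^2(\Omega)^2$.

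The main technical point is the second paragraph: the factor $e^{-\tau}$ gained from the boundary condition must beat the factor $(1+e^\tau)$ produced by the integral-equation estimate for $v_\tau$. This balance is exactly what \Cref{Lemma:BoundaryPb.2} and \Cref{Prop:RegularityKW} are designed to give, provided $\lambda(\tau)$ stays bounded; everything else is essentially an adaptation of the arguments already established in the proof of \Cref{Prop:CompactnessMinusInfty}.
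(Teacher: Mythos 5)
Your proposal is correct and follows essentially the same route as the paper: the same application of \Cref{Lemma:SigmaGradEstimates}~$(ii)$, the trade of the trace of $u_\tau$ for $e^{-\tau}$ times that of $v_\tau$ via the boundary condition, the estimate \eqref{Eq:BoundaryPbComm.UV} for $v_\tau$ combined with \Cref{Lemma:H1/2Estimate} and the uniform bound on $C_{\lambda(\tau)}$, and then the same compactness argument using $e^\tau\|u_\tau\|^2_{L^2(\partial\Omega)^2}\leq C$ from \Cref{Lemma:RelationNormsIntBdry} to force $u^\star=0$ on $\partial\Omega$.
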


\begin{proof}
	The proof follows the same lines as the one of \Cref{Prop:CompactnessMinusInfty}.
	By 
	\Cref{Lemma:SigmaGradEstimates}~$(ii)$,
	$$
	\norm{ u_\tau }_{H^1(\Omega)^2} \leq C \left (  \norm{ u_\tau }_{L^2(\Omega)^2} + \norm{{\sigma} \cdot\nabla u_\tau }_{L^2(\Omega)^2}  + \norm{ u_\tau }_{H^{1/2}(\partial \Omega)^2} \right ).
	$$
	Since $\| \varphi_\tau \|_{L^2(\Omega)^4} = 1 $, using the equation $-i{\sigma} \cdot\nabla u_\tau = (\lambda(\tau) + m) v_\tau$ from \eqref{Eq:EigenvaluePbUV.1} and the upper bound $\lambda(\tau)\leq C_\circ$, we have
	$
	\norm{ u_\tau }_{H^1(\Omega)^2} \leq C  (1  + \norm{ u_\tau }_{H^{1/2}(\partial \Omega)^2}  )
	$.	
	Now, $v_\tau = i e^\tau ({\sigma} \cdot\nu) u_\tau$ yields
	\begin{equation}
		\label{Eq:H12normsuv.bis}
		\norm{u_\tau }_{H^{1/2}(\partial \Omega)^2} \leq C e^{-\tau} \norm{v_\tau}_{H^{1/2}(\partial \Omega)^2}.
	\end{equation}
	Looking at $v_\tau$, if we combine \Cref{Lemma:BoundaryPb} with \eqref{Eq:BoundaryPbComm.UV}
	we get
	$$
	\|v_\tau\|_{H^{1/2}(\partial\Omega)^2}\leq 
	C\left(1+ |\lambda(\tau)+m| e^\tau+|\lambda(\tau)-m|e^{-\tau}\right)C_{\lambda(\tau)}
	\norm{v_\tau}_{H^{-1/2}(\partial \Omega)^2}.$$
	Using that $\lambda(\tau)\leq C_\circ$, it follows that
	$
	\| v_\tau \|_{H^{1/2}(\partial \Omega)^2} \leq C (1 + e^{\tau}) \| v_\tau \|_{H^{-1/2}(\partial \Omega)^2}$
	for all $\tau\in\mathcal{T}$.
	Applying  Lemma~\ref{Lemma:H1/2Estimate} and using that $\| \varphi_\tau \|_{L^2(\Omega)^4} = 1 $, we obtain
	$\| v_\tau \|_{H^{1/2}(\partial \Omega)^2} \leq C (1 + e^{\tau})$ for all $\tau\in\mathcal{T}$.
	Combining this with \eqref{Eq:H12normsuv.bis}, we get 
	$\norm{ u_\tau }_{H^{1/2}(\partial \Omega)^2}\leq C$ for all $\tau\in\mathcal{T}$, which proves \eqref{Eq:H1boundu.tau}.
	Once we have this uniform bound we proceed as in the proof of Proposition~\ref{Prop:CompactnessMinusInfty}, using now that $e^\tau\|u_\tau \|^2_{L^2(\partial \Omega)^2} \leq  C$ by 
	\Cref{Lemma:RelationNormsIntBdry} to get that $u^\star=0$ on 
	$\partial\Omega$. 
\end{proof}

The following lemma will be used in the proof of \Cref{Th:EigLimits}. It assures that $\|u_\tau\|_{L^2(\Omega)^2}$ does not tend to zero as $\tau\uparrow+\infty$. 
This fact will be used to show that the limit function $u^\star$ found in \Cref{Prop:CompactnessPlusInfty} is not identically zero and, therefore, that $(\lambda^\star)^2-m^2$ is an eigenvalue of the Dirichlet Laplacian on $\Omega$. 
As we will see in the proof of \Cref{Th:EigLimits}, to show an analogous nondegeneracy for $v^\star$ we will need a different argument based on formula \eqref{Eq:Lambda'Formula2}.

\begin{lemma}
	\label{Coro:u>vL^2}
	Let $\tau\in\R$, $\lambda>0$, and 
	$\varphi= (u, v)^\intercal\in \mathrm{Dom}(\Dirac_\tau)\setminus\{0\}$
	be such that 
	$\Dirac_\tau \varphi=\lambda\varphi$.
	Then,
	\begin{equation}
		\dfrac{\|u \|_{L^2(\Omega)^2}}{\| \varphi \|_{L^2( \Omega)^4}} > \dfrac{1}{\sqrt{2}} > \dfrac{\|v \|_{L^2(\Omega)^2}}{\| \varphi \|_{L^2( \Omega)^4}}.
	\end{equation}
\end{lemma}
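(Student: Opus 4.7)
Since $\|\varphi\|^2_{L^2(\Omega)^4}=\|u\|^2_{L^2(\Omega)^2}+\|v\|^2_{L^2(\Omega)^2}$, both inequalities in the statement are equivalent to the single strict inequality $\|u\|^2_{L^2(\Omega)^2}>\|v\|^2_{L^2(\Omega)^2}$. The central tool in the plan is \Cref{Lemma:RelationNormsIntBdry}, which produces the identity
\begin{equation*}
(\lambda-m)\|u\|^2_{L^2(\Omega)^2}-(\lambda+m)\|v\|^2_{L^2(\Omega)^2} \;=\; e^{\tau}\|u\|^2_{L^2(\partial\Omega)^2} \;\geq\; 0.
\end{equation*}
By \Cref{Lemma:SpectrumGenMIT}\,$(ii)$, we have $\lambda>m\geq 0$, so both coefficients $\lambda\pm m$ are strictly positive and one may rearrange the identity as
\begin{equation*}
\|u\|^2_{L^2(\Omega)^2} \;\geq\; \frac{\lambda+m}{\lambda-m}\,\|v\|^2_{L^2(\Omega)^2}.
\end{equation*}

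When $m>0$, the factor $(\lambda+m)/(\lambda-m)$ is strictly greater than $1$, and one obtains the strict inequality $\|u\|^2_{L^2(\Omega)^2}>\|v\|^2_{L^2(\Omega)^2}$ at once (the case $v=0$ is trivial because $\varphi\neq 0$ would force $u\neq 0$). The main obstacle is thus the massless case $m=0$, in which the bound above degenerates to $\|u\|^2_{L^2(\Omega)^2}\geq\|v\|^2_{L^2(\Omega)^2}$, with equality precisely when $\|u\|_{L^2(\partial\Omega)^2}=0$.

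To rule out this borderline situation when $m=0$, note that if $u\equiv 0$ on $\partial\Omega$, then the boundary condition defining $\mathrm{Dom}(\Dirac_\tau)$, namely $v=ie^{\tau}(\sigma\cdot\nu)u$, also forces $v\equiv 0$ on $\partial\Omega$, so $\varphi\in H^1_0(\Omega)^4$. Extending $\varphi$ by zero to $\widetilde\varphi\in H^1(\R^3)^4$ and using that the extension commutes with $\alpha\cdot\nabla$ on $H^1_0(\Omega)^4$, one sees that $-i\alpha\cdot\nabla\widetilde\varphi=\lambda\widetilde\varphi$ in $\R^3$. This produces a nontrivial $L^2$-eigenfunction of the free massless Dirac operator on $\R^3$, contradicting the fact that its spectrum is purely absolutely continuous. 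Therefore equality cannot occur, and the strict inequality $\|u\|^2_{L^2(\Omega)^2}>\|v\|^2_{L^2(\Omega)^2}$ is obtained in all cases, which yields the conclusion.
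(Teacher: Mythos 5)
Your argument is correct, and it takes a genuinely different route from the paper. The paper proves this lemma by invoking the global analytic parametrization of \Cref{Th:ParamEigenvalues} together with the derivative formulas of \Cref{Prop:DerivativeEigenvalues}: from $\lambda'(\tau)>0$ and \eqref{Eq:Lambda'Formula2} one gets $(\lambda-m)\gamma>(\lambda+m)(1-\gamma)$, hence $\lambda(2\gamma-1)>m\geq 0$ and $\gamma>1/2$. You instead work directly with the integration-by-parts identity of \Cref{Lemma:RelationNormsIntBdry}, which is in fact the same identity that underlies \eqref{Eq:Lambda'Formula2}, but you avoid the perturbation-theory machinery altogether; this makes the lemma logically independent of the eigenvalue-curve results, which is a real simplification. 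The point where the two proofs genuinely diverge is the strictness: the paper gets it from $\lambda'(\tau)>0$, i.e.\ from the nonvanishing of $u$ on $\partial\Omega$, while you get it for free from the factor $(\lambda+m)/(\lambda-m)>1$ when $m>0$, and in the massless case you rule out $u|_{\partial\Omega}=0$ by extending $\varphi\in H^1_0(\Omega)^4$ by zero and using that the free massless Dirac operator on $\R^3$ has no $L^2$ eigenfunctions (its spectrum being purely absolutely continuous) --- a valid unique-continuation-type step, analogous to \eqref{Eq:SigmaDeriveExtZero}. A small streamlining worth noting: the case split is unnecessary, since \Cref{Lemma:ReproductionFormula} already implies that an eigenfunction cannot vanish identically on $\partial\Omega$ (this is how the paper argues in the proof of \Cref{Lemma:SpectrumGenMIT}~$(ii)$), and the boundary condition $v=ie^\tau(\sigma\cdot\nu)u$ then gives $\|u\|_{L^2(\partial\Omega)^2}>0$ directly, so the right-hand side of the identity in \Cref{Lemma:RelationNormsIntBdry} is strictly positive for every $m\geq 0$.
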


\begin{proof}
	Thanks to \Cref{Th:ParamEigenvalues}, we can take a smooth parametrization of the eigenvalue 
	$\lambda=\lambda(\tau)$ and the associated eigenfunction 
	$\varphi = \varphi_\tau=(u_\tau, v_\tau)^\intercal$ in a neighborhood of $\tau$ (indeed in the whole real line). Set
	$$
	\gamma(\tau) := \dfrac{\|u_\tau \|^2_{L^2(\Omega)^2}}{\| \varphi_\tau \|^2_{L^2( \Omega)^4}}.
	$$
	Then $1 - \gamma(\tau) = \|v_\tau \|^2_{L^2(\Omega)^2} / \| \varphi_\tau \|^2_{L^2( \Omega)^4} $. Hence, it suffices to prove that $\gamma (\tau) > 1/2$.
	From \eqref{Eq:Lambda'Formula2} and the fact that $\lambda'(\tau) > 0$ it follows that
	$
	(\lambda(\tau) - m) \gamma(\tau) > (\lambda(\tau) + m)(1-\gamma(\tau))$,	which entails
	$
	\lambda(\tau) (2\gamma (\tau) - 1) >   m \geq 0.
	$
	Since $\lambda(\tau) > 0$, we conclude  that $\gamma (\tau) > 1/2$.
\end{proof}

\medskip 

With the previous results at hand, we can now establish \Cref{Th:EigLimits}.

\medskip

\begin{proof}[Proof of \Cref{Th:EigLimits}]
	In the following, for each $\tau\in I$ let $\varphi_\tau= (u_\tau, v_\tau)^\intercal\in \mathrm{Dom}(\Dirac_\tau)$ be such that $\Dirac_\tau\varphi_\tau=\lambda(\tau)\varphi_\tau$ and 
	$\| \varphi_\tau \|_{L^2( \Omega)^4} = 1$. 
	
	Let us first prove $(ii)$, which is shorter than proving $(i)$.
	Since $\tau\mapsto\lambda(\tau)\in\sigma(\Dirac_\tau)\cap(m,+\infty)$ is assumed to be continuous, the monotonicity of the eigenvalue curves proved in \Cref{Th:ParamEigenvalues} assures that $\lambda$ is strictly increasing on $I$, hence 
	the limit $\lambda(+\infty)$ exists and satisfies 
	$m<\lambda(+\infty)\leq+\infty.$ 
	
	Assume that $\lambda(+\infty)<+\infty$. Then $\lambda(\tau)\leq \lambda(+\infty)<+\infty$ for all $\tau\in (\tau_0,+\infty)$. By \Cref{Prop:CompactnessPlusInfty}, there exists a sequence $\{\tau_k\}_{k\in\N}\subset(\tau_0,+\infty)$ with $\lim_{k\uparrow+\infty}\tau_k = +\infty$ for which $u_{\tau_k}$ converges in $L^2(\Omega)^2$ to a function $u^\star\in H^2(\Omega)^2$ satisfying
	\begin{equation}
		\label{Eq:LimitUPbBis}
		\beqc{\PDEsystem}
		-\Delta u^\star & = & \big(\lambda(+\infty)^2-m^2\big) u^\star  & \text{in }  \Omega, \\
		u^\star & = & 0  & \text{on } \partial \Omega.
		\eeqc
	\end{equation}
	Now, since $\| \varphi_\tau \|_{L^2( \Omega)^4} = 1$ for all $\tau$, \Cref{Coro:u>vL^2} gives that $\|u_{\tau_k}\|_{L^2(\Omega)^2}>1/2$ for all $k$, thus $\|u^\star\|_{L^2(\Omega)^2}>1/2$ by the convergence in $L^2(\Omega)^2$. Therefore, $u^\star$ is an eigenfunction of the Dirichlet Laplacian on $\Omega$, which yields 
	$\lambda(+\infty)^2-m^2\in \sigma(-\Delta_D)$, as desired.
	
	Let us now address the proof of $(i)$. 
	We are assuming that $\tau\mapsto\lambda(\tau)$ is a continuous function on $(-\infty,\tau_0)$. Then, thanks to \Cref{Th:ParamEigenvalues}, $\lambda$ is indeed real analytic everywhere on $(-\infty,\tau_0)$ except possibly on countable many exceptional points where the graph of $\lambda$ may change from one real analytic eigenvalue curve to another one through a crossing point. \Cref{Th:ParamEigenvalues} actually shows that on every compact set of $(-\infty,\tau_0)$ there are only a finite number of these exceptional points. Moreover, by the monotonicity shown in \Cref{Th:ParamEigenvalues}, $\lambda'>0$ wherever $\lambda$ is differentiable (see  \Cref{Prop:DerivativeEigenvalues}), and thus the limit $\lambda(-\infty)$ exists and satisfies
	$m\leq\lambda(-\infty)<+\infty$. All these considerations  justify the identities
	$$
	\int_{-\infty}^{\tau_0-1} |\lambda'|=\int_{-\infty}^{\tau_0-1} \lambda' = \lambda(\tau_0-1) - \lambda(-\infty).
	$$
	Since $\lambda(\tau_0-1) - \lambda(-\infty)<+\infty$, we deduce that $\lambda'$ is absolutely integrable in $(-\infty, \tau_0-1)$. In  particular, there exists a sequence $\{\tau_k\}_{k\in\N}\subset
	(-\infty,\tau_0-1)$ such that 
	$\lim_{k\uparrow+\infty}\tau_k=-\infty$ and 
	\begin{equation}\label{lambda'-tend.zero.-infty}
		\lim_{k\uparrow+\infty}\lambda'(\tau_k)=0.
	\end{equation}
	Set 
	$\gamma(\tau):=\|u_\tau\|^2_{L^2(\Omega)^2}.$ Then, 
	$1-\gamma(\tau)=\|v_\tau\|^2_{L^2(\Omega)^2}$ by the fact that $\| \varphi_\tau \|_{L^2( \Omega)^4} = 1$, and
	\begin{equation}\label{lambda'-tend.zero.-infty.aux}
		\gamma(\tau)>\frac{1}{2}
	\end{equation}
	for all $\tau\in I$ by \Cref{Coro:u>vL^2}. Combining \eqref{lambda'-tend.zero.-infty} with \eqref{Eq:Lambda'Formula2} we deduce that 
	\begin{equation}
		\label{Eq:Eig->mProofClaim1}
		\lim_{k\uparrow+\infty} \Big(\big(\lambda(\tau_k) - m\big) \gamma(\tau_k) - \big(\lambda(\tau_k) + m\big)\big(1 - \gamma(\tau_k)\big)\Big)= \lim_{k\uparrow+\infty}\lambda'(\tau_k)=0.
	\end{equation}
	This limit will be the key point in the proof of $(ii)$.
	
	Recall that $m\leq\lambda(-\infty)<+\infty$. The next step  is to show that if $m<\lambda(-\infty)$ then 
	$\lambda(-\infty)^2-m^2\in \sigma(-\Delta_D)$. Note that  if $m<\lambda(-\infty)$ then the fact that $\lambda$ is increasing and \eqref{lambda'-tend.zero.-infty.aux} yield that 
	$(\lambda(\tau_k) - m) \gamma(\tau_k)
	\geq(\lambda(-\infty)-m)/2>0$ for all $k\in\N$. Then,
	using \eqref{Eq:Eig->mProofClaim1} we get that
	$(\lambda(\tau_k) + m)(1 - \gamma(\tau_k))>C$ for some $C>0$ and all $k$ big enough. In particular, since $\lambda(-\infty)<+\infty$, we deduce that
	\begin{equation}\label{lambda'-tend.zero.-infty.aux2}
		\|v_{\tau_k}\|^2_{L^2(\Omega)^2}
		=(1 - \gamma(\tau_k))>C
	\end{equation} 
	for some $C>0$ and all $k$ big enough. At this point, we simply have to use \Cref{Prop:CompactnessMinusInfty} on the sequence $\{\tau_k\}_{k\geq k_0}$ for some $k_0$ big enough to find a function $v_\star\in H^2(\Omega)^2\setminus\{0\}$ satisfying
	\begin{equation}\label{extremal.problem.-infty}
		\beqc{\PDEsystem}
		-\Delta v_\star & = & \big(\lambda(-\infty)^2-m^2\big) v_\star  & \text{in }  \Omega, \\
		v_\star & = & 0  & \text{on } \partial \Omega.
		\eeqc
	\end{equation}
	Note that $v_\star$ is not identically zero thanks to \eqref{lambda'-tend.zero.-infty.aux2} and the convergence in $L^2(\Omega)^2$ of the subsequence of 
	$\{v_{\tau_k}\}_{k\geq k_0}$ given by \Cref{Prop:CompactnessMinusInfty}. From here, we conclude that
	\begin{equation}\label{lambda.infty>m}
		\text{if $\lambda(-\infty)>m$ then $\lambda(-\infty)^2-m^2\in \sigma(-\Delta_D)$.}
	\end{equation} 
	
	Finally, assume that $\lambda(\tau)^2-m^2\leq\min\sigma(-\Delta_D)$ for some 	$\tau\in I$. Since $\lambda$ is strictly increasing on $I$, we deduce that $\lambda(-\infty)^2-m^2<\min\sigma(-\Delta_D)$, which leads to $\lambda(-\infty)^2-m^2\not\in \sigma(-\Delta_D)$. This, together with \eqref{lambda.infty>m} and the fact that $m\leq\lambda(-\infty)<+\infty$, entails 
	$\lambda(-\infty)=m$. On the contrary, if $\lambda(\tau)^2-m^2>\min\sigma(-\Delta_D)$ for all $\tau\in I$ then 
	$\lambda(-\infty)^2-m^2\geq\min\sigma(-\Delta_D)>0$,
	and thus $\lambda(-\infty)^2-m^2\in \sigma(-\Delta_D)$ by \eqref{lambda.infty>m}. 
\end{proof}

\medskip 

We now establish \Cref{Th:FirstEigTom}, concerning the first positive eigenvalue.

\medskip 

\begin{proof}[Proof of \Cref{Th:FirstEigTom}]
	Let us first show that $\tau\mapsto\lambda_1^+(\tau):=\min(\sigma(\Dirac_\tau)\cap(m,+\infty))$ is continuous and strictly increasing on $\R$. To do so, we will show that $\lambda_1^+$ is continuous and strictly increasing on $[0,+\infty)$, the proof for $(-\infty,0]$ is analogous.

	Let $\{\tau\mapsto\lambda_k (\tau) \}_{k\in\Z \setminus \{0\}}$ be the family of eigenvalues curves associated to the mapping
	$\tau\mapsto\Dirac_{\tau}$ given by \Cref{Th:ParamEigenvalues}. Note that this family contains pairs of curves whose graphs coincide. This is due to the fact that in the statement of \Cref{Th:ParamEigenvalues} the eigenvalues were repeated according to their algebraic multiplicity. In order to avoid this repetition, let us remove from the family $\{\lambda_k\}_{k\in\Z \setminus \{0\}}$ any curve
	$\lambda_k$ whose graph coincides with the graph of 
	$\lambda_j$ for some $j<k$. In this way we get a new family of curves, still denoted by $\{\lambda_k\}_{k\in\Z \setminus \{0\}}$, such that 
	$\sigma(\Dirac_\tau)=\bigcup_{k\in\Z \setminus \{0\}}\lambda_k(\tau)$ for all $\tau\in\R$, and such that the graphs of $\lambda_j$ and $\lambda_k$ differ whenever $j\neq k$. Moreover, thanks to \Cref{Th:ParamEigenvalues} the following holds: if $P\subset\R^2$ denotes the union of all the intersection points among the graphs of the curves in $\{\lambda_k\}_{k\in\Z \setminus \{0\}}$, then $P\cap K$ is finite for every compact set $K\subset\R^2$, and for each $p\in P$ there are only a finite number of curves whose graph intersects $p$. 
	
	From the previous considerations, and recalling that 
	$\lambda_1^+(0)=\min(\sigma(\Dirac_0)\cap(m,+\infty))$, we see that
	there exist only finitely many curves
	$\lambda_{k_1},\ldots,\lambda_{k_J}\in\{\lambda_k\}_{k\in\Z \setminus \{0\}}$ whose graphs intersect the point 
	$p_0=(0,\lambda_1^+(0))\in\R^2$. Furthermore, in a neighborhood of $p_0$ these curves only intersect at $p_0$. 
	Hence, there exists $k'\in\{k_1,\ldots,k_J\}$  and $\epsilon>0$ such that
	\begin{equation}\label{param.lambda1.taus}
		\lambda_{k'}(\tau)<\lambda_{k_j}(\tau)\quad\text{for all $j=1,\ldots,J$ with $k_j\neq k'$, and all $\tau\in(0,\epsilon)$.}
	\end{equation} 
	Moreover, the curves $\lambda_{k_j}(\tau)$ do not intersect any other eigenvalue curve for  $\tau \in [0,\epsilon)$ ---since in that interval all the other eigenvalue curves lie either below $-m$ or above the second positive eigenvalue of $\Dirac_0$ (by monotonicity).
	This shows that 
	$\lambda_1^+(\tau)=\lambda_{k'}(\tau)$ for all 
	$\tau\in[0,\epsilon)$.
	Then, since $\lambda_{k'}$ is real analytic and strictly increasing by \Cref{Th:ParamEigenvalues}, the same holds for $\lambda_1^+$ on $(0,\epsilon)$. 
	Now, since $\tau\mapsto\lambda_{k'}(\tau)$ is defined for all 
	$\tau\in\R$, we can increase 
	$\tau$ starting from $\tau=0$ in order to move us to the right along the graph of $\lambda_{k'}$. Regarding the family of curves $\{\lambda_k\}_{k\in\Z \setminus \{0\}}$, only two situations can happen. Either 
	\begin{itemize}
		\item[$(i)$] there exists $\tau_1>0$ such that the graph of 
		$\lambda_{k'}$ does not intersect any other graph for any 
		$\tau\in(0,\tau_1)$, but it intersects the graphs of (at most) finitely many curves at the point $p_1=(\tau_1,\lambda_{k'}(\tau_1))\in\R^2$, or
		\item[$(ii)$] the graph of $\lambda_{k'}$ does not intersect the graph of any other curve for any $\tau\in(0,+\infty)$.
	\end{itemize}
	
	We claim that if $(ii)$ holds then $\lambda_{k'}=\lambda_1^+$ on $[0,+\infty)$. Clearly, $\lambda_{k'}\geq\lambda_1^+$ on $[0,+\infty)$ by the definition of $\lambda_1^+$, the fact that 
	$\lambda_{k'}$ is continuous, and that $\lambda_{k'}(0)=\lambda_1^+(0)$. To prove the claim, assume by contradiction that 
	$\lambda_{k'}(\tau_0)>\lambda_1^+(\tau_0)$ for some 
	$\tau_0>0$. We know that there exists
	$\lambda\in\{\lambda_k\}_{k\in\Z \setminus \{0\}}$ such that 
	$\lambda(\tau_0)=\lambda_1^+(\tau_0)$. Since 
	$\lambda(0)\geq\lambda_1^+(0)=\lambda_{k'}(0)$, by $(ii)$ and 
	continuity we deduce that $\lambda(0)=\lambda_{k'}(0)$ and that $\lambda(\tau)<\lambda_{k'}(\tau)$ for all $\tau\in[0,\tau_0]$, but this contradicts \eqref{param.lambda1.taus}. Therefore, $\lambda_{k'}=\lambda_1^+$ on $[0,+\infty)$ if $(ii)$ holds.
	
	Assume now that $(i)$ holds. Arguing as in $(ii)$ we see that 
	$\lambda_{k'}=\lambda_1^+$ on $[0,\tau_1]$. Then, we can proceed exactly as we did for the point $p_0=(0,\lambda_1^+(0))\in\R^2$ but now for the point
	$p_1=(\tau_1,\lambda_1^+(\tau_1))\in\R^2$. We would see that either $\lambda_1^+$ coincides with some curve in  
	$\{\lambda_k\}_{k\in\Z \setminus \{0\}}$ on $[\tau_1,+\infty)$ or there exists, as in $(i)$, a new intersection point
	$p_2=(\tau_2,\lambda_{1}^+(\tau_2))\in\R^2$ with $\tau_2>\tau_1$ associated to the curve in $\{\lambda_k\}_{k\in\Z \setminus \{0\}}$ that coincides with $\lambda_1^+$ on $[\tau_1,\tau_2]$. Iterating this argument, in the worst case we would get an infinite sequence of intersection points 
	$p_j=(\tau_j,\lambda_{1}^+(\tau_j))\in\R^2$ for $j=1,2,\ldots$ such that $\tau_j<\tau_{j+1}$ for all $j$. However, recall from the beginning of the proof that, if $P\subset\R^2$ denotes the union of all the intersection points among the graphs of the curves in 
	$\{\lambda_k\}_{k\in\Z \setminus \{0\}}$, then $P\cap K$ is finite for every compact set $K\subset\R^2$. This yields that the set
	$\{\tau_j\}_j\cap [0,R]$ is finite for all $R>0$, which in particular means that $\lim_{j\uparrow+\infty}\tau_j=+\infty$. Therefore, in this worst case we still get a covering of the graph of $\lambda_1^+$ by the graphs of the curves in 
	$\{\lambda_k\}_{k\in\Z \setminus \{0\}}$ in a locally finite way.
	
	In conclusion, from how we described $\lambda_1^+$ in terms of the eigenvalue curves $\{\lambda_k\}_{k\in\Z \setminus \{0\}}$, and since these curves are real analytic and strictly increasing on $\R$, we deduce that $\lambda_1^+$ is continuous and strictly increasing on $\R$, and real analytic on 
	$\R\setminus E$, where $E\subset\R$ is some set such that $E\cap [-R,R]$ is finite for all $R>0$. 
	
	It only remains to prove \eqref{lim:pm.infty.lambda1+.state}. 
	That 
	$\lim_{\tau\uparrow+\infty}\lambda_1^+(\tau)^2-m^2\in \sigma(-\Delta_D)\cup\{+\infty\}$ follows directly from \Cref{Th:EigLimits}~$(ii)$, hence we only need to prove that $\lim_{\tau\downarrow-\infty}\lambda_1^+(\tau)=m$. To show this, by \Cref{Th:EigLimits}~$(i)$ it is enough to check that 
	$\lambda_1^+(\tau)^2-m^2\leq\min\sigma(-\Delta_D)$ for some $\tau\in\R$. The idea will be to bound from above the first eigenvalue of the positive operator $\Dirac_0^2$ by the one of $-\Delta_D$, and then to use that $\Dirac_0$ diagonalizes in a basis of eigenvectors to bring this estimate to $\lambda_1^+(0)$. The operator $\Dirac_0^2$ is defined by
	\begin{equation}\label{def.H2.lapl}
		\begin{split}
			\mathrm{Dom} (\Dirac_0^2) &:= \{\varphi \in \mathrm{Dom} (\Dirac_0): \, \Dirac_0 \varphi \in \mathrm{Dom} (\Dirac_0)\},\\
			\Dirac_0^2 \varphi &:= \Dirac^2 \varphi
			= (-\Delta + m^2)\varphi\quad\text{for all $\varphi\in\mathrm{Dom} (\Dirac_0^2)$}.
		\end{split}
	\end{equation}
	From (the proof of) \Cref{Lemma:SpectrumGenMIT}, we know that $\Dirac_0$ diagonalizes in an $L^2(\Omega)^4$-orthonormal basis of eigenfunctions, and that $\sigma(\Dirac_0)$ is symmetric. This yields that
	$$\langle \Dirac_0^2 \varphi, \varphi \rangle_{L^2(\Omega)^4}\geq \lambda_1^+(0)^2\| \varphi \|^2_{L^2(\Omega)^4}$$ for all $\varphi \in \mathrm{Dom} (\Dirac_0^2)$, and the equality holds if $\Dirac_0\varphi=\pm\lambda_1^+(0)\varphi$. Therefore,
	\begin{equation}
		\label{Eq:DefLambda1}
		\begin{split}
			\lambda_1^+(0)^2
			&= \min_{\varphi \in \mathrm{Dom} (\Dirac_0^2)\setminus\{0\}}  \dfrac{ \langle \Dirac_0^2 \varphi, \varphi \rangle_{L^2(\Omega)^4}}{\| \varphi \|^2_{L^2(\Omega)^4}}
			=\min_{\varphi \in \mathrm{Dom} (\Dirac_0^2)\setminus\{0\}}  \dfrac{ \langle(-\Delta + m^2) \varphi, \varphi \rangle_{L^2(\Omega)^4}}{\| \varphi \|^2_{L^2(\Omega)^4}}\\
			&\leq
			\inf_{\varphi \in C^\infty_c(\Omega)^4\setminus\{0\}}  \dfrac{ \langle(-\Delta + m^2) \varphi, \varphi \rangle_{L^2(\Omega)^4}}{\| \varphi \|^2_{L^2(\Omega)^4}}
			\leq
			m^2+ \inf_{\phi \in C^\infty_c (\Omega)\setminus\{0\}} \dfrac{ \| \nabla \phi \|^2_{L^2(\Omega)}}{\| \phi \|^2_{L^2(\Omega)}}\\
			&=m^2+\min\sigma(-\Delta_D) ,
		\end{split}
	\end{equation}	
	where we used that $C^\infty_c(\Omega)^4\subset\mathrm{Dom} (\Dirac_0^2)$, and the Rayleigh-Ritz principle in the last equality above. With this estimate at hand, \Cref{Th:EigLimits}~$(i)$ shows that 
	$\lim_{\tau\downarrow-\infty}\lambda_1^+(\tau)=m$.
\end{proof}

\medskip

\begin{remark} 
	\label{Remark:FirstEigFinite}
	Despite that from our arguments we cannot assure that 
	$\lim_{\tau\uparrow+\infty}\lambda_1^+(\tau)<+\infty$ in \eqref{lim:pm.infty.lambda1+.state}, we believe that indeed 
	\begin{equation}\label{conjecture1}
		\lim_{\tau\uparrow+\infty}\lambda_1^+(\tau)=\sqrt{ \min\sigma(-\Delta_D) + m^2}
	\end{equation} 
	for every bounded domain 
	$\Omega\subset\R^3$, as \Cref{Prop:ParamFirstEigSphere} shows in the case of a ball. The reason for this belief is the following one: in view of the boundary equation 
	$v  =  i e^\tau ( {\sigma}\cdot\nu)u$ in \eqref{Eq:EigenvaluePbUV.1}, we expect that, as $\tau\downarrow-\infty$, 
	$\Dirac_\tau$ tends to the Dirac operator $A_m$ with zigzag type boundary conditions studied in \cite{HolzmannZigZag}, whose negative eigenvalue with the smallest modulus is 
	$-\sqrt{ \min\sigma(-\Delta_D)+ m^2}$. If, for example, the convergence of $\Dirac_\tau$ to $A_m$ as $\tau\downarrow-\infty$ is in the strong resolvent sense, an application of \Cref{Lemma:SpectrumGenMIT}~$(iii)$ would yield that
	$\sqrt{\min\sigma(-\Delta_D) + m^2}=\lim_{\tau\uparrow+\infty}
	\lambda(\tau)$ for some  
	$\lambda(\tau)\in\sigma(\Dirac_\tau)\cap(m,+\infty)$. Then, the fact that we must have $\lambda(\tau)=\lambda_1^+(\tau)$ for all $\tau$ big enough, which would lead to \eqref{conjecture1}, should follow from the monotonicity of the eigenvalue curves and the fact that $-\sqrt{\min\sigma(-\Delta_D) + m^2}$ is the negative eigenvalue of $A_m$ with the smallest modulus. 
	
	In order to use this argument to get \eqref{conjecture1}, the convergence of $\Dirac_\tau$ in a resolvent sense  as $\tau\to\pm\infty$ must be studied. This question requires further work, and it will not be addressed in this article, since \Cref{Th:FirstEigTom} suffices to establish the shape optimization result stated in \Cref{Th:BallOptimalLargeTau}. 
\end{remark}

\smallskip

\begin{remark}
	\label{Remark:SetE}
	As seen from the proof of \Cref{Th:FirstEigTom}, the existence of the set $E$ in which $\lambda^+_1$ is not analytic depends on the possible crossing which an eigenvalue curve $\tau \mapsto \lambda(\tau)$, locally representing $\lambda^+_1$, may have with other eigenvalue curves.
	This, in turn, depends on the fact of  $\lambda^+_1$ having constant multiplicity for all $\tau \in \R$.	
	Although in the case of   $\Omega$ being  a ball we know from \Cref{Prop:ParamFirstEigSphere} that $\lambda^+_1$ has always multiplicity $2$  (and thus $E= \emptyset$ in this case), these questions remain open for a general domain $\Omega$.
\end{remark}
 
 \medskip 
 
We conclude the section by proving our shape optimization result for large values of the parameter $\tau$.

\medskip 

\begin{proof}[Proof of \Cref{Th:BallOptimalLargeTau}]
	Denote by $\lambda_\Omega^D:=\min\sigma(-\Delta_D)$ the first eigenvalue of the Dirichlet Laplacian in $\Omega$. From the Faber-Krahn inequality  we know that if 
	$\Omega$ is not a ball then 
	\begin{equation}\label{FK.ineq}
		\lambda_\Omega^D>\lambda_B^D,
	\end{equation}
	since $\partial\Omega$ is regular enough; see \cite[Remark 3.2.2]{Henrot}. Now, on the one hand,  \Cref{Th:FirstEigTom} yields
	\begin{equation}\label{FK.ineq.eq1}
		\lim_{\tau \uparrow + \infty} \lambda_\Omega(\tau)\geq{\sqrt{\lambda_\Omega^D+m^2}}
	\end{equation}
	and, on the other hand, \Cref{Prop:ParamFirstEigSphere} shows that
	\begin{equation}\label{FK.ineq.eq2}
		\lim_{\tau \uparrow + \infty} \lambda_B(\tau) = \sqrt{\lambda_B^D+ m^2}.
	\end{equation}
	Combining \eqref{FK.ineq}, \eqref{FK.ineq.eq1}, and \eqref{FK.ineq.eq2} we deduce that
	$\lim_{\tau \uparrow + \infty} \lambda_\Omega(\tau)
	>\lim_{\tau \uparrow + \infty} \lambda_B(\tau)$, from which the corollary follows.	
\end{proof}

\subsection{Skew projections onto Hardy spaces}\label{Subsec:Proj.Hardy}

In this section we introduce skew projections of $L^2(\partial\Omega)^2$ onto Hardy spaces. They are used in  \Cref{Subsec:Rayleigh} to prove \Cref{Th:Rayleigh.Intro}, a result which addresses the asymptotic expansion of $\lambda_\Omega(\tau)-m$ as $\tau\downarrow-\infty$.
For a more general perspective on this topic from the point of view of Clifford algebras and the Cauchy-Clifford operator, the reader may look at \cite{HMMPT}.

Let $P_\pm:L^2(\partial\Omega)^2\to L^2(\partial\Omega)^2$ be defined by
$$P_\pm:=\frac{1}{2}\pm iW_m(\sigma\cdot\nu),$$
where $W_m$ is defined in \eqref{Eq:DefK}, and let $(P_\pm)^*:L^2(\partial\Omega)^2\to L^2(\partial\Omega)^2$ be the adjoint operators with respect to 
$\langle\cdot,\cdot\rangle_{L^2(\partial\Omega)^2}$, namely,
$$(P_\pm)^*:=\frac{1}{2}\mp i (\sigma\cdot\nu)W_m$$
(recall that both $W_m$ and $\sigma\cdot\nu$ are bounded self-adjoint operators in $L^2(\partial\Omega)^2$).

\begin{lemma}\label{skew_projections}
	$P_+$ and $P_-$ are complementary projections of $L^2(\partial\Omega)^2$. More precisely,
	\begin{itemize}
		\item[$(i)$] $P_++P_-=1$,
		\item[$(ii)$] $P_\pm P_\mp=0$,
		\item[$(iii)$] $P_\pm P_\pm=P_\pm$, 
	\end{itemize}
	as bounded operators in $L^2(\partial\Omega)^2$. The same holds replacing $P_\pm$ by $(P_\pm)^*$.
\end{lemma}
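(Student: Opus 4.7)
The plan is to derive everything from the single algebraic identity $(W_m(\sigma\cdot\nu))^2 = -\tfrac{1}{4}$, which is an immediate specialization of Lemma \ref{l.prop.K.W}(ii) by setting $\lambda=m$ (so the term $(\lambda^2-m^2)(K_\lambda(\sigma\cdot\nu))^2$ vanishes). Granted this, the three claimed relations reduce to elementary manipulations, and the corresponding statement for the adjoints is obtained by taking $\langle\cdot,\cdot\rangle_{L^2(\partial\Omega)^2}$-adjoints in each identity.

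First I would check $(i)$: by definition
\[
P_+ + P_- = \Bigl(\tfrac{1}{2} + iW_m(\sigma\cdot\nu)\Bigr) + \Bigl(\tfrac{1}{2} - iW_m(\sigma\cdot\nu)\Bigr) = 1.
\]
Next, to establish $(ii)$, I would expand $P_+P_-$ as a difference of squares,
\[
P_+ P_- = \bigl(\tfrac{1}{2} + iW_m(\sigma\cdot\nu)\bigr)\bigl(\tfrac{1}{2} - iW_m(\sigma\cdot\nu)\bigr) = \tfrac{1}{4} + \bigl(W_m(\sigma\cdot\nu)\bigr)^2,
\]
and then invoke Lemma \ref{l.prop.K.W}(ii) at $\lambda=m$ to conclude that this equals $\tfrac{1}{4}-\tfrac{1}{4}=0$. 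The same computation (or reversing the two factors) gives $P_-P_+=0$.

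Then $(iii)$ is a formal consequence of $(i)$ and $(ii)$: multiplying $P_++P_-=1$ on the left by $P_+$ yields $P_+^2 + P_+P_- = P_+$, hence $P_+^2=P_+$, and symmetrically $P_-^2=P_-$.

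Finally, for the adjoints, I would simply take the $L^2(\partial\Omega)^2$-adjoint in each of the three identities already proved, using $1^*=1$, $(AB)^* = B^*A^*$, and the fact that adjoints of zero and identity operators are unchanged. This gives $(P_+)^* + (P_-)^* = 1$, $(P_\pm)^*(P_\mp)^* = (P_\mp P_\pm)^* = 0$, and $((P_\pm)^*)^2 = (P_\pm^2)^* = (P_\pm)^*$. No real obstacle is expected; the only nontrivial input is the algebraic identity from Lemma \ref{l.prop.K.W}(ii), which is already established.
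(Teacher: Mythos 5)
Your proposal is correct and follows essentially the same route as the paper: the key input is $(W_m(\sigma\cdot\nu))^2=-\tfrac14$ obtained from Lemma~\ref{l.prop.K.W}~$(ii)$ at $\lambda=m$, after which $(ii)$ follows by expanding the product, $(iii)$ by combining $(i)$ and $(ii)$, and the adjoint statements by taking adjoints. No gaps.
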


\begin{proof}
	Statement $(i)$ is obvious. To show $(ii)$, recall that 
	$(W_m(\sigma\cdot\nu))^2=-1/4$ by \Cref{l.prop.K.W}~$(ii)$, and thus
	\begin{equation}
		P_\pm P_\mp=({\textstyle \frac{1}{2}}\pm iW_m(\sigma\cdot\nu))
		({\textstyle \frac{1}{2}}\mp iW_m(\sigma\cdot\nu))
		={\textstyle \frac{1}{4}}+(W_m(\sigma\cdot\nu))^2=0.
	\end{equation}
	Then, combining $(i)$ and $(ii)$ we see that
	$P_\pm P_\pm=P_\pm (P_\pm+P_\mp)=P_\pm$, which proves $(iii)$. Finally, taking adjoints in these identities, we get the same conclusions for $(P_\pm)^*$.
\end{proof}

The previous lemma shows that $P_\pm$ are skew projections of $L^2(\partial\Omega)^2$ parallel to (with kernel) $P_\mp(L^2(\partial\Omega)^2)$, and analogously for their adjoints. The subspaces $P_\pm(L^2(\partial\Omega)^2)$ of $L^2(\partial\Omega)^2$ are the so-called boundary Hardy (or Smirnov) spaces obtained by taking traces on 
$\partial\Omega$ of inner/outer null-solutions of 
$\sigma\cdot\nabla$:
setting 
$\Omega_+:=\Omega$ and 
$\Omega_-:=\R^3\setminus\overline\Omega$,
from the reproducing formula for 
$\C^2$-valued functions analogous to \eqref{Eq:ReproductionFormula} one sees that if $u_\pm\in H^1(\Omega_\pm)^2$ are such that 
$(\sigma\cdot\nabla)u_\pm=0$ in $\Omega_\pm$, then their traces on $\partial\Omega$ 
satisfy $P_\pm u_\pm=u_\pm$  as functions in $L^2(\partial\Omega)^2$, which is equivalent to say that
$u_\pm\in P_\pm(L^2(\partial\Omega)^2)=\operatorname{Ker}(P_\mp)$.

Despite being projections, $P_+$ and $P_-$ are not orthogonal projections in general. Indeed, the fact of being orthogonal characterizes the shape of $\Omega$, as the following result shows. The reader should also look at \cite{HMMPT} for a deeper treatment of the interplay between the geometry of Hardy spaces and the geometry of the underlying domain 
$\Omega$. In \cite{HMMPT} the authors consider the general framework of domains with locally finite perimeter. For the sake of simplicity, in the following lemma we only focus on bounded regular domains. 

\begin{lemma}\label{Lemma:EquivalencesPBall}
	Let $\Omega\subset\R^3$ be a bounded domain with $C^1$ boundary. The following are equivalent:
	\begin{itemize}
		\item[$(i)$] $P_+$ and $P_-$ are self-adjoint operators in $L^2(\partial\Omega)^2$.
		\item[$(ii)$] $P_+$ and $P_-$ are orthogonal projections of $L^2(\partial\Omega)^2$.
		\item[$(iii)$] $\{W_m,\sigma\cdot\nu\}=0$ as operators in $L^2(\partial\Omega)^2$. 
		\item[$(iv)$] $\Omega$ is a ball. 
	\end{itemize}
\end{lemma}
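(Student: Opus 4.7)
The plan is to establish (i) $\Leftrightarrow$ (ii) $\Leftrightarrow$ (iii) by elementary algebraic identities, to prove (iv) $\Rightarrow$ (iii) by a direct computation of the kernel of $\{W_m,\sigma\cdot\nu\}$ on the ball, and to deduce (iii) $\Rightarrow$ (iv) by extracting pointwise geometric relations from the vanishing of the anticommutator and applying a classical characterization of spheres.

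The equivalence (i) $\Leftrightarrow$ (ii) is the standard fact that a projection on a Hilbert space is an orthogonal projection if and only if it is self-adjoint; since $P_\pm$ are projections by \Cref{skew_projections}, nothing more is needed. For (i) $\Leftrightarrow$ (iii), using that $W_m$ and $\sigma\cdot\nu$ are self-adjoint in $L^2(\partial\Omega)^2$ yields
\begin{equation*}
	P_+-(P_+)^* \;=\; i\{W_m,\sigma\cdot\nu\},
\end{equation*}
so self-adjointness of $P_+$, equivalently of $P_-=1-P_+$, is precisely condition (iii).

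For (iv) $\Rightarrow$ (iii), I would specialize the kernel computation from the proof of \Cref{Prop:RegularityKW},
\begin{equation*}
	\widetilde A_m(x,y) \;=\; \Big(\sigma\cdot\tfrac{x-y}{|x-y|^3}\Big)\big(\sigma\cdot(\nu(y)-\nu(x))\big) \;+\; 2\,\nu(x)\cdot\tfrac{x-y}{|x-y|^3},
\end{equation*}
to a ball $B_R(x_0)$. There $\nu(x)=(x-x_0)/R$ gives $\nu(y)-\nu(x) = -(x-y)/R$, while $|x-x_0|=|y-x_0|=R$ yields $2(x-x_0)\cdot(x-y) = |x-y|^2$; hence by $(\sigma\cdot a)^2 = |a|^2$ the two summands become $-1/(R|x-y|)$ and $1/(R|x-y|)$, so $\widetilde A_m \equiv 0$.

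The core work is (iii) $\Rightarrow$ (iv). Expanding $\widetilde A_m$ via $(\sigma\cdot a)(\sigma\cdot b)=a\cdot b+i\sigma\cdot(a\times b)$, the kernel of $\{W_m,\sigma\cdot\nu\}$ has the matrix form
\begin{equation*}
	\tfrac{i}{4\pi|x-y|^3}\Big[(x-y)\cdot(\nu(x)+\nu(y))\,I_2 \;+\; i\,\sigma\cdot\big((\nu(x)-\nu(y))\times(x-y)\big)\Big].
\end{equation*}
The estimates $|\nu(x)-\nu(y)|=O(|x-y|)$ and $|(x-y)\cdot\nu(x)|=O(|x-y|^2)$ that appear in the proof of \Cref{Prop:RegularityKW} make this kernel absolutely integrable on $\partial\Omega\times\partial\Omega$, so the operator being zero forces the kernel to vanish $\upsigma\otimes\upsigma$-a.e.; by continuity of $\nu$ and linear independence of $\{I_2,\sigma_1,\sigma_2,\sigma_3\}$ in $\C^{2\times 2}$, the scalar and vector parts vanish pointwise for every pair of distinct points $x,y\in\partial\Omega$:
\begin{equation*}
	(x-y)\cdot(\nu(x)+\nu(y)) = 0, \qquad (\nu(x)-\nu(y))\times(x-y) = 0.
\end{equation*}
The second identity gives $\nu(x)-\nu(y)=\lambda(x,y)(x-y)$ for a well-defined scalar $\lambda(x,y)$; decomposing $\nu(x)-\nu(z)$ for three points $x,y,z$ with $x-y$ and $y-z$ linearly independent (which exist thanks to the 2-dimensionality of $\partial\Omega$) both as $\lambda(x,y)(x-y)+\lambda(y,z)(y-z)$ and as $\lambda(x,z)[(x-y)+(y-z)]$ forces $\lambda(x,y)=\lambda(y,z)=\lambda(x,z)$. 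A standard covering argument extends this to all pairs, so $\lambda\equiv\lambda_0$ is a global constant, which is nonzero since otherwise $\nu$ would be constant and $\partial\Omega$ would lie in a plane, impossible for bounded $\Omega$. Then $\nu(x)=\lambda_0 x + c$ for some $c\in\R^3$, and $|\nu(x)|=1$ places $\partial\Omega$ on the sphere of radius $1/|\lambda_0|$ centered at $-c/\lambda_0$; as $\partial\Omega$ is a compact $C^1$ $2$-manifold without boundary inside this $2$-sphere it must coincide with the whole sphere, so $\Omega$ is the corresponding ball. The main technical obstacle is the passage from operator-vanishing to kernel-vanishing, which requires enough boundary regularity to make the kernel absolutely integrable; the subsequent geometric argument is classical but should be carried out attentively at degenerate configurations (collinear triples, possibly disconnected boundaries).
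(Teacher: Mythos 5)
Your proof is correct, and its geometric core takes a genuinely different route from the paper's. For (i)$\Leftrightarrow$(ii)$\Leftrightarrow$(iii) both arguments are the same elementary algebra (the paper computes $(P_\mp)^*P_\pm$ directly instead of quoting the fact that an idempotent is an orthogonal projection iff it is self-adjoint, but this is cosmetic). The real difference is in (iii)$\Leftrightarrow$(iv): the paper multiplies the kernel identity by $\sigma\cdot(x-y)$ and reduces it to the reflection relation $\nu(y)=\nu(x)-\frac{2(x-y)\cdot\nu(x)}{|x-y|^2}(x-y)$, then cites Chipot's Reflection Lemma to conclude in both directions; you instead split the anticommutator kernel into its $I_2$ and Pauli parts, obtain the equivalent pair $(x-y)\cdot(\nu(x)+\nu(y))=0$ and $(\nu(x)-\nu(y))\times(x-y)=0$, and solve the resulting functional equation by showing $\nu$ is affine, so $\partial\Omega$ sits on a sphere; you also verify (iv)$\Rightarrow$(iii) by a direct kernel computation on the ball. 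This buys a self-contained, elementary proof that does not rely on the external reflection lemma, at the price of having to handle the degenerate configurations you already flag. Two small points to tighten: in excluding $\lambda_0=0$, the step ``$\nu$ constant $\Rightarrow\partial\Omega$ planar'' is not automatic from the cross-product identity alone --- either invoke the scalar identity (with $\nu\equiv\nu_0$ it gives $(x-y)\cdot\nu_0=0$, hence planarity) or use surjectivity of the Gauss map of a compact $C^1$ boundary; and for the passage from operator-vanishing to kernel-vanishing, absolute integrability via the $O(|x-y|^{-1})$ bound is a $C^2$-type estimate not available under the $C^1$ hypothesis of the lemma, but it is also not needed: testing against functions supported away from a fixed point, where the kernel is continuous off the diagonal, already forces the kernel to vanish $\upsigma\otimes\upsigma$-a.e.\ (the paper glosses this step in the same way).
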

\begin{proof}
	It is obvious that $(i)$ is equivalent to $(iii)$. We first prove that $(ii)$ is equivalent to $(iii)$. Then we prove that, if $\Omega$ is bounded, $(iii)$ is equivalent to $(iv)$. 
	
	On the one hand, if $P_+$ and $P_-$ are orthogonal projections of $L^2(\partial\Omega)^2$ then
	\begin{equation}
		\begin{split}
			0=\langle P_\pm u,P_\mp v\rangle_{L^2(\partial\Omega)^2}
			=\langle (P_\mp)^*P_\pm u,v\rangle_{L^2(\partial\Omega)^2}
		\end{split}
	\end{equation}
	for all $u,v\in L^2(\partial\Omega)^2$.
	This is equivalent to say that 
	\begin{equation}
		\begin{split}
			0&=(P_\mp)^*P_\pm=({\textstyle \frac{1}{2}}\pm i(\sigma\cdot\nu)W_m)
			({\textstyle \frac{1}{2}}\pm iW_m(\sigma\cdot\nu))\\
			&={\textstyle \frac{1}{4}}-(\sigma\cdot\nu)W_mW_m(\sigma\cdot\nu)\pm{\textstyle \frac{i}{2}}\{W_m,\sigma\cdot\nu\}.
		\end{split}
	\end{equation}
	Subtracting both expressions, we conclude that $\{W_m,\sigma\cdot\nu\}=0$.  
	On the other hand, if $(iii)$ holds then 
	$(P_\pm)^*=P_\pm$ by $(i)$, thus 
	$(P_\mp)^*P_\pm=P_\mp P_\pm=0$ by \Cref{skew_projections}. This shows that $(ii)$ and $(iii)$ are equivalent.
	
	We now prove that $(iii)$ and $(iv)$ are equivalent statements. Recall that
	\begin{equation}
		\begin{split}
			W_m u(x) = \lim_{\epsilon \downarrow 0} \frac{i}{4\pi }\int_{\partial \Omega \cap \{|x-y|> \epsilon\}}    |x-y|^{-3}
			\big({\sigma} \cdot (x-y)\big) u(y)\, d \upsigma(y);
		\end{split}
	\end{equation}
	see \eqref{Eq:DefK}.
	Therefore, $\{W_m,\sigma\cdot\nu\}=0$ if and only if
	\begin{equation}\label{proj.equiv.ball:eq1}
		\big({\sigma} \cdot (x-y)\big)\big(\sigma\cdot\nu(y)\big)
		=-\big(\sigma\cdot\nu(x)\big)\big({\sigma} \cdot (x-y)\big)
		\quad\text{ for $\upsigma$-a.e.\! $x,y\in\partial\Omega$.}
	\end{equation}
	Since $\nu$ is continuous because 
	$\partial\Omega$ is of class $C^1$, we can replace ``for $\upsigma$-a.e.\! $x,y\in\partial\Omega$'' by ``for all $x,y\in\partial\Omega$'' in \eqref{proj.equiv.ball:eq1}. Recall now that $(\sigma\cdot a)(\sigma\cdot b)=a\cdot b+i\sigma\cdot(a\times b)$ for all $a,b\in\R^3$. Therefore, multiplying by ${\sigma} \cdot (x-y)$ from the left both hand sides of \eqref{proj.equiv.ball:eq1}, we get that 
	$\{W_m,\sigma\cdot\nu\}=0$ if and only if
	\begin{equation}\label{proj.equiv.ball:eq2}
		|x-y|^2\big(\sigma\cdot\nu(y)\big)
		=-\big({\sigma} \cdot (x-y)\big)\big(\sigma\cdot\nu(x)\big)\big({\sigma} \cdot (x-y)\big)
		\quad\text{for all $x,y\in\partial\Omega$.}
	\end{equation}
	Observe also that, for every $a,b\in\R^3$, 
	\begin{equation}
		\begin{split}
			(\sigma\cdot a)(\sigma\cdot b)
			&=a\cdot b+i\sigma\cdot(a\times b)
			=-b\cdot a-i\sigma\cdot(b\times a)+2a\cdot b
			=-(\sigma\cdot b)(\sigma\cdot a)+2a\cdot b,
		\end{split}
	\end{equation}
	which yields
	\begin{equation}
		\begin{split}
			(\sigma\cdot a)(\sigma\cdot b)(\sigma\cdot a)
			&=-(\sigma\cdot b)(\sigma\cdot a)(\sigma\cdot a)
			+2(a\cdot b) (\sigma\cdot a)\\
			&=-|a|^2(\sigma\cdot b)
			+2(a\cdot b) (\sigma\cdot a)
			=\sigma\cdot\big(\!-\!|a|^2b+2(a\cdot b)a\big).
		\end{split}
	\end{equation}
	Using this formula on the right-hand side of \eqref{proj.equiv.ball:eq2} taking 
	$a=x-y$ and $b=\nu(x)$ we get
	\begin{equation}\label{proj.equiv.ball:eq3}
		\sigma\cdot \big(|x-y|^2\nu(y)\big)
		=\sigma\cdot\big(|x-y|^2\nu(x)-2((x-y)\cdot\nu(x))(x-y)\big).
	\end{equation}
	Now, since the Pauli matrices together with the identity matrix form a basis for the real vector space of $2\times2$ Hermitian matrices, from \eqref{proj.equiv.ball:eq2} and \eqref{proj.equiv.ball:eq3}, we see that $\{W_m,\sigma\cdot\nu\}=0$ if and only if
	\begin{equation}\label{proj.equiv.ball:eq4}
		\nu(y)=\nu(x)-\frac{2(x-y)\cdot\nu(x)}{|x-y|^2}(x-y)
		\quad\text{for all $x,y\in\partial\Omega$ with $x\neq y$.}
	\end{equation}
	Finally, since $\Omega$ is bounded and with $C^1$ boundary, the Reflection Lemma (see, for example, \cite[Lemma 5.3 on page 45]{Chipot}) shows that \eqref{proj.equiv.ball:eq4} holds if and only if $\Omega$ is a ball.
\end{proof}

\begin{remark}
	\label{Rem:AnticommutatorBall}
	From the previous proof it follows that the equivalence between $(iii)$ and $(iv)$ in \Cref{Lemma:EquivalencesPBall} holds not only for $W_m$, but for $W_\lambda$ with $\lambda \in \R$ ---recall that $W_\lambda$ is defined in \eqref{Eq:DefK}.
\end{remark}

\medskip

\subsection{First order asymptotics as $\tau\downarrow-\infty$}
\label{Subsec:Rayleigh}

Recall that in order to highlight the dependence of $\Dirac_\tau$ on the domain $\Omega\subset\R^3$, we denote by $\lambda_\Omega(\tau)$ the first positive eigenvalue of $\Dirac_\tau$ (that is, we set $\lambda_\Omega:=\lambda_1^+$).
Recall also that $\lim_{\tau\downarrow-\infty}\lambda_\Omega(\tau)=m$ by \Cref{Th:FirstEigTom}. The purpose of this section is to address the asymptotic expansion of $\lambda_\Omega(\tau)-m$ as $\tau\downarrow-\infty$. To do it, 
in \eqref{def.L.Omega} we introduced the function
$L_\Omega:\R\to(0,+\infty)$ defined by
\begin{equation}
	\tau\mapsto L_\Omega(\tau):= (\lambda_\Omega(\tau)-m)e^{-\tau}.
\end{equation}
With this notation, $\lambda_\Omega(\tau)=m+e^{\tau}L_\Omega(\tau)$ for all $\tau\in\R$.

\begin{lemma}
	\label{Lemma:LDecreasing}
	The function $L_\Omega$ is strictly decreasing on $\R$. 
\end{lemma}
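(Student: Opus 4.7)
The plan is to compute the derivative $L'_\Omega(\tau)$ at any point where $\lambda_\Omega$ is differentiable, show it is strictly negative there, and then use continuity together with the fact that the exceptional set $E$ from \Cref{Th:FirstEigTom} is locally finite to extend the conclusion to all of $\R$.

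First, fix $\tau_0\in\R\setminus E$, where $E$ is the locally finite exceptional set of \Cref{Th:FirstEigTom}; in a neighborhood of $\tau_0$ the function $\lambda_\Omega$ coincides with a real analytic eigenvalue curve $\lambda(\tau)\in\sigma(\Dirac_\tau)\cap(m,+\infty)$ provided by \Cref{Th:ParamEigenvalues}, with associated real analytic eigenfunction $\varphi_\tau=(u_\tau,v_\tau)^\intercal$ which we may assume to be normalized so that $\|\varphi_\tau\|_{L^2(\Omega)^4}=1$. Differentiating the definition $L_\Omega(\tau)=(\lambda_\Omega(\tau)-m)e^{-\tau}$ and invoking \eqref{Eq:Lambda'Formula2} gives
\begin{equation*}
L_\Omega'(\tau)=e^{-\tau}\bigl(\lambda_\Omega'(\tau)-(\lambda_\Omega(\tau)-m)\bigr)
=e^{-\tau}\bigl((\lambda_\Omega(\tau)-m)(\|u_\tau\|_{L^2(\Omega)^2}^2-1)-(\lambda_\Omega(\tau)+m)\|v_\tau\|_{L^2(\Omega)^2}^2\bigr).
\end{equation*}
Using the normalization $\|u_\tau\|_{L^2(\Omega)^2}^2-1=-\|v_\tau\|_{L^2(\Omega)^2}^2$, the right-hand side simplifies to
\begin{equation*}
L_\Omega'(\tau)=-2\lambda_\Omega(\tau)\,e^{-\tau}\,\|v_\tau\|_{L^2(\Omega)^2}^2.
\end{equation*}

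Next, I would argue that $v_\tau\not\equiv0$ in $\Omega$. Indeed, if $v_\tau$ vanished identically then the first identity in \eqref{Eq:EigenvaluePbUV.1}, namely $-i\sigma\cdot\nabla v_\tau=(\lambda_\Omega(\tau)-m)u_\tau$, would force $u_\tau\equiv0$ because $\lambda_\Omega(\tau)>m$; this would contradict the normalization of $\varphi_\tau$. Since $\lambda_\Omega(\tau)>m\geq0$, the displayed formula yields $L_\Omega'(\tau)<0$ for every $\tau\in\R\setminus E$.

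Finally, $L_\Omega$ is continuous on $\R$ (by continuity of $\lambda_\Omega$ from \Cref{Th:FirstEigTom}) and strictly decreasing on every connected component of $\R\setminus E$ by the previous step. Since $E$ is locally finite, given arbitrary $\tau_1<\tau_2$ in $\R$ one can decompose $[\tau_1,\tau_2]$ into finitely many subintervals whose interiors lie in $\R\setminus E$; strict monotonicity on each of them, combined with continuity at the finitely many endpoints, yields $L_\Omega(\tau_1)>L_\Omega(\tau_2)$. This proves that $L_\Omega$ is strictly decreasing on $\R$. There is no real obstacle here; the only point requiring care is the non-vanishing of $v_\tau$, which follows immediately from the spinor structure of the eigenvalue equation.
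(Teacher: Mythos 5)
Your proof is correct and follows essentially the same route as the paper: differentiate $L_\Omega$ at points where $\lambda_\Omega$ is analytic, plug in \eqref{Eq:Lambda'Formula2}, and show the derivative is strictly negative because $v_\tau\not\equiv0$; your closed form $L_\Omega'(\tau)=-2\lambda_\Omega(\tau)e^{-\tau}\|v_\tau\|_{L^2(\Omega)^2}^2$ is exactly the paper's expression $(\gamma(\tau)-1)\bigl(L_\Omega(\tau)+(\lambda_\Omega(\tau)+m)e^{-\tau}\bigr)$ after normalization. The only (valid) variation is your shortcut for $v_\tau\not\equiv0$ via the first equation in \eqref{Eq:EigenvaluePbUV.1} and $\lambda_\Omega(\tau)>m$, where the paper instead argues through harmonicity of $u_\tau$ with vanishing trace; your explicit patching across the locally finite exceptional set $E$ is a welcome clarification of a step the paper leaves implicit.
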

\begin{proof}
	Since $\lambda_\Omega$ is differentiable everywhere except possibly at countable many points by \Cref{Th:FirstEigTom},  $L_\Omega$ too. Therefore, the lemma follows if we show that $L_\Omega'<0$ at the points of differentiability. 
	
	For every $\tau\in\R$, let $\varphi_\tau= (u_\tau, v_\tau)^\intercal\in \mathrm{Dom}(\Dirac_\tau)\setminus\{0\}$ such that 
	$\Dirac_\tau\varphi_\tau=\lambda_\Omega(\tau)\varphi_\tau$. We claim that $\|v_\tau\|_{L^2( \Omega)^2}>0$ for all $\tau\in\R$. To see this, assume that $\|v_\tau\|_{L^2( \Omega)^2}=0$. Then, \eqref{Eq:EigenvaluePbUV.1} shows that $-\Delta u_\tau=(-i {\sigma} \cdot \nabla)^2 u_\tau  =
	(-i {\sigma} \cdot \nabla)(\lambda_\Omega(\tau)+m) v_\tau=  0$ in $\Omega$, and $u_\tau  =  -i e^{-\tau} ( {\sigma}\cdot\nu)v_\tau=0$ on $\partial\Omega$. Hence,  
	$\|u_\tau\|_{L^2( \Omega)^2}=\|v_\tau\|_{L^2( \Omega)^2}=0$. This implies that $\|\varphi_\tau\|_{L^2( \Omega)^4}=0$, which contradicts the fact that $\varphi_\tau\in \mathrm{Dom}(\Dirac_\tau)\setminus\{0\}$. 
	
	Now, set 
	$\gamma(\tau) := { \|u_\tau \|^2_{L^2( \Omega)^2} }/{\|\varphi_{\tau}\|^2_{L^{2}(\Omega)^4}}=1-{ \|v_\tau \|^2_{L^2( \Omega)^2} }/{\|\varphi_{\tau}\|^2_{L^{2}(\Omega)^4}}$. We have shown that $\gamma(\tau)<1$ for all $\tau\in\R$.
	Then, using \eqref{Eq:Lambda'Formula2} we have
	\begin{equation}
		\begin{split}
			L_\Omega'(\tau) 
			&= \lambda_\Omega'(\tau) e^{-\tau} 
			- L_\Omega(\tau)\\
			&= (\lambda_\Omega(\tau)-m) \gamma(\tau) e^{-\tau}
			-(\lambda_\Omega(\tau)+m)(1-\gamma(\tau))e^{-\tau}
			-L_\Omega(\tau) \\
			&= L_\Omega(\tau) \gamma(\tau)
			-(\lambda_\Omega(\tau)+m)(1-\gamma(\tau))e^{-\tau}
			-L_\Omega(\tau) \\
			&= (\gamma(\tau)-1)
			\big( L_\Omega(\tau)
			+(\lambda_\Omega(\tau)+m) e^{-\tau}\big)  < 0,
		\end{split}
	\end{equation}
	establishing the result.
\end{proof}

Thanks to the monotonicity of $L_\Omega$ proved in \Cref{Lemma:LDecreasing}, we get that the limit 
$$L_\Omega^\star:=\lim_{\tau\downarrow-\infty}L_\Omega(\tau)$$
exists as an element of $(0,+\infty]$. In particular, in the case that $L_\Omega^\star<+\infty$, we deduce that 
$\lambda_\Omega(\tau)$ behaves like $m+L_\Omega^\star e^\tau$ as $\tau\downarrow-\infty$. That is, $L_\Omega^\star$ quantifies the speed of convergence of $\lambda_\Omega(\tau)$ towards $m$ as $\tau\downarrow-\infty$. Our purpose now is to prove \Cref{Th:Rayleigh.Intro}, where we give a lower bound for $L_\Omega^\star$ (which is sharp if $\Omega$ is a ball) in terms of an optimization problem posed on the boundary Hardy space $P_+(L^2(\partial\Omega)^2)$. For the convenience of the reader, we  first recall the definitions of $\mathcal L_\Omega$, $\rcal_\Omega$, and $\rcal$ given in \eqref{form.rayleigh.l.omega.setL}, \eqref{form.rayleigh.r.omega}, and \eqref{form.rayleigh.R}, respectively.
Under the notation used in \Cref{Subsec:Proj.Hardy}, the set 
$\mathcal L_\Omega$ is defined by
\begin{equation}\label{form.rayleigh.l.omega.setL.true}
	\begin{split}
		\mathcal L_\Omega:=\Big\{L\in\C:\,
		\exists\, u\in L^2(\partial\Omega)^2\setminus\{0\}
		\text{ with }P_-u=0,\,
		(P_+)^*u=
		L( \sigma\cdot\nu)K_m( \sigma\cdot\nu)u\Big\},
	\end{split}
\end{equation}
the functional $\rcal$ is given by 
\begin{equation}
	\rcal(u):=
	\frac{\langle(\sigma\cdot\nu)K_m(\sigma\cdot\nu)u,u\rangle_{L^2(\partial\Omega)^2}}{\|u\|_{L^2(\partial\Omega)^2}^2}
	\quad\text{for }u\in L^2(\partial\Omega)^2\setminus\{0\},
\end{equation}
and 
$$\rcal_\Omega:=\sup_{u\in L^2(\partial\Omega)^2\setminus\{0\},\,
	P_-u=0}\rcal(u).$$
Below \eqref{form.rayleigh.R} we mentioned that $0<\rcal(u)<\|K_m\|_{L^2(\partial\Omega)^2\to L^2(\partial\Omega)^2}$
for all $u\in L^2(\partial\Omega)^2\setminus\{0\}$. This combined with the fact that $P_+(L^2(\partial\Omega)^2) = \operatorname{Ker} (P_-) \neq\{0\}$ ---for instance, the constants belong to $P_+(L^2(\partial\Omega)^2)$--- yields 
\begin{equation}\label{ROmega.pos.finite}
	0<\rcal_\Omega\leq\|K_m\|_{L^2(\partial\Omega)^2\to L^2(\partial\Omega)^2}.
\end{equation}

The proof of \Cref{Th:Rayleigh.Intro} will be divided into several steps. First, in \Cref{Th:Rayleigh.Intro.l1} we will show that $L_\Omega^\star\in\mathcal L_\Omega$ whenever $L_\Omega^\star<+\infty$. Then, in \Cref{Th:Rayleigh.Intro.l2} we will prove that $\mathcal L_\Omega\subset\R$ and that
$1/\rcal_\Omega\leq L$ for all $L\in\mathcal L_\Omega$. The proof of the fact that $\rcal_\Omega$ is attained is given in \Cref{Rstar.attained}, and in \Cref{Rstar.EL} we will see that
the maximizers for $\rcal_\Omega$ are in the kernel of 
$(P_+)^*-
\frac{1}{\rcal_\Omega}( \sigma\cdot\nu)K_m( \sigma\cdot\nu)$. Finally, the proof of the fact that the equality in $1/\rcal_\Omega\leq L_\Omega^\star$ is attained if $\Omega$ is a ball is given in \Cref{Prop:ParamFirstEigSphere}.

\begin{lemma}\label{Th:Rayleigh.Intro.l1}
	If $L_\Omega^\star<+\infty$ then $L_\Omega^\star\in\mathcal L_\Omega$.
\end{lemma}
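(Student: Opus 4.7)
The plan is to realize $L_\Omega^\star$ as the limiting value in the boundary integral equations satisfied by (the traces of) eigenfunctions for the first positive eigenvalue, exploiting the fact that $L_\Omega^\star<+\infty$ controls the otherwise dangerous factor $e^{-\tau}$ when $\tau\downarrow-\infty$.

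First I would, for each $\tau\in\R$, fix an eigenfunction $\varphi_\tau=(u_\tau,v_\tau)^\intercal\in\mathrm{Dom}(\Dirac_\tau)\setminus\{0\}$ with $\Dirac_\tau\varphi_\tau=\lambda_\Omega(\tau)\varphi_\tau$. By \Cref{Prop:DerivativeEigenvalues} we have $\|u_\tau\|_{L^2(\partial\Omega)^2}>0$ (otherwise $\lambda_\Omega'(\tau)=0$, contradicting the strict monotonicity), so we may normalize $\|u_\tau\|_{L^2(\partial\Omega)^2}=1$. By \Cref{Lemma:BoundaryPb} (with $\lambda=\lambda_\Omega(\tau)$), $u_\tau$ satisfies
\begin{equation*}
u_\tau = 2iW_{\lambda_\Omega(\tau)}({\sigma}\cdot\nu)u_\tau - 2(\lambda_\Omega(\tau)+m)e^\tau K_{\lambda_\Omega(\tau)}u_\tau
\end{equation*}
and
\begin{equation*}
u_\tau = 2i({\sigma}\cdot\nu)W_{\lambda_\Omega(\tau)}u_\tau + 2(\lambda_\Omega(\tau)-m)e^{-\tau}({\sigma}\cdot\nu)K_{\lambda_\Omega(\tau)}({\sigma}\cdot\nu)u_\tau
\end{equation*}
in $L^2(\partial\Omega)^2$. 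The key observation is that $(\lambda_\Omega(\tau)-m)e^{-\tau}=L_\Omega(\tau)\to L_\Omega^\star<+\infty$ and $(\lambda_\Omega(\tau)+m)e^\tau\to 0$ as $\tau\downarrow-\infty$, while $\lambda_\Omega(\tau)\to m$ by \Cref{Th:FirstEigTom}.

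Next I would extract a subsequential limit of $u_\tau$. The estimate \eqref{Eq:BoundaryPbComm.UV} of \Cref{Lemma:BoundaryPb.2} gives
\begin{equation*}
\|u_\tau\|_{H^{1/2}(\partial\Omega)^2}\leq C\bigl(1+(\lambda_\Omega(\tau)+m)e^\tau+(\lambda_\Omega(\tau)-m)e^{-\tau}\bigr)C_{\lambda_\Omega(\tau)}\|u_\tau\|_{H^{-1/2}(\partial\Omega)^2},
\end{equation*}
and since $\lambda_\Omega(\tau)^2-m^2$ stays bounded (because $L_\Omega^\star<+\infty$ implies $\lambda_\Omega(\tau)-m$ is $O(e^\tau)$), the last clause of \Cref{Prop:RegularityKW} makes $C_{\lambda_\Omega(\tau)}$ uniformly bounded. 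Combined with $\|u_\tau\|_{H^{-1/2}(\partial\Omega)^2}\leq\|u_\tau\|_{L^2(\partial\Omega)^2}=1$, this yields a uniform $H^{1/2}(\partial\Omega)^2$ bound. The compact embedding $H^{1/2}(\partial\Omega)^2\hookrightarrow L^2(\partial\Omega)^2$ then furnishes a sequence $\tau_n\downarrow-\infty$ with $u_{\tau_n}\to u$ strongly in $L^2(\partial\Omega)^2$ and weakly in $H^{1/2}(\partial\Omega)^2$; in particular $\|u\|_{L^2(\partial\Omega)^2}=1$, so $u\neq 0$.

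Finally I would pass to the limit in both boundary equations. The operator families $\lambda\mapsto K_\lambda$ and $\lambda\mapsto W_\lambda$ are continuous in $L^2(\partial\Omega)^2\to L^2(\partial\Omega)^2$ operator norm at $\lambda=m$ (this is direct from the explicit kernels in \eqref{kernels.K.W} combined with dominated convergence for the resulting singular/weakly singular integral operators). Sending $\tau_n\downarrow-\infty$ in the first equation yields $u=2iW_m({\sigma}\cdot\nu)u$, equivalent to $P_-u=0$ with the notation of \Cref{Subsec:Proj.Hardy}. In the second equation the coefficient $(\lambda_\Omega(\tau_n)-m)e^{-\tau_n}$ converges to $L_\Omega^\star$, giving $u=2i({\sigma}\cdot\nu)W_m u+2L_\Omega^\star({\sigma}\cdot\nu)K_m({\sigma}\cdot\nu)u$, equivalent to $(P_+)^*u=L_\Omega^\star({\sigma}\cdot\nu)K_m({\sigma}\cdot\nu)u$. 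Together these properties say exactly $L_\Omega^\star\in\mathcal L_\Omega$, as desired. The main obstacle I anticipate is the uniform $H^{1/2}$ bound on the traces $u_\tau$: controlling it requires the hypothesis $L_\Omega^\star<+\infty$ in an essential way to tame the $e^{-\tau}$ factor in \eqref{Eq:BoundaryPbComm.UV}, which is precisely why the result cannot hold unconditionally.
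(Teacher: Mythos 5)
Your proposal is correct and is essentially the paper's own proof: normalize $\|u_\tau\|_{L^2(\partial\Omega)^2}=1$, get a uniform $H^{1/2}(\partial\Omega)^2$ bound from \eqref{Eq:BoundaryPbComm.UV} (with $C_{\lambda_\Omega(\tau)}$ uniformly controlled because $\lambda_\Omega(\tau)^2-m^2$ stays bounded as $\tau\downarrow-\infty$), extract an $L^2(\partial\Omega)^2$-convergent subsequence by compactness, and pass to the limit in the two equations of \Cref{Lemma:BoundaryPb} using the continuity of $\lambda\mapsto K_\lambda,W_\lambda$, noting $(\lambda_\Omega(\tau)+m)e^{\tau}\to0$ and $(\lambda_\Omega(\tau)-m)e^{-\tau}\to L_\Omega^\star$. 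The only minor deviation is your justification that $u_\tau\not\equiv 0$ on $\partial\Omega$ via \Cref{Prop:DerivativeEigenvalues}, which presupposes a differentiable eigenpair family; the paper instead observes that \Cref{Lemma:BoundaryPb} itself shows $\varphi_\tau\equiv 0$ whenever its boundary trace $u_\tau$ vanishes, which licenses the normalization without that caveat.
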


\begin{proof}
	From \Cref{Lemma:BoundaryPb} we see that the eigenvalue equation 
	$\Dirac_\tau\varphi_\tau=\lambda_\Omega(\tau)\varphi_\tau$ is equivalent the to system of equations
	\begin{equation}
		\label{Eq:BoundaryPb.aux1}
		\begin{split}
			u_\tau & =  \big(2i W_{\lambda_\Omega(\tau)}  ({\sigma}\cdot\nu) - 2({\lambda_\Omega(\tau)}+m) e^\tau K_{\lambda_\Omega(\tau)}\big) u_\tau,\\
			u_\tau & =  \big( 2 i ({\sigma}\cdot\nu) W_{\lambda_\Omega(\tau)}  + 2({\lambda_\Omega(\tau)}-m)e^{-\tau} ({\sigma}\cdot\nu )  K_{\lambda_\Omega(\tau)} ({\sigma}\cdot\nu) \big) u_\tau, 
		\end{split}
	\end{equation}
	where $\varphi_\tau= (u_\tau, i e^\tau ({\sigma}\cdot\nu) u_\tau)^\intercal$ on $\partial\Omega$. \Cref{Lemma:BoundaryPb} also shows that $\varphi_\tau$ vanishes identically on $\Omega$ if and only if $u_\tau$ vanishes identically on $\partial\Omega$. Hence, by homogeneity we can assume that $\|u_\tau\|_{L^2(\partial\Omega)^2}=1$ for all $\tau\in \R$. Now, 
	using that $\lim_{\tau\downarrow-\infty}\lambda_\Omega(\tau)=m$, that $L_\Omega^\star<+\infty$, and 
	\eqref{Eq:BoundaryPbComm.UV}, we see that there exist 
	$\tau_0\in\R$ and $C_\circ>0$ such that
	\begin{equation}
		\begin{split}
			\|u_\tau\|_{H^{1/2}(\partial\Omega)^2}&\leq 
			C(1+ |\lambda_\Omega(\tau)+m| e^\tau+|\lambda_\Omega(\tau)-m|e^{-\tau})C_{\lambda_\Omega(\tau)}
			\norm{u_\tau}_{H^{-1/2}(\partial \Omega)^2}\\
			&\leq 
			C\norm{u_\tau}_{H^{-1/2}(\partial \Omega)^2}
			\leq C_\circ\norm{u_\tau}_{L^2(\partial \Omega)^2}
			=C_\circ
		\end{split}
	\end{equation}
	for all $\tau<\tau_0$. From this uniform estimate and the compact embedding of 
	$H^{1/2}(\partial\Omega)^2$ into $L^2(\partial\Omega)^2$, we get the existence of a sequence $\{\tau_k\}_{k\in\N}$ with 
	$\lim_{k\uparrow+\infty}\tau_k=-\infty$ for which
	$u_{\tau_k}$ converges in $L^2(\partial\Omega)^2$ as $k\uparrow+\infty$ to some $u_\star\in L^2(\partial\Omega)^2$ with $\|u_\star\|_{L^2(\partial\Omega)^2}=1$. With this limit function at hand, we now consider \eqref{Eq:BoundaryPb.aux1} for $\tau=\tau_k$.
	It is an exercise to show that the operators $W_\lambda$ and $K_\lambda$, as bounded operators in $L^2(\partial\Omega)^2$, depend continuously on the parameter 
	$\lambda\in\R$; recall \eqref{Eq:DefK} and \eqref{kernels.K.W}. Therefore, taking the limit $k\uparrow+\infty$ in \eqref{Eq:BoundaryPb.aux1} with $\tau=\tau_k$, we get that 
	\begin{eqnarray}
		&&u_\star=2iW_m(\sigma\cdot\nu)u_\star, \label{limit-infty:eq1}\\
		&&u_\star=2i(\sigma\cdot\nu)W_m u_\star
		+2L_\Omega^\star( \sigma\cdot\nu)K_m( \sigma\cdot\nu)u_\star
		\label{limit-infty:eq2}
	\end{eqnarray}
	in $L^2(\partial\Omega)^2$. Since \eqref{limit-infty:eq1} is equivalent to $P_-u_\star=0$, and \eqref{limit-infty:eq2} can be rewritten as $(P_+)^*u_\star=
	L_\Omega^\star( \sigma\cdot\nu)K_m( \sigma\cdot\nu)u_\star$, we conclude that $L_\Omega^\star\in\mathcal L_\Omega$. 
\end{proof}

\begin{lemma}\label{Th:Rayleigh.Intro.l2}
	$\mathcal L_\Omega\subset\R$, and 
	$1/\rcal_\Omega\leq L$ for all $L\in\mathcal L_\Omega$.
\end{lemma}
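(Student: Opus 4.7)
The proof will follow by testing the equation $(P_+)^* u = L(\sigma\cdot\nu)K_m(\sigma\cdot\nu)u$ against $u$ itself, extracting both the reality of $L$ and the desired lower bound $1/\rcal_\Omega$ from a single identity.

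More precisely, fix $L \in \mathcal{L}_\Omega$ and let $u \in L^2(\partial\Omega)^2 \setminus \{0\}$ be such that $P_- u = 0$ and $(P_+)^* u = L(\sigma\cdot\nu)K_m(\sigma\cdot\nu)u$. First I would compute $\langle (P_+)^* u, u\rangle_{L^2(\partial\Omega)^2}$ in two ways. On the one hand, by the definition of the adjoint and using that $P_+ u = u$ (since $P_+ + P_- = 1$ and $P_- u = 0$), we have
\begin{equation}
\langle (P_+)^* u, u\rangle_{L^2(\partial\Omega)^2} = \langle u, P_+ u\rangle_{L^2(\partial\Omega)^2} = \|u\|^2_{L^2(\partial\Omega)^2}.
\end{equation}
On the other hand, inserting the eigenvalue-type equation yields
\begin{equation}
\langle (P_+)^* u, u\rangle_{L^2(\partial\Omega)^2} = L\,\langle (\sigma\cdot\nu)K_m(\sigma\cdot\nu)u, u\rangle_{L^2(\partial\Omega)^2}.
\end{equation}

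Next I would argue that the second factor on the right-hand side is a strictly positive real number. Since both $K_m$ (by \Cref{Subsec:BIOperators}, being defined via the kernel \eqref{Eq:DefPhi.selfadj} for $\lambda=m$) and $\sigma\cdot\nu$ are bounded self-adjoint operators in $L^2(\partial\Omega)^2$, so is $(\sigma\cdot\nu)K_m(\sigma\cdot\nu)$, which makes $\langle (\sigma\cdot\nu)K_m(\sigma\cdot\nu)u, u\rangle_{L^2(\partial\Omega)^2}$ real. Moreover, $K_m$ is strictly positive, and $\sigma\cdot\nu$ is invertible on $L^2(\partial\Omega)^2$ (as $(\sigma\cdot\nu)^2=1$), so $(\sigma\cdot\nu)u\neq 0$ and hence
\begin{equation}
\langle (\sigma\cdot\nu)K_m(\sigma\cdot\nu)u, u\rangle_{L^2(\partial\Omega)^2} = \langle K_m(\sigma\cdot\nu)u, (\sigma\cdot\nu)u\rangle_{L^2(\partial\Omega)^2} > 0.
\end{equation}
Combining the two expressions for $\langle (P_+)^* u, u\rangle_{L^2(\partial\Omega)^2}$ forces $L\in\R$ and
\begin{equation}
L = \frac{\|u\|^2_{L^2(\partial\Omega)^2}}{\langle (\sigma\cdot\nu)K_m(\sigma\cdot\nu)u, u\rangle_{L^2(\partial\Omega)^2}} = \frac{1}{\rcal(u)}.
\end{equation}

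Finally, since $P_- u = 0$ is equivalent to $u\in P_+(L^2(\partial\Omega)^2)$ (because $P_+$ is a projection with kernel $P_-(L^2(\partial\Omega)^2)$, by \Cref{skew_projections}), the function $u$ is admissible in the definition of $\rcal_\Omega$ given in \eqref{form.rayleigh.r.omega}. Therefore $\rcal(u)\leq \rcal_\Omega$, which yields $L=1/\rcal(u)\geq 1/\rcal_\Omega$, concluding the proof. I do not foresee any real obstacle in executing this plan; the only subtle point is the consistent use of the identification $\operatorname{Ker}(P_-) = P_+(L^2(\partial\Omega)^2)$ to move between the two formulations of the admissible set.
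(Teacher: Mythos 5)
Your proposal is correct and follows essentially the same route as the paper: both reduce the eigenvalue-type equation to the identity $\|u\|_{L^2(\partial\Omega)^2}^2=L\,\langle(\sigma\cdot\nu)K_m(\sigma\cdot\nu)u,u\rangle_{L^2(\partial\Omega)^2}$ (the paper by pairing the expanded equation with $\overline u$ and using the self-adjointness of $W_m$ and $\sigma\cdot\nu$ together with $P_-u=0$, you by the equivalent abstract step $\langle (P_+)^*u,u\rangle=\langle u,P_+u\rangle=\|u\|^2$), and then conclude $L\in\R$ from the self-adjointness (and positivity) of $(\sigma\cdot\nu)K_m(\sigma\cdot\nu)$ and $L=1/\rcal(u)\geq 1/\rcal_\Omega$ from the admissibility of $u\in\operatorname{Ker}(P_-)=P_+(L^2(\partial\Omega)^2)$. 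No gaps.
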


\begin{proof} 
	Let 
	$L\in \mathcal L_\Omega$. By definition of $\mathcal L_\Omega$, there exists $u\in L^2(\partial\Omega)^2\setminus\{0\}$ such that $P_-u=0$ and 
	$(P_+)^*u=L( \sigma\cdot\nu)K_m( \sigma\cdot\nu)u$, that is, 
	\begin{eqnarray}
		&&u=2iW_m(\sigma\cdot\nu)u, \label{limit-infty:eq1.L}\\
		&&u=2i(\sigma\cdot\nu)W_m u
		+2L( \sigma\cdot\nu)K_m( \sigma\cdot\nu)u.
		\label{limit-infty:eq2.L}
	\end{eqnarray}
	Multiplying \eqref{limit-infty:eq2.L} by $\overline u$, integrating on $\partial\Omega$, using that 
	$(\sigma\cdot\nu)$ and $W_m$ are self-adjoint operators on $L^2(\partial\Omega)^2$, and using \eqref{limit-infty:eq1.L} we obtain
	\begin{equation}
		\begin{split}
			\|u\|_{L^2(\partial\Omega)^2}^2
			&=\langle2i(\sigma\cdot\nu)W_m u,u\rangle_{L^2(\partial\Omega)^2}
			+2L\langle(\sigma\cdot\nu)K_m(\sigma\cdot\nu)u,u\rangle_{L^2(\partial\Omega)^2}\\
			&=-\langle u,2iW_m(\sigma\cdot\nu)u\rangle_{L^2(\partial\Omega)^2}
			+2L\langle(\sigma\cdot\nu)K_m(\sigma\cdot\nu)u,u\rangle_{L^2(\partial\Omega)^2}\\
			&=-\|u\|_{L^2(\partial\Omega)^2}^2
			+2L\langle(\sigma\cdot\nu)K_m(\sigma\cdot\nu)u,u\rangle_{L^2(\partial\Omega)^2}.
		\end{split}
	\end{equation}
	From this we deduce that
	\begin{equation}\label{intuition R(u)}
		L=\frac{\|u\|_{L^2(\partial\Omega)^2}^2}
		{\langle(\sigma\cdot\nu)K_m(\sigma\cdot\nu)u,u\rangle_{L^2(\partial\Omega)^2}}=\frac{1}{\rcal(u)}.
	\end{equation}
	On the one hand, using that $(\sigma\cdot\nu)K_m(\sigma\cdot\nu)$ is self-adjoint we get that $L\in\R$, which proves that $\mathcal L_\Omega\subset\R$. On the other hand, using that $P_-u=0$, \eqref{intuition R(u)} also yields
	$L\geq{1}/{\rcal_\Omega}.$
\end{proof}

\begin{lemma}\label{Rstar.attained}
	There exists $u\in L^2(\partial\Omega)^2\setminus\{0\}$ such that $P_-u=0$ and $\rcal(u)=\rcal_\Omega$.
\end{lemma}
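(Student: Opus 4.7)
The plan is to prove existence of a maximizer by a direct compactness argument applied to a maximizing sequence in the closed subspace $P_+(L^2(\partial\Omega)^2) = \ker(P_-)$, using the fact that $K_m$ is a compact operator on $L^2(\partial\Omega)^2$ (a property already recorded in the introduction for the single layer potential).

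First I would fix a maximizing sequence: by \eqref{ROmega.pos.finite} we have $0<\mathcal R_\Omega<+\infty$, so there exist $u_n\in L^2(\partial\Omega)^2$ with $\|u_n\|_{L^2(\partial\Omega)^2}=1$, $P_- u_n = 0$, and
\begin{equation*}
\langle (\sigma\cdot\nu)K_m(\sigma\cdot\nu)u_n, u_n\rangle_{L^2(\partial\Omega)^2} \longrightarrow \mathcal R_\Omega \quad\text{as } n\uparrow+\infty.
\end{equation*}
Since $\{u_n\}$ is bounded in $L^2(\partial\Omega)^2$, which is a Hilbert space, after passing to a subsequence (not relabelled) we may assume $u_n\rightharpoonup u_\star$ weakly in $L^2(\partial\Omega)^2$ for some $u_\star\in L^2(\partial\Omega)^2$. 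The constraint passes to the limit because $P_-$ is a bounded linear operator: $P_- u_n \rightharpoonup P_- u_\star$, which together with $P_- u_n = 0$ yields $P_- u_\star = 0$.

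The key step is to upgrade the weak convergence to strong convergence in the numerator. The operator $K_m$ is compact on $L^2(\partial\Omega)^2$ (as recalled in the introduction, the single layer potential is a compact, self-adjoint, positive, injective operator), and since $\sigma\cdot\nu$ is bounded, the composition $(\sigma\cdot\nu)K_m(\sigma\cdot\nu)$ is compact and self-adjoint. Consequently $(\sigma\cdot\nu)K_m(\sigma\cdot\nu)u_n \to (\sigma\cdot\nu)K_m(\sigma\cdot\nu)u_\star$ strongly in $L^2(\partial\Omega)^2$, and combining this strong convergence with the weak convergence $u_n\rightharpoonup u_\star$ gives
\begin{equation*}
\langle (\sigma\cdot\nu)K_m(\sigma\cdot\nu)u_n, u_n\rangle_{L^2(\partial\Omega)^2} \longrightarrow \langle (\sigma\cdot\nu)K_m(\sigma\cdot\nu)u_\star, u_\star\rangle_{L^2(\partial\Omega)^2}=\mathcal R_\Omega.
\end{equation*}

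The main (mild) obstacle is to ensure that $u_\star\neq 0$; this is precisely where the strict positivity $\mathcal R_\Omega>0$ from \eqref{ROmega.pos.finite} enters. Since the limiting numerator equals $\mathcal R_\Omega>0$, necessarily $u_\star\not\equiv 0$. Then by weak lower semicontinuity of the norm, $\|u_\star\|_{L^2(\partial\Omega)^2}^2\leq \liminf_{n\uparrow+\infty}\|u_n\|_{L^2(\partial\Omega)^2}^2=1$, and therefore
\begin{equation*}
\mathcal R(u_\star) = \frac{\langle (\sigma\cdot\nu)K_m(\sigma\cdot\nu)u_\star, u_\star\rangle_{L^2(\partial\Omega)^2}}{\|u_\star\|_{L^2(\partial\Omega)^2}^2} \geq \mathcal R_\Omega.
\end{equation*}
Since $u_\star\in P_+(L^2(\partial\Omega)^2)\setminus\{0\}$, the reverse inequality holds by the definition of $\mathcal R_\Omega$ as a supremum, so $\mathcal R(u_\star)=\mathcal R_\Omega$. (As a byproduct, this also forces $\|u_\star\|_{L^2(\partial\Omega)^2}=1$, so the weak convergence $u_n\rightharpoonup u_\star$ is in fact strong.)
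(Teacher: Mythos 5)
Your proof is correct and takes essentially the same route as the paper: a normalized maximizing sequence in $\ker(P_-)$, weak compactness in $L^2(\partial\Omega)^2$, compactness of $K_m$ to pass to the limit in the quadratic form, the strict positivity $\rcal_\Omega>0$ to rule out $u_\star\equiv 0$, and weak lower semicontinuity of the norm to conclude $\rcal(u_\star)=\rcal_\Omega$. The only cosmetic difference is that you invoke directly that a compact operator maps weakly convergent sequences to strongly convergent ones, while the paper first extracts the strong limit $g$ of $(\sigma\cdot\nu)K_m(\sigma\cdot\nu)u_j$ and then identifies $g=(\sigma\cdot\nu)K_m(\sigma\cdot\nu)u_\star$ via self-adjointness and the weak convergence.
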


\begin{proof}
	We take $u_j\in L^2(\partial\Omega)^2\setminus\{0\}$ such that $P_-u_j=0$ for all 
	$j\in\N$ and 
	$\lim_{j\uparrow+\infty}\rcal(u_j)=\rcal_\Omega$. From the fact that $\rcal$ is homogeneous, we can assume that 
	$\|u_j\|_{L^2(\partial\Omega)^2}=1$ for all $j$. Then, since $K_m$ is a compact operator in $L^2(\partial\Omega)^2$, up to a subsequence, there exists 
	\begin{equation}\label{Rstar.attained:eq5}
		g:=\lim_{j\uparrow+\infty}(\sigma\cdot\nu)K_m
		(\sigma\cdot\nu)u_j\quad\text{in $L^2(\partial\Omega)^2$.}
	\end{equation}
	Also, since 
	$\|u_j\|_{L^2(\partial\Omega)^2}=1$ for all $j$, by Banach-Alaoglu theorem there exists 
	\begin{equation}\label{Rstar.attained:eq6}
		u:=\lim_{j\uparrow+\infty}u_j\quad\text{in the weak$^*$ topology of $(L^2(\partial\Omega)^2)^*\cong L^2(\partial\Omega)^2$},
	\end{equation}
	up to a subsequence.
	This means that $u\in L^2(\partial\Omega)^2$ and
	$\lim_{j\uparrow+\infty}\langle
	u_j,w\rangle_{L^2(\partial\Omega)^2}
	=\langle u,w\rangle_{L^2(\partial\Omega)^2}$
	for all $w\in L^2(\partial\Omega)^2$. In particular, for every $w\in L^2(\partial\Omega)^2$ with 
	$\|w\|_{L^2(\partial\Omega)^2}=1$ we have
	\begin{equation}
		\begin{split}
			1=\lim_{j\uparrow+\infty}\|u_j\|_{L^2(\partial\Omega)^2}=\lim_{j\uparrow+\infty}\sup_{\|v\|_{L^2(\partial\Omega)^2}=1}
			|\langle u_j,v\rangle_{L^2(\partial\Omega)^2}|
			\geq\lim_{j\uparrow+\infty}
			|\langle u_j,w\rangle_{L^2(\partial\Omega)^2}|
			=|\langle u,w\rangle_{L^2(\partial\Omega)^2}|.
		\end{split}
	\end{equation}
	Taking the supremum of $|\langle u,w\rangle_{L^2(\partial\Omega)^2}|$ among all $w\in L^2(\partial\Omega)^2$ with $\|w\|_{L^2(\partial\Omega)^2}=1$, we get
	\begin{equation}\label{Rstar.attained:eq1}
		\|u\|_{L^2(\partial\Omega)^2}\leq1.
	\end{equation}
	
	The next step is to relate $u$ and $g$. Since $K_m$ and $\sigma\cdot\nu$ are self-adjoint in $L^2(\partial\Omega)^2$, for every $w\in L^2(\partial\Omega)^2$ we have
	\begin{equation}
		\begin{split}
			\langle g,w \rangle_{L^2(\partial\Omega)^2}
			&=\lim_{j\uparrow+\infty}
			\langle (\sigma\cdot\nu)K_m(\sigma\cdot\nu)u_j,w \rangle_{L^2(\partial\Omega)^2}
			=\lim_{j\uparrow+\infty}
			\langle u_j,(\sigma\cdot\nu)K_m(\sigma\cdot\nu)w \rangle_{L^2(\partial\Omega)^2}\\
			&=\langle u,(\sigma\cdot\nu)K_m(\sigma\cdot\nu)w \rangle_{L^2(\partial\Omega)^2}
			=\langle (\sigma\cdot\nu)K_m(\sigma\cdot\nu)u,w \rangle_{L^2(\partial\Omega)^2},
		\end{split}
	\end{equation}
	which yields 
	\begin{equation}\label{Rstar.attained:eq2}
		g=(\sigma\cdot\nu)K_m(\sigma\cdot\nu) u\quad\text{in $L^2(\partial\Omega)^2$}.
	\end{equation}
	
	Let us now prove that $P_-u=0$. Using that $P_-u_j=0$ for all $j$ and \eqref{Rstar.attained:eq6} we see that, for every $w\in L^2(\partial\Omega)^2$,
	\begin{equation}
		\begin{split}
			0=\lim_{j\uparrow+\infty}
			\langle P_-u_j,w\rangle_{L^2(\partial\Omega)^2}
			=\lim_{j\uparrow+\infty}
			\langle u_j,(P_-)^*w\rangle_{L^2(\partial\Omega)^2}
			=\langle u,(P_-)^*w\rangle_{L^2(\partial\Omega)^2}
			=\langle P_-u,w\rangle_{L^2(\partial\Omega)^2},
		\end{split}
	\end{equation}
	which leads to
	\begin{equation}\label{Rstar.attained:eq4}
		P_-u=0\quad\text{in $L^2(\partial\Omega)^2$}.
	\end{equation}
	We also claim that $u$ is not identically zero. If it was, we would get $g=0$ by \eqref{Rstar.attained:eq2} and, thus, 
	$\rcal_\Omega=\lim_{j\uparrow+\infty}\rcal(u_j)$ would also be zero by \eqref{Rstar.attained:eq5}. However, this would contradict \eqref{ROmega.pos.finite}.
	
	Finally, since $\|u_j\|_{L^2(\partial\Omega)^2}=1$ for all $j$, we have
	\begin{equation}
		\begin{split}
			|\langle(\sigma\cdot\nu)K_m(\sigma\cdot\nu) u_j-g,u_j \rangle_{L^2(\partial\Omega)^2}|
			\leq\|(\sigma\cdot\nu)K_m(\sigma\cdot\nu) u_j-g\|_{L^2(\partial\Omega)^2},
		\end{split}
	\end{equation}
	which yields $\lim_{j\uparrow+\infty}
	\langle(\sigma\cdot\nu)K_m(\sigma\cdot\nu) u_j-g,u_j \rangle_{L^2(\partial\Omega)^2}=0$ by \eqref{Rstar.attained:eq5}. Combining this with \eqref{Rstar.attained:eq2}, \eqref{Rstar.attained:eq6}, \eqref{Rstar.attained:eq1}, and \eqref{Rstar.attained:eq4}, we conclude that
	\begin{equation}
		\begin{split}
			0<\rcal_\Omega&=\lim_{j\uparrow+\infty}\rcal(u_j)
			=\lim_{j\uparrow+\infty}
			\langle(\sigma\cdot\nu)K_m(\sigma\cdot\nu)u_j,u_j\rangle_{L^2(\partial\Omega)^2}
			=\lim_{j\uparrow+\infty}
			\langle g,u_j\rangle_{L^2(\partial\Omega)^2}\\
			&=\lim_{j\uparrow+\infty}
			\langle (\sigma\cdot\nu)K_m(\sigma\cdot\nu) u,u_j\rangle_{L^2(\partial\Omega)^2}
			=\langle (\sigma\cdot\nu)K_m(\sigma\cdot\nu) u,u\rangle_{L^2(\partial\Omega)^2}\\
			&\leq\frac{\langle (\sigma\cdot\nu)K_m(\sigma\cdot\nu) u,u\rangle_{L^2(\partial\Omega)^2}}{\|u\|_{L^2(\partial\Omega)^2}^2}=\rcal(u)\leq \rcal_\Omega,
		\end{split}
	\end{equation}
	which leads to $\rcal(u)=\rcal_\Omega$. As a byproduct, we also deduce that 
	$\|u\|_{L^2(\partial\Omega)^2}=1$.
\end{proof}

\begin{lemma}\label{Rstar.EL}
	If $u\in L^2(\partial\Omega)^2\setminus\{0\}$ is such that $P_-u=0$ and 
	$\rcal(u)=\rcal_\Omega$, then
	\begin{equation}
		(P_+)^*u=\frac{1}{\rcal_\Omega}( \sigma\cdot\nu)K_m( \sigma\cdot\nu)u.
		\label{limit-infty:eq2_lemma}
	\end{equation}
\end{lemma}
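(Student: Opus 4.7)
The plan is to read \eqref{limit-infty:eq2_lemma} as the Euler-Lagrange equation for the constrained maximization of the Rayleigh quotient $\rcal$ over the complex subspace $V:=\operatorname{Ker}(P_-)=P_+(L^2(\partial\Omega)^2)$, and then to promote this relation (which a priori only tells us something modulo $V^\perp$) to the full identity by exploiting the anticommutation property from \Cref{l.prop.K.W}$(iii)$.

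First I would perform the variational computation. Since $u$ maximizes $\rcal$ on $V\setminus\{0\}$, for every $\phi\in V$ one has $\frac{d}{dt}\big|_{t=0}\rcal(u+t\phi)=\frac{d}{dt}\big|_{t=0}\rcal(u+it\phi)=0$, and using that $(\sigma\cdot\nu)K_m(\sigma\cdot\nu)$ is self-adjoint this yields
\begin{equation*}
\langle (\sigma\cdot\nu)K_m(\sigma\cdot\nu)u-\rcal_\Omega u,\phi\rangle_{L^2(\partial\Omega)^2}=0\quad\text{for all }\phi\in V.
\end{equation*}
That is, $(\sigma\cdot\nu)K_m(\sigma\cdot\nu)u-\rcal_\Omega u\in V^\perp$. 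Since $(P_-)^*$ is a bounded projection (by \Cref{skew_projections}), its range is closed and $V^\perp=(\operatorname{Ker}P_-)^\perp=\operatorname{Ran}((P_-)^*)=\operatorname{Ker}((P_+)^*)$ (the last equality uses $(P_+)^*+(P_-)^*=I$). Applying $(P_+)^*$ to the Euler-Lagrange relation therefore gives
\begin{equation*}
(P_+)^*(\sigma\cdot\nu)K_m(\sigma\cdot\nu)u=\rcal_\Omega (P_+)^*u.
\end{equation*}

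The main point that remains is to show that $(\sigma\cdot\nu)K_m(\sigma\cdot\nu)u$ already lies in $V$, equivalently that $(P_-)^*(\sigma\cdot\nu)K_m(\sigma\cdot\nu)u=0$. This is where the anticommutator $\{K_m(\sigma\cdot\nu),W_m(\sigma\cdot\nu)\}=0$ from \Cref{l.prop.K.W}$(iii)$ enters. Writing $(P_-)^*=\tfrac12+i(\sigma\cdot\nu)W_m$ and setting $A:=(\sigma\cdot\nu)K_m(\sigma\cdot\nu)$, one computes
\begin{equation*}
(P_-)^*Au=\tfrac12 Au+i(\sigma\cdot\nu)W_m(\sigma\cdot\nu)K_m(\sigma\cdot\nu)u=\tfrac12 Au-i(\sigma\cdot\nu)K_m(\sigma\cdot\nu)W_m(\sigma\cdot\nu)u=A\,P_-u=0,
\end{equation*}
where in the second equality we used $W_m(\sigma\cdot\nu)K_m(\sigma\cdot\nu)=-K_m(\sigma\cdot\nu)W_m(\sigma\cdot\nu)$ and in the last one the hypothesis $P_-u=0$.

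Combining the two displays with $(P_+)^*+(P_-)^*=I$ and using $\rcal_\Omega>0$ (which follows from \eqref{ROmega.pos.finite}), one obtains
\begin{equation*}
(\sigma\cdot\nu)K_m(\sigma\cdot\nu)u=(P_+)^*(\sigma\cdot\nu)K_m(\sigma\cdot\nu)u=\rcal_\Omega (P_+)^*u,
\end{equation*}
which is \eqref{limit-infty:eq2_lemma}. The only step that is not entirely routine is the algebraic identity $(P_-)^*A=AP_-$; however, given \Cref{l.prop.K.W}$(iii)$, this is a direct manipulation, so I do not anticipate any genuine obstacle in the proof.
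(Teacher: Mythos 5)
Your proposal is correct and follows essentially the same route as the paper: the same Euler--Lagrange computation over perturbations in $P_+(L^2(\partial\Omega)^2)=\operatorname{Ker}(P_-)$ giving $(P_+)^*\big((\sigma\cdot\nu)K_m(\sigma\cdot\nu)-\rcal_\Omega\big)u=0$, followed by the same use of the anticommutation relation of \Cref{l.prop.K.W}~$(iii)$ to get $(P_-)^*(\sigma\cdot\nu)K_m(\sigma\cdot\nu)u=(\sigma\cdot\nu)K_m(\sigma\cdot\nu)P_-u=0$, and adding via $(P_+)^*+(P_-)^*=1$. The only cosmetic difference is your identification $V^\perp=\operatorname{Ker}((P_+)^*)$ through the closed range of the bounded projection, which the paper obtains directly by testing against $P_+v$.
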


\begin{proof}
	Given $\epsilon\in\R$ and $v\in L^2(\partial\Omega)^2$, set $u_\epsilon=u+\epsilon P_+v$. Then, 
	$P_-u_\epsilon=P_-u+\epsilon P_-P_+v=0$ by \Cref{skew_projections}~$(ii)$. In addition, 
	$\|u_\epsilon\|_{L^2(\partial\Omega)^2}>0$ if $|\epsilon|$ is small enough because 
	$u\in L^2(\partial\Omega)^2\setminus\{0\}$ by assumption. Therefore, $\rcal(u)=\rcal_\Omega\geq \rcal(u_\epsilon)$ for all $|\epsilon|$ small enough, which entails
	\begin{equation}\label{EulerLagr R}
		\frac{d}{d\epsilon}\Big|_{\epsilon=0}\rcal(u_\epsilon)=0.
	\end{equation}
	On the one hand, 
	\begin{equation}
		\begin{split}
			\|u_\epsilon\|_{L^2(\partial\Omega)^2}^2
			&=\langle u+\epsilon P_+v,u+\epsilon P_+v\rangle_{L^2(\partial\Omega)^2}\\
			&=\|u\|_{L^2(\partial\Omega)^2}^2
			+2\epsilon\Re\langle u,P_+v\rangle_{L^2(\partial\Omega)^2}
			+\epsilon^2\|P_+v\|_{L^2(\partial\Omega)^2}^2,
		\end{split}
	\end{equation}
	which yields $\frac{d}{d\epsilon}\big|_{\epsilon=0}
	(\|u_\epsilon\|_{L^2(\partial\Omega)^2}^2)
	=2\Re\langle u,P_+v\rangle_{L^2(\partial\Omega)^2}$. On the other hand, since $K_m$ and $\sigma \cdot \nu$ are self-adjoint,
	\begin{equation}
		\begin{split}
			\langle(\sigma\cdot\nu)K_m(\sigma\cdot\nu)u_\epsilon,u_\epsilon\rangle_{L^2(\partial\Omega)^2}
			&=\langle(\sigma\cdot\nu)K_m(\sigma\cdot\nu)( u+\epsilon P_+v), u+\epsilon P_+v\rangle_{L^2(\partial\Omega)^2}\\
			&=\langle(\sigma\cdot\nu)K_m(\sigma\cdot\nu)u, u\rangle_{L^2(\partial\Omega)^2}\\
			&\quad+2\epsilon\Re\langle(\sigma\cdot\nu)K_m(\sigma\cdot\nu)u,P_+v\rangle_{L^2(\partial\Omega)^2}\\
			&\quad+\epsilon^2\langle(\sigma\cdot\nu)K_m(\sigma\cdot\nu)P_+v,P_+v\rangle_{L^2(\partial\Omega)^2},
		\end{split}
	\end{equation}
	which yields $\frac{d}{d\epsilon}\big|_{\epsilon=0}
	\langle(\sigma\cdot\nu)K_m(\sigma\cdot\nu)u_\epsilon,u_\epsilon\rangle_{L^2(\partial\Omega)^2}
	=2\Re\langle(\sigma\cdot\nu)K_m(\sigma\cdot\nu)u,P_+v\rangle_{L^2(\partial\Omega)^2}$. Therefore, plugging this in \eqref{EulerLagr R} and using that $\rcal(u)=\rcal_\Omega$, we deduce that
	\begin{equation}\label{EulerLagr R comput1}
		\begin{split}
			0&=\frac{1}{2}\|u\|_{L^2(\partial\Omega)^2}^2\frac{d}{d\epsilon}\Big|_{\epsilon=0}\rcal(u_\epsilon)\\
			&=\Re\langle(\sigma\cdot\nu)K_m(\sigma\cdot\nu)u,P_+v\rangle_{L^2(\partial\Omega)^2}
			-\rcal(u)\Re\langle u,P_+v\rangle_{L^2(\partial\Omega)^2}\\
			&=\Re\langle(\sigma\cdot\nu)K_m(\sigma\cdot\nu)u-\rcal_\Omega u,P_+v\rangle_{L^2(\partial\Omega)^2}
		\end{split}
	\end{equation}
	for all $v\in L^2(\partial\Omega)^2$. Replacing $v$ by $iv$ in \eqref{EulerLagr R comput1}, we conclude that
	\begin{equation}
		\begin{split}
			0&=\langle(\sigma\cdot\nu)K_m(\sigma\cdot\nu)u-\rcal_\Omega u,P_+v\rangle_{L^2(\partial\Omega)^2}
		\end{split}
	\end{equation}
	for all $v\in L^2(\partial\Omega)^2$, which leads to
	\begin{equation}\label{EulerLagr R comput2}
		\begin{split}
			(P_+)^*\big((\sigma\cdot\nu)K_m(\sigma\cdot\nu)-\rcal_\Omega\big) u
			=0\quad\text{in $L^2(\partial\Omega)^2$}.
		\end{split}
	\end{equation}
	Since $(P_+)^*(P_+)^*=(P_+)^*$ by 
	\Cref{skew_projections}~$(iii)$, 
	\eqref{EulerLagr R comput2} can be rewritten as
	\begin{equation}\label{EulerLagr R comput3}
		\begin{split}
			(P_+)^*\big((\sigma\cdot\nu)K_m(\sigma\cdot\nu)-\rcal_\Omega(P_+)^*\big) u
			=0\quad\text{in $L^2(\partial\Omega)^2$}.
		\end{split}
	\end{equation}
	
	Let us now show that we also have
	$(P_-)^*\big((\sigma\cdot\nu)K_m(\sigma\cdot\nu)-\rcal_\Omega(P_+)^*\big) u
	=0$. Using that
	$W_m(\sigma\cdot\nu)K_m=
	-K_m(\sigma\cdot\nu)W_m$ by \Cref{l.prop.K.W}~$(iii)$, we have
	\begin{equation}\label{Rstar.attained:eq3}
		\begin{split}
			(P_-)^*(\sigma\cdot\nu)K_m(\sigma\cdot\nu)
			&=({\textstyle\frac{1}{2}}+i(\sigma\cdot\nu)W_m)(\sigma\cdot\nu)K_m(\sigma\cdot\nu)\\
			&=(\sigma\cdot\nu)K_m(\sigma\cdot\nu)({\textstyle\frac{1}{2}}-iW_m(\sigma\cdot\nu))\\
			&=(\sigma\cdot\nu)K_m(\sigma\cdot\nu)P_-.
		\end{split}
	\end{equation}
	Combining \eqref{Rstar.attained:eq3} with the fact that 
	$(P_-)^*(P_+)^*=0$ by \Cref{skew_projections}~$(ii)$, and that $P_-u=0$, we deduce that 
	\begin{equation}\label{EulerLagr R comput4}
		\begin{split}
			(P_-)^*\big((\sigma\cdot\nu)K_m(\sigma\cdot\nu)-\rcal_\Omega(P_+)^*\big) u
			&=(P_-)^*(\sigma\cdot\nu)K_m(\sigma\cdot\nu) u\\
			&=(\sigma\cdot\nu)K_m(\sigma\cdot\nu)P_-u=0
			\quad\text{in $L^2(\partial\Omega)^2$}.
		\end{split}
	\end{equation}
	
	Finally, since $(P_+)^*+(P_-)^*=1$ by 
	\Cref{skew_projections}~$(i)$, summing \eqref{EulerLagr R comput3} and \eqref{EulerLagr R comput4} we arrive to
	\begin{equation}
		\begin{split}
			\big ( (\sigma\cdot\nu)K_m(\sigma\cdot\nu)-\rcal_\Omega(P_+)^* \big ) u
			=0
			\quad\text{in $L^2(\partial\Omega)^2$},
		\end{split}
	\end{equation}
	from which \eqref{limit-infty:eq2_lemma} follows.
\end{proof}

Combining the previous lemmas we establish \Cref{Th:Rayleigh.Intro}.

\begin{proof}[Proof of \Cref{Th:Rayleigh.Intro}]
	\Cref{Th:Rayleigh.Intro.l1} proves $(i)$, and \Cref{Th:Rayleigh.Intro.l2} yields $(ii)$. The proof of 
	$(iii)$ follows from \Cref{Rstar.attained,Rstar.EL}. \Cref{Prop:ParamFirstEigSphere} shows the last statement in the theorem regarding the ball, using \Cref{Th:Rayleigh.Intro.l1,Th:Rayleigh.Intro.l2}.
\end{proof}

\appendix

\section{Properties of the spectrum}
\label{Sec:Formulas}

Here we prove the properties of the spectrum of $\Dirac_\tau$ collected in \Cref{Lemma:SpectrumGenMIT}.
For a shorter notation, we will use 
\begin{equation}
	\mathcal{B} := -i \beta (\alpha \cdot \nu),
\end{equation}
which defines a self-adjoint operator in $L^2(\partial\Omega)^4$ such that $\mathcal{B}^2 = I_4$. For every $\varphi \in \mathrm{Dom}(\Dirac_\tau)$ it holds $\varphi = \sinh \tau \mathcal{B} \beta \varphi + \cosh \tau \mathcal{B} \varphi$ on $\partial \Omega$.

\begin{proof}[Proof of \Cref{Lemma:SpectrumGenMIT}]
	From \cite[Proposition 5.15]{BehrndtHolzmannMas} we know that $\Dirac_\tau$ is self-adjoint in $L^2(\Omega)^4$. The proof of $(i)$ follows from this and the compact embedding of 
	$H^1(\Omega)^4$ into $L^2(\Omega)^4$.
	
	Let us now show $(ii)$. 
	We first claim that for every $\varphi = (u,v)^\intercal \in \mathrm{Dom}(\Dirac_\tau)$ it holds
	\begin{equation}
		\label{Eq:SquareDiracNorm}
		\norm{ \Dirac \varphi}^2_{L^2(\Omega)^4} = \norm{ \alpha \cdot \nabla \varphi }^2_{L^2(\Omega)^4} + m^2 \norm{\varphi}^2_{L^2(\Omega)^4} + m e^\tau \norm{u}^2_{L^2(\partial \Omega)^2} + m e^{-\tau} \norm{v}^2_{L^2(\partial \Omega)^2}.
	\end{equation}
	To prove this formula, note first that expanding the square we have
	\begin{equation}
		\begin{split}
			\norm{ \Dirac \varphi}^2_{L^2(\Omega)^4} 
			&= \langle -i \alpha \cdot \nabla \varphi,  -i \alpha \cdot \nabla \varphi \rangle_{L^2(\Omega)^4} + m^2 \langle \beta \varphi, \beta \varphi \rangle_{L^2(\Omega)^4} + 2 m \Re \langle \beta \varphi, -i \alpha \cdot \nabla \varphi \rangle_{L^2(\Omega)^4} \\
			& = \norm{ \alpha \cdot \nabla \varphi }^2_{L^2(\Omega)^4} + m^2 \norm{\varphi}^2_{L^2(\Omega)^4} + 2 m \Re \langle \beta \varphi, -i \alpha \cdot \nabla \varphi \rangle_{L^2(\Omega)^4} .
		\end{split}	
	\end{equation}
	Integrating by parts we get
	\begin{equation}
		\begin{split}
			\langle \beta \varphi, -i \alpha \cdot \nabla \varphi \rangle_{L^2(\Omega)^4} & = \langle -i \alpha \cdot \nabla  (\beta \varphi), \varphi \rangle_{L^2(\Omega)^4} - \langle -i (\alpha \cdot \nu) \beta \varphi , \varphi \rangle_{L^2(\partial\Omega)^4} \\
			& = - \langle \beta (-i \alpha \cdot \nabla )\varphi, \varphi \rangle_{L^2(\Omega)^4} + \langle \mathcal{B} \varphi , \varphi \rangle_{L^2(\partial\Omega)^4} ,
		\end{split}
	\end{equation}
	and using that $\beta$ is hermitian, we obtain
	$2 \Re \langle \beta \varphi, -i \alpha \cdot \nabla \varphi \rangle_{L^2(\Omega)^4} = \langle \mathcal{B} \varphi , \varphi \rangle_{L^2(\partial\Omega)^4}$.
	Thus,
	\begin{equation}
		\label{Eq:SquareDiracNormPrev}
		\norm{ \Dirac \varphi}^2_{L^2(\Omega)^4} = \norm{ \alpha \cdot \nabla \varphi }^2_{L^2(\Omega)^4} + m^2 \norm{\varphi}^2_{L^2(\Omega)^4} + m \langle \mathcal{B} \varphi , \varphi \rangle_{L^2(\partial\Omega)^4}.
	\end{equation}
	Now, using that $\varphi \in \mathrm{Dom}(\Dirac_{\tau})$, we see that
	\begin{equation}
		\begin{split}
			\langle \mathcal{B} \varphi , \varphi \rangle_{L^2(\partial\Omega)^4} & = \langle \mathcal{B} \varphi , \sinh \tau \mathcal{B} \beta \varphi + \cosh \tau \mathcal{B} \varphi \rangle_{L^2(\partial\Omega)^4} = \langle  \varphi , \sinh \tau \mathcal{B}^2 \beta \varphi + \cosh \tau \mathcal{B}^2 \varphi \rangle_{L^2(\partial\Omega)^4} \\
			&= \langle  \varphi , \sinh \tau \beta \varphi + \cosh \tau \varphi \rangle_{L^2(\partial\Omega)^4},
		\end{split}
	\end{equation}
	where we have used that $\mathcal{B}$ is self-adjoint and that $\mathcal{B}^2 = I_4$.
	From here, and writing $\varphi = (u, v)^\intercal$, we easily see that
	\begin{equation}
		\langle \mathcal{B} \varphi , \varphi \rangle_{L^2(\partial\Omega)^4} = e^\tau \norm{u}^2_{L^2(\partial \Omega)^2} + e^{-\tau} \norm{v}^2_{L^2(\partial \Omega)^2}.
	\end{equation}
	Plugging this into \eqref{Eq:SquareDiracNormPrev} we obtain \eqref{Eq:SquareDiracNorm}, proving the claim.
	Now, let $\varphi\in \mathrm{Dom}(\Dirac_\tau)\setminus\{0\}$ be such that $\Dirac \varphi = \lambda \varphi$ in $\Omega$.	
	Note that, by Lemma~\ref{Lemma:ReproductionFormula}, $\varphi$ cannot vanish identically on $\partial\Omega$.
	Therefore, using \eqref{Eq:SquareDiracNorm} we obtain
	\begin{equation}
		\lambda ^2 \norm{\varphi}^2_{L^2(\Omega)^4} =  \norm{ \Dirac \varphi}^2_{L^2(\Omega)^4} >  m^2 \norm{\varphi}^2_{L^2(\Omega)^4},
	\end{equation}
	which yields $|\lambda | > m$.

	We finally prove $(iii)$ and $(iv)$. First, by the compact embedding of 
	$H^1(\Omega)^4$ into $L^2(\Omega)^4$ we have that the resolvent of $\Dirac_\tau$ is a compact operator, which yields that every eigenvalue has finite multiplicity. Now, given $\psi\in\C^4$, consider the charge conjugation operator
	$$
	\mathcal{C} \psi:= i \beta \alpha_{2} \overline{\psi}
	$$
	and the time reversal-symmetry operator
	\begin{equation}
		\label{Eq:TimeReversalSymmetry}
		T \psi: =-i \gamma_{5} \alpha_{2} \overline{\psi}, \quad \text{where}\quad  \gamma_{5}:= \begin{pmatrix}0&I_2\\I_2&0\end{pmatrix}.
	\end{equation}
	Then, simple computations show that $\Dirac T=T \Dirac$, $\Dirac \mathcal{C}=-\mathcal{C} \Dirac$, and $ T \mathcal{C}=  \mathcal{C} T$.
	In addition, setting
	\begin{equation}
		\mathcal{B}_\tau := \sinh \tau \mathcal{B} \beta + \cosh \tau \mathcal{B} = i (\sinh \tau - \cosh \tau \beta ) (\alpha \cdot \nu),
	\end{equation}
	it is also easy to check that $\mathcal B_{\tau} T=T \mathcal B_{\tau}$ and $ \mathcal B_{\tau} \mathcal{C}=\mathcal{C} \mathcal B_{-\tau}$. 
	Note that for every function $\varphi \in \mathrm{Dom}(\Dirac_{\tau})$ it holds $\varphi = \mathcal{B}_\tau \varphi$ on $\partial \Omega$.
	As a consequence, given an eigenfunction $\varphi$ of $\Dirac_\tau$ with eigenvalue $\lambda$, $T\varphi$ is also an eigenfunction of $\Dirac_\tau$ with eigenvalue $\lambda$. 
	Furthermore, $\mathcal{C}\varphi$ and $T\mathcal{C} \varphi$ are eigenfunctions of $\Dirac_{-\tau}$ with eigenvalue~$-\lambda$. 
\end{proof}

To conclude this section, we establish a formula which relates the $L^2(\Omega)^4$-norms of $\nabla \varphi$ and $\alpha \cdot \nabla \varphi$ for functions $\varphi \in \mathrm{Dom} (\Dirac_\tau)$.
Although we do not use this formula in this article, we think that it has its own interest, and it may be useful to present it here for future reference.
The formula is a generalization of \cite[formula (1.3)]{ALR2017}, in which the case $\tau = 0$ is considered, and the proof follows the same lines.

\begin{lemma}
	\label{Lemma:MeanCurvatureFormula}
	Let $\tau\in\R$ and $\varphi \in \mathrm{Dom} (\Dirac_\tau) \cap H^1(\partial \Omega)^4$.
	Then,
	\begin{equation}
		\label{Eq:MeanCurvatureFormula}
		\norm{ \alpha \cdot \nabla  \varphi}^2_{L^2(\Omega)^4} = \norm{ \nabla  \varphi}^2_{L^2(\Omega)^4} + \dfrac{1}{2} \int_{\partial \Omega} \kappa |\varphi|^2 d \upsigma  + \sinh \tau \langle  \gamma_5  (\alpha \cdot \nu) \varphi,  \alpha \cdot (\nu \times \nabla)  \varphi \rangle_{L^2(\partial\Omega)^4},
	\end{equation}	
	where $\kappa$ denotes the mean curvature of $\partial \Omega$.
\end{lemma}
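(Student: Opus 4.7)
\medskip
The plan is to exploit the $2\times2$ block structure of the Dirac matrices by writing $\varphi=(u,v)^\intercal$ with $u,v\in H^1(\Omega)^2$, noting that the boundary condition in $\mathrm{Dom}(\Dirac_\tau)$ reads $v=ie^{\tau}(\sigma\cdot\nu)u$ on $\partial\Omega$ (cf.\ \eqref{Eq:EigenvaluePbUV.1}) and that $\alpha\cdot\nabla\varphi=(\sigma\cdot\nabla v,\sigma\cdot\nabla u)^\intercal$. It therefore suffices to analyse $\norm{\sigma\cdot\nabla w}^2_{L^2(\Omega)^2}-\norm{\nabla w}^2_{L^2(\Omega)^2}$ for $w\in\{u,v\}$. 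Since $(\sigma\cdot\nabla)^2=\Delta$, integration by parts makes the two bulk pieces $-\langle\Delta w,w\rangle$ cancel, leaving only a boundary term. Applying the Pauli identity $(\sigma\cdot a)(\sigma\cdot b)=a\cdot b+i\sigma\cdot(a\times b)$ and decomposing $\nabla=\nu\partial_\nu+\nabla_T$ on $\partial\Omega$ (so that $\nu\cdot\nabla_T=0$ and $\nu\times\nabla=\nu\times\nabla_T$) identifies this boundary contribution with $-iQ_w$, where
\begin{equation*}
Q_w := \int_{\partial\Omega}w\cdot\overline{\sigma\cdot(\nu\times\nabla)w}\,d\upsigma.
\end{equation*}
Summing the $u$- and $v$-contributions yields $\norm{\alpha\cdot\nabla\varphi}^2_{L^2(\Omega)^4}-\norm{\nabla\varphi}^2_{L^2(\Omega)^4}=-i(Q_u+Q_v)$.

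\medskip
The core geometric step is the pointwise identity
\begin{equation*}
\sigma\cdot(\nu\times\nabla)\big[(\sigma\cdot\nu)u\big]=i\,\sigma\cdot\nabla_T u - i\kappa\,(\sigma\cdot\nu)u\qquad\text{on }\partial\Omega.
\end{equation*}
I would derive it by applying the product rule to the $\sigma\cdot\nu$ factor, re-using the Pauli identity, and simplifying the resulting double $\varepsilon$-contractions via $\varepsilon_{jkl}\varepsilon_{jmn}=\delta_{km}\delta_{ln}-\delta_{kn}\delta_{lm}$. Extending $\nu$ near $\partial\Omega$ as the gradient of the signed distance function, the $\varepsilon\varepsilon$-reduction produces precisely $-i(\sigma\cdot\nu)\operatorname{div}\nu=-i\kappa(\sigma\cdot\nu)$ together with a term proportional to $\nu\cdot\operatorname{curl}\nu=0$ that disappears. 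Plugging this identity into $Q_v$ via $v=ie^{\tau}(\sigma\cdot\nu)u$ and using $(\sigma\cdot\nu)^2=1$ together with $(\sigma\cdot\nu)(\sigma\cdot\nabla_T)=i\sigma\cdot(\nu\times\nabla)$ (hence $\int_{\partial\Omega}(\sigma\cdot\nu)u\cdot\overline{\sigma\cdot\nabla_T u}\,d\upsigma=-iQ_u$ after Hermiticity), a short manipulation gives $-iQ_v=ie^{2\tau}Q_u+e^{2\tau}\int_{\partial\Omega}\kappa|u|^2\,d\upsigma$, and consequently
\begin{equation*}
\norm{\alpha\cdot\nabla\varphi}^2_{L^2(\Omega)^4}-\norm{\nabla\varphi}^2_{L^2(\Omega)^4}=2ie^{\tau}\sinh\tau\,Q_u+e^{2\tau}\int_{\partial\Omega}\kappa|u|^2\,d\upsigma.
\end{equation*}

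\medskip
For the last term in \eqref{Eq:MeanCurvatureFormula}, the same block computation gives $\gamma_5(\alpha\cdot\nu)\varphi=((\sigma\cdot\nu)u,\,ie^{\tau}u)^\intercal$, and applying the geometric identity to the lower component of $\alpha\cdot(\nu\times\nabla)\varphi$ yields
\begin{equation*}
\langle\gamma_5(\alpha\cdot\nu)\varphi,\alpha\cdot(\nu\times\nabla)\varphi\rangle_{L^2(\partial\Omega)^4}=2ie^{\tau}Q_u+e^{\tau}\int_{\partial\Omega}\kappa|u|^2\,d\upsigma.
\end{equation*}
Multiplying by $\sinh\tau$ therefore cancels the purely imaginary $Q_u$-piece in the previous display exactly, and the surviving real part is $(e^{2\tau}-e^{\tau}\sinh\tau)\int_{\partial\Omega}\kappa|u|^2\,d\upsigma = e^{\tau}\cosh\tau\int_{\partial\Omega}\kappa|u|^2\,d\upsigma=\tfrac{1+e^{2\tau}}{2}\int_{\partial\Omega}\kappa|u|^2\,d\upsigma$. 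Since $|\varphi|^2=(1+e^{2\tau})|u|^2$ on $\partial\Omega$, this equals $\tfrac12\int_{\partial\Omega}\kappa|\varphi|^2\,d\upsigma$, which is precisely \eqref{Eq:MeanCurvatureFormula}. The main obstacle is the pointwise identity in the second paragraph: threading the correct $\kappa$-coefficient through the $\varepsilon\varepsilon$-contraction while discarding the $\nu\cdot\operatorname{curl}\nu$-term relies on extending $\nu$ as a gradient, and is the only step where the $C^2$-regularity of $\partial\Omega$ enters in an essential way; the remaining miraculous cancellation $e^{2\tau}-1=2e^{\tau}\sinh\tau$ then forces the $Q_u$-contributions to disappear and produces the factor $\tfrac12$ in front of the curvature integral.
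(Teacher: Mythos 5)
Your argument is correct, and it checks out line by line (I verified the boundary integration by parts, the pointwise identity $\sigma\cdot(\nu\times\nabla)[(\sigma\cdot\nu)u]=i\,\sigma\cdot\nabla_T u-i\kappa(\sigma\cdot\nu)u$, and the final bookkeeping $e^{2\tau}-1=2e^\tau\sinh\tau$, $e^\tau-\sinh\tau=\cosh\tau$, $|\varphi|^2=(1+e^{2\tau})|u|^2$ on $\partial\Omega$), but it follows a genuinely different route from the paper. The paper never descends to two-component spinors: it quotes from \cite{ALR2017} the $H^2$-identity $\norm{\alpha\cdot\nabla\varphi}^2_{L^2(\Omega)^4}=\norm{\nabla\varphi}^2_{L^2(\Omega)^4}-\langle\gamma_5\varphi,-i\alpha\cdot(\nu\times\nabla)\varphi\rangle_{L^2(\partial\Omega)^4}$ (extended by density to $H^1(\Omega)^4\cap H^1(\partial\Omega)^4$) together with the curvature commutator $[-i\alpha\cdot(\nu\times\nabla),\mathcal{B}]=-\kappa\gamma_5\mathcal{B}$, and then manipulates the four-spinor boundary condition $\varphi=\mathcal{B}_\tau\varphi$ with the (anti)commutation rules of $\beta$, $\gamma_5$, $\mathcal{B}$. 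You instead rederive both ingredients from scratch at the $2\times2$ level: your integration-by-parts step reproduces the ALR identity componentwise (your $-i(Q_u+Q_v)$ is exactly their $\gamma_5$-boundary term in blocks), and your ``core geometric step'' is precisely the upper-left block of the commutator formula, since it is equivalent to $\{\sigma\cdot(\nu\times\nabla),\sigma\cdot\nu\}=-i\kappa(\sigma\cdot\nu)$ with $\kappa=\operatorname{div}\nu$ — so the two $\kappa$-conventions agree. What your route buys is self-containedness and transparency: the explicit boundary relation $v=ie^\tau(\sigma\cdot\nu)u$ reduces everything to the two scalar quantities $Q_u$ and $\int_{\partial\Omega}\kappa|u|^2\,d\upsigma$, and the cancellation of the $Q_u$-terms is visible by inspection; what the paper's route buys is brevity and no need to redo the signed-distance/$\varepsilon\varepsilon$ computation, at the cost of citing two external lemmas and longer four-spinor algebra. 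One point you should make explicit: your double integration by parts (which produces $\Delta w$ in the bulk) is not directly legitimate for $\varphi$ merely in $\mathrm{Dom}(\Dirac_\tau)\cap H^1(\partial\Omega)^4$; as in the paper, establish the boundary identity first for $\varphi\in H^2(\Omega)^4$ and then pass to the stated class by density, since both sides of \eqref{Eq:MeanCurvatureFormula} are continuous in the $H^1(\Omega)^4\cap H^1(\partial\Omega)^4$ topology.
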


\begin{proof}
	First, for every $\varphi \in H^2(\Omega)^4$, it holds
	\begin{equation}
		\label{Eq:AlphaGradH1}
		\norm{ \alpha \cdot \nabla  \varphi}^2_{L^2(\Omega)^4} = \norm{ \nabla  \varphi}^2_{L^2(\Omega)^4} - \langle \gamma_5 \varphi, -i \alpha \cdot (\nu \times \nabla) \varphi \rangle_{L^2(\partial\Omega)^4}, 
	\end{equation}	
	where $\gamma_5$ is  defined in \eqref{Eq:TimeReversalSymmetry}.
	This is proved in \cite[Appendix A.2]{ALR2017}.
	By density, it also holds for all $\varphi\in H^1(\Omega)^4 \cap H^1(\partial \Omega)^4$.
	
	Let us now investigate the boundary term in the above expression.	
	The crucial point is to use that the mean curvature of $\partial \Omega$ arises in our context through the formula
	\begin{equation}
		\label{Eq:MeanCurvatureCommutator}
		[-i \alpha \cdot (\nu \times \nabla), \mathcal{B}]= - \kappa \gamma_5 \mathcal{B},
	\end{equation}	
	where $[\cdot, \cdot]$ denotes the commutator of two operators, i.e., $[S,T] := ST - TS$; see \cite[Lemma~A.3]{ALR2017}.
	Using this and the boundary condition for $\varphi \in \mathrm{Dom}(\Dirac_\tau)\cap H^1(\partial \Omega)^4$ we get
	\begin{equation}
		\begin{split}
			\langle \gamma_5 \varphi &,-i \alpha \cdot (\nu \times \nabla) \varphi \rangle_{L^2(\partial\Omega)^4} \\
			& = \sinh \tau \langle \gamma_5 \varphi, -i \alpha \cdot (\nu \times \nabla) \mathcal{B} \beta \varphi \rangle_{L^2(\partial\Omega)^4} + \cosh \tau \langle \gamma_5 \varphi, -i \alpha \cdot (\nu \times \nabla) \mathcal{B}\varphi \rangle_{L^2(\partial\Omega)^4} \\
			&= \sinh \tau \langle \gamma_5 \varphi, [-i \alpha \cdot (\nu \times \nabla), \mathcal{B}] \beta \varphi \rangle_{L^2(\partial\Omega)^4}  + \sinh \tau \langle \gamma_5 \varphi, \mathcal{B} ( -i \alpha \cdot (\nu \times \nabla) ) \beta \varphi \rangle_{L^2(\partial\Omega)^4}
			\\
			& \quad \quad  + \cosh \tau \langle \gamma_5 \varphi, [-i \alpha \cdot (\nu \times \nabla), \mathcal{B}]\varphi \rangle_{L^2(\partial\Omega)^4}  + \cosh \tau \langle \gamma_5 \varphi, \mathcal{B} (-i \alpha \cdot (\nu \times \nabla))\varphi \rangle_{L^2(\partial\Omega)^4} \\
			&= -\sinh \tau \langle \gamma_5 \varphi, \kappa \gamma_5 \mathcal{B} \beta \varphi \rangle_{L^2(\partial\Omega)^4}+ \sinh \tau \langle \mathcal{B} \gamma_5 \varphi, -i \alpha \cdot (\nu \times \nabla) \beta \varphi \rangle_{L^2(\partial\Omega)^4}
			\\
			& \quad \quad  - \cosh \tau \langle \gamma_5 \varphi, \kappa \gamma_5 \mathcal{B} \varphi \rangle_{L^2(\partial\Omega)^4}  + \cosh \tau \langle \mathcal{B} \gamma_5 \varphi,  -i \alpha \cdot (\nu \times \nabla) \varphi \rangle_{L^2(\partial\Omega)^4} \\
			&= - \langle \gamma_5 \varphi, \kappa \gamma_5 (\sinh \tau \mathcal{B} \beta + \cosh \tau \mathcal{B} )\varphi \rangle_{L^2(\partial\Omega)^4} + \sinh \tau \langle \mathcal{B} \gamma_5 \varphi, -i \alpha \cdot (\nu \times \nabla) \beta \varphi \rangle_{L^2(\partial\Omega)^4}
			\\
			& \quad \quad  + \cosh \tau  \langle \mathcal{B} \gamma_5 \varphi,  -i \alpha \cdot (\nu \times \nabla) \varphi \rangle_{L^2(\partial\Omega)^4} \\
			&= - \langle \gamma_5 \varphi, \kappa \gamma_5 \varphi \rangle_{L^2(\partial\Omega)^4}  - \sinh \tau \langle \gamma_5 \mathcal{B}  \varphi, -i \alpha \cdot (\nu \times \nabla) \beta \varphi \rangle_{L^2(\partial\Omega)^4}
			\\
			& \quad \quad - \cosh \tau  \langle  \gamma_5 \mathcal{B}\varphi,  -i \alpha \cdot (\nu \times \nabla) \varphi \rangle_{L^2(\partial\Omega)^4} .\\
		\end{split}
	\end{equation}
	Here we have used that $(\alpha \cdot x) \gamma_5 =  \gamma_5 (\alpha \cdot x)$ for all $x\in \R^3$ (see \cite[Lemma~A.1]{ALR2017}) and that $\beta$ anticommutes with $\gamma_5$, thus $ \mathcal{B} \gamma_5 = - \gamma_5 \mathcal{B} $. 
	Now, using that  $(\alpha \cdot x) \beta = - \beta (\alpha \cdot x)$ for every $x\in \R^3$, and that $\beta$ anticommutes with $\mathcal{B}$ and $\gamma_5$, we have
	\begin{equation}
		\begin{split}
			\langle \gamma_5 \mathcal{B}  \varphi, -i \alpha \cdot (\nu & \times \nabla) \beta  \varphi \rangle_{L^2(\partial\Omega)^4}   = - \langle \gamma_5 \mathcal{B}  \varphi, \beta (-i \alpha \cdot (\nu \times \nabla) ) \varphi \rangle_{L^2(\partial\Omega)^4} \\
			&= - \langle \beta \gamma_5\mathcal{B}  \varphi, -i \alpha \cdot (\nu \times \nabla) \varphi \rangle_{L^2(\partial\Omega)^4} = -\langle  \gamma_5 \mathcal{B}  \beta \varphi, -i \alpha \cdot (\nu \times \nabla) \varphi \rangle_{L^2(\partial\Omega)^4}.
		\end{split}
	\end{equation}
	Hence,
	\begin{equation}
		\begin{split}
			\langle \gamma_5 \varphi, -i \alpha \cdot (\nu \times \nabla) \varphi \rangle_{L^2(\partial\Omega)^4} 
			&= - \langle \gamma_5 \varphi, \kappa \gamma_5 \varphi \rangle_{L^2(\partial\Omega)^4}  + \sinh \tau \langle  \gamma_5 \mathcal{B}  \beta \varphi, -i \alpha \cdot (\nu \times \nabla) \varphi \rangle_{L^2(\partial\Omega)^4}
			\\
			& \quad \quad - \cosh \tau  \langle  \gamma_5 \mathcal{B}\varphi,  -i \alpha \cdot (\nu \times \nabla) \varphi \rangle_{L^2(\partial\Omega)^4} \\
			&= - \langle \varphi, \kappa \varphi \rangle_{L^2(\partial\Omega)^4}  + 2\sinh \tau \langle  \gamma_5 \mathcal{B}  \beta \varphi, -i \alpha \cdot (\nu \times \nabla)  \varphi \rangle_{L^2(\partial\Omega)^4}
			\\
			& \quad \quad -  \langle  \gamma_5 (\sinh \tau \mathcal{B} \beta + \cosh \tau  \mathcal{B})\varphi,  -i \alpha \cdot (\nu \times \nabla) \varphi \rangle_{L^2(\partial\Omega)^4}
		\end{split}
	\end{equation}
	and thus, using the boundary condition, we get
	\begin{equation}
		\langle \gamma_5 \varphi, -i \alpha \cdot (\nu \times \nabla) \varphi \rangle_{L^2(\partial\Omega)^4} = - \dfrac{1}{2} \int_{\partial \Omega} \kappa |\varphi|^2 d \upsigma  + \sinh \tau \langle  \gamma_5 \mathcal{B}  \beta \varphi, -i \alpha \cdot (\nu \times \nabla)  \varphi \rangle_{L^2(\partial\Omega)^4},
	\end{equation}
	which combined with \eqref{Eq:AlphaGradH1} gives
	\begin{equation}
		\norm{ \alpha \cdot \nabla  \varphi}^2_{L^2(\Omega)^4} = \norm{ \nabla  \varphi}^2_{L^2(\Omega)^4} + \dfrac{1}{2} \int_{\partial \Omega} \kappa |\varphi|^2 d \upsigma  - \sinh \tau \langle  \gamma_5 \mathcal{B}  \beta \varphi, -i \alpha \cdot (\nu \times \nabla)  \varphi \rangle_{L^2(\partial\Omega)^4}.
	\end{equation}	
	Finally, using again that $(\alpha \cdot x) \beta = - \beta (\alpha \cdot x)$ for every $x\in \R^3$, we have $\mathcal{B}  \beta = i \alpha \cdot \nu$, and inserting this into the above identity we conclude the proof.
\end{proof}

\section{The ball}
\label{Sec:ExplicitComputationsSphere}
In this appendix we present a more explicit spectral analysis in the case that $\Omega\subset\R^3$ is a ball of radius $R>0$ centered at the origin, which will be denoted by $B_R$.
To study this radially symmetric case we introduce spherical coordinates: if $x\in \R^3$ we write $x=r\theta$ with $r = |x| \in [0,+\infty)$ and $\theta = x/|x| \in \Sph^2$. Using separation of variables and the spherical harmonic spinors, we give the explicit equations for the eigenvalues and eigenfunctions of $\Dirac_\tau$.

\addtocontents{toc}{\SkipTocEntry}
\subsection{Decomposition using spherical harmonic spinors}
\label{Subsec:DecompositionSph}

Let $Y^\ell_n$ be the usual spherical harmonics on $\Sph^2$; here $n = 0, 1, 2, \ldots$ and $\ell = -n, -n+1, \ldots, n + 1, n$. They satisfy {$\Delta_{\Sph^2} Y^\ell_n = -n(n + 1)Y^\ell_n$}, where $\Delta_{\Sph^2}$ denotes the usual spherical Laplacian. 
Moreover,  $Y^\ell_n$ form a complete orthonormal set in $L^2(\Sph^2)$.

Following \cite[Section 4.6.4]{Thaller}, the spherical harmonic spinors are defined as follows: for $j=1/2, 3/2, \ldots$ and $\mu_j = -j, -j+1, \ldots, j-1, j$, set
$$
\psi_{j-1 / 2}^{\mu_j}=\frac{1}{\sqrt{2 j}}\begin{pmatrix}
	\sqrt{j+\mu_j} Y_{j-1 / 2}^{\mu_j -1 / 2} \\
	\sqrt{j-\mu_j} Y_{j-1 / 2}^{\mu_j +1 / 2}
\end{pmatrix} \quad \text{ and } \quad \psi_{j+1 / 2}^{\mu_j}=\frac{1}{\sqrt{2 j+2}}\begin{pmatrix}
	\sqrt{j+1-\mu_j} Y_{j+1 / 2}^{\mu_j -1 / 2} \\
	-\sqrt{j+1+\mu_j} Y_{j+1 / 2}^{\mu_j +1 / 2}
\end{pmatrix}.
$$
As shown in \cite[Section 4.6.5]{Thaller}, one can decompose the space $L^2(\R^3)^4$ ---and analogously $L^2(B_R)^4$--- as
$$
L^2(\R^3)^4 = \bigoplus_{j=1/2}^{+\infty } \, \bigoplus_{\mu_j=-j}^j L_{j, \mu_j}^+ \oplus  L_{j, \mu_j}^-,
$$
where
$$
L_{j, \mu_j}^\pm := \left \{ \varphi  \in L^2(\R^3)^4  : \, \varphi(r\theta) = \begin{pmatrix}
	i \tilde{f}(r) \psi_{j \pm 1/2}^{\mu_j} (\theta) \vspace{3pt} \\
	\tilde{g}(r) \psi_{j \mp 1 / 2}^{\mu_j} (\theta)
\end{pmatrix} \text{ with } \tilde{f}, \tilde{g} \in L^2(\R_+, r^2 d r) \right\}.
$$
In each subspace define the mapping $U_{j,\mu_j}^\pm: L^\pm_{j, \mu_j} \to L^2(\R_+)^2$ by
$$
(U_{j,\mu_j}^\pm \varphi) (r) = \begin{pmatrix}
	r \tilde{f}(r)\\
	r \tilde{g}(r) 
\end{pmatrix} 
=: \begin{pmatrix}
	f(r)\\
	g(r) 
\end{pmatrix},
$$
and also define the differential operator (see \cite[equation (4.129)]{Thaller})
$$
\widehat{H}_{j,\pm} := \begin{pmatrix}
	m & - \partial_r + \kappa_{j,\pm}/r\\
	\partial_r + \kappa_{j,\pm}/r & -m
\end{pmatrix}, \quad \text{where } \kappa_{j,\pm} := \pm (j + 1/2).
$$
Then, the differential operator $\Dirac = -i \alpha \cdot \nabla + m \beta $ decomposes into the orthogonal sum of the operators $(U_{j,\mu_j}^\pm)^{-1} \widehat{H}_{j,\pm} U_{j,\mu_j}^\pm$.
In particular, if $\phi = (f,g)^\intercal$ satisfies $\widehat{H}_{j,\pm} \phi = \lambda \phi$ in $(0,R)$, then
\begin{equation}
	\label{Eq:SeparationOfVariables}
	\varphi =  \begin{pmatrix}
		u\\
		v 
	\end{pmatrix} = \begin{pmatrix}
		\dfrac{ i f(r)}{r} \psi^{\mu_j}_{j \pm 1/2}\vspace{3pt}\\ \dfrac{g(r)}{r} \psi^{\mu_j}_{j \mp 1/2}
	\end{pmatrix}
\end{equation}
satisfies $\Dirac\varphi = \lambda \varphi$ in $B_R\setminus \{0\}$.
As we will see, by further imposing that $f(0)$ is finite, we can guarantee that $\Dirac\varphi = \lambda \varphi$ holds across the origin. 

\addtocontents{toc}{\SkipTocEntry}
\subsection{Eigenvalue equations}
\label{Subsec:EigenvalueEquationsBall}

Our first goal is to find solutions to $\widehat{H}_{j,\pm} (f, g)^\intercal = \lambda (f,g)^\intercal$. This equation rewrites as the system of ODE
\begin{equation}
	\label{Eq:EigSphereODE}
	\beqc{\PDEsystem}
	- g' + \frac{\kappa}{r} g & = &(\lambda - m) f, \vspace{2pt}\\
	f' + \frac{\kappa}{r} f & = &(\lambda + m) g,
	\eeqc
\end{equation}
where $\kappa := \kappa_{j,\pm} := \pm (j + 1/2)$.
For simplicity, let us assume first that $\kappa = j + 1/2$.
To solve the system, note that from the second ODE we get
\begin{equation}
	\label{Eq:gIsolated}
	g = \dfrac{1}{\lambda + m} \Big (f' + \frac{\kappa}{r} f \Big )
\end{equation}
and, thus, inserting this into the first one we get the Bessel-type ODE
$$
f'' + \Big ( \lambda^2 - m^2 - \frac{\kappa^2 + \kappa}{r^2} \Big ) f = 0.
$$
Therefore, $f$ is of the form
$$
f(r) = b_1 \sqrt{r} J_{\kappa + 1/2}(\sqrt{\lambda^2 - m^2} r) + b_2 \sqrt{r} Y_{\kappa + 1/2}( \sqrt{\lambda^2 - m^2} r),
$$
where $b_1,b_2\in\C$, and $J_{\kappa + 1/2}$ and $Y_{\kappa + 1/2}$ denote the Bessel functions of the first and second kind of order ${\kappa + 1/2}$; see \cite[Chapters 9 and 10]{AbramowitzStegun}.
Since the eigenfunctions are not allowed to be singular at $r=0$ (as the corresponding $\varphi$ given by \eqref{Eq:SeparationOfVariables} must solve an elliptic equation across the origin), we deduce that $b_2=0$, and thus $f$ is of the form
\begin{equation}
	f(r) = b_1 \sqrt{r} J_{\kappa + 1/2}(\sqrt{\lambda^2 - m^2} r).
\end{equation}
Now, note that for every real index $p$, one has the relation 
$$
\partial_r [J_{p}(\sqrt{\lambda^2 - m^2} r) ]= \sqrt{\lambda^2 - m^2}  J_{p - 1}(\sqrt{\lambda^2 - m^2} r) - \dfrac{p}{r} J_{p}(\sqrt{\lambda^2 - m^2} r);
$$
see \cite[formula (9.1.27)]{AbramowitzStegun}.
Using this and \eqref{Eq:gIsolated}, we see that
\begin{equation}
	g(r) = b_1 \dfrac{\sqrt{\lambda^2 - m^2}}{\lambda + m}\sqrt{r} J_{\kappa - 1/2}(\sqrt{\lambda^2 - m^2} r).
\end{equation}

The case $\kappa = - j - 1/2$ follows by similar arguments.
One isolates $f$ instead of $g$ and uses that, for a positive integer $p$, $J_{-(p + 1/2)} = (-1)^{p+1} Y_{p + 1/2}$ and $Y_{-(p+ 1/2)} = (-1)^{p} J_{p + 1/2}$.

As a conclusion, we obtain that every eigenfunction of $\widehat{H}_{j,\pm}$ with eigenvalue $\lambda$ is, up to a multiplicative constant, of the form
\begin{equation}
	\label{Eq:EigenvalueGeneralForm}
	\begin{pmatrix}
		f(r)\\
		g(r)
	\end{pmatrix} =  \sqrt{r} \begin{pmatrix}
		J_{\ell + 1/2}(\sqrt{\lambda^2 - m^2} r) \vspace{2pt}\\
		\pm \dfrac{\sqrt{\lambda^2 - m^2}}{\lambda + m} J_{\ell'+1/2}(\sqrt{\lambda^2 - m^2} r) 
	\end{pmatrix},
\end{equation}
where $\ell = j \pm 1/2$ and $\ell' = j \mp 1/2$.

To obtain the equation \eqref{Eq:IntroEigEquationSphere} that relates $\lambda$ and 
$\tau$ by means of Bessel functions, it only remains to impose the boundary condition $v = i e^\tau (\sigma \cdot \nu) u$ on $\partial B_R$ for $\varphi=(u,v)^\intercal$ as in \eqref{Eq:SeparationOfVariables} and satisfying 
$\Dirac\varphi=\lambda\varphi$ in $B_R$.
Since 
\begin{equation}
	\label{Eq:SphSpinorsSymmetrySigma}
	(\sigma \cdot \nu) \psi_{j \pm 1/2}^{\mu_j} =   \psi_{j \mp 1/2}^{\mu_j},
\end{equation}
by \cite[equation (4.121)]{Thaller}, it follows from \eqref{Eq:SeparationOfVariables} that the boundary condition relating $f$ and $g$ is 
\begin{equation}
	\label{Eq:BoundaryConditionfg}
	g(R) = - e^\tau f(R).
\end{equation}
Therefore, for each $j = 1/2, 3/2, \ldots$, each $\mu_j = -j, -j + 1,\ldots, j$, and each subspace $L^\pm_{j, \mu_j}$, we obtain the  eigenvalue equation
\begin{equation}
	\label{Eq:EigenvalueEquationBall}
	e^\tau J_{\ell + 1/2}(\sqrt{\lambda^2 - m^2} R) \pm \dfrac{\sqrt{\lambda^2 - m^2}}{\lambda + m} J_{\ell'+1/2}(\sqrt{\lambda^2 - m^2} R) = 0,
\end{equation}
where $\ell = j \pm 1/2$ and $\ell' = j \mp 1/2$. This  corresponds to \eqref{Eq:IntroEigEquationSphere}.
Note that the equation is independent of the indexes $\mu_j$, accounting for the multiplicity of the eigenvalues.

\addtocontents{toc}{\SkipTocEntry}
\subsection{Parametrization of the eigenvalues}
\label{Subsec:ParamEigBall}

Our goal now is to exploit the eigenvalue equations given by \eqref{Eq:EigenvalueEquationBall} to prove that the eigenvalues of $\Dirac_{\tau}$ can be parametrized in terms of $\tau\in \R$, obtaining a family of increasing curves whose limits as $\tau\to \pm \infty$ are related with the zeroes of the Bessel functions (and thus with the eigenvalues of the Dirichlet Laplacian).
In the following lemma we collect the results on the Bessel functions that we will use.

\begin{lemma}
	\label{Lemma:Bessel}
	Let $J_p$ be the Bessel function of the first kind of order $p >0$, and denote the $k$-th positive zero of this function by $z_{p, k}$.
	
	Then,
	\begin{enumerate}
		\item[$(i)$] the positive zeroes of $J_p$ are simple and form an infinite increasing sequence,
		\item[$(ii)$] the zeroes of two consecutive Bessel functions are interlaced, meaning that
		\begin{equation}
			0< z_{p, 1} < z_{p + 1, 1} < z_{p, 2} < z_{p + 1, 2} < \ldots,
		\end{equation}
		\item[$(iii)$] the quotient of two consecutive Bessel functions can be expressed as
		\begin{equation}
			\label{Eq:QuotientBessel}
			\dfrac{J_{p + 1}(x)}{J_{p}(x)} = \sum_{k \geq 1} \dfrac{2x}{z_{p, k}^2 - x^2} \quad \text{ for } x \in \R \setminus \{z_{p, k}\}_{k\in \N}.
		\end{equation}
		As a consequence, $J_{p + 1}/J_{p}$ is odd, strictly increasing in each interval contained in $\R \setminus \{z_{p, k}\}_{k\in \N}$, it is positive in the intervals $(0,z_{p, 1})$ and $(z_{p + 1, k},z_{p, k + 1})$ for $k\geq 1$, and negative in the intervals $(z_{p, k},z_{p + 1, k})$ for $k\geq 1$.

	\end{enumerate}
\end{lemma}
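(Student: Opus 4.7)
The plan is to deduce all three statements from the classical theory of Bessel functions, either citing standard references or extracting short proofs from the recurrence relations and the Hadamard product for $J_p$. The main technical input is the product expansion
\begin{equation*}
J_p(x) = \frac{x^p}{2^p \Gamma(p+1)} \prod_{k=1}^\infty \Big(1 - \frac{x^2}{z_{p,k}^2}\Big),
\end{equation*}
from which the logarithmic derivative yields the Mittag--Leffler type expansion needed in $(iii)$; combined with the standard identity
\begin{equation*}
J_p'(x) = \frac{p}{x} J_p(x) - J_{p+1}(x)
\end{equation*}
(see \cite[formulas (9.1.27)]{AbramowitzStegun}) this is the cleanest route.

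For $(i)$, I would use Bessel's ODE: if $J_p$ had a double zero at some $x_0 > 0$ then uniqueness for the Cauchy problem would force $J_p \equiv 0$, contradicting $J_p \not\equiv 0$. The existence of infinitely many positive zeros increasing to $+\infty$ is immediate from the asymptotic $J_p(x) \sim \sqrt{2/(\pi x)} \cos(x - p\pi/2 - \pi/4)$ as $x\to+\infty$, and can simply be cited from \cite[Chapter 9]{AbramowitzStegun}.

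For $(ii)$, the cleanest argument uses the pair of identities
\begin{equation*}
\frac{d}{dx}\big[x^{-p} J_p(x)\big] = -x^{-p} J_{p+1}(x), \qquad \frac{d}{dx}\big[x^{p+1} J_{p+1}(x)\big] = x^{p+1} J_p(x).
\end{equation*}
Applying Rolle's theorem to $x^{-p}J_p$ on $[z_{p,k}, z_{p,k+1}]$ produces a zero of $J_{p+1}$ in between; applying it to $x^{p+1}J_{p+1}$ on $[z_{p+1,k}, z_{p+1,k+1}]$ produces a zero of $J_p$ in between. Together with the asymptotics, and the observation that $J_{p+1}(x)/x^{p+1}$ is positive near $0$, this forces the strict interlacing $0<z_{p,1}<z_{p+1,1}<z_{p,2}<\cdots$.

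For $(iii)$, I would take the logarithmic derivative of the product formula, obtaining
\begin{equation*}
\frac{J_p'(x)}{J_p(x)} = \frac{p}{x} - \sum_{k=1}^\infty \frac{2x}{z_{p,k}^2 - x^2},
\end{equation*}
and then substitute the identity for $J_p'$ above to get \eqref{Eq:QuotientBessel}. The derived properties of $J_{p+1}/J_p$ then follow by inspection of the series: oddness is clear; term-by-term differentiation gives $\sum_k 2(z_{p,k}^2+x^2)/(z_{p,k}^2-x^2)^2 > 0$ on each interval of $\R\setminus\{z_{p,k}\}$, yielding strict monotonicity; and the sign pattern on $(0, z_{p,1})$, $(z_{p,k}, z_{p+1,k})$, $(z_{p+1,k}, z_{p,k+1})$ follows from the interlacing in $(ii)$ combined with the fact that the only sign change of the series across $z_{p,k}$ comes from the $k$-th summand going from $+\infty$ to $-\infty$, while the zero crossings at $z_{p+1,k}$ come from the continuous vanishing of $J_{p+1}$. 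The main care needed is justifying the interchange of the product differentiation with the limit, which is standard since the zeros satisfy $z_{p,k}\sim k\pi$ and the product converges uniformly on compact subsets of $\C\setminus\{\pm z_{p,k}\}_k$; no serious obstacle is anticipated, the whole lemma being a packaging of classical material.
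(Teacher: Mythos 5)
Your proposal is correct, and at its core it rests on the same key identity as the paper: the Mittag--Leffler type expansion \eqref{Eq:QuotientBessel}, from which the oddness, monotonicity (term-by-term differentiation giving $\sum_k 2(z_{p,k}^2+x^2)/(z_{p,k}^2-x^2)^2>0$) and the sign pattern are read off exactly as in the paper's proof. The difference is one of packaging: the paper simply cites Watson (Chapter XV for $(i)$--$(ii)$, formula (1) on p.~498 for \eqref{Eq:QuotientBessel}, and Abramowitz--Stegun for the reality of the zeroes), whereas you re-derive the classical inputs --- simplicity of zeroes via uniqueness for the Cauchy problem for Bessel's ODE, infinitude via the large-$x$ asymptotics, interlacing via Rolle applied to $x^{-p}J_p$ and $x^{p+1}J_{p+1}$ (where, to pin down $z_{p,1}<z_{p+1,1}$, the cleanest step is Rolle on $[0,z_{p+1,1}]$ for $x^{p+1}J_{p+1}$, which vanishes at both ends), and \eqref{Eq:QuotientBessel} via the logarithmic derivative of the product expansion combined with $J_p'=\frac{p}{x}J_p-J_{p+1}$. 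This buys a largely self-contained argument at the cost of length; the one classical fact you still use silently is that, for $p>0$, \emph{all} zeroes of $z^{-p}J_p(z)$ are real, which is what legitimates writing the Hadamard product over the positive real zeroes $\{z_{p,k}\}$ only (the paper makes this point explicitly with a reference), and for strict interlacing you should also note that $J_p$ and $J_{p+1}$ have no common positive zero, which follows at once from the recurrence and the simplicity in $(i)$. Neither of these is a gap in substance, just points to cite or state explicitly.
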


\begin{proof}
	The first two statements are shown in \cite[Chapter XV]{Watson1995TreatiseBessel}.
	Note that for $p \geq -1$ the zeroes of $J_p$ are real, and thus we can order them; see also \cite[p. 372]{AbramowitzStegun}. 	
	Last, \eqref{Eq:QuotientBessel} follows from formula (1) in \cite[p.\,498]{Watson1995TreatiseBessel}.
	Note that this yields that $J_{p + 1}/J_{p}$ is an infinite sum of functions with singularities at $\pm z_{p, k}$ which have strictly positive derivative in their domain of definition. 
	As a consequence, in each interval of the form $(z_{p, k},z_{p, k + 1})$ for $k\in \N$, the function $J_{p + 1}/J_{p}$ is well defined, smooth, and strictly increasing, and therefore has a unique zero which necessarily is $z_{p + 1, k}$.
\end{proof}

With the help of the previous lemma we can now establish the following result on the parametrization of the eigenvalues.

\begin{proposition}
	\label{Prop:ParamEgienvaluesSphere}
	For each index $j = 1/2, 3/2, \ldots$ there exists an infinite number of smooth and strictly increasing functions $\{\tau \mapsto \uplambda_{j, k}^\pm(\tau) \}_{k \in \Z}$ such that, for each $\tau\in \R$, the real number $\uplambda_{j, k}^\pm(\tau)$ is an eigenvalue of $\Dirac_{\tau}$.
	The functions $\uplambda_{j, k}^\pm $ are surjectively defined by
	\begin{equation}
		\begin{matrix}
			\uplambda_{j, k}^\pm : &\R &\to &I_{j, k}^\pm \\
			&\tau &\mapsto& \uplambda_{j, k}^\pm(\tau),
		\end{matrix}
	\end{equation}
	with 
	\begin{itemize}
		\item $I_{j,0}^-= \big(m, \sqrt{(z_{j,1}/R)^2 + m^2}\,\big)$,
		\item $I_{j,k}^- = \big(\sqrt{(z_{j + 1,k}/R)^2 + m^2}, \sqrt{(z_{j,k + 1}/R)^2 + m^2}\,\big)$ for $k = 1, 2, \ldots$,
		\item $I_{j,k}^- = \big(\!-\!\sqrt{(z_{j + 1,|k|}/R)^2 + m^2}, - \sqrt{(z_{j,|k|}/R)^2 + m^2}\,\big)$ for $k=-1, -2, \ldots$,
	\end{itemize}
	and 
	\begin{itemize}
		\item $I_{j,0}^+= \big(\!-\!\sqrt{(z_{j,1}/R)^2 + m^2}, -m\big)$,
		\item $I_{j,k}^+ = \big(\!-\!\sqrt{(z_{j,k + 1}/R)^2 + m^2}, -\sqrt{(z_{j + 1,k}/R)^2 + m^2}\,\big)$ for $k = 1, 2, \ldots$,
		\item $I_{j,k}^+ = \big(\sqrt{(z_{j,|k|}/R)^2 + m^2}, \sqrt{(z_{j + 1,|k|}/R)^2 + m^2}\,\big)$ for $k=-1, -2, \ldots$,
	\end{itemize}
	where $z_{p, k}$ denotes the $k$-th positive zero of $J_p$, the Bessel function of order $p$. 
	As a consequence, for every $\tau \in \R$, the function
	\begin{equation}
		\label{Eq:EigExplicitSphere}
		\varphi_\tau :=  \begin{pmatrix}
			u_\tau\\
			v_\tau
		\end{pmatrix} = \dfrac{1}{\sqrt{r}} \begin{pmatrix}
			i J_{\ell + 1/2}\left (\sqrt{\uplambda_{j, k}^\pm(\tau)^2 - m^2} r \right ) \, \psi^{\mu_j}_{j \pm 1/2}(\theta) \vspace{3pt}\\  \pm \dfrac{\sqrt{\uplambda_{j, k}^\pm(\tau)^2 - m^2}}{\uplambda_{j, k}^\pm(\tau) + m} J_{\ell'+1/2} \left(\sqrt{\uplambda_{j, k}^\pm(\tau)^2 - m^2} r\right ) \, \psi^{\mu_j}_{j \mp 1/2} (\theta)
		\end{pmatrix}
	\end{equation}
	with $j = 1/2, 3/2, \ldots$, $\ell = j \pm 1/2$, $\ell' = j \mp 1/2$, $\mu_j = -j, -j + 1,\ldots, j$, and $k\in \Z$, 	belongs to $L^\pm_{j, \mu_j}$ and is an eigenfunction of $\Dirac_{\tau}$ with eigenvalue $\uplambda_{j, k}^\pm(\tau)$.	
\end{proposition}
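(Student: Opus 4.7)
The plan is to start from the scalar eigenvalue equation \eqref{Eq:EigenvalueEquationBall} already derived in Sections B.1--B.2, and turn it into a bijection statement between $\tau \in \R$ and $\lambda$ in the prescribed interval. Concretely, I will rewrite \eqref{Eq:EigenvalueEquationBall} in the form $e^{\tau} = h(\lambda)$, where, for the subspace $L^-_{j,\mu_j}$ (so $\ell = j - 1/2$, $\ell' = j+1/2$),
\begin{equation*}
h(\lambda) := \frac{\sqrt{\lambda^2-m^2}}{\lambda+m}\cdot\frac{J_{j+1}\bigl(\sqrt{\lambda^2-m^2}\,R\bigr)}{J_{j}\bigl(\sqrt{\lambda^2-m^2}\,R\bigr)}=:A(\lambda)\,B\bigl(x(\lambda)\bigr),\qquad x(\lambda)=\sqrt{\lambda^2-m^2}\,R,
\end{equation*}
and analogously for $L^+_{j,\mu_j}$. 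The goal is to show that $h$, restricted to each interval $I^-_{j,k}$ listed in the statement, is a smooth (indeed real analytic) strictly increasing bijection onto $(0,+\infty)$; once this is done, defining $\uplambda^-_{j,k}(\tau):=h^{-1}(e^\tau)$ yields the claimed parametrization, and the associated eigenfunction \eqref{Eq:EigExplicitSphere} is exactly \eqref{Eq:EigenvalueGeneralForm} composed with \eqref{Eq:SeparationOfVariables}. The $L^+_{j,\mu_j}$ curves will then be obtained by the symmetry $\uplambda^+_{j,k}(\tau):=-\uplambda^-_{j,k}(-\tau)$, which is compatible with the intervals because $I^+_{j,k}=-I^-_{j,k}$.

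The first step is to identify the maximal intervals of $\{\lambda : |\lambda|>m\}$ on which $h$ is defined, nonzero, and of constant sign equal to that of $e^\tau=+1$. The singularities of $h$ come from the zeros of $J_j(x(\lambda))$, i.e.\ from $\lambda=\pm\sqrt{(z_{j,k}/R)^2+m^2}$, and $h$ vanishes at $\lambda=\pm m$ (where $A\to 0$ and $B\to 0$) and at $\lambda=\pm\sqrt{(z_{j+1,k}/R)^2+m^2}$ (where $B\to 0$). Using Lemma~\ref{Lemma:Bessel}(ii)--(iii) together with the sign analysis of $A$ (positive on $(m,+\infty)$, negative on $(-\infty,-m)$), a direct case check shows that $h>0$ exactly on the intervals $I^-_{j,k}$ displayed in the statement. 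A careful computation of one-sided limits at each endpoint gives $h\to 0$ at one endpoint (the one adjacent to a zero of $B$, or to $\lambda=m$) and $h\to +\infty$ at the other (where either $A$ or $B$ blows up), so $h(I^-_{j,k})\supseteq(0,+\infty)$.

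Next, strict monotonicity of $h$ on each $I^-_{j,k}$ must be established. The cleanest route is to observe that, once we know that for every $\tau\in\R$ the equation $h(\lambda)=e^\tau$ has at least one solution in $I^-_{j,k}$ (given by the two-sided surjectivity above), Theorem~\ref{Th:ParamEigenvalues} (or \Cref{Prop:DerivativeEigenvalues}) provides a real analytic, strictly increasing local parametrization of every eigenvalue curve of $\Dirac_\tau$; pulling back through $h$ we conclude $h'>0$ on $I^-_{j,k}$. A self-contained alternative is to differentiate $\log h$ and combine Lemma~\ref{Lemma:Bessel}(iii) (strict increase of $J_{p+1}/J_p$ on each component of its domain) with explicit monotonicity of $A$ on each branch of $\{|\lambda|>m\}$, being careful with signs on $(-\infty,-m)$, where both $A$ and $B$ are negative and decreasing, so that their product is still increasing. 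Either way, $h:I^-_{j,k}\to(0,+\infty)$ is an analytic bijection, and so is its inverse $\uplambda^-_{j,k}$; by construction $\uplambda^-_{j,k}(\tau)\in\sigma(\Dirac_\tau)$.

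The final step is exhaustivity and the $L^+$ case. Because $\Dirac_\tau$ leaves every subspace $L^\pm_{j,\mu_j}$ invariant, every eigenfunction is a superposition of the explicit separated solutions found in Section~B.2, so the families $\{\uplambda^\pm_{j,k}\}_{j,k}$ cover $\sigma(\Dirac_\tau)$ for each $\tau$. For $L^+_{j,\mu_j}$ one can either repeat the analysis with $\ell=j+1/2,\ \ell'=j-1/2$ (which, after a completely parallel sign inspection, yields the intervals $I^+_{j,k}$), or simply invoke the antiunitary symmetry recalled in \Cref{Rem:SymmetrySphere} and \Cref{Lemma:SpectrumGenMIT}(iii), which maps $L^-_{j,\mu_j}$-eigenfunctions of $\Dirac_\tau$ with eigenvalue $\lambda$ to $L^+_{j,\mu_j}$-eigenfunctions of $\Dirac_{-\tau}$ with eigenvalue $-\lambda$; setting $\uplambda^+_{j,k}(\tau):=-\uplambda^-_{j,k}(-\tau)$ then gives a strictly increasing analytic bijection $\R\to I^+_{j,k}$, and the eigenfunction formula \eqref{Eq:EigExplicitSphere} remains valid. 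The main technical obstacle I expect is the sign bookkeeping in Step~2 across the four qualitatively different types of intervals ($\lambda$ near $\pm m$, between consecutive Bessel zeros on $(m,+\infty)$, and their mirror images on $(-\infty,-m)$); once this is done correctly, everything else follows from standard properties of Bessel functions and the general theory already developed in Sections~2--3.
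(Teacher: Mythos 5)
Your proposal is correct and follows essentially the same route as the paper: rewrite \eqref{Eq:EigenvalueEquationBall} as $e^\tau=h(\lambda)$, use the interlacing and monotonicity properties of Bessel quotients from \Cref{Lemma:Bessel} to identify the maximal intervals where $h$ is positive, strictly increasing, and surjective onto $(0,+\infty)$, invert to get $\uplambda^\pm_{j,k}(\tau)=h^{-1}(e^\tau)$, and reduce the $L^+_{j,\mu_j}$ case to the $L^-_{j,\mu_j}$ case via the odd symmetry of \Cref{Rem:SymmetrySphere} and \Cref{Lemma:SpectrumGenMIT}~$(iii)$. Your ``self-contained alternative'' for monotonicity (differentiating $\log h$ with the sign bookkeeping on $(-\infty,-m)$) is exactly the paper's argument, while the detour through \Cref{Th:ParamEigenvalues} is an unnecessary but harmless extra.
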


Note that the superindex in $\uplambda_{j, k}^\pm$ indicates to which invariant subspace belongs the associated eigenfunction.
It should not be confused with the sign of the eigenvalue (as the superindex in $\lambda^\pm_1$ denotes, hence the different typography $\uplambda$ vs. $\lambda$).

\begin{proof}[Proof of \Cref{Prop:ParamEgienvaluesSphere}]
	From the arguments already presented in \Cref{Subsec:DecompositionSph} it only remains to show that from \eqref{Eq:EigenvalueEquationBall} we can obtain the aforementioned infinite number of parametrizations of $\lambda$ in terms of $\tau\in \R$.
	To do it, the idea is to rewrite the eigenvalue equation \eqref{Eq:EigenvalueEquationBall} as 
	\begin{equation}
		\label{Eq:EigenvalueEquationBallParamGeneral}
		e^\tau =  \mp \dfrac{\sqrt{\lambda^2-m^2}}{\lambda+m} \dfrac{ J_{\ell' + 1/2} ( \sqrt{\lambda^2-m^2} R ) } {J_{\ell + 1/2} ( \sqrt{\lambda^2-m^2} R ) } =: h(\lambda).
	\end{equation}
	Then, our goal will be to invert $h$ in suitable intervals to get 
	$\lambda=\lambda(\tau):=h^{-1}(e^\tau)$. 
	
	First, note that we can restrict ourselves to the subspaces $L^-_{j, \mu_j}$, thanks to the odd symmetry mentioned in \Cref{Rem:SymmetrySphere}.
	In this case the eigenvalue equation is written as
	\begin{equation}
		\label{Eq:EigEquationSphereParamBis}
		e^\tau =  \sign (\lambda+m ) \sqrt{\dfrac{\lambda-m}{\lambda+m}} \dfrac{ J_{j + 1} ( \sqrt{\lambda^2-m^2} R ) } {J_{j} ( \sqrt{\lambda^2-m^2} R ) } = h(\lambda).
	\end{equation}
	Due to the fact that $e^\tau>0$ for all $\tau\in\R$, we are forced to work with $h$ only on intervals $I\subset\R$ such that
	$h(I)\subset(0,+\infty)$ and where $h$ is invertible.
	Since $\frac{\lambda-m}{\lambda+m}$ is positive and strictly increasing for $\lambda \in (m, +\infty)$ and for $\lambda \in (- \infty, -m)$, $I$ must be such that 
	\begin{equation}
		\sign (\lambda+m ) \dfrac{ J_{j + 1} ( \sqrt{\lambda^2-m^2} R ) } {J_{j} ( \sqrt{\lambda^2-m^2} R ) } \text{ is positive and strictly increasing for } \lambda \in I. 
	\end{equation}
	Then, \Cref{Lemma:Bessel} yields that all the possible intervals $I$ are:
	\begin{itemize}
		\item $I= (m, \sqrt{(z_{j,1}/R)^2 + m^2})$,
		\item $I = (\sqrt{(z_{j + 1,k}/R)^2 + m^2}, \sqrt{(z_{j,k + 1}/R)^2 + m^2})$ for $k\geq 1$,
		\item $I = (-\sqrt{(z_{j + 1,k}/R)^2 + m^2}, - \sqrt{(z_{j,k}/R)^2 + m^2})$ for $k\geq 1$.
	\end{itemize}
	In each of these intervals $I$ the function $h: I \to (0,+\infty)$ is of class $C^\infty$, strictly increasing, and surjective.
	Therefore, it can be inverted, obtaining a $C^\infty$ function $\tau \mapsto \lambda(\tau) := h^{-1}(e^\tau)$ which maps $\R$ into $I$ surjectively and corresponds, for each $\tau\in \R$, to an eigenvalue of $\Dirac_\tau$.
	In addition, the monotonicity of $\lambda\mapsto\tau=\tau(\lambda):=\log(h(\lambda))$ yields that $\tau\mapsto\lambda(\tau)$ is also strictly increasing.
\end{proof}

The previous result yields that, for any given eigenvalue curve $\tau \mapsto \lambda(\tau)$,  $\lim_{\tau \to \pm \infty}|\lambda(\tau)|$ is either $m$ or a positive zero of the function $J_{k + 1/2} ( \sqrt{(\cdot)^2-m^2} R )$ for some $k=0,1,2,\ldots$; note that each of these zeroes corresponds to the square root of a Dirichlet eigenvalue of $-\Delta + m^2$ in $B_R$. 
The monotonicity and limiting values of $\tau\mapsto\lambda(\tau)$ was observed in the curves plotted in \Cref{Fig:ALL_eigenvalues}.
Furthermore, the alternation (with respect to the zeroes of the Bessel function) between positive and negative eigenvalue curves, which is given by the  intervals $I_{j,k}^\pm$ defined in \Cref{Prop:ParamEgienvaluesSphere}, was already shown numerically in \Cref{Fig:j12_eigenvalues,Fig:j12BOTH_eigenvalues}.

\addtocontents{toc}{\SkipTocEntry}
\subsection{The first positive eigenvalue}
\label{Subsec:FirstEigenvalueBall}

In this section we focus on the first positive eigenvalue of $\Dirac_{\tau}$ when $\Omega = B_R$. We provide a fine description of the associated eigenvalue curve, whose main properties are summarized in the following proposition. The reader may compare it with \Cref{Th:FirstEigTom}, which is the analogous result for general domains; see also \Cref{Th:Rayleigh.Intro} regarding \eqref{Eq:LStarBall}.

\begin{proposition}
	\label{Prop:ParamFirstEigSphere}
	The function $\tau \mapsto \lambda_1^+(\tau)=\min(\sigma(\Dirac_\tau)\cap(m,+\infty))$ is of class $C^\infty$ in $\R$, and satisfies
	\begin{equation}
		\label{Eq:LimitsFirstEigBall}
		\lim_{\tau \downarrow - \infty} \lambda_1^+ (\tau) = m \quad \text{and} \quad  \lim_{\tau \uparrow + \infty} \lambda_1^+ (\tau) = \sqrt{\pi^2/R^2 + m^2} = \sqrt{ \min\sigma(-\Delta_D) + m^2},
	\end{equation}
	where $\min\sigma(-\Delta_D)$ denotes the first Dirichlet eigenvalue of $-\Delta$ in $B_R$.
	In addition, the corresponding eigenspace associated to $\lambda_1^+(\tau)$ has always dimension $2$.
	
	Furthermore,
	\begin{equation}
		\label{Eq:LStarBall}
		L^\star_{B_R} := \lim_{\tau \downarrow -\infty} (\lambda_1^+(\tau)-m)e^{-\tau} = \frac{3}{R}=\frac{1}{\rcal_{B_R}},
	\end{equation}
	where $\rcal_{B_R}$ is defined in \eqref{form.rayleigh.r.omega}.
\end{proposition}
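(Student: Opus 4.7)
The plan is to identify $\lambda_1^+(\tau)$ explicitly among the eigenvalue curves catalogued in \Cref{Prop:ParamEgienvaluesSphere}, from which the regularity, monotonicity, limits and multiplicity statements will follow almost immediately, and then to compute $L^\star_{B_R}$ and $\rcal_{B_R}$ directly, via Bessel asymptotics and a spherical spinor diagonalization of $(\sigma\cdot\nu)K_m(\sigma\cdot\nu)$ respectively.

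I would first establish that $\lambda_1^+(\tau)=\uplambda_{1/2,0}^-(\tau)$ for every $\tau\in\R$. By \Cref{Prop:ParamEgienvaluesSphere}, the only positive eigenvalue curves whose range meets $(m,\sqrt{\pi^2/R^2+m^2})$ are those of the family $\{\uplambda_{j,0}^-\}_{j\geq 1/2}$, because $z_{1/2,1}=\pi$ and the interlacing in \Cref{Lemma:Bessel}(ii) places every other positive curve strictly above $\sqrt{\pi^2/R^2+m^2}$. Within this family, each $\uplambda_{j,0}^-$ is defined implicitly by $e^\tau=h_j(\lambda):=\sqrt{(\lambda-m)/(\lambda+m)}\,J_{j+1}(bR)/J_j(bR)$ with $b=\sqrt{\lambda^2-m^2}$. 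Inserting the series representation $J_{p+1}(x)/J_p(x)=\sum_{k\geq 1}2x/(z_{p,k}^2-x^2)$ from \Cref{Lemma:Bessel}(iii) and exploiting the interlacing $z_{j,k}\geq z_{1/2,k}$ for $j\geq 1/2$, a termwise comparison yields $h_j(\lambda)<h_{1/2}(\lambda)$ for every $j>1/2$ and every $\lambda\in(m,\sqrt{\pi^2/R^2+m^2})$; since each $h_j$ is strictly increasing, this translates into $\uplambda_{1/2,0}^-(\tau)<\uplambda_{j,0}^-(\tau)$ for all $\tau\in\R$. The $C^\infty$ regularity and the limits in \eqref{Eq:LimitsFirstEigBall} then follow directly from \Cref{Prop:ParamEgienvaluesSphere} (recalling that the first Dirichlet eigenvalue of $-\Delta$ on $B_R$ equals $\pi^2/R^2$), while the eigenspace dimension equals $2j+1=2$ from the two admissible values $\mu_{1/2}\in\{\pm 1/2\}$ combined with the strict separation just obtained.

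For $L^\star_{B_R}$ I would insert the small-argument asymptotics $J_p(x)\sim(x/2)^p/\Gamma(p+1)$ into $h_j$, which gives $h_j(\lambda)\sim(\lambda-m)R/(2(j+1))$ as $\lambda\downarrow m$ (after the cancellation $\sqrt{(\lambda-m)/(\lambda+m)}\cdot b=\lambda-m$); inverting produces $(\uplambda_{j,0}^-(\tau)-m)e^{-\tau}\to 2(j+1)/R$ as $\tau\downarrow-\infty$, so $L^\star_{B_R}=3/R$ at $j=1/2$. For $\rcal_{B_R}$ I would diagonalize $(\sigma\cdot\nu)K_m(\sigma\cdot\nu)$ in the spherical spinor basis: the classical single-layer identity $\int_{\partial B_R}|x-y|^{-1}Y_n(\hat y)\,d\upsigma(y)=(4\pi R/(2n+1))Y_n(\hat x)$ for $|x|=R$, combined with \eqref{Eq:SphSpinorsSymmetrySigma}, yields $(\sigma\cdot\nu)K_m(\sigma\cdot\nu)\psi_{j-1/2}^{\mu_j}=\frac{R}{2j+2}\psi_{j-1/2}^{\mu_j}$ and $(\sigma\cdot\nu)K_m(\sigma\cdot\nu)\psi_{j+1/2}^{\mu_j}=\frac{R}{2j}\psi_{j+1/2}^{\mu_j}$. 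A separation-of-variables analysis in the spirit of \Cref{Subsec:DecompositionSph}, now applied to $\sigma\cdot\nabla$ (equivalently to $\Dirac$ with $m=\lambda=0$), identifies the bounded interior $H^1$ null-solutions of $\sigma\cdot\nabla$ as radial multiples of $r^{j-1/2}\psi_{j-1/2}^{\mu_j}(\hat x)$, and an analogous exterior analysis shows that $P_+(L^2(\partial B_R)^2)$ coincides with the closed orthogonal span of $\{\psi_{j-1/2}^{\mu_j}\}_{j,\mu_j}$, in agreement with \Cref{Lemma:EquivalencesPBall}. Restricted to this Hardy subspace, $(\sigma\cdot\nu)K_m(\sigma\cdot\nu)$ has spectrum $\{R/(2j+2)\}_{j\geq 1/2}$ with maximum $R/3$ attained at $j=1/2$, so $\rcal_{B_R}=R/3$ and hence $1/\rcal_{B_R}=3/R=L^\star_{B_R}$, consistently with \Cref{Th:Rayleigh.Intro}(i)--(iii).

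The main technical obstacle I anticipate is the rigorous description of the Hardy space $P_+(L^2(\partial B_R)^2)$ via spherical spinors: one must verify the $H^1$-regularity of the radial factors $r^{j-1/2}$ at the origin (a direct integrability check, valid for all $j\geq 1/2$ since $\int_{B_R}r^{2j-3}\,dx<+\infty$) and rule out any further $H^1$ null-solutions of $\sigma\cdot\nabla$ outside the span just identified, which can be done by matching the interior and exterior traces to exhaust the orthonormal spinor basis of $L^2(\partial B_R)^2$.
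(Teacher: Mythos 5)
Your proposal is correct, and it splits naturally into a part that mirrors the paper and a part that does not. For the identification $\lambda_1^+\equiv\uplambda_{1/2,0}^-$ (hence the $C^\infty$ regularity, the limits \eqref{Eq:LimitsFirstEigBall}, and the multiplicity $2$) you argue essentially as the paper does: by \Cref{Prop:ParamEgienvaluesSphere} only the curves $\uplambda_{j,0}^-$ can enter $(m,\sqrt{\pi^2/R^2+m^2})$, and the termwise comparison of the expansions \eqref{Eq:QuotientBessel} through the interlacing of zeros in \Cref{Lemma:Bessel} gives the strict ordering that excludes crossings; your pointwise inequality $h_j<h_{1/2}$ is just a slightly stronger packaging of the paper's ``no crossing point $x_\circ$'' argument. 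Where you genuinely diverge is \eqref{Eq:LStarBall}. The paper obtains $L^\star_{B_R}$ by letting $\tau\downarrow-\infty$ in the boundary integral system of \Cref{Lemma:BoundaryPb} with the $\tau$-independent trace $\psi_0^{\mu_{1/2}}$, quoting the spectral decomposition of $K_m$ on the sphere from earlier work (via modified Bessel limits), proving $\psi_1^{\mu_{1/2}}\perp P_+(L^2)$ through the $\tau\uparrow+\infty$ limit attached to the first negative eigenvalue, and invoking $1/\rcal_\Omega=\min\mathcal L_\Omega\le L_\Omega^\star$ from \Cref{Th:Rayleigh.Intro}. You instead compute both sides by hand: $L^\star_{B_R}=3/R$ from the small-argument asymptotics $J_p(x)\sim(x/2)^p/\Gamma(p+1)$ in $e^\tau=h_{1/2}(\lambda)$ (your cancellation giving $h_j(\lambda)\sim(\lambda-m)R/(2(j+1))$ as $\lambda\downarrow m$ is correct), and $\rcal_{B_R}=R/3$ by identifying $P_+(L^2(\partial B_R)^2)$ with the closed span of $\{\psi_{j-1/2}^{\mu_j}\}$ and diagonalizing $(\sigma\cdot\nu)K_m(\sigma\cdot\nu)$ there via the Newtonian single-layer identity and \eqref{Eq:SphSpinorsSymmetrySigma}, yielding the eigenvalues $R/(2j+2)$ with maximum $R/3$ at $j=1/2$. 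The Hardy-space identification that you flag as the main obstacle does go through, and in fact more easily than you suggest: $r^{j-1/2}\psi_{j-1/2}^{\mu_j}$ and $r^{-j-3/2}\psi_{j+1/2}^{\mu_j}$ are interior and exterior $H^1$ null solutions of $\sigma\cdot\nabla$, their traces lie in $\operatorname{Ran}P_+$ and $\operatorname{Ran}P_-$ respectively (\Cref{Subsec:Proj.Hardy}), and since the spherical spinors form a complete orthonormal system while $P_\pm$ are complementary bounded projections (\Cref{skew_projections}), both inclusions must be equalities; no separate classification of all null solutions is required. What each route buys: yours is more elementary and self-contained (no reliance on the external spectral results for $K_m$ nor on \Cref{Th:Rayleigh.Intro}, which is tidy since the paper's final claim in that theorem is in turn deduced from this proposition), at the price of the full Hardy-space identification; the paper avoids that identification by extracting only the two membership facts $\psi_0^{\mu_{1/2}}\in P_+(L^2)$ and $\psi_1^{\mu_{1/2}}\perp P_+(L^2)$ from asymptotic limits of the eigenvalue problem itself.
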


\begin{proof}

	Let $\lambda : \R \to \big(m, \sqrt{\pi^2/R^2 + m^2}\big)$ be the eigenvalue curve corresponding to $\uplambda_{1/2, 0}^-$ in the notation of \Cref{Prop:ParamEgienvaluesSphere}.
	This eigenfunction is associated to the two subspaces  $L^-_{1/2, \mu_{1/2}}$, with either  $\mu_{1/2} =  1/2$ or $\mu_{1/2} = -1/2$, and solves the implicit equation
	\begin{equation}
		\label{Eq:EigenvalueEquationBallParamFirst}
		e^\tau =   \dfrac{\sqrt{\lambda^2-m^2}}{\lambda+m} \dfrac{ J_{3/2} ( \sqrt{\lambda^2-m^2} R ) } {J_{1/2} ( \sqrt{\lambda^2-m^2} R ) }.
	\end{equation}
	We will show that $\lambda = \lambda_1^+$. 
	The upper bound for $\lambda(\tau)$ (and limit as $\tau \to +\infty$) is given by the fact that $z_{1/2, 1} = \pi$, which follows from the explicit expression of the Bessel functions involved in the above equation:	
	\begin{equation}
		\label{Eq:BesselJ1232}
		J_{1/2} (x) = \sqrt{\dfrac{2}{\pi x}} \sin(x)  \quad \text{ and } \quad J_{3/2} (x) = \sqrt{\dfrac{2}{\pi x}} \left( \dfrac{\sin(x)}{x} - \cos(x) \right).
	\end{equation}
	As a matter of fact, these expressions can be used to show ---after a tedious computation and using that $x>\sin(x)$ for all $x\in(0,\pi)$--- that the right-hand side of \eqref{Eq:EigenvalueEquationBallParamFirst} is a strictly increasing function of $\lambda$ for $\lambda \in \big(m, \sqrt{\pi^2/R^2 + m^2}\big)$ without making use of \Cref{Lemma:Bessel} (as done in the proof of \Cref{Prop:ParamEgienvaluesSphere}).

	Since $\pi = z_{1/2, 1}$ is the smallest positive zero among all the positive zeroes of the Bessel functions of half-integer index ---as shown by \Cref{Lemma:Bessel}~($ii$)---, it follows that $\lambda(\tau) $ coincides with $ \lambda^+_1(\tau)$ at least for big enough values of $\tau$.
	To show that indeed $\lambda(\tau)$ is the first positive eigenvalue $\lambda_1^+ (\tau)$ for all $\tau \in \R$, it suffices to show that $\lambda$ cannot cross any other eigenvalue curve. 
	On the one hand, taking into account the possible intervals $I_{j,k}^+$ given in \Cref{Prop:ParamEgienvaluesSphere}, it follows that any positive eigenvalue curve associated to the spaces $L^+_{j, \mu_{j}}$ must lie above $\sqrt{\pi^2/R^2 + m^2}$, and thus it cannot cross $\lambda(\tau)$.
	On the other hand, if there was a crossing between $\lambda(\tau)$ and another positive eigenvalue curve associated to $L^-_{j_\circ, \mu_{j_\circ}}$ for some half-integer  $j_\circ > 1/2$, then by \eqref{Eq:EigenvalueEquationBallParamFirst} there would exist a point $x_\circ \in (0, z_{1/2, 1})$ such that
	\begin{equation}
		\label{Eq:CrossingBessel}
		\dfrac{ J_{3/2} (x_\circ) } {J_{1/2} (x_\circ ) } = \dfrac{ J_{j_\circ+ 1} (x_\circ) } {J_{j_\circ} (x_\circ ) }.
	\end{equation}
	However, since the zeroes of the Bessel functions are ordered (see \Cref{Lemma:Bessel}), for every half-integer $j\geq 1/2$ it follows that $z_{j, k} < z_{j + 1,k}$ for all $k \geq 1$, and therefore
	\begin{equation}
		\dfrac{2x}{z_{j, k}^2 - x^2}  > \dfrac{2x}{z_{j + 1, k}^2 - x^2} \quad \text{ for every } x \in (0, z_{1/2, 1}).
	\end{equation}
	Hence, by  \eqref{Eq:QuotientBessel} it follows that 
	\begin{equation}
		\dfrac{ J_{j + 1} (x) } {J_{j} (x ) } > \dfrac{ J_{j+2} (x) } {J_{j+1} (x ) } \quad \text{ for every } x \in (0, z_{1/2, 1}) \text{ and for every half-integer } j \geq 1/2,
	\end{equation}
	thus there cannot exist such a $x_\circ \in (0, z_{1/2, 1})$ satisfying \eqref{Eq:CrossingBessel}.
	In conclusion, $\lambda$ does not cross any other eigenvalue curve. Therefore,  $\lambda(\tau)$ is the first positive eigenvalue of $\Dirac_\tau$ for all $\tau \in \R$.
	As a byproduct, since $\lambda(\tau)$ is associated to $L^-_{1/2, \mu_{1/2}}$ with either $\mu_{1/2} = 1/2$ or $\mu_{1/2} = -1/2$, it follows that the first positive eigenvalue of $\Dirac_\tau$ has multiplicity $2$ for all $\tau \in \R$.

	To conclude the proof, we are only left to show that 
	$L_{B_R}^* = 3/R=1/\rcal_{B_R}$.
	First, note that from the rescaling properties of the operators $K_m$ and $W_m$ defined in \eqref{Eq:DefK}, it follows readily that $\rcal_{B_R} = R\rcal_{B_1}$.
	Moreover, $L_{B_R}^\star= L_{B_1}^\star/R$. 
	To show this second equality, one notices that if $u_1$ and $u_R$ denote the boundary values of the upper component of the first eigenfunction in $B_1$ and $B_R$ respectively, both associated to the same subspace $L^-_{1/2, \mu_{1/2}}$, then after a normalization one can choose them in such a way that $u_R(\cdot) = u_1(\cdot/R)$. 
	Hence, from \eqref{Eq:BoundaryPbComm} and taking the limit $\tau \downarrow -\infty$, using again the scaling of $K_m$, and that $\{W_m, \sigma\cdot \nu \} = 0$ by \Cref{Lemma:EquivalencesPBall}, it follows that $L_{B_R}^\star= L_{B_1}^\star/R$; see the argument to get to \eqref{Eq:BoundaryPbSphLimit} below for more details.  
	As a consequence of all this, it is enough to prove the result for $R=1$.
	
	Recall that $\lambda(\tau)$ is associated, for all $\tau \in \R$, to the two subspaces $L^-_{1/2, \mu_{1/2}}$ with either $\mu_{1/2}= 1/2$ or  $\mu_{1/2}= -1/2$.
	We will work in one of these two subspaces (the precise choice will be completely irrelevant for the rest of the argument), and therefore, after a normalization, we can take an eigenfunction $\varphi_\tau = (u_\tau, v_\tau)^\intercal$ associated to $\lambda(\tau)$	such that $u_\tau$ at the boundary of $B_1$ is given by 
	$ \psi^{\mu_{1/2}}_0$ for all $\tau \in \R$.
	By \Cref{Lemma:BoundaryPb}, it holds
	\begin{equation}
		\label{Eq:BoundaryPbSph}
		\begin{split}
			\Big ( \frac{1}{2} -i W_\lambda  ({\sigma}\cdot\nu) \Big )\psi^{\mu_{1/2}}_0 & =   - (\lambda+m) e^\tau K_\lambda \psi^{\mu_{1/2}}_0\quad\text{and}\\
			\Big ( \frac{1}{2} -i ({\sigma}\cdot\nu) W_\lambda   \Big )\psi^{\mu_{1/2}}_0 & =  (\lambda-m)e^{-\tau} ({\sigma}\cdot\nu ) \, K_\lambda ({\sigma}\cdot\nu)  \psi^{\mu_{1/2}}_0
		\end{split}
	\end{equation}
	in $L^2(\Sph^2)^2$.
	We will take the limit $\tau \downarrow -\infty$ in \eqref{Eq:BoundaryPbSph}, taking into account that $\lambda \downarrow m$ as $\tau \downarrow -\infty$.
	Letting  $\tau \downarrow -\infty$ in \eqref{Eq:BoundaryPbSph}, and using that then $K_\lambda \to K_m$ and $W_\lambda \to W_m$ as bounded operators in $L^2(\Sph^2)^2$ (which follows from the explicit expressions of the operators),  we obtain
	\begin{equation}
		\label{Eq:BoundaryPbSphLimit}
		\begin{split}
			P_- \, \psi^{\mu_{1/2}}_0  =  0 \quad\text{and} \quad (P_+)^*\,  \psi^{\mu_{1/2}}_0  =   L_{B_1}^\star ({\sigma}\cdot\nu ) \, K_m ({\sigma}\cdot\nu) \psi^{\mu_{1/2}}_0 \quad \text{ in } L^2(\Sph^2)^2,
		\end{split}
	\end{equation}
	where  $L^\star_{B_1} =\lim_{\tau \downarrow -\infty} (\lambda(\tau)-m)e^{-\tau}$, and $P_- =  \frac{1}{2} -i W_m  ({\sigma}\cdot\nu)$ and $(P_+)^*= \frac{1}{2} -i ({\sigma}\cdot\nu) W_m $, as defined in \Cref{Subsec:Proj.Hardy}.
	In particular, \eqref{Eq:BoundaryPbSphLimit} shows that $L^\star_{B_1}$ is finite, since all the involved operators  are bounded and $K_m$ is injective.
	It is worth pointing out that this last argument leading to $L_{B_1}^\star< +\infty$ works thanks to the fact that, on $\partial \Omega$, the eigenfunction $u_\tau = \psi^{\mu_{1/2}}_0$ is indeed independent of $\tau$, something that may not be guaranteed on a general domain $\Omega$.

	We will use \eqref{Eq:BoundaryPbSphLimit} to establish \eqref{Eq:LStarBall} for $R=1$.
	First, let $L\in \mathcal{L}_{B_1}$ ---recall that $\mathcal{L}_{B_1}$ is defined in \eqref{form.rayleigh.l.omega.setL}, see also \eqref{form.rayleigh.l.omega.setL.true}. 
	We claim that $1/L$ is an eigenvalue of $K_m$.
	This claim can be proven by adding the two equations from which $\mathcal{L}_{B_1}$ is defined and using that $\{W_m, \sigma\cdot \nu\} = 0$ as operators in $L^2(\Sph^2)^2$; see \Cref{Lemma:EquivalencesPBall}.
	As a consequence, and since $1/\rcal_\Omega = \min \lcal_{\Omega}$ for every $\Omega \subset \R^3$ by \Cref{Th:Rayleigh.Intro}, $ \rcal_{B_1}$ is the maximum of the eigenvalues of $K_m$ among eigenfunctions of the form $(\sigma\cdot\nu)u$ with $u\in P_+(L^2(\Sph^2)^2)$, where $P_+=\frac{1}{2}+ iW_m(\sigma\cdot\nu)$.
	
	Let us now compute explicitly the eigenvalues of $K_m$ as an operator in $L^2(\Sph^2)^2$.
	Using \cite[Lemma~4.3]{AMV2} and 	\cite[Theorem~3.6]{MasPizzichillo-Sphere} on $K_a$ for $a>0$ and letting\footnote{Note that the kernel $k_m$ of $K_m$ does not depend on $m$. Hence, we can assume here that $m>0$ to cover as well the case $m=0$.} $a \uparrow m$, it follows that the spectrum of $K_m$ is given by a sequence $\{ d_{j\pm 1/2} \}_{j = 1/2, \ 3/2, \ldots}$ whose associated eigenfunctions are $\psi_{j \pm 1 / 2}^{\mu_j}$, i.e., 
	\begin{equation}\label{Eq:djExpression.diagonal}
		K_m \psi_{j \pm 1 / 2}^{\mu_j} = d_{j\pm 1/2}\, \psi_{j \pm 1 / 2}^{\mu_j},
	\end{equation}
	and the eigenvalues $ d_{j\pm 1/2}$ are given by the expression
	\begin{equation}
		\label{Eq:djExpression}
		d_{j\pm 1/2} = \lim_{t\downarrow 0} \ical_{j \pm 1/2 +1/2}(t) \mathcal{K}_{j \pm 1/2 +1/2}(t).
	\end{equation}
	Here $\ical_\kappa$ and $\mathcal{K}_\kappa$ are the modified Bessel functions of the first and second kind of order $\kappa$, respectively.
	By \cite[formulas 9.6.7 and 9.6.9]{AbramowitzStegun} we have
	\begin{equation}
		\lim_{t\downarrow 0} \ical_{\kappa}(t) \mathcal{K}_{\kappa}(t) = \dfrac{\Gamma (\kappa)}{2 \Gamma (\kappa + 1)} = \dfrac{1}{2 \kappa } \quad \text{for } \kappa > 0.
	\end{equation}
	Hence, for $k= 0, 1, \ldots,$ we have
	\begin{equation}
		\label{Eq:d_kExpression}
		d_{k} = \dfrac{1}{2k +1}.
	\end{equation}

	We claim that $(\sigma \cdot \nu)  \psi_0^{\mu_j} = \psi_1^{\mu_j} $ is orthogonal to $P_+(L^2(\Sph^2)^2)$. 
	Once this is proved, it follows that $1/d_0 \notin \lcal_{B_1}$ and, as a consequence, $\rcal_{B_1} \leq  d_1 = 1/3$.
	To prove the claim, notice that for all $\tau\in \R$, after a normalization,  $\psi_1^{\mu_j}$ is the upper component of the eigenfunction of $\Dirac_{\tau}$ associated to $-\lambda(-\tau)$, i.e., the first (larger) negative eigenvalue of $\Dirac_{\tau}$.
	Indeed, this can be shown with exactly the same arguments as we did at the beginning of the proof of the proposition for the first positive eigenvalue, considering in this case the subspace $L^+_{1/2, \mu_{1/2}}$; see also \Cref{Rem:SymmetrySphere}.
	Therefore, since such eigenvalue converges to $-m$ as $\tau \uparrow +\infty$, using the second equation in \eqref{Eq:BoundaryPb} with $u= \psi_1^{\mu_j}$	and taking the limit $\tau \uparrow +\infty$ ---analogously as how we proceeded before to show \eqref{Eq:BoundaryPbSphLimit}---, it follows that $(P_+)^* \psi_1^{\mu_j} = 0$, establishing our claim and, as a byproduct, the inequality $\rcal_{B_1} \leq  d_1 = 1/3$.
	
	We shall now prove that $L_{B_1}^\star = 1/ d_1 = 3$, which will finish the proof of \eqref{Eq:LStarBall} since $1/\rcal_{B_1} \leq L_{B_1}^*$ by \Cref{Th:Rayleigh.Intro.l1,Th:Rayleigh.Intro.l2}.
	By adding the two equations in \eqref{Eq:BoundaryPbSphLimit} and using again that $\{W_m,\sigma \cdot \nu  \}  = 0$ (or equivalently, that $(P_+)^* = P_+$ by \Cref{Lemma:EquivalencesPBall} since the underlying domain is a ball), we obtain
	\begin{equation}
		\psi^{\mu_{1/2}}_0  =   L_{B_1}^\star ({\sigma}\cdot\nu ) \, K_m ({\sigma}\cdot\nu) \psi^{\mu_{1/2}}_0 \quad \text{in } L^2(\Sph^2)^2.
	\end{equation}
	Finally, using that $({\sigma}\cdot\nu) \psi^{\mu_{1/2}}_{j\pm 1/2} = \psi^{\mu_{1/2}}_{j \mp 1/2}$ for all half-integers $j$ (see \cite[equation (4.121)]{Thaller}), and that $\psi^{\mu_{1/2}}_1$ is an eigenfunction of $K_m$ with eigenvalue $d_1$ by \eqref{Eq:djExpression.diagonal}, we get
	\begin{equation}
		\psi^{\mu_{1/2}}_0  = L_{B_1}^\star d_1 \psi^{\mu_{1/2}}_0 \quad \text{ in } L^2(\Sph^2)^2.
	\end{equation}
	Therefore, we have $L_{B_1}^\star = 1/d_1 = 3$ by \eqref{Eq:d_kExpression}. This concludes the proof.
\end{proof}

\bibliographystyle{siam}
\bibliography{biblio}

\end{document}